\documentclass[a4paper, pdftex]{amsart}

\AtBeginDocument{%
  \def\MR#1{}
}
\usepackage{tikz, tikz-cd}
\usepackage{amsmath,amssymb}
\usepackage{amsthm}
\usepackage{mathrsfs}
\usepackage[shortlabels]{enumitem}
\usepackage{mathtools}
\usepackage{array}
\usepackage{comment}
\usepackage{stmaryrd}
\usepackage{url}
\usepackage{ascmac}

\usepackage[top=30truemm,bottom=30truemm,left=35truemm,right=35truemm]{geometry}

\usepackage{hyperref}

\usepackage{fixme}
\fxsetup{status=draft}


%
\theoremstyle{plain}
\newtheorem{thm}{Theorem}[section]
\newtheorem{prop}[thm]{Proposition}
\newtheorem{lem}[thm]{Lemma}
\newtheorem{claim}[thm]{Claim}
\newtheorem{cor}[thm]{Corollary}
\newtheorem*{thm*}{Theorem}

\theoremstyle{definition}
\newtheorem{defi}[thm]{Definition}

\newtheorem*{NaC}{Notation and Convention}
\newtheorem*{ACK}{Acknowledgment}

\theoremstyle{remark}
\newtheorem{rem}[thm]{Remark}
\newtheorem{nota}[thm]{Notation}

\numberwithin{equation}{section}

\newcommand{\Z}{\mathbb{Z}}
\newcommand{\Q}{\mathbb{Q}}
\newcommand{\R}{\mathbb{R}}
\newcommand{\C}{\mathbb{C}}

\renewcommand{\P}{\mathbb{P}}
\newcommand{\F}{\mathbb{F}}

\renewcommand{\F}{\mathbb{F}}

\newcommand{\s}{\sigma}

\newcommand{\emp}{\varnothing}

\newcommand{\ol}{\overline}
\newcommand{\wt}{\widetilde}

\newcommand{\ra}{\Rightarrow}

\newcommand{\hra}{\hookrightarrow}
\newcommand{\epm}{\twoheadrightarrow}
\newcommand{\dra}{\dashrightarrow}

\DeclareMathOperator{\rk}{rk}
\DeclareMathOperator{\id}{id}

\DeclareMathOperator{\Sym}{\mathrm{Sym}}

\DeclareMathOperator{\Spec}{\mathrm{Spec}}

\DeclareMathOperator{\Proj}{\mathrm{Proj}}

\DeclareMathOperator{\Exc}{\mathrm{Exc}}

\DeclareMathOperator{\Pic}{\mathrm{Pic}}
\DeclareMathOperator{\Gr}{\mathrm{Gr}}
\DeclareMathOperator{\Fl}{\mathrm{Fl}}
\DeclareMathOperator{\red}{\mathrm{red}}

\DeclareMathOperator{\Bl}{\mathrm{Bl}}

\DeclareMathOperator{\codim}{\mathrm{codim}}
\DeclareMathOperator{\Sing}{\mathrm{Sing}}

\DeclareMathOperator{\pr}{pr}

\DeclareMathOperator{\Hom}{Hom}
\DeclareMathOperator{\End}{End}
\DeclareMathOperator{\Ker}{Ker}
\DeclareMathOperator{\Cok}{Cok}
\DeclareMathOperator{\Supp}{Supp}
\DeclareMathOperator{\Ext}{Ext}

\DeclareMathOperator{\Cl}{Cl}

\DeclareMathOperator{\Hilb}{Hilb}
\DeclareMathOperator{\Univ}{Univ}
\DeclareMathOperator{\ev}{ev}
\DeclareMathOperator{\coev}{coev}

\newcommand{\mcA}{\mathcal{A}}

\newcommand{\mcC}{\mathcal{C}}
\newcommand{\mcD}{\mathcal{D}}
\newcommand{\mcE}{\mathcal{E}}
\newcommand{\mcF}{\mathcal{F}}
\newcommand{\mcG}{\mathcal{G}}
\newcommand{\mcH}{\mathcal{H}}
\newcommand{\mcI}{\mathcal{I}}

\newcommand{\mcK}{\mathcal{K}}
\newcommand{\mcO}{\mathcal{O}}
\newcommand{\mcL}{\mathcal{L}}
\newcommand{\mcN}{\mathcal{N}}

\newcommand{\mcQ}{\mathcal{Q}}

\newcommand{\mcS}{\mathcal{S}}
\newcommand{\mcT}{\mathcal{T}}

\newcommand{\mcV}{\mathcal{V}}

\let\Im\relax
\DeclareMathOperator{\Im}{\mathrm{Im}}

\DeclareMathOperator{\RG}{\mathrm{R}\Gamma}

\DeclareMathOperator{\RHom}{\mathrm{RHom}}
\newcommand{\RR}{\mathbf{R}}
\newcommand{\LL}{\mathbf{L}}

\DeclareMathOperator{\Db}{\mathrm{D}^{\mathrm{b}}}

\newcommand{\wF}{\mathsf{wF}}

\renewcommand{\mod}{\mathrm{mod}}

\newcommand{\sm}{\mathrm{sm}}

\newcommand{\gen}[1]{\langle #1 \rangle}
\newcommand{\ls}[1]{\lvert #1 \rvert}

\mathchardef\mhyphen="2D
\title[Rank $2$ weak Fano bundles on Fano $3$-folds of Picard rank $1$]{Rank two Weak Fano bundles on Fano threefolds of Picard rank one}
\author[T.FUKUOKA, W.HARA, D.ISHIKAWA]{Takeru Fukuoka, Wahei Hara, Daizo Ishikawa}

\address[T.FUKUOKA]{Graduate School of Mathematical Sciences\\The University of Tokyo\\3-8-1 Komaba\\Meguro-ku, Tokyo 153-8914, Japan}
\email{takeru.fukuoka@gmail.com}

\address[W.HARA]{Kavli Institute for the Physics and Mathematics of the Universe (WPI), University of Tokyo, 5-1-5 Kashiwanoha, Kashiwa, 277-8583, Japan.}
\email{wahei.hara@ipmu.jp}

\address[D.ISHIKAWA]{Department of Mathematics, School of Science and Engineering, Waseda University, Ohkubo 3-4-1, Shinjuku, Tokyo 169-8555, JAPAN}
\email{azoth@toki.waseda.jp}

\date{\today}

\subjclass[2010]{14J60, 14J45, 14J30.}


\begin{document}
\maketitle

\begin{abstract}
We classify rank two vector bundles on a Fano threefold of Picard rank one whose projectivizations are weak Fano. We also prove the existence of examples for each case of the classification result. Our classification includes detailed resolutions of them on the quadric threefold. 
\end{abstract}
\tableofcontents

\section{Introduction}\label{sec-intro}

A \emph{weak Fano bundle}, which was originally introduced by Langer \cite{Langer98}, is defined as a vector bundle $\mcE$ on a smooth projective variety $X$ such that $-K_{\P_{X}(\mcE)}$ is nef and big. 
This is a natural generalization of a \emph{Fano bundle}, which is defined as a vector bundle $\mcE$ such that $-K_{\P_{X}(\mcE)}$ is ample. 
The classification study of weak Fano bundles is a research theme that spans several domains, including the classification of high-dimensional weak Fano manifolds and the study of stable vector bundles on Fano manifolds. 
By the result of Mu\~{n}oz--Occhetta--Sol\'{a}~Conde \cite{mos2}, 
the classification of rank two Fano bundles on Fano threefolds of Picard rank one was established. 
The purpose of this paper is to extend this classification to rank two weak Fano bundles on Fano threefolds of Picard rank one.

Until now, the classification of rank two weak Fano bundles has been studied in the case where the base space is a projective space $\P^{n}$ with $n \geq 3$ or a del Pezzo threefold of Picard rank one \cite{yas, Ishikawa16, FHI1, FHI2}.
In other words, when the base space $X$ is a Fano threefold of Picard rank one, the classification had been completed when the \emph{Fano index} $i_{X}:=\max\{i \in \Z_{>0} \mid \frac{1}{i}(-K_{X}) \in \Pic(X)\}$ is even. 
It is well known that the Fano index $i_{X}$ is less than or equal to $4$, and if $i_{X}=3$, $X$ is isomorphic to a quadratic hypersurface $\Q^{3}$ \cite{KO73}. 
The purpose of this paper is to classify weak Fano vector bundles of rank two when $i_{X}$ is odd, i.e., $X = \Q^{3}$ or $i_{X} = 1$. 

\subsection{Classification on $\Q^{3}$}

The property that a vector bundle $\mcE$ is weak Fano is invariant under tensoring with line bundles. 
Therefore, in order to classify rank two weak Fano bundles $\mcE$ on a Fano threefold $X$ of Picard rank $1$, 
it is sufficient to classify them according to the parity of their first Chern class $c_{1}(\mcE) \pmod{2}$. 
In particular, for $X=\Q^{3}$, we assume $\mcE$ is \emph{normalized}, which means $c_{1}(\mcE) \in \{-1,0\}$, to give our classification. 
As in \cite{FHI2}, we give a global resolution for every $\mcE$ using line bundles and the Spinor bundle $\mcS$ on $\Q^{3}$. 
Our first main result is as follows. 

\begin{thm}\label{main-Q3}
Let $\mcE$ be a normalized rank two vector bundle on $\Q^{3}$. 
Then $\mcE$ is weak Fano if and only if $\mcE$ is isomorphic to one of the following. 
\begin{enumerate}[label=(\roman*)]
\item $\mcO_{\Q^{3}}(a) \oplus \mcO_{\Q^{3}}(b)$ with $(a,b) = (-1,1)$, $(-2,1)$, $(-1,0)$, or $(0,0)$. 
\item The pull-back of the null-correlation bundle on $\P^{3}$ via a double covering $\Q^{3} \to \P^{3}$, which fits into 
$0 \to \mcO_{\Q^{3}}(-2) \to \mcO_{\Q^{3}}(-1)^{\oplus 4} \to \mcO_{\Q^{3}}^{\oplus 5} \to \mcE(1) \to 0$.
\item The (negative) spinor bundle $\mcS$.
\item The restriction of a Cayley bundle on $\Q^{5}$ via a linear embedding $\Q^{3} \hookrightarrow \Q^{5}$, which fits into 
$0 \to \mcO_{\Q^{3}}(-2) \to \mcO_{\Q^{3}}(-1)^{\oplus 5} \to \mcO_{\Q^{3}}^{\oplus 2} \oplus \mcS^{\oplus 2} \to \mcE(1) \to 0$.
\item A vector bundle $\mcE$ that fits into 
$0 \to \mcO_{\Q^{3}}(-2)^{\oplus 2} \to \mcO_{\Q^{3}}(-1)^{\oplus 10} \to \mcS^{\oplus 5} \to \mcE(1) \to 0$.
\item A vector bundle $\mcE$ that fits into 
$0 \to \mcO_{\Q^{3}}(-2)^{\oplus 2} \to \mcO_{\Q^{3}}(-1)^{\oplus 7} \to \mcO_{\Q^{3}}^{\oplus 7} \to \mcE(2) \to 0$.
\end{enumerate}
Furthermore, there exist examples for each case (i) -- (vi).
Among the above results, 
only $\mcO_{\Q^{3}}(-1) \oplus \mcO_{\Q^{3}}(1)$, 
$\mcO_{\Q^{3}}(-1) \oplus \mcO_{\Q^{3}}$, 
$\mcO_{\Q^{3}} \oplus \mcO_{\Q^{3}}$, 
(ii), and (iii) are Fano bundles, while the remaining cases are not. 
\end{thm}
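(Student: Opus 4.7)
The strategy is to translate the weak Fano condition on $\P_{\Q^3}(\mcE)$ into bounds on the Chern classes of $\mcE$, reduce to a short list of admissible invariants, and then for each such pair classify the underlying bundle, either by a stability argument or by constructing an explicit global resolution.

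Writing $-K_{\Q^3}=3H$ with $H$ the ample generator of $\Pic(\Q^3)$ and $c_i(\mcE)=c_iH^i$, the anticanonical of the projective bundle is
\[
-K_{\P(\mcE)}=2\xi+(3-c_1)\pi^*H,
\]
where $\xi$ is the relative tautological class. Nefness is equivalent to a corresponding nefness of a virtual twist of $\mcE$, which is tested by restricting to curves on $\Q^3$, principally to lines. Since $\mcE|_\ell\cong\mcO_\ell(a)\oplus\mcO_\ell(b)$ with $a+b=c_1$ on a line $\ell$, this yields an explicit lower bound on $\min(a,b)$. Bigness $(-K_{\P(\mcE)})^4>0$, expanded in the basis $\xi,\pi^*H$, becomes a Bogomolov-type inequality in $(c_1,c_2)$. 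Together these conditions leave only finitely many admissible pairs $(c_1,c_2)$.

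For each admissible pair I would split into a stability dichotomy. If $\mcE$ is Mumford-unstable, a destabilising sub-line bundle produces a short exact sequence of the form $0\to\mcO(k)\to\mcE\to\mcI_Z(c_1-k)\to 0$; using the nefness of $-K_{\P(\mcE)}$ on the family of lines ruling $\Q^3$, one forces $Z=\emptyset$ and the sequence to split, producing the bundles in (i). If $\mcE$ is semistable, I would identify it among the known rank-$2$ semistable bundles on $\Q^3$: the null-correlation pullback in (ii), the spinor bundle itself in (iii), and the restricted Cayley bundle in (iv) are pinned down by matching Chern classes together with $h^0(\mcE(k))$ for small $k$. The remaining admissible invariants correspond to (v) and (vi), where I would construct $\mcE$ as the cokernel of a generic morphism of the displayed form and show conversely that every weak Fano bundle with these Chern classes admits such a resolution. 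Producing and recognising these resolutions relies on Bott-type vanishing for $\mcS(t)$ on $\Q^3$ together with a Beilinson-type spectral sequence adapted to $\Q^3$.

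Existence for (i)--(iv) is immediate from the direct sum, pullback, or restriction constructions. For (v) and (vi) it reduces to verifying that a generic morphism between the prescribed direct sums has locally free cokernel and that the resulting projective bundle is weak Fano via the numerical criterion of the first paragraph. The final assertion distinguishing Fano from weak-but-not-Fano cases amounts to locating a curve in $\P(\mcE)$ of $-K_{\P(\mcE)}$-degree zero, which exists exactly in the non-Fano cases by a direct intersection check. The main obstacle is cases (v) and (vi): proving that any weak Fano bundle with those invariants fits into the prescribed resolution requires careful computation of $\Hom$-groups involving $\mcS$, of the ranks of these morphisms on generic elements, and of additional cohomological vanishings beyond Bott; this is where the technical bulk of the argument should lie.
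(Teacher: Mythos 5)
Your numerical step does not close the list of admissible invariants: for a normalized bundle the bigness inequality $(-K_{\P(\mcE)})^4>0$ gives $c_{2}\le 4$ and parity gives $c_{2}$ even when $c_{1}=0$, so the pair $(c_{1},c_{2})=(0,4)$ survives all of your tests (nefness on lines does not exclude it either, since a stable bundle with these invariants can be nef on every line). Ruling out $(0,4)$ is one of the genuinely new points of the theorem, and it is not numerical: the paper invokes Sols--Szurek--Wi\'sniewski, who produce for any stable bundle with $(c_1,c_2)=(0,4)$ a smooth conic $C$ with $\mcE|_{C}\simeq\mcO(-d)\oplus\mcO(d)$, $d\ge 3$, and then shows this is incompatible with the weak Fano property by a separate argument (Proposition~\ref{prop-conicnef}): a $-K$-trivial section over such a conic moves in a one-parameter family, which by a bend-and-break type argument (Proposition~\ref{prop-semiAtiyah}) yields a complete curve of smooth conics in the Hilbert scheme, contradicting the ampleness of the discriminant divisor (condition $(\dag)$). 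Nothing in your plan supplies this step, so as written the proposal would also (incorrectly) admit a case $(0,4)$ alongside (v) and (vi).

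A second gap is in the ``conversely'' direction for $(c_{1},c_{2})=(-1,3)$. The resolution in (v) is equivalent not to stability but to the extra vanishing $h^{0}(\mcE(1))=0$ (equivalently $2$-regularity, Lemma~\ref{lem-orth3}), and this vanishing does not follow from the Chern classes, from stability, or from the $(-K)^{3}(\xi-aH)$-type estimates; your proposed ``careful computation of Hom-groups'' and a Beilinson/Bott argument cannot produce it. The paper derives it geometrically: a general section of $\mcE(2)$ gives $\Bl_{C}\Q^{3}$ for an elliptic curve $C$ of degree $7$, the crepant contraction is flopping by the Jahnke--Peternell--Radloff classification, the anticanonical model is very ample, and then a Ravindra--Srinivas Noether--Lefschetz argument (Corollary~\ref{cor-RSNL}) plus a short Picard-lattice computation excluding $(-2)$-curves on a K3 member forces $h^{0}(\mcE(1))=0$. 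Relatedly, for existence in (v) and (vi) the paper does not take a generic morphism of the displayed shape (these are four-term resolutions, so ``generic cokernel'' is not well posed without building a monad); it instead uses the Hartshorne--Serre correspondence and the existence, due to Arap--Cutrone--Marshburn, of elliptic curves of the right degree whose blow-up of $\Q^{3}$ is weak Fano. The mutation/exceptional-collection part of your plan for recognizing the resolutions is in the spirit of the paper, but without the two inputs above the argument does not go through.
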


\subsection{Application to the moduli space}
As in our previous work \cite{FHI2}, these descriptions can be used to investigate their moduli spaces. 
Let $M^{\wF}_{c_{1},c_{2}}$ (resp. $M_{c_{1},c_{2}}$) be the coarse moduli space of rank $2$ weak Fano (resp. slope stable) bundles $\mcE$ on $\Q^{3}$ with $c_{1}(\mcE)=c_{1}$ and $c_{2}(\mcE)=c_{2}$. 
For a given weak Fano bundle $\mcE$ of rank $2$ with $(c_{1}(\mcE),c_{2}(\mcE))=(c_{1},c_{2})$, 
$\mcE$ is indecomposable if and only if $(c_{1},c_{2}) \in \{(0,2),(-1,1),(-1,2),(-1,3),(-1,4)\}$, and each such $\mcE$ is slope stable (see Proposition~\ref{prop-unstableQ3}). 

Suppose $(c_{1},c_{2}) \in \{(0,2),(-1,1),(-1,2),(-1,3),(-1,4)\}$. 
Then $M^{\wF}_{c_{1},c_{2}}$ is an open subset of $M_{c_{1},c_{2}}$, and $M^{\wF}_{c_{1},c_{2}}=M_{c_{1},c_{2}}$ if $(c_{1},c_{2})=(0,2),(-1,1),(-1,2)$ (see Theorems~\ref{thm-FanoClsf} and \ref{thm-Q3CaylayResol}). 
When $(c_{1},c_{2}) \neq (-1,4)$, Ottaviani and Szurek \cite{OS94} described the moduli space $M_{c_{1},c_{2}}$ and showed that $M^{\wF}_{c_{1},c_{2}}$ is irreducible and smooth (\cite[(2,1), (2,2), (4.1), and (5.2)]{OS94}). 
However, the case when $(c_{1},c_{2})=(-1,4)$ has not yet been addressed. 
To address this case, using the description in Theorem~\ref{main-Q3}~(v), we construct an embedding $M^{\wF}_{-1,4}$ into the moduli space of specific representations of quivers. 
As a result, we obtain the following theorem. 
\begin{thm}\label{main-moduli}
The moduli space $M^{\wF}_{-1,4}$ is irreducible, smooth of dimension $18$, and fine. 
\end{thm}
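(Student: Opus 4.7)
The plan is to use the resolution of Theorem~\ref{main-Q3}~(v) to realize $M^{\wF}_{-1,4}$ as an open subscheme of a moduli of representations of a quiver with relations, and then to transfer the four desired properties across this embedding. Given $\mcE \in M^{\wF}_{-1,4}$, the resolution
\[
0 \to V_1 \otimes \mcO_{\Q^3}(-2) \xrightarrow{d_1} V_2 \otimes \mcO_{\Q^3}(-1) \xrightarrow{d_2} V_3 \otimes \mcS \to \mcE(1) \to 0
\]
with $(\dim V_1, \dim V_2, \dim V_3) = (2, 10, 5)$ should first be shown to be canonical up to the natural action of $\GL(V_1) \times \GL(V_2) \times \GL(V_3)$, by identifying each $V_i$ with an explicit cohomology group of $\mcE$ (for instance $V_3 \cong \Hom(\mcS, \mcE(1))$, and similarly for $V_2$ and $V_1$ after checking that the corresponding higher $\Ext$-groups vanish). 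The data $(d_1, d_2)$ then defines a representation of the quiver $Q$ with three vertices of dimensions $(2, 10, 5)$, with $5$ arrows $v_1 \to v_2$ indexed by a basis of $\Hom(\mcO_{\Q^3}(-2), \mcO_{\Q^3}(-1)) = H^0(\mcO_{\Q^3}(1))$ and $4$ arrows $v_2 \to v_3$ indexed by a basis of $\Hom(\mcO_{\Q^3}(-1), \mcS) = H^0(\mcS(1))$, subject to the relations $d_2 \circ d_1 = 0$ controlled by the multiplication $H^0(\mcO_{\Q^3}(1)) \otimes H^0(\mcS(1)) \to H^0(\mcS(2))$.

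Choosing a stability parameter $\theta$ so that the dimension vector $(2, 10, 5)$ is $\theta$-coprime and $\theta$-stability matches the openness conditions satisfied by the resolutions coming from bundles, the assignment $\mcE \mapsto (d_1, d_2)$ defines a morphism $\Phi : M^{\wF}_{-1,4} \to \mcM_Q^\theta$, which I would then show to be an open immersion: injectivity follows from canonicity, openness from the openness of the bundle conditions (generic full-rank of $d_2$, prescribed kernel shape $\mcO_{\Q^3}(-2)^{\oplus 2}$, and local freeness plus weak Fanoness of the cokernel), and the \'etale property from matching the tangent spaces (deformations of $\mcE$ versus deformations of its resolution modulo gauge). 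Granting the open immersion, the four properties follow readily. Smoothness and dimension: Hirzebruch--Riemann--Roch on $\Q^3$ yields $\chi(\mcE, \mcE) = -17$ for $(c_1, c_2) = (-1, 4)$, and combined with $\dim \Ext^0(\mcE, \mcE) = 1$ (stability), $\dim \Ext^3(\mcE, \mcE) = 0$ (Serre duality on the Fano threefold), and the vanishing $\Ext^2(\mcE, \mcE) = 0$ --- provable by applying $\Hom(-, \mcE)$ to the resolution and using standard vanishing for line bundles and the spinor bundle on $\Q^3$ --- this gives $\dim \Ext^1(\mcE, \mcE) = 18$ and smoothness at every point. Fineness is a consequence of $\theta$-coprimality of the dimension vector, which produces a universal representation on $\mcM_Q^\theta$ that pulls back along $\Phi$. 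Irreducibility reduces to that of a nonempty (by Theorem~\ref{main-Q3}) open subset of the variety $\{(d_1, d_2) : d_2 \circ d_1 = 0\}$, which can be shown to be irreducible via a projection argument onto one of the two factors.

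The main obstacle I anticipate is the verification that $\Phi$ is an open immersion, particularly the \'etale tangent-space check, which amounts to showing that all infinitesimal deformations of the three-term resolution arise from deformations of $\mcE$ together with the gauge action. Concretely this requires controlled vanishing of higher $\Ext$-groups among the exceptional building blocks $\mcO_{\Q^3}(-2), \mcO_{\Q^3}(-1), \mcS$ and $\mcE(1)$ on $\Q^3$. Once these cohomological computations are carried out, all four assertions follow uniformly from the standard technology of moduli of representations of quivers with relations.
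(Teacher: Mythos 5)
Your construction starts from the wrong resolution, and this breaks the proposal at the very first step. The three-term resolution you write down, with dimension vector $(2,10,5)$ and the spinor bundle $\mcS$ in the last term, is the one in Theorem~\ref{main-Q3}~(v), i.e.\ Theorem~\ref{thm-Q3Resol7}, and it characterizes weak Fano bundles with $(c_{1},c_{2})=(-1,3)$: a simple Chern class computation on that complex forces $c_{2}(\mcE)=3$. The bundles parametrized by $M^{\wF}_{-1,4}$ have $c_{2}=4$ and do \emph{not} admit such a resolution; by Lemma~\ref{lem-c2=8}~(3) one has $\RHom(\mcS(1),\mcE(2))=0$, so no spinor summand can occur, and the relevant presentation is instead Theorem~\ref{main-Q3}~(vi) (Theorem~\ref{thm-Q3Resol8}): $0\to\mcO_{\Q^{3}}(-2)^{\oplus 2}\to\mcO_{\Q^{3}}(-1)^{\oplus 7}\to\mcO_{\Q^{3}}^{\oplus 7}\to\mcE(2)\to 0$. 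Consequently the assignment $\mcE\mapsto(d_{1},d_{2})$ you propose is not even defined on $M^{\wF}_{-1,4}$, and everything built on it (the quiver, the dimension vector, the stability parameter, the claimed open immersion) collapses. The same confusion infects the smoothness step: $\Ext^{2}(\mcE,\mcE)=0$ must be extracted from the correct resolution (the paper applies $\RHom(\mcE(2),-)$ to the sequence of Theorem~\ref{thm-Q3Resol8}, using $\RG(\mcE(-1))=0$, $\RG(\mcE(-2))\simeq\C^{21}[-2]$, $\RG(\mcE(-3))\simeq\C^{4}[-2]$), not from the $(-1,3)$ one.

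Even after substituting the correct resolution (dimension vector $(2,7,7)$), your framework of a three-vertex quiver \emph{with relations} is substantially harder than what is needed, and the key verifications are only asserted: that the image representations are $\Theta$-stable, that the comparison of deformations gives an open immersion, and that the locus $\{d_{2}\circ d_{1}=0\}$ is irreducible (moduli of representations of quivers with relations are in general neither smooth nor irreducible, so none of this comes for free). The paper avoids relations altogether: it passes to the kernel $\mcK_{\mcE(2)}=\Ker\left(H^{0}(\mcE(2))\otimes\mcO_{\Q^{3}}\twoheadrightarrow\mcE(2)\right)$, which lies in the subcategory $\langle\mcO_{\Q^{3}}(-2),\mcO_{\Q^{3}}(-1)\rangle$, equivalent to modules over the $5$-Kronecker quiver, with primitive dimension vector $(7,2)$; after the stability check (Proposition~\ref{stab of K}) the open immersion into $M^{\Theta\text{-st}}_{(7,2)}(Q)$, which is smooth, projective, irreducible of dimension $18$ and carries a universal family, yields irreducibility and fineness at once, while smoothness and the dimension are computed directly from $\Ext^{1}(\mcE,\mcE)\simeq\C^{18}$ and $\Ext^{2}(\mcE,\mcE)=0$.
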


\subsection{Classification on a Fano threefold whose Fano index is one}

In order to complete the classification of weak Fano bundles of rank two on a Fano threefold $X$ of Picard rank one, 
we consider the case where the Fano index $i_{X}$ is one. 
Our classification result is summarized as follows. 
\begin{thm}\label{main-index1}
Let $X$ be a Fano threefold of Picard rank one whose Fano index is one. Let $g:=\frac{-K_{X}^{3}}{2}+1$, which is called the genus of $X$. 

Then every rank two weak Fano bundle $\mcE$ is isomorphic to one of the following, up to tensoring with a line bundle.
\begin{enumerate}[label=(\roman*)]
\item $\mcO_{X}^{\oplus 2}$. 
\item $\mcO_{X} \oplus \mcO(-K_{X})$. 
\item A globally generated rank two vector bundle $\mcF$ with $c_{1}(\mcF) = c_{1}(X)$ and $\lfloor \frac{g+3}{2} \rfloor \leq -K_{X}.c_{2}(\mcF) \leq g-2$. 
This case arises only when $X$ is a prime Fano threefold of genus $g \geq 6$. 
\end{enumerate}
Moreover, for every prime Fano threefold $X$ of genus $g \geq 6$ and every integer $d$ with $\lfloor \frac{g+3}{2} \rfloor \leq d \leq g-2$, 
there exists a rank two weak Fano bundle $\mcF$ such that $c_{1}(\mcF)=c_{1}(X)$ and $-K_{X}.c_{2}(\mcF) = d$. 
\end{thm}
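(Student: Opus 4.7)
Set $H := -K_X$, so that $\Pic(X) = \Z\cdot H$ by hypothesis. Since the weak Fano property is invariant under tensoring with line bundles, we normalize $c_1(\mcE) = mH$ with $m \in \{0,1\}$. Writing $\xi := c_1(\mcO_{\P(\mcE)}(1))$, the canonical bundle formula reads $-K_{\P(\mcE)} = 2\xi + (1-m)\pi^*H$, and this formula drives the entire analysis. The plan is first to dispose of the decomposable case: if $\mcE \cong \mcO(aH)\oplus \mcO(bH)$ with $a \leq b$, then $\pi: \P(\mcE) \to X$ admits two tautological sections along which $\xi$ restricts to $aH$ and $bH$ respectively; nefness of $-K_{\P(\mcE)}$ along the $\xi = aH$ section forces $b - a \leq 1$, and combined with $a + b \in \{0,1\}$ this isolates exactly cases (i) and (ii).

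For indecomposable $\mcE$, the case $m = 0$ should be excluded. On a general line $\ell \subset X$, $\mcE|_\ell \cong \mcO_\ell(-a) \oplus \mcO_\ell(a)$ for some $a \geq 0$; nefness of $-K_{\P(\mcE)}|_{\pi^{-1}(\ell)} = 2\xi + \pi^*H|_\ell$ forces $a = 0$, so $\mcE|_\ell$ is trivial on every line. Similarly, any destabilizing sub $\mcO(kH) \subset \mcE$ with $k \geq 1$ would give a section of $\pi$ on which $-K_{\P(\mcE)}$ has negative degree, so $\mcE$ is necessarily $H$-semistable. Bogomolov's inequality then gives $c_2(\mcE)\cdot H \geq 0$, while triviality of $\mcE$ on every line (combined with the fact that lines span $H_2(X,\Z)$) forces $c_2(\mcE) = 0$. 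For an $H$-semistable bundle with $c_1 = c_2 = 0$, Donaldson--Uhlenbeck--Yau gives a flat Hermite--Einstein structure, and the simple connectedness of $X$ then yields $\mcE \cong \mcO_X^{\oplus 2}$, contradicting indecomposability.

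In the remaining case $m = 1$ one has $-K_{\P(\mcE)} = 2\xi$, so weak Fano is equivalent to $\xi$ being nef and big. Setting $d := H\cdot c_2(\mcE)$, the Segre computation $\pi_*\xi^4 = H^3 - 2d = 2(g-1) - 2d$ combined with bigness yields the upper bound $d \leq g-2$. Applying Kawamata--Viehweg to the nef and big class $3\xi = \xi - K_{\P(\mcE)}$ on $\P(\mcE)$ gives $H^i(X, \mcE) = 0$ for $i \geq 1$, and Riemann--Roch on $X$ (using $H\cdot c_2(X) = 24$ and $H^3 = 2g-2$) produces
\[
h^0(\mcE) \;=\; \chi(\mcE) \;=\; g + 3 - d.
\]
Nefness of $\xi$ together with indecomposability is then exploited to show that $\mcE$ is globally generated (the base locus of $\xi$ is empty on $\P(\mcE)$). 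For the lower bound $d \geq \lfloor(g+3)/2\rfloor$ I argue by contradiction: if $d$ drops below this threshold, the large dimension $h^0(\mcE) > (g+3)/2$ of the space of sections forces the existence of a section whose zero-locus is a rational curve $C$, and then the associated Serre sequence $0 \to \mcO \to \mcE \to \mcI_{C/X}(H) \to 0$ splits because $\Ext^1(\mcI_{C/X}(H), \mcO) \cong H^0(\omega_C) = 0$ for rational $C$, contradicting indecomposability. The restriction $g \geq 6$ appears automatically since the range $\lfloor (g+3)/2 \rfloor \leq d \leq g - 2$ is non-empty only for $g \geq 6$.

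For existence in each admissible $d$, I would apply Serre's construction to a smooth elliptic curve $C \subset X$ of $H$-degree $d$: the subcanonical condition $\omega_C \cong (\omega_X \otimes \mcO(H))|_C \cong \mcO_C$ is automatic for elliptic $C$, and any non-split extension $0 \to \mcO_X \to \mcE \to \mcI_{C/X}(H) \to 0$ yields a rank $2$ bundle with the required Chern classes; a direct verification of global generation of $\mcE$ and of bigness of $\xi$ then confirms the weak Fano property. The existence of such elliptic curves of every admissible degree on every prime Fano threefold of genus $g \geq 6$ is available from Mukai's structure theorems and classical Hilbert-scheme computations. I expect the main obstacle to lie in the sharp lower bound $d \geq \lfloor(g+3)/2\rfloor$, whose proof requires carefully converting abundance of sections into a splitting, together with the genus-by-genus verification of the existence of appropriate elliptic curves realizing every admissible degree.
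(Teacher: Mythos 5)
Your skeleton matches the paper's at the coarsest level (split by the parity of $c_{1}$, dispose of the decomposable case, get the upper bound from $(-K_{\P(\mcE)})^{4}>0$ and $h^{0}$ from Riemann--Roch, and construct examples by the Serre correspondence from elliptic curves), but the three steps that carry the real content are gapped. First, in the case $c_{1}(\mcE)=0$ your deduction of $c_{2}(\mcE)=0$ from triviality on lines ``combined with the fact that lines span $H_{2}(X,\Z)$'' is a non sequitur: $c_{2}(\mcE)$ lives in $H^{4}(X)$ and is not controlled by restrictions to lines in any direct way, and on an index-one threefold the lines form only a one-dimensional family sweeping out a surface, so they do not even cover $X$. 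Extracting $c_{2}=0$ requires a genuine family argument (Grauert and base change over a universal family plus a Chern-class/projection computation), and the paper runs it with the two-dimensional family of conics, which does cover $X$; the hard point your proposal bypasses entirely is proving that a semistable weak Fano bundle is trivial on all conics, which is Proposition~\ref{prop-conicnef}/Theorem~\ref{thm-conic}, resting on the bend-and-break-type Proposition~\ref{prop-semiAtiyah} and on condition $(\dag)$, verified for the only surviving genera $g\in\{10,12\}$ through the ampleness of the discriminant divisor in the Hilbert scheme of conics (Proposition~\ref{prop-discample}) after a numerical reduction. (Also, Donaldson--Uhlenbeck--Yau applies to polystable, not merely semistable, bundles; once $c_{2}=0$ the paper's elementary section argument already gives $\mcO_{X}^{\oplus 2}$.)

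Second, your proof of the lower bound $d\ge\lfloor (g+3)/2\rfloor$ cannot work as described: the zero scheme $C$ of any section of $\mcF$ with $c_{1}(\mcF)=-K_{X}$ is subcanonical with $\omega_{C}\simeq\mcO_{C}$ (its components have arithmetic genus one), so a section with rational zero locus never exists, $\Ext^{1}(\mcI_{C}(-K_{X}),\mcO_{X})$ does not vanish, and no splitting can be forced no matter how large $h^{0}(\mcF)$ is. The missing idea is the Brill--Noether generality of the anticanonical K3 section $(S,-K_{X}|_{S})$ (Theorem~\ref{thm-LazBN}), which gives $h^{0}(\mcO_{S}(C))\cdot h^{0}(\mcO_{S}(H_{S}-C))\le g$ and hence exactly $d\ge\lfloor(g+3)/2\rfloor$ (Proposition~\ref{prop-BNineq}); note also that global generation of $\mcF$ is not a routine consequence of nefness plus indecomposability but the prior result \cite[Theorem~1.7]{FHI1}. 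Third, the existence part is the technical heart of the paper and not a ``direct verification'': for an arbitrary elliptic curve of degree $d$ the Serre bundle need not be nef, so one must prove that for a \emph{general} elliptic normal curve (whose existence on every prime Fano threefold comes from Ciliberto--Flamini--Knutsen, not from Mukai's structure theorems) the associated bundle is globally generated. This is Theorem~\ref{thm-exist}, and it needs the refined incidence properties of such curves with lines and conics (Proposition~\ref{prop-CFK}), the $(-2)$-curve decomposition on the K3 (Lemma~\ref{lem-K3dcp}), and Brill--Noether generality again; none of this is visible in your outline.
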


From the classification results obtained so far \cite{yas, Ishikawa16,FHI1,FHI2} 
and Theorems~\ref{main-Q3} and \ref{main-index1}, 
the following corollary follows immediately. 

\begin{cor}\label{maincor-parity}
Let $X$ be a Fano $3$-fold of Picard rank $1$. 
Let $\mcE$ be a rank $2$ weak Fano bundle on $X$.
\begin{enumerate}
\item If $c_{1}(\mcE) \equiv c_{1}(X) \pmod{2}$, then $\mcF:=\mcE(\frac{c_{1}(X)-c_{1}(\mcE)}{2})$ is globally generated, or $X$ is a del Pezzo $3$-fold of degree $1$ and $\mcF \simeq \mcO_{X}(1)^{\oplus 2}$. 
\item If $c_{1}(\mcE) \not\equiv c_{1}(X) \pmod{2}$, then $\mcE$ is a Fano bundle. 
\end{enumerate}
\end{cor}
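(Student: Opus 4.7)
The proof proceeds by case analysis on the Fano index $i_X \in \{1,2,3,4\}$, invoking the relevant classification in each instance. As a preliminary reduction, note that tensoring a rank two bundle with $\mcO_X(k)$ shifts $c_1(\mcE)$ by $2kH$ (where $H$ generates $\Pic(X)$) and preserves the weak Fano property, so the residue class of $c_1(\mcE) \pmod{2}$ is an invariant, and the bundle $\mcF := \mcE\bigl(\tfrac{c_1(X)-c_1(\mcE)}{2}\bigr)$ in (1) is well-defined with $c_1(\mcF) = c_1(X)$. It therefore suffices to check the conclusion on the explicit representatives supplied by the classification.

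For $i_X = 4$ (so $X = \P^3$) and $i_X = 2$ (del Pezzo threefolds), the classifications in \cite{yas, Ishikawa16, FHI1, FHI2} list every possible $\mcE$ together with its structure, and both global generation of $\mcF$ (in (1)) and ampleness of $-K_{\P_X(\mcE)}$ (in (2)) can be read off directly. The exception $\mcF \simeq \mcO_{V_1}(1)^{\oplus 2}$ on the degree-one del Pezzo appears in (1) precisely because the ample generator $\mcO_{V_1}(1)$ is not base-point-free there.

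For $i_X = 3$, so $X = \Q^3$, I apply Theorem~\ref{main-Q3}. Computing $c_1(\mcE)$ from each resolution shows that the bundles with $c_1$ even are $(a,b) \in \{(-1,1),(0,0)\}$ in (i) together with (ii); the final sentence of Theorem~\ref{main-Q3} declares each of these a Fano bundle, giving (2). The bundles with $c_1$ odd are $(a,b) \in \{(-2,1),(-1,0)\}$ in (i) together with (iii)--(vi); in each case $\mcF = \mcE(2)$ is visibly globally generated, either directly (for (i)), because $\mcS(1)$ is globally generated on $\Q^3$ (for (iii)), or as a quotient of a globally generated bundle obtained by twisting the resolutions of Theorem~\ref{main-Q3} by $\mcO(1)$ (for (iv)--(vi)), giving (1).

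For $i_X = 1$, Theorem~\ref{main-index1} lists three types up to twist: $\mcO_X^{\oplus 2}$ (even $c_1$), $\mcO_X \oplus \mcO(-K_X)$ (odd $c_1$), and a globally generated $\mcF$ with $c_1(\mcF) = c_1(X)$ (odd $c_1$). In the first, $\P_X(\mcO_X^{\oplus 2}) \simeq X \times \P^1$ has ample anticanonical class since $-K_X$ is ample, confirming (2). In the other two, $\mcF$ is globally generated by construction, using the classical fact that $-K_X$ is globally generated for every Fano threefold of Picard rank $1$ with $i_X = 1$, confirming (1). The assembly is routine; the only mildly subtle point is isolating the $V_1$ exception in (1).
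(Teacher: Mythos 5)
Your proposal is correct and follows essentially the same route as the paper: the authors obtain Corollary~\ref{maincor-parity} as an immediate read-off from the prior classifications \cite{yas, Ishikawa16, FHI1, FHI2} together with Theorems~\ref{main-Q3} and \ref{main-index1} (the new inputs being Theorem~\ref{thm-split} for the even-parity, index-one case and the list of Fano bundles at the end of Theorem~\ref{main-Q3}), which is exactly the index-by-index verification you carry out. One cosmetic slip: in case (vi) of Theorem~\ref{main-Q3} the stated resolution already exhibits $\mcE(2)$ as a quotient of $\mcO_{\Q^{3}}^{\oplus 7}$, so no twist by $\mcO_{\Q^{3}}(1)$ is needed there (twisting would concern $\mcE(3)$); this does not affect the argument.
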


As in the above corollary, a weak Fano bundle $\mcE$ of rank two can be distinguished from a Fano bundle by the parity of $c_{1}(X)-c_{1}(\mcE)$. 

\subsection{Geometric feature of rank two weak Fano bundles}

Corollary~\ref{maincor-parity} shows that the difference between Fano and weak Fano bundles of rank two occurs only in the case $c_{1}(\mcF)=c_{1}(X)$. 
Fano bundles $\mcF$ with $c_{1}(X)=c_{1}(\mcF)$ have been considered important in that they give a natural generalization of the Hartshorne conjecture \cite{Mukai-Pair}. 
Weak Fano bundles $\mcF$ with $c_{1}(\mcF)=c_{1}(X)$ are not only simple generalizations, but also possess a historical background in the classification study of prime Fano threefolds.

For example, Gushel \cite{Gushel-genus8} and Mukai \cite{MukaiDev} independently constructed a certain rank two vector bundle $\mcF$ generated by global sections such that $c_{1}(\mcF)=c_{1}(X)$ for a prime Fano threefold $X$ of genus $8$. 
It was classically known that examples of prime Fano threefolds of genus $8$ could be constructed as linear sections of the Grassmannian $\Gr(6,2)$, and by constructing such vector bundles, Gushel and Mukai proved that every prime Fano threefold of genus $8$ can be realized as such a linear section. 
Mukai extended this vector bundle methodology by constructing specific weak Fano vector bundles $\mcF$ with $c_{1}(\mcF)=c_{1}(X)$ for all prime Fano threefolds, and described all Fano threefolds of genus $g \geq 7$ as linear sections of homogeneous spaces \cite{MukaiDev}. 
When $X$ is a prime Fano threefold of genus $g \in \{6,8,10\}$, the constructed vector bundle $\mcF$ is precisely an indecomposable weak Fano bundle of rank $2$ with $c_{1}(\mcF)=c_{1}(X)$ and the smallest $c_{2}$. 
It is also known that when $g \in \{7,9\}$, the moduli space of the indecomposable rank $2$ weak Fano bundles $\mcF$ with $c_{1}(\mcF)=c_{1}(X)$ and the smallest $c_{2}$ coincides with the curve given as the projective dual of $X$ \cite{IM-genus7, IR-LG36},
and the Fourier--Mukai transform using the universal bundle also provides a description of the derived category of $X$ \cite{Kuz05,Kuz06}. 
In summary, for a fixed Fano threefold $X$ of Picard rank one, the classification of weak Fano bundles $\mcF$ of rank two on $X$ is essentially the classification of those with $c_{1}(\mcF)=c_{1}(X)$, 
and studying which kinds of $\mcF$ exist is closely related to studying $X$ itself.

Motivated by these historical backgrounds, we investigate morphisms to Grassmannian varieties induced by $\mcF$. 
Our result is the following theorem. 
\begin{thm}\label{main-emb}
Let $X$ be a Fano threefold of Picard rank one and index one. 
Let $\mcF$ be a weak Fano bundle of rank two with $c_{1}(\mcF)=c_{1}(X)$. 
Then there is a closed embedding 
\[\Phi \colon X \hra \Gr(H^{0}(\mcF),2)\]
with $\mcF \simeq \Phi^{\ast}\mcQ_{\Gr(H^{0}(\mcF),2)}$ if and only if $(X,\mcF)$ satisfies none of the following conditions. 
\begin{enumerate}
\item $X$ is a double cover of $\P^{3}$ 
and $\mcF$ is isomorphic to the pull-back of $\mcO_{\P^{3}} \oplus \mcO_{\P^{3}}(1)$. 
\item $X$ is a double cover of $\Q^{3}$ 
and $\mcF$ is isomorphic to the pull-back of $\mcO_{\Q^{3}} \oplus \mcO_{\Q^{3}}(1)$.  
\item $X$ is a double cover of a del Pezzo $3$-fold $V_{5}$ of degree $5$ 
and $\mcF$ is isomorphic to the pull-back of the restriction of the rank $2$ quotient bundle $\mcQ_{\Gr(5,2)}$ under the embedding $V_{5} \hra \Gr(5,2)$. 
\end{enumerate}
\end{thm}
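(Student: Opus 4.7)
Since $c_{1}(\mcF)=c_{1}(X)$ and $X$ has Fano index one, Corollary~\ref{maincor-parity}~(1) gives that $\mcF$ is globally generated, and the evaluation $H^{0}(\mcF)\otimes\mcO_{X}\twoheadrightarrow\mcF$ produces a canonical morphism $\Phi\colon X\to\Gr(H^{0}(\mcF),2)$ with $\Phi^{\ast}\mcQ\simeq\mcF$. Since $\Phi^{\ast}\mcO_{\Gr}(1)=\det\mcF=\mcO_{X}(-K_{X})$ is ample, $\Phi$ is a finite morphism. The theorem therefore reduces to deciding when $\Phi$ is injective and unramified.

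For the only-if direction, in each exceptional case $\mcF\simeq\pi^{\ast}\mcF'$ for a double cover $\pi\colon X\to Y$ with $Y\in\{\P^{3},\Q^{3},V_{5}\}$. Writing $\pi_{\ast}\mcO_{X}=\mcO_{Y}\oplus\mcL^{-1}$, the projection formula gives
\[
H^{0}(X,\mcF)=H^{0}(Y,\mcF')\oplus H^{0}(Y,\mcF'\otimes\mcL^{-1}),
\]
and a direct cohomological check shows the second summand vanishes in each case. Hence $\Phi$ factors as $X\xrightarrow{\pi}Y\to\Gr(H^{0}(\mcF'),2)$, which is of generic degree two onto its image and is therefore not a closed embedding.

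For the if direction, assume $(X,\mcF)$ is in none of the three cases. By Theorem~\ref{main-index1}, either $\mcF\simeq\mcO_{X}\oplus\mcO_{X}(-K_{X})$, or $\mcF$ is indecomposable and $X$ is a prime Fano threefold of genus $g\geq 6$. In the decomposable case, inspecting kernels of evaluations shows that $\Phi$ is a closed embedding if and only if the anticanonical morphism $\phi_{\lvert-K_{X}\rvert}$ is; by Iskovskikh's classification the latter fails exactly when $X$ is a double cover of $\P^{3}$ or $\Q^{3}$, in which case $\mcF=\pi^{\ast}(\mcO_{Y}\oplus\mcO_{Y}(1))$ falls into cases (1) or (2), contradicting the hypothesis. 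In the indecomposable case, $-K_{X}$ is very ample. Suppose $\Phi$ is not a closed embedding, that is, $\Phi$ fails to separate some pair of points or some tangent vector. Using that $\Pic(X)$ is rank one, generated by the primitive class $\Phi^{\ast}\mcO_{\Gr}(1)=\mcO_{X}(-K_{X})$, together with the indecomposability of $\mcF$, one shows that this failure must be witnessed by a genuine finite morphism $\pi\colon X\to Z$ of degree two onto a smooth Fano threefold $Z$ of Picard rank one and index at least two, with $\mcF\simeq\pi^{\ast}\tilde{\mcF}$ for a globally generated rank two bundle $\tilde{\mcF}$ on $Z$. The Iskovskikh-Mukai classification of Fano threefolds, together with the classification of rank two globally generated weak Fano bundles on Fano threefolds of higher index \cite{yas,Ishikawa16,FHI1,FHI2}, then identifies $(Z,\tilde{\mcF})$ uniquely as $(V_{5},\mcQ_{\Gr(5,2)}|_{V_{5}})$, placing $(X,\mcF)$ in case (3) and contradicting our assumption.

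The principal difficulty is the indecomposable case of the if direction: promoting the failure of $\Phi$ to be a closed embedding into a genuine double-cover structure on $X$, and then invoking the Iskovskikh-Mukai classification together with the classification of weak Fano bundles on the quotient to single out $(V_{5},\mcQ_{\Gr(5,2)}|_{V_{5}})$.
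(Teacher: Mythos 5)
Your overall frame (evaluation map gives a finite morphism $\Phi$ with $\Phi^{\ast}\mcQ\simeq\mcF$; handle the split case via hyperellipticity; treat the three listed cases as double covers) matches the paper, and the ``only if'' direction and the decomposable case are essentially fine. But the heart of the theorem is exactly the step you compress into ``one shows that this failure must be witnessed by a genuine finite morphism $\pi\colon X\to Z$ of degree two onto a smooth Fano threefold $Z$ of Picard rank one and index at least two, with $\mcF\simeq\pi^{\ast}\tilde{\mcF}$,'' and as written this is an unproven assertion, not a proof. Three things are missing and none of them is routine. First, a degree bound: nothing in your argument excludes $\deg\Phi\geq 3$; the paper obtains $\deg\Phi_{\lvert\xi\rvert}\in\{1,2\}$ (hence $\deg\Phi\leq 2$) by applying Fujita's $\Delta$-genus classification to the image of the morphism $\Phi_{\lvert\xi\rvert}\colon\P(\mcF)\to\P(H^{0}(\mcF))$, together with the ladder $S\subset\wt{X}\subset\P(\mcF)$ and the Brill--Noether generality of the K3 member (Proposition~\ref{prop-nonhypell}). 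Second, normality of the image: a finite morphism can fail to be a closed embedding by being birational onto a non-normal image, in which case there is no double-cover structure at all; ruling this out is the content of Lemmas~\ref{lem-Sveedim} and \ref{lem-singfibMella} and the ensuing normality argument for $X'=\Phi(X)$, and your sketch gives no substitute. Third, the identification of the degree-two case: ``$\Pic(X)$ has rank one and $\mcF$ is indecomposable'' does not produce a smooth Fano quotient $Z$, let alone $(V_{5},\mcQ_{\Gr(5,2)}|_{V_{5}})$. In the paper this comes from showing that degree two forces the crepant contraction $\psi$ of $\P(\mcF)$ to be divisorial, invoking the classification of \cite{JPR05} to land on genus $6$ with $-K_{X}.C=4$, and then a nontrivial cohomology computation (the Koszul resolution on $\Fl(5;2,1)$, showing $h^{0}(\mcO_{Y}(3\xi+\pi^{\ast}K_{X}))=0$) to exclude the competing Gushel alternative in which $\Phi$ \emph{is} a closed embedding of a genus-$6$ prime Fano threefold; your sketch has no mechanism to separate these two genus-$6$ possibilities.

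A further soft spot in your final step: even granting a double cover $\pi\colon X\to Z$ with $\mcF\simeq\pi^{\ast}\tilde{\mcF}$, the bundle $\tilde{\mcF}$ on $Z$ need not be a weak Fano bundle in the sense of the classifications you cite, because the adjoint conditions on $X$ and on $Z$ differ ($-K_{\P_{X}(\mcF)}=2\xi$ pulls back from $2\xi_{\tilde{\mcF}}$, not from $-K_{\P_{Z}(\tilde{\mcF})}$). So the appeal to \cite{yas,Ishikawa16,FHI1,FHI2} to pin down $(Z,\tilde{\mcF})$ does not apply as stated; one would instead have to argue directly with nefness/bigness of $\xi_{\tilde{\mcF}}$ and the numerical constraints linking $c_{1}(\tilde{\mcF})$, the branch divisor, and $-K_{X}$. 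In short, the proposal is a plausible outline of the statement's shape, but the indecomposable ``if'' direction — which is where all of the paper's Section~\ref{sec-MukaiEmb} machinery is spent — is not established.
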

If $\mcF$ has no moduli, the embedding $\Phi$ depends only on the isomorphism class of $X$. 
In particular, if $X$ is a prime Fano threefold of genus $g \in \{6,8,10\}$, then $\mcF$ provides the description of $X$ as a linear section of a certain homogeneous variety \cite{Gushel-genus6, Gushel-genus8, MukaiDev}. 
This theorem shows that $\mcF$ also gives an embedding into a Grassmannian variety even if $\mcF$ has moduli. 

\subsection{Outline of our classification}

Let $X$ be a Fano threefold of Picard rank one and the Fano index $i_{X}$ is assumed to be odd.
Let $\mcE$ be a normalized weak Fano bundle of rank $2$. 
For the proof of Theorems~\ref{main-Q3} and \ref{main-index1}, it is important to handle the case $c_{1}(X) \not\equiv c_{1}(\mcE) \pmod{2}$. 
As in Corollary~\ref{maincor-parity}, it is necessary to show that $\mcE$ is a Fano bundle. 
If $\mcE$ is not a Fano bundle, then $M:=\P(\mcE)$ has the contraction $\psi \colon M \to \ol{M}$ of the $K_{M}$-trivial extremal ray. 
If $\mcE$ is semi-stable, then the dimension of every $\psi$-fiber is at most $1$ (Claim~\ref{claim-MOS}). 
Furthermore, a curve that is contracted by $\psi$ is the negative section $\Gamma_{0}$ of $\P(\mcE|_{\Gamma})$ for a rational curve $\Gamma \subset X$. 
Since $c_{1}(X) \not\equiv c_{1}(\mcE) \pmod{2}$, it is easy to see that $\Gamma$ is not a line. 
The most important step is to show that $\Gamma$ is not conic either.
Our proof is roughly as follows. 
Suppose that $\Gamma$ is a conic. 
Then $\dim_{[\Gamma_{0}]}\Hilb(M) \geq 1$.
Hence $\psi$ contracts a surface $S$ onto a curve $C$, 
and furthermore $\Gamma_{0}$ is a fiber of $S \to C$. 
By the same technique as Bend-and-Break, all the fibers of $S \to C$ can be assumed to be irreducible.
This yields a one-dimensional family of non-degenerate conics on $X$. 
On the other hand, the discriminant locus in the Hilbert scheme of conics on $X$ is known to be an ample divisor. 
This contradicts the existence of the non-trivial family $S \to C$ of smooth conics. 
Through this observation, we conclude that $\mcE$ is Fano.
In fact, when $X$ is $\Q^{3}$, it follows that $\mcE$ is Fano by using Sols--Szurek--Wi\'{s}niewski \cite{ssw}. 
When $i_{X}=1$, it follows that $\mcE \simeq \mcO_{X}^{\oplus 2}$, since the family of conics covers $X$.

In the case of $c_{1}(X) \equiv c_{1}(\mcE) \pmod{2}$, 
take an integer $a$ such that $\mcF:=\mcE(a)$ satisfies $c_{1}(\mcF)=c_{1}(X)$. 
Then $\mcF$ is globally generated \cite[Theorem~1.7]{FHI1}. 
The possible values of $c_{2}(\mcF)$ are relatively easy to classify. 
To complete the classification, it is necessary to construct a rank two vector bundle $\mcF$ satisfying $c_{2}(\mcF)=c_{2}$ for each possible value $c_{2}$. 
In virtue of the Hartshorne--Serre correspondence, this problem is equivalent to showing the existence of a particular elliptic curve on $X$.
In the case $X=\Q^{3}$, the existence follows directly from the Arap--Cutrone--Marshburn classification result \cite{ACM17} as in \cite{FHI2}, and the resolution in Theorem~\ref{main-Q3} follows from the description of a full exceptional collection \cite{Kap88} of $\Db(\Q^{3})$. 
In the case $i_{X}=1$, as in \cite{FHI1}, it is necessary to construct such an elliptic curve on an arbitrary $X$. 
Strictly speaking, for a given prime Fano threefold $X$ of genus $g \geq 6$, we need to construct an elliptic curve $C$ such that $\mcI_{C}(-K_{X})$ is globally generated and $-K_{X}.C=d$ for each integer $d$ that satisfies $\lfloor \frac{g-3}{2} \rfloor \leq d \leq g-2$. 
The elliptic curve that we need is always projectively normal.
Moreover, elliptic normal curves on prime Fano threefolds are comprehensively studied by Ciliberto--Flamini--Knutsen \cite{CFK}. 
With the help of their research, our problem reduces to the nefness of a certain Cartier divisor on a K3 surface $\wt{S}$, which is the minimal resolution of a Du Val member $S \in \lvert -K_{X} \rvert$ containing an elliptic normal curve $C$. 
To prove the nefness, we utilize the Brill--Noether generality of this quasi-polarized K3 surface \cite{MukaiDev}, 
and the Ciliberto--Flamini--Knutsen method of studying elliptic normal curves on prime Fano threefolds \cite{CFK}. 
Our proofs will be detailed in Section 7.4 and thereafter.
\subsection{Outline of our proof of Theorem~\ref{main-emb}}
Let $X$ be a Fano threefold of Picard rank one and Fano index one, and $\mcF$ a weak Fano bundle of rank two such that $c_{1}(\mcF)=c_{1}(X)$.
Then our previous result \cite[Theorem~1.7]{FHI1} shows that $\mcF$ is globally generated, and hence it gives a morphism $\Phi_{\lvert \mcF \rvert} \colon X \ni x \mapsto (H^{0}(\mcF) \epm \mcF(x)) \in \Gr(H^{0}(\mcF),2)$. 
To see whether $\Phi_{\lvert \mcF \rvert}$ is a closed embedding, 
following the original argument by Mukai \cite{MukaiDev}, 
it is necessary to show the surjectivity of the map 
$\bigwedge^{2}H^{0}(\mcF) \to H^{0}(\bigwedge^{2}\mcF)$. 
In our case, however, this is not generally surjective. 
Thus, we proceed with another approach as follows. 
If $\mcF$ decomposes into line bundles, then it is clear that whether $\Phi_{\lvert \mcF \rvert}$ is not a closed embedding is characterized by either (1) or (2) in Theorem~\ref{main-emb}. 
Let us assume that $\mcF$ is indecomposable. 
In this case, in order to examine the morphism $\Phi_{\lvert \mcF \rvert}$, we consider the morphism $\Phi_{\lvert \xi \rvert} \colon Y=\P(\mcF) \to \P(H^{0}(\mcF))$, which is given by a tautological divisor $\xi$. 
Note that the crepant contraction $\psi \colon Y \to \ol{Y}$ is the first part of the Stein factorization of $\Phi_{\lvert \xi \rvert}$. 
Since $Y$ is a weak Fano fourfold such that $-K_{Y} \sim 2\xi$, we can check whether $\Phi_{\lvert \xi \rvert}$ gives a birational morphism onto its image in a similar way as the classification method for hyperelliptic Fano threefolds by Iskovskikh \cite{Iskovskikh77}. 
In particular, the degree of $\Phi_{\lvert \xi \rvert}$ is at most $2$. 
Let $Y'$ be the image of the canonical morphism $Y \to \Fl(H^{0}(\mcF);2,1)$. 
Then $\Phi_{\lvert \xi \rvert}$ factors as $Y \to Y' \to \P(H^{0}(\mcF))$. 
Moreover, $Y'$ is a $\P^{1}$-bundle over the image $X' \subset \Gr(H^{0}(\mcF),2)$ of $\Phi_{\lvert \mcF \rvert}$. 
Then, using the above discussion and Lemma~\ref{lem-singfibMella}, we see that $Y'$ is normal at the generic point of all fibers. 
From this, it follows that $X'$ is normal, and that $\Phi_{\lvert \mcF \rvert}$ is either a closed embedding or a double covering. 
If $\Phi_{\lvert \mcF \rvert}$ gives a double covering, then $\Phi_{\lvert \xi \rvert}$ is a degree $2$ morphism onto $\P^{4}$ and hence $Y' \to \P^{4}$ is a birational morphism. 
Therefore, the contraction $\psi$ contracts a divisor. 
Then by the classification result of \cite{JPR05}, it can be shown that $(X,\mcF)$ satisfies (3) of Theorem~\ref{main-emb}.

\subsection{Organization of this paper}

We devote Section~\ref{sec-conic} and Section~\ref{sec-RSGNL} for preliminaries.
In Section~\ref{sec-conic}, we discuss the connection between the weak Fano bundle and the Hilbert scheme of conics, and show Theorem~\ref{thm-conic}. 
This theorem is used in Sections~\ref{sec-Q3Num} and \ref{sec-MukaiEven}.
Section~\ref{sec-RSGNL} summarizes Brill--Noether theory and Noether--Lefschetz type theorems for weak Fano varieties. Results are used in Sections~\ref{sec-Q3Resol} and \ref{sec-MukaiOdd}. 
From Sections~\ref{sec-Q3Num} to \ref{sec-Q3Moduli}, we carry out the classification of weak Fano bundles $\mcE$ of rank $2$ on $\Q^{3}$. 
In Section~\ref{sec-Q3Num}, we restrict the possible values of $(c_{1}(\mcE),c_{2}(\mcE))$. 
In Section~\ref{sec-Q3Resol}, we prove Theorem~\ref{main-Q3} by showing the existence of $\mcE$ satisfying the condition for each possible value of $(c_{1},c_{2})$, and by providing an explicit decomposition of these bundles.
Using this explicit decomposition, we prove Theorem~\ref{main-moduli} in Section~\ref{sec-Q3Moduli}.
In Sections~\ref{sec-MukaiEven} and \ref{sec-MukaiOdd}, we carry out the classification on a Fano threefold $X$ of Picard rank $1$ and index $1$. 
In Section~\ref{sec-MukaiEven}, we treat the case where $c_{1}(\mcE) \not\equiv c_{1}(X) \pmod{2}$. 
This proves (1) of Theorem~\ref{main-index1}. 
Consequently, Corollary~\ref{maincor-parity} is also proved at this stage.
In Section~\ref{sec-MukaiOdd}, we deal with the case where 
$c_{1}(\mcE) \equiv c_{1}(X) \pmod{2}$. 
This proves the rest of Theorem~\ref{main-index1}.
Finally, in Section~\ref{sec-MukaiEmb}, we investigate the morphism associated with a weak Fano bundle $\mcF$ of rank 2 such that $c_{1}(\mcF)=c_{1}(X)$ and prove Theorem~\ref{main-emb}.

\begin{ACK}
The first author is grateful to Professor Hiromichi Takagi and Yoshinori Gongyo for their guidance during his doctoral studies. 
The second author was supported by World Premier International Research Center Initiative (WPI),
MEXT, Japan, and by JSPS KAKENHI Grant Number JP24K22829.
\end{ACK}

\begin{NaC}
Throughout this article, we will work over the complex number field $\C$. 
We also adopt the following conventions. 
\begin{itemize}
\item We regard vector bundles as locally free sheaves. 
For a locally free sheaf $\mcE$ on a smooth projective variety $X$, 
we define $\P(\mcE):=\Proj \Sym \mcE$. 
\item Let $\mcE$ be a locally free sheaf, $\pi \colon \P(\mcE) \to X$ the projection, $\xi$ a tautological divisor. 
In this paper, the $i$-th Chern class $c_{i}(\mcE)$ is defined to satisfy the Grothendieck relation
$\sum_{i=0}^{\rk \mcE} (-1)^{i} \pi^{\ast}c_{i}(\mcE) \xi^{\rk \mcE-i}=0$. 
The $i$-th Segre class is defined to be $s_{i}(\mcE):=\pi_{\ast}\xi^{\rk \mcE+i-1}$. 
\item Let $X$ be a smooth Fano $3$-fold of Picard rank $1$. 
We often identify $N^{i}(X)_{\Z} \simeq \Z$ by taking the effective generator class for each $i \in \{0,1,2,3\}$, 
where $N^{i}(X)_{\Z}$ is the numerical class group of the codimension $i$ cycles with $\Z$-coefficients. 
A positive generator $H_{X}$ in $\Pic(X)$ is called a \emph{fundamental divisor}. 
The $i$-th Chern class $c_{i}(\mcE)$ and the $i$-th Segre class $s_{i}(\mcE)$ are often denoted as $c_{i}$ and $s_{i}$ respectively if they are regarded as integers through the above identification. 
\item 
We also say that a rank $2$ vector bundle $\mcE$ on a Fano $3$-fold $X$ of Picard rank $1$ is \emph{normalized} if $c_{1}(\mcE) \in \{0,-1\}$. 
Note that if $\mcE$ is a weak Fano bundle, then so is $\mcE(n):=\mcE \otimes \mcO_{X}(nH_{X})$ for any $n \in \Z$.
Thus when one studies a given weak Fano bundle $\mcE$ of rank $2$, 
it can be assumed without loss of generality that $\mcE$ is normalized.
\item $\Q^{n}$ denotes the smooth quadric hypersurface of $\P^{n+1}$. 
\item The \emph{(negative) spinor bundle} on $\Q^{3}$ is denoted by $\mcS$ \cite{Ottaviani88}. 
In our notation, $\mcS$ is a slope stable bundle with $c_{1}(\mcS)=-1$, $c_{2}(\mcS)=1$, and $\rk \mcS=2$. 
\item A \emph{Cayley bundle} $\mcC$ on $\Q^{5}$ is defined to be a slope stable rank $2$ bundle with $c_{1}(\mcC)=-1$ and $c_{2}(\mcC)=1$ \cite{Ott90}. 
\item A \emph{null-correlation bundle} on $\P^{3}$ is denoted by $\mcN$ \cite{OSS80}. 
\item For the Grassmannian varieties, we employ the quotient notation as follows; for an $N$-dimensional vector space $V=\C^{N}$ and a positive integer $m<N$, 
we define the Grassmannian variety $\Gr(V,m)=\Gr(N,m)$ as the parameter space of $m$-dimensional linear quotient spaces of $V=\C^{N}$. 
In particular, if we regard $V$ as a locally free sheaf on $\Spec \C$, 
then $\P(V)$ is canonically isomorphic to $\Gr(V,1)$ in our notation. 
We also employ the quotient notation for the flag varieties.
\item Let $X$ be a smooth Fano $3$-fold of Picard rank $\rho(X)=1$ and $H_{X}$ its fundamental divisor. 
The \emph{Fano index} is the positive integer $i_{X}$ with $-K_{X} \sim i_{X} \cdot H_{X}$. 
If $\rho(X)=i_{X}=1$ and $-K_{X} \sim H_{X}$ is very ample, 
we call $X$  a \emph{prime Fano $3$-fold}. 
\end{itemize}
\end{NaC}

\section{Weak Fano bundles and Hilbert scheme of conics}\label{sec-conic}

\begin{defi}\label{def-dag}
Let $X$ be a smooth projective variety and $H_{X}$ a very ample divisor. 
We say a curve $C \subset X$ is a \emph{conic} (resp. \emph{line}) with respect to $H_{X}$ if the Hilbert polynomial $p_{C,H_{X}}(t):=\chi(C,\mcO(tH_{X})|_{C})$ is $2t+1$ (resp. $t+1$). 
It is well-known that every conic $\gamma$ is either a smooth rational curve, a union of two lines meeting at one point, or a double line on $\P^{2}$. 
In this paper moreover, we say $(X,H_{X})$ satisfies $(\dag)$ if the following condition is satisfied.
\begin{itemize}
\item[$(\dag)$] For every irreducible component $Z$ of the Hilbert scheme $\Hilb_{2t+1}(X,H_{X})$ of conics, 
if the open set $Z^{\sm}:=\{[\gamma] \mid \gamma \text{ is smooth }\}$ is non-empty, then $Z^{\sm}$ does not contain any proper curve.
\end{itemize}
\end{defi}

\begin{rem}[discriminant divisors]\label{rem-discriminant}
In the setting in Definition~\ref{def-dag}, 
let $V:=H^{0}(X,\mcO_{X}(H_{X}))$ and $i \colon X \hra \P(V)$ the embedding given by $\lvert H_{X} \rvert$. 
Then $\Hilb_{2t+1}(\P(V),\mcO_{\P(V)}(1))$ is isomorphic to $\P_{\Gr(V,3)}(\Sym^{2}\mcQ^{\vee})$, where $\mcQ$ is the universal quotient bundle on $\Gr(V,3)$ of rank $3$.  
Let $\pi \colon \P_{\Gr(V,3)}(\Sym^{2}\mcQ^{\vee}) \to \Gr(V,3)$ be the projection. 
The singular conics on $\P(V)$ are parametrized by the discriminant divisor $\Delta \subset \P_{\Gr(V,3)}(\Sym^{2}\mcQ^{\vee})$, 
which is the member of $\lvert \mcO_{\pi}(3) \otimes (\pi^{\ast}\det \mcQ)^{\otimes 2} \rvert$ given by the determinant of the natural map 
$\pi^{\ast}\mcQ^{\vee} \to \pi^{\ast}\mcQ \otimes \mcO_{\pi}(1)$
corresponding to $H^{0}(\mcO_{\pi}(1) \otimes \Sym^{2}\mcQ) \simeq \Hom(\Sym^{2}\mcQ,\Sym^{2}\mcQ) \ni \id$. 

For each irreducible component $Z \subset \Hilb_{2t+1}(X,H_{X})$ with the reduced structure, 
$Z$ is also a closed subscheme of $\Hilb_{2t+1}(\P(V),\mcO_{\P(V)}(1))$. 
The \emph{discriminant locus} $\Delta_{Z}$ is defined to be the scheme theoretic intersection $\Delta \cap Z$. 
By definition, $\Delta_{Z}$ is either an effective Cartier divisor on $Z$ or equal to $Z$ itself. 
In this viewpoint, the condition $(\dag)$ is equivalent to saying that $Z \setminus \Delta_{Z}$ does not contain any proper curve if $\Delta_{Z} \neq Z$. 
For example, if $\Delta_{Z}$ is ample, then the condition $(\dag)$ holds.
\end{rem}

The aim of Section~\ref{sec-conic} is to establish the following theorem. 
\begin{thm}\label{thm-conic}
Let $X$ be a smooth Fano variety of dimension $n \geq 3$ with $b_{2}(X)=b_{4}(X)=1$. 
Let $H_{X}$ be a fundamental divisor on $X$ and $i_{X}$ be the index of $X$, i.e., $-K_{X} \sim i_{X}H_{X}$. 
Suppose that $H_{X}$ is very ample, $X$ has a smooth conic with respect to $H_{X}$, and $(X,H_{X})$ satisfies $(\dag)$. 

Let $\mcE$ be a rank $2$ weak Fano bundle with $c_{1}(\mcE) = (i_{X}-1)H_{X}$. 
If $\mcE$ is $H_{X}$-slope semistable, then the following assertions hold. 
\begin{enumerate}
\item $\mcE|_{l}$ is nef for every line $l$ and $\mcE|_{C}$ is nef for every smooth conic $C$. 
\item If $i_{X}=1$, then $\mcE \simeq \mcO_{X}^{\oplus 2}$. 
\end{enumerate}
\end{thm}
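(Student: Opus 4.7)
Write $M := \P(\mcE)$ with projection $\pi \colon M \to X$ and tautological class $\xi$. Because $\mcE$ has rank two with $c_1(\mcE) = (i_X - 1) H_X$, the rank-two canonical formula reads $-K_M = 2\xi + \pi^{*}H_X$. Given any smooth rational curve $\Gamma \subset X$ with splitting $\mcE|_\Gamma \simeq \mcO_\Gamma(a) \oplus \mcO_\Gamma(b)$, $a \leq b$, the minimal section $\Gamma_0 \subset \P(\mcE|_\Gamma)$ satisfies $\xi \cdot \Gamma_0 = a$, so
\[
-K_M \cdot \Gamma_0 \;=\; 2a + H_X \cdot \Gamma.
\]
The nefness of $-K_M$ is the central numerical constraint used throughout.

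Applying this inequality to a line $l$ gives $2a + 1 \geq 0$, hence $a \geq 0$, which proves $\mcE|_l$ is nef. For a smooth conic $C$ the inequality becomes $2a + 2 \geq 0$, so $a \geq -1$, and the only possible obstruction to nefness is the borderline case $a = -1$, in which $\Gamma_0$ is a $K_M$-trivial smooth rational curve. Suppose such a conic $C$ exists. Since $-K_M$ is nef and big, the base point free theorem furnishes a crepant birational contraction $\psi \colon M \to \overline{M}$ of the $K_M$-trivial face, and $\Gamma_0$ is contained in a $\psi$-fiber. Semistability of $\mcE$ together with Claim~\ref{claim-MOS} (following Mu\~{n}oz--Occhetta--Sol\'{a}~Conde) limits positive-dimensional $\psi$-fibers to dimension one and identifies every contracted curve as a minimal section over some rational curve $\Gamma \subset X$; the line case above rules out $\Gamma$ being a line, so $\Gamma$ is a conic. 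The deformation estimate
\[
\dim_{[\Gamma_0]} \Hilb(M) \;\geq\; \chi(N_{\Gamma_0/M}) \;=\; -K_M \cdot \Gamma_0 + n - 2 \;\geq\; 1
\]
then produces a positive-dimensional family of such sections, sweeping out a subvariety of $M$ contracted by $\psi$. A Mori-type bend-and-break argument, combined with the fiber-dimension bound (which prevents reducible limits without enlarging some $\psi$-fiber), lets us extract a one-parameter complete subfamily of sections lying over smooth conics in $X$. Projecting back to $X$ produces a proper connected curve inside the smooth-conic locus $Z^{\sm}$ of some irreducible component $Z$ of $\Hilb_{2t+1}(X,H_X)$, directly contradicting assumption $(\dag)$. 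This completes (1).

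For (2), assume $i_X = 1$, so that $c_1(\mcE) = 0$. Part (1) forces $\mcE|_C \simeq \mcO_C^{\oplus 2}$ on every smooth conic $C \subset X$. Under the standing hypotheses the expected-dimension count for the smooth conic Hilbert scheme, combined with the existence of a single smooth conic, yields a covering family of smooth conics in $X$. Triviality of $\mcE$ on a covering family of smooth rational curves, together with semistability and $c_1(\mcE) = 0$, makes $\mcE$ numerically flat; by Demailly--Peternell--Schneider it admits a filtration whose graded pieces are hermitian flat line bundles, which on the simply connected Fano $X$ are each isomorphic to $\mcO_X$, and the successive extensions split by the Fano vanishing $H^{1}(X, \mcO_X) = 0$. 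Hence $\mcE \simeq \mcO_X^{\oplus 2}$.

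The principal obstacle is the borderline conic case in (1): one must combine the fiber-dimension bound of Claim~\ref{claim-MOS} with bend-and-break in such a way that the extracted one-parameter family consists of genuinely \emph{smooth} conics, rather than one that leaks into the discriminant locus $\Delta_Z$. Once such a complete curve in $Z^{\sm}$ is isolated, assumption $(\dag)$ (equivalently, the ampleness of $\Delta_Z$, cf.\ Remark~\ref{rem-discriminant}) closes the proof in one line, and the remaining ingredients are standard deformation theory and semistability manipulations.
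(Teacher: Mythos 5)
Your outline for (1) follows the paper's strategy, but the step you yourself identify as ``the principal obstacle'' is exactly the step you do not prove, and the two justifications you offer for it are both incorrect. First, the assertion that every $\psi$-contracted curve is a minimal section over a \emph{conic} is false in general: a contracted curve $\Gamma_0'$ only satisfies $2\xi\cdot\Gamma_0'+\pi^{*}H_X\cdot\Gamma_0'=0$, so it can lie over a rational curve of any even $H_X$-degree (quartics, etc.), or be a multisection; only degree one is excluded by the line case. Second, the fiber-dimension bound does not ``prevent reducible limits'' --- degenerate members of a family of contracted curves are still contracted and live happily in one-dimensional fibers. What actually controls the family in the paper is Proposition~\ref{prop-semiAtiyah}: one works with the ample divisor $A=\xi+\pi^{*}H_X$, runs a descending induction replacing $\Gamma_0$ by a component of a non-integral fiber of strictly smaller $A$-degree, and then in the application $A\cdot\Gamma_0=1$ forces every member of the resulting complete family to be integral of $A$-degree $1$, hence of $\xi$-degree $-1$ and $H$-degree $2$, hence mapping isomorphically onto a \emph{smooth} conic (lines being excluded as you say); the rigidity lemma and Claim~\ref{claim-MOS} give finiteness over $\ol{Y}$ and hence a genuine proper curve inside $Z^{\sm}$. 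Without this degree bookkeeping your ``bend-and-break'' sentence is an assertion of the conclusion, not an argument, and as stated it rests on the two false claims above.

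For (2) your route is genuinely different from the paper's, but it has two gaps. The expected-dimension count $\dim_{[C]}\Hilb\geq n-1$ does not yield a \emph{covering} family of conics: positive-dimensional families of low-degree rational curves can sweep out a proper subvariety (lines on prime Fano threefolds sweep out only a divisor), and nothing in the hypotheses of the theorem guarantees conics cover $X$. Moreover, even granting a covering family, ``trivial on a covering family of curves $+$ semistable $+$ $c_1=0$ $\Rightarrow$ numerically flat'' is not a valid implication: numerical flatness requires nefness on \emph{all} curves, or, via Demailly--Peternell--Schneider/Simpson, semistability together with $c_1=0$ \emph{and} $c_2\cdot H_X^{\,n-2}=0$; the vanishing of $c_2$ is precisely what has to be proved. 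The paper gets it by extending triviality to singular conics (Lemma~\ref{lem-conictriv}), applying Grauert to the universal family over a whole component $Z$ of the conic Hilbert scheme, and observing that a one-parameter subfamily of distinct conics sweeps out a surface, which together with $b_4(X)=1$ forces $c_2(\mcE)\equiv 0$; it then concludes via Riemann--Roch, Le Potier vanishing and a section argument rather than via flatness. If you repaired your argument by first deducing $c_2(\mcE)\cdot H_X^{\,n-2}=0$ from the family (as the paper does) and then invoking the DPS/Simpson characterization plus simple connectedness and $H^1(\mcO_X)=0$, you would obtain a correct variant of the endgame; as written, both the covering claim and the passage to numerical flatness are unsupported.
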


\subsection{An observation for $(2,1)$-type crepant contractions from almost Fano fourfolds}

First, we present the following proposition to study crepant contractions from almost Fano $4$-folds whose fibers are at most $1$-dimensional. 
\begin{prop}\label{prop-semiAtiyah}
Let $Y$ be an almost Fano $n \geq 4$-fold, i.e. an $n$-dimensional smooth variety that is weak Fano but not Fano.
Let $\psi \colon Y \to \ol{Y} := \Proj R(Y,-K_{Y})$ be the contraction associated to $-K_{Y}$.
Suppose that $\dim \psi^{-1}(y) \leq 1$ for every closed point $y \in \ol{Y}$.

Then, given an irreducible curve $\Gamma_{0}$ contracted by $\psi$
and an ample divisor $A$ on $Y$, there exist
\begin{itemize}
\item a $\P^{1}$-bundle $p \colon S \to C$ over a smooth curve $C$,
\item a generically finite morphism $f \colon S \to Y$, and a finite morphism $g \colon C \to \ol{Y}$
\end{itemize}
satisfying the following properties. 
\begin{enumerate}
\item The following diagram commutes.
\[
\begin{tikzcd}
S \arrow[d, "p"'] \arrow[r, "f"] & Y \arrow[d, "\psi"] \\
C \arrow[r, "g"'] & \ol{Y}.
\end{tikzcd} 
\]

\item Any fiber $\gamma$ of $p$ satisfies $f^{\ast}A . \gamma \leq A . \Gamma_{0}$.
\end{enumerate}
\end{prop}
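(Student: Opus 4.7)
The plan is to realize the $\P^1$-bundle $p\colon S \to C$ as a base change of Koll\'ar's universal family of rational curves in $Y$, restricted to a one-parameter family of deformations of $\Gamma_0$. The three key inputs are that $\Gamma_0$ is rational, that it deforms in a positive-dimensional family of contracted rational curves, and that this family carries a universal $\P^1$-bundle over it.

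First, I would argue that $\Gamma_0$ is rational. Since $\psi$ is a birational crepant contraction from the smooth (hence canonical) variety $Y$, every fiber of $\psi$ is rationally chain connected, and combined with the hypothesis $\dim \psi^{-1}(y) \leq 1$ this forces each fiber to be a tree of rational curves; in particular $\Gamma_0$ is rational. Let $\nu \colon \P^1 \to \Gamma_0 \subset Y$ denote its normalization. Crepancy yields $-K_Y \cdot \Gamma_0 = 0$, and the standard deformation-theoretic lower bound gives
\[
\dim_{[\nu]} \mathrm{RatCurves}(Y) \geq -K_Y \cdot \Gamma_0 + n - 3 = n - 3 \geq 1.
\]
I would pick an irreducible $1$-dimensional closed subscheme $T$ of the component containing $[\nu]$, through $[\nu]$, and let $C$ be its normalization. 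Pulling Koll\'ar's universal family back along $C \to \mathrm{RatCurves}(Y)$ gives a Severi--Brauer family of relative dimension $1$ over the smooth complex curve $C$; by Tsen's theorem ($\mathrm{Br}(C) = 0$) this family is Zariski-locally trivial, yielding a $\P^1$-bundle $p \colon S \to C$ together with an evaluation morphism $f \colon S \to Y$.

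Next, I would verify the commutative diagram, the finiteness of $g$, the generic finiteness of $f$, and the degree bound. For each $c \in C$, the curve $\gamma'_c := f(p^{-1}(c))$ is a deformation of $\Gamma_0$ and hence numerically equivalent to it; this class lies on the $\psi$-contracted face of $\ol{\NE}(Y)$, so each $\gamma'_c$ is contracted by $\psi$. Thus $\psi \circ f$ is constant on $p$-fibers and factors uniquely as $g \circ p$ for some morphism $g \colon C \to \ol{Y}$. Because $T$ parametrizes pairwise distinct rational curves and each $\psi$-fiber contains only finitely many irreducible components, $g$ is non-constant and hence finite; the image $f(S)$ is then $2$-dimensional, so $f$ is generically finite. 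For any fiber $\gamma = p^{-1}(c)$,
\[
f^{\ast}A \cdot \gamma = A \cdot f_{\ast}\gamma = A \cdot \gamma'_c = A \cdot \Gamma_0
\]
by birational invariance of the pushforward and constancy of the numerical class, so the desired inequality holds (indeed, with equality).

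The most delicate input is the rationality of $\Gamma_0$, which rests on the rational chain connectedness of fibers of crepant contractions from smooth varieties, a nontrivial theorem relying on bend-and-break / MMP arguments. A secondary technical point is that Koll\'ar's universal $\P^1$-fibration on $\mathrm{RatCurves}(Y)$ is only \'etale-locally trivial in general, but Tsen's theorem makes it Zariski-locally trivial after base change to the smooth curve $C$.
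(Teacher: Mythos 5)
There is a genuine gap at the very first step of your construction: properness of the parameter curve. The components of $\mathrm{RatCurves}^{n}(Y)$ are only quasi-projective, because deformations of $\Gamma_{0}$ may degenerate to reducible or non-reduced contracted cycles, which are excluded from $\mathrm{RatCurves}^{n}(Y)$. An irreducible $1$-dimensional subscheme $T$ that is closed \emph{in the component through} $[\nu]$ therefore need not be proper, and its normalization $C$ may be an affine curve. Then your non-constant morphism $g \colon C \to \ol{Y}$ is only quasi-finite, not finite as the proposition requires (a finite $g$ forces $C$ to be proper), and, more importantly, the downstream application (Proposition~\ref{prop-conicnef}) hinges on producing a \emph{proper} curve inside the smooth locus of a component of $\Hilb_{2t+1}(X,H_{X})$, which your construction does not deliver. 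This is exactly why the paper works instead inside the projective scheme $\Hilb(Y)$, where a proper curve $T$ through $[\Gamma_{0}]$ exists for free; the price is that fibers of the pulled-back universal family can break, and the paper handles this by a bend-and-break style descending induction: replace $\Gamma_{0}$ by an irreducible component $\Gamma_{1}$ of a broken fiber, note $A.\Gamma_{1} < A.\Gamma_{0}$, and repeat until all fibers are irreducible. That is also why the statement asserts only the inequality $f^{\ast}A.\gamma \leq A.\Gamma_{0}$; your claimed equality is a symptom of having bypassed the breaking phenomenon rather than dealt with it. (If you could prove that the relevant family is unsplit---as happens in the application, where $A.\Gamma_{0}=1$---properness would follow, but the proposition is stated for an arbitrary contracted curve and arbitrary ample $A$, so this cannot be assumed.)

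A secondary, less serious point: you derive the rationality of $\Gamma_{0}$ from rational chain connectedness of fibers of crepant contractions, a heavy theorem, whereas the paper gets both rationality and smoothness of every contracted curve by the elementary vanishing argument $R^{1}\psi_{\ast}\mcO_{Y}=0$ (Kawamata--Viehweg) combined with $R^{2}\psi_{\ast}\mcI_{\Gamma/Y}=0$ from the fiber-dimension hypothesis, giving $H^{1}(\mcO_{\Gamma})=0$. Your route is acceptable in principle, but the vanishing argument is both simpler and yields the smoothness used later in the paper's induction.
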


\begin{proof}
The proof of the proposition is divided into three steps.

\textit{Step 1.}
This step shows that any irreducible curve $\Gamma \subset Y$ contracted by $\psi$ is a smooth rational curve.
Indeed, since $R^{1}\psi_{\ast} \mcO_{Y}  = 0$ by the Kawamata--Viehweg vanishing 
and $R^{2}\psi_{\ast} \mcI_{\Gamma/Y} = 0$ by the dimension assumption,
it holds that $H^{1}(\mcO_{\Gamma}) \simeq R^{1}\psi_{\ast}\mcO_{\Gamma} = 0$, 
which implies that $\Gamma$ is a smooth rational curve.
In particular, the curve $\Gamma_{0}$  in the assumption is a smooth rational curve.

\textit{Step 2.}
In this step, we prove the following claim.
\begin{claim}\label{claim-moving}
With the same assumption as in Proposition~\ref{prop-semiAtiyah},
given an irreducible curve $\Gamma_{0}$ contracted by $\psi$ and an ample divisor $A$,
there exist 
\begin{itemize}
\item a normal projective surface $S$ and a pointed smooth curve $(C, x_{0})$ with a surjective morphism $p \colon S \to C$,  
\item a generically finite morphism $f \colon S \to Y$, and a finite morphism $g \colon C \to \ol{Y}$
\end{itemize} 
such that
\begin{enumerate}
\item the following diagram commutes;
\begin{equation}\label{dia-moving}
\begin{tikzcd}
S \arrow[d, "p"'] \arrow[r, "f"] & Y \arrow[d, "\psi"] \\
C \arrow[r, "g"'] & \ol{Y}.
\end{tikzcd} 
\end{equation}
\item the scheme-theoretic fiber $p^{-1}(x_{0})$ is a smooth rational curve, 
\item $f$ restricts to an isomorphism $f|_{p^{-1}(x_0)} \colon p^{-1}(x_0) \xrightarrow{\sim} \Gamma_{0}$, and 
\item $f^{\ast}A$ is $p$-ample. 
\end{enumerate}
\end{claim}

\begin{proof}
First, since $\Gamma_{0}$ is a smooth rational curve, 
\cite[Chap. II, Theorem 1.14]{Kollar} shows that $\dim_{[\Gamma_{0}]} \Hilb(Y) \geq n-3 \geq 1$. 
Thus there exists a proper irreducible curve $T \subset \Hilb(Y)$ with $[\Gamma_{0}] \in T$.
Let $C$ be the normalization of $T$, 
$x_{0} \in C$ a point over $[\Gamma_{0}] \in T$,
and $\pi_{C} \colon U_{C} \to C$ the pull-back of the universal family over $\Hilb(Y)$.
Note that $U_{C}$ is of dimension two. 
Put
\[ C^{0} := \{ t \in C \mid \text{$\pi^{-1}_{C} (t)$ is geometrically integral} \}. \]
Let us check that $\pi_{C^{0}} \colon U_{C^{0}} \to C^{0}$ is a $\P^{1}$-bundle, i.e., $U_{C^{0}}$ is a smooth irreducible surface and every fiber of $\pi_{C^{0}}$ is isomorphic to $\P^{1}$. 
Since $\pi_{C}$ is a flat and proper morphism and $\pi_{C}^{-1}(x_{0})$ is a smooth rational curve, 
the subset $C^{0} \subset C$ is open and contains $x_{0}$. 
Let $\pi_{C^{0}} \colon U_{C^{0}} \to C^{0}$ be the restriction of $\pi_{C}$.
Since every $\pi_{C^{0}}$-fiber and the base $C^{0}$ are integral, so is $U_{C^{0}}$.
Since the $\pi_{C^{0}}$-fiber of $x_{0}$ is isomorphic to $\P^{1}$, the cohomology and base change theorem shows that every $\pi_{C^{0}}$-fiber is also isomorphic to $\P^{1}$. 
Since every $\pi_{C^{0}}$-fiber is also an effective Cartier divisor on $U_{C^{0}}$, the surface $U_{C^{0}}$ is smooth and $\pi_{C^{0}} \colon U_{C^{0}} \to C^{0}$ is a $\P^{1}$-bundle. 

Let $\ol{U_{C^{0}}} \subset U_{C}$ be the closure of $U_{C^{0}}$ with the reduced scheme structure and $S \to \ol{U_{C^{0}}}$ be the normalization. 
The composition morphism $S \to \ol{U_{C^{0}}} \to U_{C} \to Y$ is denoted by $f \colon S \to Y$, 
and the composition $S \xrightarrow{\mu} \ol{U_{C^{0}}} \hra U_C \xrightarrow{\pi_{C}} C$ by $p \colon S \to C$. 
We now obtain the following diagram.
\[ \begin{tikzcd}
S \arrow[rd, "p"'] \arrow[r] \arrow[rr, bend left=40, "f"]& U_C \arrow[d, "\pi_C"] \arrow[r] & Y \arrow[d, "\psi"] \\
& C  & \ol{Y}.
\end{tikzcd} \]
Here we check that $f$ is generically finite and the condition (4). 
Since the morphism $S \to \ol{U_{C^{0}}}$ is finite, 
it suffices to show that $\ol{f} \colon \ol{U_{C^{0}}} \to Y$ is generically finite and the restriction of $\ol{f}$ to every fiber under $\ol{p} \colon \ol{U_{C^{0}}} \to C$ is a closed embedding. 
Since $\ol{p} \times \ol{f} \colon \ol{U_{C^{0}}} \to C \times Y$ is closed embedding, so is the restriction of $\ol{f}$ to each $\ol{p}$-fiber. 
Moreover, since the image of $\ol{f}$ containing $\Gamma_{0}$, 
if the dimension of the image of $\ol{U_{C^{0}}} \to Y$ is one, 
then $\ol{U_{C^{0}}} \simeq C \times \Gamma_{0}$, which contradicts our construction of $T$.

Note that $p^{-1}(x_{0})$ is a smooth rational curve by construction, 
and $f$ restricts to an isomorphism $f|_{p^{-1}(x_{0})} \colon p^{-1}(x_{0}) \xrightarrow{\sim} \Gamma_{0}$. Moreover, $\psi \circ f$ contracts $p^{-1}(x_{0})$, and hence applying the rigidity lemma \cite[Lemma~1.15]{Debarre} shows that 
there exists a morphism $g \colon C \to \ol{Y}$ such that the diagram (\ref{dia-moving}) commutes. 
Note that $g \colon C \to Y$ is finite; otherwise the image $y = g(C)$ is a point, and thus the fiber $\psi^{-1}(y)$ contains a surface $f(S)$, which contradicts that we assume every $\psi$-fiber is at most $1$-dimensional. 
This completes the proof of the claim.
\end{proof}

\textit{Step 3.}
This step completes the proof of this proposition.
First, applying the claim gives a commutative diagram as in (\ref{dia-moving}). 
If $p \colon S \to C$ is already a $\P^{1}$-bundle, the proposition holds.
Otherwise, there exists a point $x \in C$ such that the scheme-theoretic fiber $p^{-1}(x)$ is not integral.
Put $p^{-1}(x) = \sum_{i} a_{i} D_{i}$, where $D_{i}$ is a prime divisor on $S$ for all $i$.

Let $D_{i_{0}}$ be an arbitrary irreducible component of $p^{-1}(x)$. 
By Claim~\ref{claim-moving}~(4), $\Gamma_{1}:=f(D_{i_{0}})$ is an irreducible curve. 
Then $\Gamma_{1}$ is smooth rational curve since $\Gamma_{1}$ is contracted by $\psi$. 
Again by Claim~\ref{claim-moving}~(4), the inequality $A.\Gamma_{1} <  A.\Gamma_{0}$ holds since
\[ A.\Gamma_{1} \leq A. f_{\ast}D_{i_{0}} = f^{\ast}A.D_{i_{0}} < f^{\ast}A.p^{-1}(x_{0}) = A.\Gamma_{0}. \]
Replacing the fixed curve $\Gamma_{0}$ with $\Gamma_{1}$ and applying 
Claim~\ref{claim-moving} give
another $(S, C, x_{0}, f, g, p)$.
Repeating this procedure yields a decreasing sequence
\[ (0 \leq) \cdots < f^{\ast}A. \Gamma_{n} < \cdots <  f^{\ast}A. \Gamma_{1} < f^{\ast}A. \Gamma_{0}, \]
which should terminate at some point.
Thus the induction gives $(S, C, x_{0}, f, g, p)$ such that 
$p \colon S \to C$ is a $\P^{1}$-bundle and that $f^{\ast}A.p^{-1}(x) \leq A.\Gamma_{0}$ for all $x \in C$.
\end{proof}

\subsection{Positivity of weak Fano bundles along conics}

In this section, we prove Theorem~\ref{thm-conic}. 
First, we prove Theorem~\ref{thm-conic}~(1) by establishing the following slightly more general proposition.
\begin{prop}\label{prop-conicnef}
Let $X$ be a smooth weak Fano variety. 
Let $\mcE$ be a rank $2$ weak Fano bundle on $X$ whose anti-adjoint bundle $H_{X}:=-K_{X}-c_{1}(\mcE)$ is very ample. 
Suppose the following two conditions hold. 
\begin{enumerate}
\item $(X,H_{X})$ satisfies $(\dag)$ (see Definition~\ref{def-dag}). 
\item $(K_{X}^{2}+\Delta(\mcE)).T > 0$ for every irreducible surface $T$ on $X$, where $\Delta(\mcE):=4c_{2}(\mcE)-c_{1}(\mcE)^{2}$ is the discriminant class. 
\end{enumerate}
Then for every line $l$ and every smooth conic $C$ with respect to $H_{X}$, $\mcE|_{l}$ and $\mcE|_{C}$ are nef.
\end{prop}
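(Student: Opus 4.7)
The plan is to reduce to the contraction theory of $M := \P(\mcE)$, produce a moving family of smooth conics on $X$, and invoke hypothesis $(\dag)$ to derive a contradiction; hypothesis (2) enters as an intersection-theoretic bound on the dimensions of fibers of the crepant contraction. Let $\pi \colon M \to X$ denote the projection and $\xi$ the tautological divisor, so that $-K_M = 2\xi + \pi^{\ast}H_X$. For any smooth rational curve $\gamma \subset X$ with $\mcE|_{\gamma} \simeq \mcO(a) \oplus \mcO(b)$, $a \leq b$, the minimal section $\Gamma_0 \subset \pi^{-1}(\gamma)$ associated to the quotient $\mcE|_{\gamma} \twoheadrightarrow \mcO(a)$ satisfies $-K_M \cdot \Gamma_0 = 2a + H_X \cdot \gamma$. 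For a line $l$ this equals $2a + 1$, and nefness of $-K_M$ immediately forces $a \geq 0$, so $\mcE|_l$ is nef. For a smooth conic $C$ the number is $2a + 2$, and the only way $\mcE|_C$ can fail to be nef without violating nefness of $-K_M$ is $a = -1$, in which case $\Gamma_0$ is contracted by the crepant contraction $\psi \colon M \to \ol{M} := \Proj R(M, -K_M)$. I argue by contradiction, supposing such a $C$ exists.

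Before applying Proposition~\ref{prop-semiAtiyah}, I verify that every fiber of $\psi$ has dimension $\leq 1$; this is where hypothesis (2) is used. If some $\psi$-fiber contained an irreducible surface $\Sigma \subset M$, then $\pi(\Sigma)$ could be neither a point (since $\pi^{-1}(\mathrm{pt}) \simeq \P^1$) nor a curve (otherwise $\Sigma = \pi^{-1}(\pi(\Sigma))$, and a $\pi$-fiber $F \subset \Sigma$ would satisfy $-K_M \cdot F = 2$, contradicting $-K_M|_{\Sigma} \equiv 0$). Thus $T := \pi(\Sigma)$ is a surface and $\pi|_{\Sigma} \colon \Sigma \to T$ is generically finite of some degree $k \geq 1$. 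Expressing the class of $\Sigma$ in the $\P^1$-bundle $\pi^{-1}(T) \to T$ as $\pi^{\ast}L + k\,\xi|_{\pi^{-1}(T)}$ and using the Grothendieck relation $\xi^2 = \pi^{\ast}c_1(\mcE) \cdot \xi - \pi^{\ast}c_2(\mcE)$, the triviality $-K_M|_{\Sigma} \equiv 0$ translates into a numerical identity on $T$: in the representative case $k = 1$, where $\Sigma$ corresponds to a quotient line bundle of $\mcE|_T$ whose first Chern class is numerically $-H_X|_T / 2$, a short Chern-class calculation yields exactly $(K_X^2 + \Delta(\mcE)) \cdot T = 0$, contradicting (2); the cases $k \geq 2$ are handled by the parallel argument.

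Now Proposition~\ref{prop-semiAtiyah}, applied to $\Gamma_0$ and the ample class $A := \xi + k\,\pi^{\ast}H_X$ (with $k \gg 0$), produces a smooth curve $C'$, a $\P^1$-bundle $p \colon S \to C'$, and a generically finite morphism $f \colon S \to M$ fitting in a commutative square with $\psi$, such that $f^{\ast}A \cdot \gamma \leq A \cdot \Gamma_0$ for every $p$-fiber $\gamma$. Setting $\Gamma_t := f(p^{-1}(t))$, $C_t := \pi(\Gamma_t)$, and $d_t := \deg(\pi|_{\Gamma_t})$, the equation $-K_M \cdot \Gamma_t = 2\xi \cdot \Gamma_t + d_t H_X \cdot C_t = 0$ combined with the $A$-bound forces $d_t = 1$, $\xi \cdot \Gamma_t = -1$, and $H_X \cdot C_t = 2$. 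Hence $\Gamma_t \simeq \P^1$ maps birationally onto an irreducible reduced curve $C_t$ of $H_X$-degree $2$; such a curve spans some $\P^2 \subset \P(H^0(H_X))$ and has arithmetic genus $0$, making it a smooth conic in the sense of Definition~\ref{def-dag}. The classifying morphism $t \mapsto [C_t]$ thus maps $C'$ into the smooth locus of $\Hilb_{2t+1}(X, H_X)$; provided it is non-constant, its image is a proper curve in that smooth locus, directly contradicting $(\dag)$. The degenerate alternative where all $C_t$ equal a single $C_0$, which forces $f(S) \subset \pi^{-1}(C_0)$ and $\mcE|_{C_0} \simeq \mcO(-1)^{\oplus 2}$, is ruled out by Bend-and-Break-type arguments together with a further use of (2) on a surface containing $C_0$, via a computation parallel to the previous paragraph. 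The chief technical obstacle is the fiber-dimension bound: the Chern-class identity reducing $-K_M|_{\Sigma} \equiv 0$ to $(K_X^2 + \Delta(\mcE)) \cdot T \leq 0$ must be established uniformly in the multi-section degree $k \geq 1$, and the degenerate constant-family case requires its own careful handling.
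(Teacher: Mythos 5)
Your proposal follows the same route as the paper: lines are handled by nefness of $-K_{\P(\mcE)}$, a non-nef smooth conic produces a $-K$-trivial minimal section, hypothesis (2) is used to bound the fibers of the crepant contraction $\psi$, Proposition~\ref{prop-semiAtiyah} produces a $\P^{1}$-bundle of $-K$-trivial curves of bounded $A$-degree, and $(\dag)$ gives the contradiction. Two of your loose ends close easily. For the fiber-dimension bound, do not decompose $[\Sigma]$ in the $\P^{1}$-bundle over $T=\pi(\Sigma)$ and case-split on the multisection degree $k$: work on $\Sigma$ itself. Since $-K_{\P(\mcE)}|_{\Sigma}\equiv 0$, the restriction of $-K_{\pi}=2\xi-\pi^{\ast}c_{1}(\mcE)$ to $\Sigma$ is numerically $(\pi|_{\Sigma})^{\ast}K_{X}$; squaring and using $\xi^{2}=\xi\,\pi^{\ast}c_{1}(\mcE)-\pi^{\ast}c_{2}(\mcE)$ gives $(\pi|_{\Sigma})^{\ast}\bigl(K_{X}^{2}+\Delta(\mcE)\bigr)\equiv 0$, and pushing forward along the finite map $\pi|_{\Sigma}$ yields $\deg(\pi|_{\Sigma})\cdot\bigl(K_{X}^{2}+\Delta(\mcE)\bigr).T=0$, contradicting (2) uniformly in $k$; this is exactly the paper's Claim~\ref{claim-MOS}, so your ``chief technical obstacle'' disappears. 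Also, $d_{t}=1$ is not forced by the numerics alone: $H_{X}$-degree $2$ for $\Gamma_{t}$ allows $(d_{t},H_{X}.C_{t})=(2,1)$, i.e.\ a double cover of a line, and you must exclude this with your own first paragraph ($\mcE|_{l}$ nef on lines makes $-K_{\P(\mcE)}$ strictly positive on every curve of $\P(\mcE|_{l})$), as the paper does explicitly.

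The genuine gap is the degenerate case you name at the end. If all $C_{t}$ coincide with a single conic $C_{0}$, then indeed $f(S)=\pi^{-1}(C_{0})$ and $\mcE|_{C_{0}}\simeq\mcO(-1)^{\oplus 2}$, but your proposed fix --- ``a further use of (2) on a surface containing $C_{0}$'' together with unspecified Bend-and-Break arguments --- is not an argument: in this scenario the contracted curves sweep the surface $\pi^{-1}(C_{0})\subset\P(\mcE)$, whose image in $X$ is the curve $C_{0}$, so there is no irreducible surface $T\subset X$ to which hypothesis (2) applies, and $\psi$ contracts $\pi^{-1}(C_{0})$ only onto a curve, so the fiber-dimension bound is not violated either. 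Note that the paper itself treats this point only implicitly: after observing that $(p,e)\colon S\to B\times X$ is a closed embedding over $B$, it asserts that the resulting family of conics is non-trivial with finite classifying morphism $u\colon B\to\Hilb_{2t+1}(X,H_{X})$, which is precisely the non-constancy you would need and which fails exactly in your degenerate scenario. So as written your sketch does not close this case; you would need either a genuine argument excluding a smooth conic $C_{0}$ with $\mcE|_{C_{0}}\simeq\mcO(-1)^{\oplus 2}$ arising as the image of the family (for instance by controlling the choice of the curve in $\Hilb(Y)$ used in Proposition~\ref{prop-semiAtiyah}), or an explicit appeal to the paper's assertion.
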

\begin{proof}
Let $\pi \colon Y:=\P(\mcE) \to X$ be the projectivization, $\xi$ a tautological divisor, and $H:=\pi^{\ast}H_{X}$. 
From our assumption, $-K_{Y}=2\xi+H$ is nef. 
Hence $A:=\xi+H$ is ample and $\mcE(H_{X})$ is an ample vector bundle.
In particular,  $\mcE|_{l}$ is nef for every line $l$ on $X$. 
\begin{claim}[{\cite[Lemma~5.3]{MOS14}}]\label{claim-MOS}
Let $\psi \colon Y \to \ol{Y}$ be the contraction given by $-K_{Y}$. 
Then there is no irreducible surface contracted by $\psi$. 
\end{claim}
\begin{proof}
Assume the contrary. 
Then there is an irreducible surface $J \subset Y$ such that $\psi(J)$ is a point. 
Then $\pi|_{J} \colon J \to X$ is finite. 
Since $-K_{Y}|_{J} \sim 0$, it follows that
\begin{align*}
(\pi|_{J})^{\ast}K_{X}^{2}
&= (-K_{\pi}|_{J})^{2} \\
&=(2\xi|_{J}-(\pi|_{J})^{\ast}c_{1}(\mcE))^{2} \\
&=4(\xi|_{J})^{2}-4(\xi|_{J})(\pi|_{J})^{\ast}c_{1}(\mcE)+(\pi|_{J})^{\ast}c_{1}(\mcE)^{2} \\
&=4(\xi|_{J}(\pi|_{J})^{\ast}c_{1}(\mcE)-(\pi|_{J})^{\ast}c_{2}(\mcE))-4\xi|_{J}(\pi|_{J})^{\ast}c_{1}(\mcE)+(\pi|_{J})^{\ast}c_{1}(\mcE)^{2} \\
&=(\pi|_{J})^{\ast}(c_{1}(\mcE)^{2}-4c_{2}(\mcE)) = -(\pi|_{J})^{\ast} \Delta(\mcE).
\end{align*}
Thus $(\pi|_{J})^{\ast}(K_{X}^{2}+\Delta(\mcE))=0$, which contradicts our assumption (2). 
\end{proof}
Let $C$ be a smooth conic on $X$. 
Suppose that $\mcE|_{C}$ is not nef. 
Write $\mcE|_{C}= \mcO_{\P^{1}}(a) \oplus \mcO_{\P^{1}}(b)$ with $a \leq b$ and $a \leq -1$.  
For a minimal section $\wt{C} \subset \P(\mcE|_{C})$, we have $\xi.\wt{C}=a$ and $H.\wt{C}=2$. 
Since $-K_{\P(\mcE)}=2\xi+H$ is nef, we have $a \geq -1$. 
Then $a=-1$, which means $-K_{\P(\mcE)}.\wt{C}=0$. 
By Proposition~\ref{prop-semiAtiyah}, there is a smooth curve $B$ and a $\P^{1}$-bundle $p \colon S \to B$ such that every $p$-fiber $\gamma$ satisfies 
$0 < f^{\ast}A.\gamma \leq A.\wt{C}$, 
where $A=\xi+H$; 
\[
\begin{tikzcd}
S \arrow[r,"f"] \arrow[d,"p"']&Y=\P_{X}(\mcE) \arrow[r,"\pi"] \arrow[d,"\psi"]& X \\
B \arrow[r,"g"']&\ol{Y}.&
\end{tikzcd}
\]
Since $A.\wt{C}=1$, 
we have $f^{\ast}A.\gamma = 1$ and $f^{\ast}H.\gamma=2$ for every $p$-fiber $\gamma$. 
Define $e:=\pi \circ f \colon S \to X$. 
For a $p$-fiber $\gamma$, if $e(\gamma)$ is a line, then $\mcE|_{e(\gamma)}$ is nef and hence $-K_{Y}|_{\pi^{-1}(e(\gamma))}=(2\xi+H)|_{\pi^{-1}(e(\gamma))}$ is ample, which is a contradiction. 
Thus $H_{X}.e(\gamma)=2$, i.e., $e(\gamma)$ is a conic on $X$ and $e|_{\gamma} \colon \gamma \to e(\gamma)$ is an isomorphism. 
Hence the morphism $(p,e) \colon S \to B \times X$ is a closed embedding over $B$. 
Hence $p \colon S \to B$ is a non-trivial flat family of conics on $X$ and is the base change of $\Univ_{2t+1}(X,H_{X}) \to \Hilb_{2t+1}(X,H_{X})$ under the induced finite morphism $u \colon B \to \Hilb_{2t+1}(X,H_{X})$. 
Let $Z$ be the irreducible component containing the image $u(B)$. 
Since every $p$-fiber is a smooth reduced rational curve, a closed point in $u(B)$ corresponds to a smooth conic on $X$. 
Hence the discriminant locus $\Delta_{Z}$ is a proper closed subscheme on $Z$ and $\Delta_{Z} \cap u(B) = \emp$. 
This contradicts our assumption $(\dag)$ in Definition~\ref{def-dag} that $Z^{\sm}=Z \setminus \Delta_{Z}$ never contains a proper curve.
Therefore, $\mcE|_{C}$ is nef. 
\end{proof}

To show Theorem~\ref{thm-conic}~(2), we prepare the following lemma. 

\begin{lem}\label{lem-conictriv}
Let $X \subset \P^{n}$ be a projective variety and $\mcE$ a rank $2$ vector bundle on $X$. 
Suppose that 
$\mcE|_{l} \simeq \mcO_{l}^{\oplus 2}$ for every line $l$. 
Then $\mcE|_{C} \simeq \mcO_{C}^{\oplus 2}$ for every singular conic $C$. 
\end{lem}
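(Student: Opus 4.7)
The plan is to split on the type of singular conic $C$. Recall that $C$ spans a plane $\P^{2} \subset \P^{n}$ and is either (a) a nodal union $l_{1} \cup l_{2}$ of two distinct lines meeting at a single point $p$, or (b) a double line $2l$ supported on a line $l$. In both cases the underlying reduced components are lines on $X$ with respect to $H_{X}$, so the hypothesis applies and $\mcE$ restricts trivially to each of them.

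For case (a), I would tensor the Mayer--Vietoris sequence
\[
0 \to \mcO_{C} \to \mcO_{l_{1}} \oplus \mcO_{l_{2}} \to \mcO_{p} \to 0
\]
with the locally free sheaf $\mcE$. Fixing a basis of the fiber $\mcE_{p}$, the assumption $\mcE|_{l_{i}} \simeq \mcO_{l_{i}}^{\oplus 2}$ lifts this basis uniquely to a trivializing pair of sections of $\mcE|_{l_{i}}$ for $i=1,2$, and these two liftings tautologically agree at $p$. They therefore glue to two sections $s_{1},s_{2}\in H^{0}(\mcE|_{C})$, and the induced morphism $\mcO_{C}^{\oplus 2} \to \mcE|_{C}$ is an isomorphism on each $l_{i}$ and hence on $C$.

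For case (b), I would tensor the conormal sequence on the double line,
\[
0 \to \mcO_{l}(-1) \to \mcO_{2l} \to \mcO_{l} \to 0,
\]
(with $\mcO_{l}(-1)$ the ideal of $l$ inside $2l$, which is $N^{\vee}_{l/\P^{2}}$) by $\mcE|_{2l}$ to obtain
\[
0 \to \mcE|_{l}(-1) \to \mcE|_{2l} \to \mcE|_{l} \to 0.
\]
Since $\mcE|_{l} \simeq \mcO_{l}^{\oplus 2}$ by hypothesis, we have $H^{1}(\mcE|_{l}(-1)) = H^{1}(\P^{1},\mcO(-1))^{\oplus 2} = 0$, so a trivializing pair of sections of $\mcE|_{l}$ lifts to sections $s_{1},s_{2}$ of $\mcE|_{2l}$. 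The resulting map $\mcO_{2l}^{\oplus 2} \to \mcE|_{2l}$ between rank two locally free $\mcO_{2l}$-modules is an isomorphism modulo the square-zero ideal, and hence an isomorphism on $2l$ by Nakayama: its determinant is a unit everywhere because its restriction to $l$ is.

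The argument is essentially formal in both cases. The one mildly delicate step is the lifting in case (b), which rests on the vanishing $H^{1}(\P^{1},\mcO(-1))=0$ and the Nakayama check on $\det$; I do not foresee any substantive obstacle.
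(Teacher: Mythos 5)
Your proof is correct, and while it uses the same skeleton as the paper -- the same case division into a nodal conic $l_{1}\cup l_{2}$ and a double line, and the same two exact sequences $0 \to \mcO_{C} \to \mcO_{l_{1}}\oplus\mcO_{l_{2}} \to \Bbbk(p) \to 0$ and $0 \to \mcO_{l}(-1) \to \mcO_{2l} \to \mcO_{l} \to 0$ -- the way you settle each case is genuinely more direct than the paper's. In the nodal case the paper computes $\RG(\mcE|_{C})=\C^{\oplus 2}$, shows a nonzero section is nowhere vanishing, obtains an extension $0 \to \mcO_{C} \to \mcE|_{C} \to \mcL_{C} \to 0$, identifies $\mcL_{C}\simeq\mcO_{C}$, and splits it using $H^{1}(\mcO_{C})=0$; you instead exploit the fact that for the trivial bundle on $\P^{1}$ evaluation at a point is an isomorphism from global sections to the fiber, so the two frames matching a chosen basis of $\mcE(p)$ glue through the Mayer--Vietoris kernel to an explicit trivializing morphism $\mcO_{C}^{\oplus 2}\to\mcE|_{C}$, with no extension-splitting needed. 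In the double-line case the paper proves surjectivity of the evaluation map by analyzing the cokernel of a connecting morphism $\delta\colon \mcO_{l}(-1)^{\oplus 2}\to\mcO_{l}(-1)^{\oplus 2}$, whereas you lift a trivializing frame of $\mcE|_{l}$ using $H^{1}(\P^{1},\mcO(-1))=0$ and finish with the determinant/Nakayama observation, which is shorter. The only step worth spelling out is the passage ``isomorphism on each $l_{i}$, hence on $C$'' in the nodal case: it is justified exactly as in your case (b), since the fiber map at every closed point of $C$ agrees with that of a component on which it is invertible, so the cokernel vanishes by Nakayama and the map of rank-two bundles is then an isomorphism (e.g.\ by the determinant). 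With that remark made explicit, your argument is complete and, if anything, streamlines the paper's.
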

\begin{proof}
Let $C$ be a singular conic on $X$. 
First, we treat the case when $C$ is not irreducible and reduced.
Let $l_{1}$ and $l_{2}$ be the irreducible components of $C$, and define $p:=l_{1} \cap l_{2}$. 
Then, we obtain the following exact sequence 
\begin{align}\label{ex-unionofline}
0 \to \mcO_{C} \to \mcO_{l_{1}} \oplus \mcO_{l_{2}} \to \Bbbk(p) \to 0.
\end{align}
Note that $\mcE|_{l} \simeq \mcO_{l}^{\oplus 2}$ for any line $l$. 
Tensoring with $\mcE$, we obtain 
\[0 \to \mcE|_{C} \to \mcO_{l_{1}}^{\oplus 2} \oplus \mcO_{l_{2}}^{\oplus 2} \xrightarrow{\alpha} \Bbbk(p)^{\oplus 2} \to 0.\]
Note that $H^{0}(\alpha)$ is given by $(s_{1},s_{2}) \mapsto s_{1}(p)-s_{2}(p)$ and hence $H^{0}(\alpha)$ is surjective. 
Thus $\RG(\mcE|_{C})=\C^{\oplus 2}$ and $H^{0}(\mcE|_{C})=\Ker H^{0}(\alpha)=\{(s_{1},s_{2}) \in H^{0}(\mcE|_{l_{1}} \oplus \mcE|_{l_{2}}) \mid s_{1}(p)-s_{2}(p)=0\}$. 
If $s \in H^{0}(\mcE|_{C})$ satisfies $s|_{l_{1}}=0$, then $s(p)=0$ and hence $s|_{l_{2}}(p)=0$, which implies $s=0$. 
Thus for a non-zero section $s \in H^{0}(\mcE|_{C})$, 
both $s|_{l_{1}}$ and $s|_{l_{2}}$ are non-zero.
Hence a non-zero global section $s$ of $\mcE|_{C}$ is nowhere vanishing. 
Thus the cokernel of $s \colon \mcO_{C} \to \mcE|_{C}$ is an invertible sheaf $\mcL_{C}$ on $C$. 
Since $\RG(\mcO_{C})=\C$ and $\RG(\mcE|_{C})=\C^{\oplus 2}$, 
we have $\RG(\mcL_{C})=\C$. 
From the exact sequence (\ref{ex-unionofline}), 
tensoring with $\mcL_{C}$, we obtain 
\[0 \to \mcL_{C} \to \mcL_{C}|_{l_{1}} \oplus \mcL_{C}|_{l_{2}} \to \Bbbk(p) \to 0.\]
For each $i \in \{1,2\}$, $\mcL_{C}|_{l_{i}}$ is a quotient of $\mcE|_{l_{i}}=\mcO_{l_{i}}^{\oplus 2}$, $\mcL_{C}|_{l_{i}}$ is nef. 
Since $\RG(\mcL_{C}|_{l_{1}} \oplus \mcL_{C}|_{l_{2}}) = \C^{\oplus 2}$, 
$\mcL_{C}|_{l_{i}} \simeq \mcO_{l_{i}}$. 
Thus $\mcL_{C} \simeq \mcO_{C}$. 
Since $H^{1}(\mcO_{C})=0$, we have $\mcE|_{C} \simeq \mcO_{C}^{\oplus 2}$. 

Finally, we treat the case $C$ is non-reduced. 
Let $l:=C_{\red}$. 
On the plane $\Pi=\gen{C} \simeq \P^{2}$, 
there exist $0 \to \mcO_{\P^{2}}(-2) \to \mcO_{\P^{2}} \to \mcO_{C} \to 0$ and $0 \to \mcO_{\P^{2}}(-1) \to \mcO_{\P^{2}} \to \mcO_{l} \to 0$. 
By the snake lemma, we have 
\[0 \to \mcO_{l}(-1) \to \mcO_{C} \to \mcO_{l} \to 0.\]
Then we have the following exact sequences on $C$: 
\[
\begin{tikzcd}
0 \arrow[r] & 0 \arrow[r] \arrow[d] & H^{0}(\mcE|_{C}) \otimes \mcO_{C} \arrow[r] \arrow[d] & H^{0}(\mcE|_{l}) \otimes \mcO_{C} \arrow[r] \arrow[d,"\ev"] & 0 \\
0 \arrow[r] & \mcE|_{l}(-1) \arrow[r] & \mcE|_{C} \arrow[r]  & \mcE|_{l} \arrow[r] & 0.
\end{tikzcd}
\]
Since $\mcE|_{l} \simeq \mcO_{l}^{\oplus 2}$, 
the restriction morphism $H^{0}(\mcE|_{C}) \to H^{0}(\mcE|_{l}) \simeq \C^{2}$ is isomorphic. 
Then by Nakayama's lemma, it suffices to show the canonical map $H^{0}(\mcE|_{C}) \otimes \mcO_{C} \to \mcE|_{C}$ is surjective. 
Note that the natural map $H^{0}(\mcE|_{l}) \otimes \mcO_{C} \to \mcE|_{l} \simeq \mcO_{l}^{\oplus 2}$ is surjective and its kernel is $\mcO_{l}(-1)^{\oplus 2}$. 
Hence we obtain a connecting morphism $\delta \colon \mcO_{l}(-1)^{\oplus 2} \to \mcO_{l}(-1)^{\oplus 2}$. 
Then there is $a \in \{0,1,2\}$ such that $\Cok(\delta)=\mcO_{l}(-1)^{\oplus a}$. 
Then there is a surjection $\mcE|_{C} \epm \mcO_{l}(-1)^{\oplus a}$. 
Restricting to $l$, we obtain a surjection $\mcE|_{l} \epm \mcO_{l}(-1)^{\oplus a}$, which implies $a=0$. 
Thus $\mcE|_{C} \simeq \mcO_{C}^{\oplus 2}$. 
\end{proof}

\noindent\emph{\textbf{Proof of Theorem~\ref{thm-conic}}}.
Theorem~\ref{thm-conic}~(1) is a direct corollary of Proposition~\ref{prop-conicnef}. 
Let us show Theorem~\ref{thm-conic}~(2). 
Let $X$ be a smooth Fano variety of dimension $n \geq 3$ with $b_{2}(X)=b_{4}(X)=1$. 
Suppose that $-K_{X}$ is very ample, $X$ has a smooth conic with respect to $-K_{X}$, and $(X,-K_{X})$ satisfies $(\dag)$. 
Let $\mcE$ be a $(-K_{X})$-slope semistable rank $2$ weak Fano bundle with $c_{1}(\mcE)=0$. 
The aim is to show $\mcE \simeq \mcO_{X}^{\oplus 2}$. 

Let $\Gamma$ be a smooth conic on $X$. 
Let $Z$ be an irreducible component of $\Hilb_{2t+1}(X,-K_{X})$ containing $[\Gamma]$. 
Denote the universal family by $p \colon U \to Z$ and the evaluation morphism by $e \colon U \to X$; 
\begin{equation}\label{dia-conic}
\begin{tikzcd}
U \arrow[r,"e"] \arrow[d,"p"']& X \\
Z. 
\end{tikzcd}
\end{equation}

Since $\mcE$ is $(-K_{X})$-slope semistable and $b_{4}(X)=1$, 
Bogomolov's inequality shows $\Delta(\mcE)=4c_{2}(\mcE)-c_{1}(\mcE)^{2}$ satisfies $\Delta(\mcE).T \geq 0$ for every irreducible surface $T$. 
Thus we can use Proposition~\ref{prop-conicnef} for $(X,\mcE)$ and hence $\mcE|_{l}$ is nef for every line $l$, and $\mcE|_{\gamma}$ is nef for every smooth conic $\gamma$. 
Since $c_{1}(\mcE)=0$, $\mcE|_{l} \simeq \mcO_{l}^{\oplus 2}$ for every line $l$ and hence by Lemma~\ref{lem-conictriv}, $\mcE|_{\gamma} \simeq \mcO_{\gamma}^{\oplus 2}$ for every (possibly singular) conic $\gamma$. 

Then Grauert's theorem shows $\mcG:=p_{\ast}e^{\ast}\mcE$ is a locally free sheaf of rank $2$ and the natural morphism $p^{\ast}\mcG \to e^{\ast}\mcE$ is isomorphic. 
In particular, $p^{\ast}c_{2}(\mcG) \equiv e^{\ast}c_{2}(\mcE)$. 
By \cite[Chap.~II, Theorem~1.14]{Kollar}, $\dim Z \geq n-1 \geq 2$. 
Then for every irreducible curve $C \subset Z$, we have 
\begin{align*}
0
&=c_{2}(p^{\ast}\mcG).p^{-1}(C) \\
&=c_{2}(e^{\ast}\mcE).p^{-1}(C) \\
&=a \cdot e^{\ast}(-K_{X})^{2}.p^{-1}(C),
\end{align*}
where $a \in \Q_{\geq 0}$. 
If $a>0$, then $p^{-1}(C).e^{\ast}(-K_{X})^{2}=0$, which implies that $\dim e(p^{-1}(C)) \leq 1$ for every irreducible curve $C$ on $Z$. 
This leads a contradiction, and hence $c_{2}(\mcE) = 0$ in $H^{4}(X,\Q)$. 
Then the Hirzebruch--Riemann--Roch formula implies $\chi(\mcE)=2$. 
Since $h^{\geq 2}(\mcE)=0$ follows from the Le Potier vanishing theorem \cite[Theorem~7.3.5]{laz2}, 
we have $h^{0}(\mcE) \geq 2 > 0$.
Take a non-zero section $s \colon \mcO \to \mcE$. 
Since $\mcE$ is slope semistable and $b_{2}(X)=1$, the cokernel of $s$ is isomorphic to the ideal sheaf $\mcI_{W}$ for a closed subscheme $W$ which is of purely codimension $2$ or empty. 
Since $c_{2}(\mcE) \equiv [W]$, $W$ is empty and hence $\mcE \simeq \mcO_{X}^{\oplus 2}$. 
This completes the proof. \qed

\begin{rem}
In the above proof, the condition $b_{4}(X)=1$ could be weakened by assuming that $c_{2}(\mcE)$ is nef in $N^{2}(X)_{\R}$ and the cylinder homomorphism $e_{\ast} \circ p^{\ast} \colon H_{2}(Z,\Q) \to H_{4}(X,\Q)$ is surjective for a family of conics as in (\ref{dia-conic}). 
\end{rem}

\section{Noether--Lefschetz theorems for weak Fano varieties}\label{sec-RSGNL}

As further preliminaries, we summarize here the Brill--Noether theory for K3 surfaces and the Grothendieck--Noether--Lefschetz-type theorem generalized by Ravindra--Srinivas, and review some results on linear systems of weak Fano $3$-folds derived from those theories.

\subsection{Preliminaries for K3 surfaces}\label{subsec-K3prelim}
First, we review some results from the Brill--Noether theory for K3 surfaces.

\begin{defi}[Brill--Noether general \cite{MukaiDev}]
A \emph{quasi-polarized K3 surface $(S,H)$ of genus $g$} is a pair of a smooth K3 surface $S$ and a nef big divisor $H$ on $S$ with $H^{2}=2g-2$. 
\begin{itemize}
\item A smooth curve $C$ of genus $g$ is said to be \emph{Brill--Noether general} if 
\[\forall \mcL \in \Pic(C), h^{0}(\mcL) \cdot h^{1}(\mcL) \leq g.\]
\item 
A quasi-polarized K3 surface $(S,H)$ is said to be \emph{Brill--Noether general} if 
\[\forall L \in \Pic(S) \setminus \{0,H\}, h^{0}(H-L) \cdot h^{0}(L) \leq g.\]
\end{itemize}
\end{defi}
Here we summarize important results about Brill--Noether theory of K3 surfaces. 
\begin{thm}[Lazarsfeld \cite{LazBNP}]
\label{thm-LazBN}
Let $(S,H)$ be a quasi-polarized K3 surface of genus $g$. 
Suppose that $\lvert H \rvert$ is base point free. 
\begin{enumerate}
\item If there is a Brill--Noether general curve $C \in \lvert H \rvert$, then $(S,H)$ is Brill--Noether general. 
\item If $S$ is of Picard rank $1$ and $H$ is a primitive element in $\Pic(S)$, then every smooth member $C \in \lvert H \rvert$ is Brill--Noether general. 
\item Let $X$ be a smooth Fano $3$-fold such that $\rho(X)=1$ and $-K_{X}$ is very ample. 
Let $S$ be a Du Val member of $\lvert -K_{X} \rvert$ and $\mu \colon \wt{S} \to S$ the minimal resolution. 
Then $(\wt{S},\mu^{\ast}(-K_{X}|_{S}))$ is Brill--Noether general. 
\end{enumerate}
\end{thm}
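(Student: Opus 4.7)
All three parts rest on the Lazarsfeld--Mukai (LM) construction: given a base-point-free line bundle $A$ on a smooth $C \subset S$ with $h^{0}(A) h^{1}(A) > g$, dualizing the kernel of the evaluation $H^{0}(A) \otimes \mcO_{S} \epm \iota_{\ast}A$ (where $\iota \colon C \hra S$) produces a vector bundle $E$ on $S$ with $\rk E = h^{0}(A)$, $c_{1}(E) = H$, $c_{2}(E) = \deg A$, and $h^{0}(E) \geq h^{0}(A) + h^{1}(A)$.  The whole theorem then reduces to ruling out such $E$ via the Picard-group hypotheses, and to transferring Brill--Noether statements between $S$ and $C$ by standard restriction-sequence arguments.

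\textbf{Part (2) (Lazarsfeld's original argument).}  With $\Pic(S) = \Z \cdot H$ and $H$ primitive, any rank-one saturated subsheaf of $E$ has first Chern class $kH$ for some $k \in \Z$.  A destabilizer would force $k \geq 1$, and combined with the lower bound on $h^{0}(E)$ and Bogomolov's inequality this yields a numerical contradiction with $h^{0}(A) h^{1}(A) > g$.  Hence $E$ cannot exist, and consequently no such $A$ exists on any smooth $C \in \lvert H \rvert$.

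\textbf{Part (1).}  If $h^{0}(L) = 0$ or $h^{0}(H-L) = 0$ the inequality is vacuous; otherwise both $L$ and $H-L$ are effective.  Since $L \notin \{0, H\}$, neither $L - H$ nor $-L$ is effective, so $h^{0}(L-H) = h^{0}(-L) = 0$.  The restriction sequence $0 \to L(-H) \to L \to L|_{C} \to 0$ then forces $h^{0}(L|_{C}) \geq h^{0}(L)$, and by adjunction $\omega_{C} \simeq H|_{C}$ one has $h^{1}(L|_{C}) = h^{0}((H-L)|_{C}) \geq h^{0}(H-L)$; multiplying gives $h^{0}(L|_{C}) h^{1}(L|_{C}) > g$, contradicting the Brill--Noether generality of $C$.

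\textbf{Part (3).}  Let $\pi \colon \mcS \to \Delta$ be a general one-parameter smoothing of $S$ inside $\lvert -K_{X} \rvert$ with smooth general fibre $S_{t}$.  Since $S$ has only Du Val singularities, a simultaneous resolution (possibly after finite base change) gives a flat family $\wt{\mcS} \to \Delta$ of K3 surfaces with central fibre $\wt{S}$ and general fibre still $S_{t}$.  The Ravindra--Srinivas Noether--Lefschetz theorem, applied to very ample $-K_{X}$ on the Picard rank one Fano threefold $X$, yields $\Pic(S_{t}) = \Z \cdot H|_{S_{t}}$ for general $t$, so $(S_{t}, H|_{S_{t}})$ is Brill--Noether general by (2) and (1).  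To transfer this to $(\wt{S}, \mu^{\ast}(H|_{S}))$, I would argue by contradiction: given an offending $L_{0}$ on $\wt{S}$, construct the associated LM bundle $E_{0}$ from a suitable smooth curve in $\lvert \mu^{\ast}(H|_{S}) \rvert$, extend it to a relative Lazarsfeld--Mukai sheaf on $\wt{\mcS}$ using semicontinuity of $h^{\bullet}$ and the rigidity of $E_{0}$ up to $\mu$-exceptional twists, and obtain a Brill--Noether failure on a nearby smooth $S_{t}$.  This last step is the main obstacle: Brill--Noether generality is an open condition on curve moduli and so does not specialize directly, but recasting the obstruction as a vector bundle with rigid Chern data turns the question into a deformation-theoretic one where extension is possible.
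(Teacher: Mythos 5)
Parts (1) and (2) of your proposal are fine and match the paper: your restriction-sequence argument for (1) is essentially identical to the paper's (the paper gets $h^{0}(H-L)\leq h^{1}(L|_{C})$ from $H^{2}(L-H)\simeq H^{0}(H-L)^{\vee}$ and $H^{2}(L)=0$, which is the same computation as your Serre-duality step on $C$), and (2) is a citation-level result in the paper ([LazBNP, Corollary~1.4]), so your rough sketch of the Lazarsfeld--Mukai argument is acceptable even though, as stated, ``destabilizer plus Bogomolov'' is not quite Lazarsfeld's actual mechanism (he shows the LM bundle would be non-simple and then splits it against $\Pic(S)=\Z\cdot H$). The genuine gap is part (3). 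Your simultaneous-resolution-plus-deformation scheme stops exactly at the decisive step, which you yourself flag as ``the main obstacle,'' and that step is not a technicality: a Brill--Noether-violating line bundle $L_{0}$ on $\wt{S}$ does \emph{not} deform to the nearby fibers, since by Noether--Lefschetz those fibers have Picard rank one, so the class of $L_{0}$ is not even algebraic there --- this is precisely why Brill--Noether generality cannot be specialized naively from the general fiber to the central one. Passing to a Lazarsfeld--Mukai bundle $E_{0}$ does not by itself repair this: you would still have to (i) produce the smooth curve and base-point-free series needed for the LM construction from the offending divisor, (ii) prove $E_{0}$ actually extends sideways in the family (the ``rigidity up to $\mu$-exceptional twists'' you invoke is unproved), (iii) show enough sections survive on $S_{t}$ --- semicontinuity gives bounds in the wrong direction, so one needs $h^{1}=h^{2}=0$ statements plus invariance of $\chi$ --- and (iv) convert the deformed bundle back into a Brill--Noether-special linear series on a curve in $\lvert H_{t}\rvert$ so as to contradict (2). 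None of these steps is carried out, so as written (3) is not proved.

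For comparison, the paper's proof of (3) avoids deformation theory entirely: keep $S$ fixed and choose a second, very general member $S'\in\lvert -K_{X}\rvert$; by Moishezon and Ravindra--Srinivas one may take $\rho(S')=1$ with $C:=S\cap S'$ smooth and disjoint from the finitely many Du Val points of $S$. Then $C$ is a smooth member of the primitive polarization on the Picard-rank-one K3 surface $S'$, hence Brill--Noether general by (2); and the same curve, viewed on $\wt{S}$, lies in $\lvert \mu^{\ast}(-K_{X}|_{S})\rvert$, so (1) applied on $\wt{S}$ immediately gives the Brill--Noether generality of $(\wt{S},\mu^{\ast}(-K_{X}|_{S}))$. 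If you want to rescue your route you would need the full machinery of deforming Lazarsfeld--Mukai bundles in families; the paper's two-line intersection trick is the far shorter path.
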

\begin{proof}
(1) Suppose there is a divisor $L$ on $S$ such that $h^{0}(L) \cdot h^{0}(H-L) \geq g+1$. 
For a Brill--Noether general member $C \in \lvert H \rvert$, we obtain an exact sequence
\[0 \to \mcO_{S}(L-H) \to \mcO_{S}(L) \to \mcO_{C}(L|_{C}) \to 0.\]
Since $H^{0}(S,\mcO_{S}(L-H))=0$ and $H^{2}(S,\mcO(L))=0$, 
we have $h^{0}(L) \leq h^{0}(L|_{C})$ and $h^{0}(H-L) \leq h^{1}(L|_{C})$. 
Hence $h^{0}(L|_{C}) \cdot h^{1}(L|_{C}) \geq g+1$, which violates the Brill--Noether generality of $C$. 

(2) follows from Lazarsfeld's result \cite[Corollary~1.4]{LazBNP}. 

(3) Pick a very general member $S' \in \lvert -K_{X} \rvert$ such that $\rho(S')=1$ \cite{Moishezon67,RS2} and $C:=S \cap S'$ is smooth. 
By (2), $C$ is Brill--Noether general. 
Hence by (1), the quasi-polarized K3 surface $(\wt{S},\mu^{\ast}(-K_{X}|_{S}))$ is Brill--Noether general. 
\end{proof}

\subsection{Ravindra--Srinivas--Grothendieck--Noether--Lefschetz theorem}

Let $X$ be a smooth Fano $3$-fold of Picard rank $1$. 
As we mentioned in the proof of Theorem~\ref{thm-LazBN}, 
Moishezon \cite{Moishezon67} proved the \emph{Noether--Lefschetz theorem} for a Fano $3$-fold of Picard rank $1$, which states that a very general anticanonical member $S \in \ls{-K_{X}}$ is of Picard rank $1$. 
On the other hand, when taking a very general member $S$ containing a given smooth curve $C \subset X$, it is generally nontrivial whether $S$ is smooth and whether $\rho(S)=2$. 
Ravindra--Srinivas's generalization of the Noether--Lefschetz theorem \cite{RS2} enables us to find sufficient conditions for this desired property for very general members in the linear system $\lvert \mcI_{C}(-K_{X}) \rvert$ (Corollary~\ref{cor-RSNL}). 
We will also need to determine the Picard group of a general member of $\lvert \mcO_{\P(\mcF)}(1) \rvert$ of the projectivization of a rank $2$ weak Fano bundle $\mcF$ such that $c_{1}(\mcF)=c_{1}(X)$. 
To see this, we use a variant of the \emph{Grothendieck--Lefschetz theorem} for weak Fano manifolds (Theorem~\ref{thm-RSGNL}), which can also be shown by using the generalization of the Grothendieck-Lefschetz theorem by Ravindra--Srinivas \cite{RS1}. 

\begin{thm}\label{thm-RSGNL}
Let $M$ be a smooth weak Fano variety of dimension $n \geq 3$ with $\rho(M)=2$. 
Let $H$ be a big base-point-free divisor with $-K_{M} \sim (n-2)H$. 
Let $\psi \colon M \to \ol{M}$ denote the contraction induced by $H$. 
When $n=3$, we additionally assume that $\psi$ coincides with the morphism given by the complete linear system $\lvert H \rvert$. 
Suppose that $\psi$ is small or divisorial with $\dim \psi(\Exc(\psi)) \geq 2$. 
Then for general $X \in \lvert H \rvert$, $\Pic(M) \to \Pic(X)$ is isomorphic. 
\end{thm}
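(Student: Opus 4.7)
The plan is to reduce the statement to the Ravindra--Srinivas generalizations of the Grothendieck--Lefschetz and Noether--Lefschetz theorems for class groups of normal projective varieties, applied to $\ol{M}$. Write $\ol{H}$ for the ample line bundle on $\ol{M}$ with $H = \psi^{\ast}\ol{H}$; since $|H| = \psi^{\ast}|\ol{H}|$, a general $X \in |H|$ has the form $X = \psi^{-1}(\ol{X})$ for a general $\ol{X} \in |\ol{H}|$. By Bertini, $X$ is smooth (smooth $M$ and base-point-free $|H|$) and $\ol{X}$ is normal (normal $\ol{M}$ and ample base-point-free $|\ol{H}|$).

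The first step is to apply the Ravindra--Srinivas theorems to the pair $(\ol{M}, \ol{H})$: when $n \geq 4$, the Grothendieck--Lefschetz theorem for normal varieties gives that $\Cl(\ol{M}) \to \Cl(\ol{X})$ is an isomorphism for general $\ol{X}$, because $\ol{M}$ is a normal projective variety of dimension $\geq 4$ and $\ol{H}$ is ample and base-point-free; when $n = 3$, the corresponding Noether--Lefschetz statement of Ravindra--Srinivas yields the same conclusion for (very) general $\ol{X}$, and the extra hypothesis $\psi = \phi_{|H|}$ is exactly what ensures that $\ol{M}$ is the birational image in $\P^{N}$ defined by $|\ol{H}|$, as needed.

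Next I would identify $\Pic$ with $\Cl(\ol{\cdot})$ on both $M$ and $X$ according to the type of $\psi$. If $\psi$ is small, then it is an isomorphism in codimension one, so $\Pic(M) = \Cl(M) = \Cl(\ol{M})$; moreover, for general $X$ the locus $X \cap \Exc(\psi)$ still has codimension $\geq 2$ in $X$, so $\psi|_{X}$ is small and $\Pic(X) = \Cl(\ol{X})$ similarly. If instead $\psi$ is divisorial with exceptional divisor $E$ satisfying $\dim \psi(E) \geq 2$, then pushforward of Weil divisors yields a short exact sequence
\[
0 \to \Z[E] \to \Pic(M) \to \Cl(\ol{M}) \to 0,
\]
and for general $X$, Bertini applied on $\ol{M}$ (using $\dim \psi(E) \geq 2$) implies that $\psi(E) \cap \ol{X}$ is irreducible, whence $E_{X} := E \cap X = \psi|_{E}^{-1}(\psi(E) \cap \ol{X})$ is irreducible as well. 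One then checks that $\psi|_{X} \colon X \to \ol{X}$ is divisorial with $E_{X}$ as its sole exceptional divisor, since $\dim \psi(E_{X}) = \dim \psi(E) - 1 \leq n-3 < n-2 = \dim E_{X}$, giving the analogous sequence
\[
0 \to \Z[E_{X}] \to \Pic(X) \to \Cl(\ol{X}) \to 0.
\]

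To conclude, the restriction map $\Pic(M) \to \Pic(X)$ is compatible with these two sequences and sends $[E]$ to $[E_{X}]$, an isomorphism $\Z \to \Z$; combined with the isomorphism $\Cl(\ol{M}) \to \Cl(\ol{X})$ from the first step, the five-lemma implies that $\Pic(M) \to \Pic(X)$ is an isomorphism. I expect the main obstacle to be the divisorial case of the second step, namely the Bertini-type verification that $E_{X}$ is a prime divisor and that $\psi|_{X}$ is a divisorial contraction with $E_{X}$ as its unique exceptional divisor; this is exactly where the hypothesis $\dim \psi(\Exc(\psi)) \geq 2$ is essential, since without it the intersection $\psi(E) \cap \ol{X}$ could fail to be irreducible or to have the right dimension for general $\ol{X}$.
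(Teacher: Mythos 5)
Your proposal is correct and follows essentially the same route as the paper: restrict to $\ol{X} \in \lvert \ol{H} \rvert$ general, invoke the Ravindra--Srinivas Grothendieck/Noether--Lefschetz theorems to get $\Cl(\ol{M}) \xrightarrow{\sim} \Cl(\ol{X})$, and then compare $\Pic$ with $\Cl$ of the targets separately in the small and divisorial cases, using $\dim\psi(\Exc(\psi)) \geq 2$ to keep $\Exc(\psi)\cap X$ and its image irreducible. The only cosmetic difference is that you conclude the divisorial case via the two exact sequences and the five lemma, whereas the paper splits $\Pic(M) \simeq \Z[\Exc(\psi)] \oplus \Cl(\ol{M})$ using torsion-freeness; these are equivalent bookkeeping.
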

\begin{proof}
Let $\ol{H}$ be the Cartier divisor on $\ol{M}$ such that $\psi^{\ast}\ol{H} = H$. 
Since $H$ is base point free, so is $\ol{H}$. 
If $n \geq 4$ (resp. $n=3$), 
then for (resp. very) general member $\ol{X} \in \lvert \ol{H} \rvert$, 
$\Cl(\ol{M}) \to \Cl(\ol{X})$ is isomorphic by \cite{RS1,RS2}. 
We may also assume that $X:=\psi^{-1}(\ol{X}) \in \lvert H \rvert$ is smooth since $\lvert H \rvert$ is base point free. 
Define $\psi_{X}:=\psi|_{X} \colon X \to \ol{X}$. 

First we suppose that $\psi$ is small. 
Then $\psi_{\ast} \colon \Pic(M)=\Cl(M) \to \Cl(\ol{M})$ is isomorphic. 
In this case, $\psi_{X}$ is also small or isomorphic and hence ${\psi_{X}}_{\ast} \colon \Pic(X)=\Cl(X) \to \Cl(\ol{X})$ is isomorphic. 
Hence the restriction $\Pic(M) \to \Pic(X)$ is an isomorphism. 

Let us consider the case $\psi$ is divisorial. 
Note that $\Pic(M)$ and $\Pic(X)$ are torsion-free since $M$ and $X$ are simply-connected. 
Thus $\Pic(M) \simeq \Z[\Exc(\psi)] \oplus \Cl(\ol{M})$. 
Since $\dim \psi(\Exc(\psi)) \geq 2$, $\psi_{X}$ is not an isomorphism but a divisorial crepant morphism such that 
the exceptional divisor $\Exc(\psi_{X})=\Exc(\psi) \cap X$ and 
the center $\psi_{X}(\Exc(\psi_{X}))=\psi(\Exc(\psi)) \cap \ol{X}$ are irreducible. 
Thus $\Pic(X) \simeq \Z[\Exc(\psi_{X})] \oplus \Cl(\ol{X})$. 
Hence $\Pic(M) \to \Pic(X)$ is isomorphic. 
\end{proof}

\begin{cor}\label{cor-RSNL}
Let $X$ be a Fano $3$-fold and $C=\bigsqcup_{i=1}^{m}C_{i} \subset X$ the disjoint union of smooth curves. 
Suppose the following three conditions. 
\begin{itemize}
\item $\wt{X}:=\Bl_{C}X$ is weak Fano. 
\item The contraction onto the anticanonical model $\wt{X} \to \ol{X}:=\Proj R(\wt{X},-K_{\wt{X}})$ is flopping. 
\item $-K_{\ol{X}}$ is very ample.
\end{itemize}
Then a very general member $S$ of $\lvert \mcI_{C}(-K_{X}) \rvert$ is smooth and the natural map $\Pic(X) \oplus \bigoplus_{i=1}^{m} \Z[C_{i}] \to \Pic(S)$ is isomorphic. 
\end{cor}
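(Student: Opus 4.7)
The plan is to pass to the blow-up $\pi \colon \wt{X} \to X$ with exceptional divisor $E = \sum_{i=1}^{m} E_{i}$, use the identification $-K_{\wt{X}} = \pi^{\ast}(-K_{X}) - E$ to get a bijection $\lvert -K_{\wt{X}} \rvert \simeq \lvert \mcI_{C}(-K_{X}) \rvert$ sending $\wt{S}$ to its image $S := \pi(\wt{S})$, and then to run the small-case argument of Theorem~\ref{thm-RSGNL} for the flopping contraction $\psi \colon \wt{X} \to \ol{X}$.

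For smoothness, since $-K_{\ol{X}}$ is very ample and $\psi$ is crepant, $-K_{\wt{X}} = \psi^{\ast}(-K_{\ol{X}})$ is base-point-free, so Bertini ensures that a general $\wt{S} \in \lvert -K_{\wt{X}} \rvert$ is smooth. A local coordinate computation on the blow-up, together with the automatic inclusion $C \subset S$, shows that $\wt{S} \cap E_{i}$ is a section of the $\P^{1}$-bundle $E_{i} \to C_{i}$; hence $\pi|_{\wt{S}} \colon \wt{S} \to S$ is a bijective birational morphism. Smoothness of $\wt{S}$ then forces $\wt{S} \xrightarrow{\sim} S$, whence $S$ is smooth.

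Next I would rerun the small-case argument of Theorem~\ref{thm-RSGNL} with $M = \wt{X}$ and $H = -K_{\wt{X}}$; note that the $\rho(M) = 2$ hypothesis there is used only in the divisorial case, so it is not needed here. Since $\psi$ is small, $\Pic(\wt{X}) = \Cl(\wt{X}) \simeq \Cl(\ol{X})$; by \cite{RS2} applied to $\lvert -K_{\ol{X}} \rvert$, a very general $\ol{S} \in \lvert -K_{\ol{X}} \rvert$ satisfies $\Cl(\ol{X}) \xrightarrow{\sim} \Cl(\ol{S})$; and such $\ol{S}$ avoids the finitely many singular points of $\ol{X}$ (the images of the flopped curves), so $\psi^{-1}(\ol{S}) = \wt{S} \xrightarrow{\sim} \ol{S}$. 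Composing these three isomorphisms yields that the restriction $\Pic(\wt{X}) \to \Pic(\wt{S})$ is an isomorphism.

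Finally, the structure of the blow-up gives $\Pic(\wt{X}) = \pi^{\ast}\Pic(X) \oplus \bigoplus_{i} \Z[E_{i}]$, and under $\wt{S} \simeq S$ one checks $\pi^{\ast}L|_{\wt{S}} \leftrightarrow L|_{S}$ for $L \in \Pic(X)$ and $[E_{i}]|_{\wt{S}} \leftrightarrow [C_{i}]$, which realises precisely the natural map in the statement. The main obstacle I anticipate is the geometric bookkeeping around $\wt{S} \simeq S$: verifying that $\pi|_{\wt{S}}$ is an isomorphism and that the restricted class $E_{i}|_{\wt{S}} \in \Pic(\wt{S}) = \Pic(S)$ is exactly $[C_{i}]$ (rather than $-[C_{i}]$ or a correction by $\pi^{\ast}(-K_{X})|_{S}$). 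Once this is in place, the result is a direct consequence of Bertini and the small-case portion of the proof of Theorem~\ref{thm-RSGNL}.
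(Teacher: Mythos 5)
Your overall route coincides with the paper's: blow up $C$, apply the small case of Theorem~\ref{thm-RSGNL} to $(\wt{X},-K_{\wt{X}})$ (and you are right that the $\rho=2$ hypothesis there only enters the divisorial case), show that $\pi|_{\wt{S}}\colon\wt{S}\to S$ is an isomorphism, and transport the Picard computation. The gap is in the step where you conclude that isomorphism. You argue: $\pi|_{\wt{S}}$ is bijective and birational and $\wt{S}$ is smooth, therefore $\wt{S}\xrightarrow{\sim}S$. This implication is false in general: bijectivity of a birational morphism onto $S$ only rules out multibranch non-normal points of $S$, not unibranch ones. For example $\A^{2}\to\A^{3}$, $(x,y)\mapsto(x,y^{2},y^{3})$, is bijective and birational from a smooth surface onto the hypersurface $\{w^{2}=v^{3}\}\subset\A^{3}$ (a ``cuspidal edge''), which is non-normal along a line; the map is the normalization but not an isomorphism. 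In your situation $S\in\lvert-K_{X}\rvert$ is a Cartier divisor in a smooth threefold, hence Cohen--Macaulay ($S_{2}$), but nothing you have said excludes a cuspidal-edge type singularity of $S$ along a curve (a priori even along some $C_{i}$); in that event $\pi|_{\wt{S}}$ would be the bijective normalization, $S$ would not be smooth, and $\Pic(S)$ would not be identified with $\Pic(\wt{S})$. So you still owe the normality of $S$.

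The paper supplies exactly this missing step by an adjunction/conductor argument: $\pi|_{\wt{S}}$ is finite and birational with $\wt{S}$ smooth, hence it is the normalization $\nu\colon\wt{S}\to S$; adjunction gives $K_{\wt{S}}\sim 0$ (from $\wt{S}\in\lvert-K_{\wt{X}}\rvert$) and $K_{S}\sim 0$ (from $S\in\lvert-K_{X}\rvert$, $S$ being Gorenstein), and since $K_{\wt{S}}+\Delta\sim\nu^{\ast}K_{S}$ with $\Delta\geq 0$ the conductor divisor, one gets $\Delta\sim 0$, hence $\Delta=0$, hence $S$ is normal and $\nu$ is an isomorphism; this is the content of the citation \cite[Proposition~2.3]{Reid94} in the paper's proof, and the same trick reappears in the proof of Claim~\ref{claim-K3exc}. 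Once this is inserted, the remaining ingredients of your proposal (Bertini for the base-point-free $\lvert-K_{\wt{X}}\rvert$, smoothness of $\wt{S}\cap E_{i}$ forcing it to be a section and hence bijectivity of $\pi|_{\wt{S}}$, the small-case Noether--Lefschetz step, and the bookkeeping $\pi^{\ast}L|_{\wt{S}}\leftrightarrow L|_{S}$, $E_{i}|_{\wt{S}}\leftrightarrow[C_{i}]$) agree with the paper's argument.
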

\begin{proof}
Let $\s \colon \wt{X}:=\Bl_{C}X \to X$ be the blowing-up. 
By Theorem~\ref{thm-RSGNL}, for a very general member $\wt{S} \in \lvert -K_{\wt{X}} \rvert$, the restriction morphism $\Pic(\wt{X}) \to \Pic(\wt{S})$ is isomorphic. 
It suffices to show that $\s|_{\wt{S}} \colon \wt{S} \to S:=\s(\wt{S})$ is isomorphic. 
As $H^{0}(\wt{X},\mcO(-K_{\wt{X}})) \simeq H^{0}(\ol{X},\mcO(-K_{\ol{X}}))$, we may assume that $\wt{S} \cap \Exc(\s)$ is smooth, which implies that $\s|_{\wt{S}}$ is bijective. 
Thus $\wt{S} \to S$ is the normalization. 
Since $S \in \lvert -K_{X} \rvert$ and $\wt{S} \in \lvert -K_{\wt{X}} \rvert$, it follows that $K_{S} \sim 0$ and $K_{\wt{S}} \sim 0$, which means that $\wt{S} \to S$ is an isomorphism (see e.g. \cite[Proposition~2.3]{Reid94}). 
\end{proof}

\section{Numerical Classification on $\Q^{3}$}\label{sec-Q3num}\label{sec-Q3Num}
Let $\mcE$ be a rank two vector bundle over $\Q^{3}$. 
Let $c_{i}$ be the integer corresponding to $c_{i}(\mcE)$ through the identification $N^{i}(\Q^{3})_{\Z} \simeq \Z$. 
The purpose of this section is to determine the range of possible values for $(c_{1},c_{2})$ when $\mcE$ is a weak Fano bundle. 
For this purpose, we may assume that $\mcE$ is normalized, i.e., $c_{1} \in \{-1,0\}$. 
With this terminology, the Hirzebruch--Riemann--Roch formula for $\mcE$ is given by
\begin{equation}\label{eq-RR}
\chi(\mcE) = \frac{1}{6}(2c_{1}^{3}  - 3 c_{1} c_{2}) + \frac{3}{2}(c_{1}^{2} - c_{2}) + \frac{13}{6}c_{1} + 2 .
\end{equation}
\begin{rem}\label{rem-c10c2even}
If $c_{1} = 0$, then the Hirzebruch--Riemann--Roch formula is simplified to  $\chi(\mcE) = - \frac{3}{2}c_{2} + 2$.
Thus, it follows that the second Chern class $c_{2} \in \Z$ is an even integer in this case.
\end{rem}

Let $\P(\mcE)$ be the projectivization with the projection $\pi \colon \P(\mcE) \to \Q^{3}$, $\xi$ a tautological divisor, and
$H$ the pull-back of a hyperplane section of $\Q^{3}$.
The Grothendieck relation $\sum_{i=0}^{2} (-1)^{i} \pi^{\ast}c_{i}(\mcE)\xi^{2-i}=0$ implies that $\xi^{4}=2c_{1}^{3}-2c_{1}c_{2}$, $\xi^{3}H=2c_{1}^{2}-c_{2}$, $\xi^{2}H^{2}=2c_{1}^{2}-c_{2}$, $\xi H^{3}=2$, and $H^{4}=0$. 
As the anticanonical divisor of $\P(\mcE)$ is given by $-K_{\P(\mcE)} \sim 2\xi + (3-c_{1})H$, it holds that 
\begin{align}
\label{eq-acself}
(-K_{\P(\mcE)})^{4} &= 48(c_{1}^{2}-2c_{2}+9) \text{ and } \\
\label{eq-h0van}
(-K_{\P(\mcE)})^{3}(\xi-aH) 
&= 2 (-2a(c_{1}^{2}-2c_{2}+27)+c_{1}^{3}-2c_{1}c_{2}+9(c_{1}^{2}+3c_{1}-2c_{2}+3)) \text{ for } a \in \Q.
\end{align}
\begin{lem} \label{lem-wFbQ3}
Let $\mcE$ be a rank two weak Fano bundle on $\Q^{3}$. 
\begin{enumerate}
\item If $\mcE$ is normalized, i.e., $c_{1} \in \{-1,0\}$, then $c_{2} \leq 4$.
\item $H^{i}(\mcE(n)) = 0$ for all $i \geq 2$ and $n \geq -1-c_{1}$.
\item $H^{i}(\mcE(n))=0$ for all $i \geq 1$ and $n \geq \frac{1}{2}(3-c_{1})$. 
\item For an integer $a$, $H^{0}(\mcE(-a))=0$ if $a > \frac{c_{1}^{3}-2c_{1}c_{2}+9(c_{1}^{2}+3c_{1}+3-2c_{2})}{2(c_{1}^{2}-2c_{2}+27)}$. 
In particular, $H^{0}(\mcE(-a))=0$ if $a > 
\begin{cases}
\frac{9}{2}-\frac{108}{27-2c_{2}} & \text{when $c_{1} = 0$,} \\
4-\frac{54}{14-c_{2}}& \text{when $c_{1} = -1$.}
\end{cases}
$
\end{enumerate}
\end{lem}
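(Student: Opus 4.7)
Throughout, put $M := \P_{\Q^{3}}(\mcE)$ with projection $\pi \colon M \to \Q^{3}$, a tautological divisor $\xi$, and $H := \pi^{\ast}H_{X}$. The weak Fano hypothesis says that $-K_{M} = 2\xi + (3-c_{1})H$ is nef and big, and since $R^{>0}\pi_{\ast}\mcO_{M}(\xi) = 0$ we have $H^{i}(\mcE(n)) = H^{i}(M, \mcO_{M}(\xi + nH))$ for every $i,n$. Twisting $\mcE$ by a line bundle preserves each assertion, so we may assume $\mcE$ is normalized, i.e.\ $c_{1} \in \{-1,0\}$.

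For~(1), bigness of $-K_{M}$ gives $(-K_{M})^{4} > 0$, and (\ref{eq-acself}) rephrases this as $c_{1}^{2} - 2c_{2} + 9 > 0$, whence $c_{2} \leq 4$. For~(3), the identity
\[
(\xi + nH) - K_{M} \;=\; 3\xi + (n+3-c_{1})H \;=\; \tfrac{3}{2}(-K_{M}) + \tfrac{2n - (3-c_{1})}{2}H
\]
writes the left-hand side as a positive multiple of a nef-and-big class plus a non-negative multiple of a nef class whenever $n \geq (3-c_{1})/2$; Kawamata--Viehweg then kills $H^{\geq 1}(\mcO_{M}(\xi + nH))$. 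For~(4), suppose $H^{0}(\mcE(-a)) \neq 0$; then $\lvert \xi - aH \rvert$ contains an effective divisor. Since $-K_{M}$ is nef, the cycle $(-K_{M})^{3}$ pairs non-negatively with any effective divisor (apply Kleiman's inequality $N^{\dim V} \geq 0$ to each prime component), so $(-K_{M})^{3}.(\xi - aH) \geq 0$. Solving the linear expression in (\ref{eq-h0van}), and using that $c_{1}^{2} - 2c_{2} + 27 > 0$ by~(1), gives the stated upper bound on $a$; substituting $c_{1} \in \{-1,0\}$ yields the two simplified inequalities.

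The substantive step is~(2). The plan is to invoke Le Potier's vanishing theorem \cite[Thm.~7.3.5]{laz2}: for an ample bundle $\mcF$ of rank $r$ on a smooth projective variety $X$ of dimension $d$, $H^{i}(X, \mcF \otimes \omega_{X}) = 0$ for $i > d - r$. Specialized to $d = 3$, $r = 2$, and $\omega_{\Q^{3}} = \mcO(-3H_{X})$, writing $\mcE(n) = \mcE(n+3) \otimes \omega_{\Q^{3}}$ reduces (2) to showing that $\mcE(m)$ is ample for every integer $m \geq 2 - c_{1}$. I would verify this via Kleiman's criterion for $\xi + mH$ on $M$, using the decomposition
\[
\xi + mH \;=\; \tfrac{1}{2}(-K_{M}) + \tfrac{2m - 3 + c_{1}}{2}H,
\]
whose $H$-coefficient is at least $(1 - c_{1})/2 > 0$ for $c_{1} \in \{-1,0\}$. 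For any irreducible curve $C \subset M$ one has $(-K_{M}).C \geq 0$ and $H.C \geq 0$, so $(\xi + mH).C \geq 0$ with equality forcing $(-K_{M}).C = H.C = 0$; but $H.C = 0$ means $C$ lies inside a $\pi$-fiber $\cong \P^{1}$, on which $-K_{M}$ restricts to $\mcO_{\P^{1}}(2)$, a contradiction. Since $M$ is weak Fano, $\overline{\NE}(M)$ is polyhedral by the cone theorem, so strict positivity on all irreducible curves upgrades to positivity on every extremal ray, giving the ampleness of $\xi + mH$.

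The main obstacle is precisely this last step: the weak Fano hypothesis gives $-K_{M}$ only nef and big, so the ampleness of $\mcE(m)$ for $m \geq 2 - c_{1}$ does not follow automatically and requires the Kleiman check above, whose key ingredient is that $\pi$-fibers of $M \to \Q^{3}$ are always $(-K_{M})$-positive.
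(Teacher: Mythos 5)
Your proposal is correct and follows essentially the same route as the paper: bigness of $(-K_{\P(\mcE)})^{4}$ for (1), ampleness of $\xi+(2-c_{1})H$ plus Le Potier vanishing for (2) (the paper simply asserts this ampleness, which your Kleiman/cone-theorem check fills in), Kawamata--Viehweg applied to $3\xi+(3+n-c_{1})H$ for (3), and non-negativity of $(-K_{\P(\mcE)})^{3}$ against effective divisors for (4). One harmless slip: Le Potier's range is $i \geq \rk\mcF$ rather than $i > \dim X - \rk\mcF$; for a rank-$2$ bundle on a threefold the two happen to coincide, so your application is unaffected.
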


\begin{proof}
(1) Since $-K_{\P(\mcE)}$ is nef and big, it follows that $(-K_{\P(\mcE)})^{4} > 0$. 
Hence $c_{1}^{2}-2c_{2}+9 > 0$ follows from (\ref{eq-acself}). 
If $\mcE$ is normalized, then the inequality $c_{1}^{2}-2c_{2}+9 > 0$ is equivalent to $c_{2} \leq 4$, which proves (1)

(2) Since $-K_{\P(\mcE)} \sim 2\xi + (3-c_{1})H$ is nef, 
$\xi+(2-c_{1})H$ is ample. 
Hence $\mcE(n)$ is an ample vector bundle for all $n \geq 2-c_{1}$. 
Then the Le Potier vanishing theorem \cite[Theorem~7.3.5]{laz2} proves the statement.

(3) If $n \geq \frac{1}{2}(3-c_{1})$, then $3\xi + (3+n-c_{1})H$ is nef and big since so is $-K_{\P(\mcE)} \sim 2\xi + (3-c_{1})H$. 
Hence the Kawamata--Viehweg vanishing theorem shows
$H^{i}(\Q^{3},\mcE(n))=H^{i}(\P(\mcE),\xi+nH) = H^{i}(\P(\mcE),K_{\P(\mcE)}+(3\xi + (3+n-c_{1})H)) = 0$. 

(4) Since $-K_{\P(\mcE)}$ is nef and big, it follows that $(-K_{\P(\mcE)})^{4} > 0$. 
Hence $c_{1}^{2}-2c_{2}+9 > 0$ follows from (\ref{eq-acself}). 
In particular, $c_{1}^{2}-2c_{2}+27 > 0$ holds. 
Hence for a given integer $a$ satisfying the inequality in (4), 
the equality (\ref{eq-h0van}) shows $(-K_{\P(\mcE)})^{3}(\xi-aH) < 0$. 
Since $-K_{\P(\mcE)}$ is nef and big, this inequality yields that $0=H^{0}(\P(\mcE),\mcO(\xi-aH))=H^{0}(\mcE(-a))$. 
The remaining part follows from direct computation.
\end{proof}
Lemma~\ref{lem-wFbQ3} gives the upper bound for the second Chern class $c_{2}(\mcE)$ of rank $2$ weak Fano bundles $\mcE$ from the inequality $(-K_{\P(\mcE)})^{4} > 0$. 
The lower bound for $c_{2}(\mcE)$ is closely related to the slope stability of $\mcE$ as shown by the following proposition. 
Note that if $\mcE$ is a normalized rank two vector bundle, then $\mcE$ is slope stable if and only if $h^{0}(\mcE) = 0$ (c.f. \cite[Remark 2.2]{FHI2}).

\begin{prop}\label{prop-unstableQ3}
Let $\mcE$ be a normalized rank two weak Fano bundle on $\Q^{3}$.
Then the following conditions are equivalent.
\begin{enumerate}[label=(\alph*)]
\item $\mcE$ is not slope stable.
\item $\mcE$ is isomorphic to one of the following: $\mcO(-1) \oplus \mcO(1)$, $\mcO(-2) \oplus \mcO(1)$, $\mcO^{\oplus 2}$, or $\mcO(-1) \oplus \mcO$.
\item $c_{2} \leq 0$.
\end{enumerate}
\end{prop}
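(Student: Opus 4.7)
The plan is to establish the cycle (b) $\Rightarrow$ (a), (b) $\Rightarrow$ (c), (c) $\Rightarrow$ (a), and then the hard direction (a) $\Rightarrow$ (b), giving the equivalence of all three statements. The first three directions are essentially by inspection: each listed split bundle $\mcO(a) \oplus \mcO(b)$ in (b) admits $\mcO(a) \hra \mcE$ with slope $a \geq c_{1}/2 = \mu(\mcE)$, so (b) $\Rightarrow$ (a); direct computation gives $c_{2} \in \{-4,-2,0,0\}$ for the four pairs, so (b) $\Rightarrow$ (c); and for (c) $\Rightarrow$ (a), the Hirzebruch--Riemann--Roch formula (\ref{eq-RR}) simplifies to $\chi(\mcE) = 2 - \tfrac{3}{2}c_{2}$ when $c_{1} = 0$ and $\chi(\mcE) = 1 - c_{2}$ when $c_{1} = -1$, both positive once $c_{2} \leq 0$. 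Combined with the vanishing $h^{i}(\mcE) = 0$ for $i \geq 2$ from Lemma~\ref{lem-wFbQ3}~(2), this forces $h^{0}(\mcE) \geq \chi(\mcE) > 0$, and the remark preceding the proposition then yields that $\mcE$ is not slope stable.

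The heart of the proof is (a) $\Rightarrow$ (b). Assuming $h^{0}(\mcE) > 0$, I start with a nonzero section and pass to its saturation in order to produce a rank-one subbundle $\mcO(k) \hra \mcE$ with $k \geq 0$ whose cokernel has the form $\mcI_{Z}(c_{1}-k)$ for some $Z \subset \Q^{3}$ of codimension $\geq 2$ (possibly empty). Multiplicativity of the total Chern class gives, under the standard integer identification of $N^{\bullet}(\Q^{3})_{\Z}$, the relation
\[
c_{2} \;=\; 2k(c_{1}-k) + \deg(Z).
\]
On $\P(\mcE)$, the subbundle corresponds to a nonzero section of $\mcO_{\P(\mcE)}(\xi-kH)$, so $\xi-kH$ is represented by an effective divisor $D$, and nefness of $-K_{\P(\mcE)} = 2\xi + (3-c_{1})H$ forces
\[
(-K_{\P(\mcE)})^{3}\cdot(\xi-kH) \;\geq\; 0.
\]
Expanding this via formula (\ref{eq-h0van}) at $a=k$ and substituting the Chern class relation above produces a linear inequality in $k$ and $\deg(Z)$; a case analysis in each parity of $c_{1}$, using $\deg(Z) \geq 0$ and the global bound $c_{2} \leq 4$ from Lemma~\ref{lem-wFbQ3}~(1), forces $k \in \{0,1\}$ and $\deg(Z) = 0$.

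Once $Z = \emp$, the resulting extension $0 \to \mcO(k) \to \mcE \to \mcO(c_{1}-k) \to 0$ has class in $\Ext^{1}(\mcO(c_{1}-k),\mcO(k)) = H^{1}(\mcO_{\Q^{3}}(2k-c_{1})) = 0$, so $\mcE \simeq \mcO(k) \oplus \mcO(c_{1}-k)$. Enumerating the four pairs $(c_{1},k) \in \{-1,0\}\times\{0,1\}$ recovers exactly the four bundles in (b), completing the cycle. The main obstacle is the numerical case analysis bounding $k$ and $\deg(Z)$ via the nefness inequality; the arithmetic is straightforward but needs to be carried out carefully in both parities of $c_{1}$, in order to rule out $k \geq 2$ and to force $Z = \emp$ in the surviving range $k \in \{0,1\}$.
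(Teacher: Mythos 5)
Your overall strategy is sound and, at bottom, runs on the same engine as the paper's proof: the decisive input in both arguments is Lemma~\ref{lem-wFbQ3} (the nefness inequality $(-K_{\P(\mcE)})^{3}(\xi-aH)\geq 0$ whenever $H^{0}(\mcE(-a))\neq 0$, i.e.\ (\ref{eq-h0van}), together with the bound $c_{2}\leq 4$ and the vanishing of $h^{\geq 2}$), plus the splitting of extensions via $H^{1}(\mcO_{\Q^{3}}(m))=0$. Your implications (b)$\Rightarrow$(a), (b)$\Rightarrow$(c), (c)$\Rightarrow$(a) coincide with the paper's. For (a)$\Rightarrow$(b) the organization differs: the paper first kills large twists by showing $h^{0}(\mcE(-2))=0$ (Serre duality plus Lemma~\ref{lem-wFbQ3}~(2)), then splits into the cases $h^{0}(\mcE(-1))\neq 0$ and $h^{0}(\mcE(-1))=0$, using Lemma~\ref{lem-wFbQ3}~(4) in each to force $c_{2}(\mcE(-1))\leq 0$ resp.\ $c_{2}\leq 0$ and hence an empty zero locus; you instead take the saturation $\mcO(k)\hra\mcE$ of a section, record $c_{2}=2k(c_{1}-k)+\deg Z$, and run (\ref{eq-h0van}) at $a=k$ uniformly in $k$, excluding $k\geq 2$ numerically (for large $k$ you genuinely need $c_{2}\leq 4$, as you say). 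Both routes work; yours trades the cohomological cap on the twist for a longer but mechanical case analysis.

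There is, however, one subcase where your stated inputs do not suffice. For $c_{1}=0$ and $k=0$ the inequality from (\ref{eq-h0van}) reads $(3-2k)^{3}\geq 2\deg Z\,(9-2k)$, i.e.\ $27\geq 18\deg Z$, which only gives $\deg Z\leq 1$; the possibility $\deg Z=1$ (so $c_{2}=1$) is not excluded by $\deg Z\geq 0$ and $c_{2}\leq 4$, contrary to your claim that these alone force $\deg Z=0$. To close it you must add the parity constraint: when $c_{1}=0$, the Riemann--Roch formula (\ref{eq-RR}) gives $\chi(\mcE)=2-\tfrac{3}{2}c_{2}$, so $c_{2}$ is even (Remark~\ref{rem-c10c2even}), ruling out $c_{2}=1$. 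This is exactly the ingredient the paper's own proof invokes at the analogous point, so the fix is one line; with it, your enumeration of $(c_{1},k)\in\{-1,0\}\times\{0,1\}$ and the splitting via $\Ext^{1}(\mcO(c_{1}-k),\mcO(k))=H^{1}(\mcO_{\Q^{3}}(2k-c_{1}))=0$ correctly recovers the four bundles in (b).
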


\begin{proof}
Let us show the implication (a) $\ra$ (b). 
Note that Lemma~\ref{lem-wFbQ3}~(2) and Serre duality show $h^{0}(\mcE(-2)) = h^{3}(\mcE^{\vee}(2) \otimes \omega_{\Q^{3}}) = h^{3}(\mcE(-c_{1}-1)) = 0$. 

If $h^{0}(\mcE(-1)) \neq 0$, then it follows from Lemma~\ref{lem-wFbQ3}~(4) that 
$\frac{9}{2}-\frac{108}{27-2c_{2}} \geq 1$ when $c_{1}=0$ 
and $4-\frac{54}{14-c_{2}} \geq 1$ when $c_{1} = -1$. 
Hence $c_{2} \leq -2+2c_{1}$, which implies $c_{2}(\mcE(-1)) = c_{2}-2c_{1}+2 \leq 0$. 
Pick a non-zero section $s \in H^{0}(\mcE(-1))$ and let $Z$ be the zero locus of $s$. 
Since $H^{0}(\mcE(-2)) = 0$, $Z$ is purely two codimensional or empty. 
Since $[Z]=c_{2}(\mcE(-1)) \leq 0$ in $N^{2}(\Q^{3})_{\Z} \simeq \Z$, it follows that $Z$ is empty. 
Hence the non-zero section $s \colon \mcO \to \mcE(-1)$ is nowhere vanishing and hence its cokernel is an invertible sheaf $\mcO_{\Q^{3}}(c_{1}-2)$. 
Since $H^{1}(\mcO_{\Q^{3}}(n)) = 0$ for every $n$, the exact sequence $0 \to \mcO \to \mcE(-1) \to \mcO_{\Q^{3}}(c_{1}-2) \to 0$ splits. 
Hence $\mcE=\mcO(1) \oplus \mcO(c_{1}-1)$.
Note that this $\mcE$ is a Fano bundle if $c_{1}=0$ and not a Fano bundle but a weak Fano bundle if $c_{1}=-1$. 

Assume $H^{0}(\mcE(-1)) = 0$.
Since $\mcE$ is not slope stable, it holds that $h^{0}(\mcE) \neq 0$.
Again by Lemma~\ref{lem-wFbQ3}~(4), we have 
$\frac{9}{2}-\frac{108}{27-2c_{2}} \geq 0$ when $c_{1}=0$ 
and $4-\frac{54}{14-c_{2}} \geq 0$ when $c_{1} = -1$. 
Recall that, if $c_{1} = 0$, then $c_{2}$ is even by Remark~\ref{rem-c10c2even}.
Thus, in both cases, it follows that $c_{2} \leq 0$.
Then a similar argument to the above shows that 
$\mcE \simeq \mcO(c_{1}) \oplus \mcO$. 
Whether $c_{1}=0$ or $-1$, $\mcE$ is a Fano bundle in this case. 
Thus (a) implies (b).

It is clear that (b) implies (c). Now it remains to show that (c) implies (a).
By Lemma~\ref{lem-wFbQ3}~(2) and (\ref{eq-RR}), 
\[ h^{0}(\mcE) \geq h^{0}(\mcE) - h^1(\mcE) = \chi(\mcE) =
\begin{cases}
- \frac{3}{2}c_{2} + 2 & \text{if $c_{1} = 0$, and} \\
-c_{2} + 1 & \text{if $c_{1} = -1$.}
\end{cases} \]
In both cases, $c_{2} \leq 0$ implies $h^{0}(\mcE) > 0$.
\end{proof}

\begin{rem}
In contrast to del Pezzo threefolds $X$ of degree $\deg X \in \{3,4,5\}$ \cite{Ishikawa16, FHI1, FHI2},
there is no indecomposable weak Fano bundle $\mcE$ over $\Q^{3}$ that is not slope stable.
\end{rem}

We now have a numerical classification of rank $2$ weak Fano bundles on $\Q^{3}$, as stated in the following proposition. 

\begin{prop}\label{prop-nonexistQ3}
Let $\mcE$ be an indecomposfable normalized rank two weak Fano bundle on $\Q^{3}$.
Then $\mcE$ is slope stable and 
\begin{align}\label{eq-candidates}
(c_{1},c_{2}) \in \{(0,2),(-1,1),(-1,2),(-1,3),(-1,4)\}.
\end{align}
In particular, the case $(c_{1},c_{2})=(0,4)$ cannot occur.
\end{prop}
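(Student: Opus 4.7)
The plan is to combine the constraints already established in this section with an additional argument that rules out one specific candidate pair. Since $\mcE$ is indecomposable, Proposition~\ref{prop-unstableQ3} gives at once that $\mcE$ is slope stable and $c_{2} \geq 1$. Combining this with $c_{2} \leq 4$ from Lemma~\ref{lem-wFbQ3}(1) and the parity constraint $c_{2} \in 2\Z$ when $c_{1}=0$ (Remark~\ref{rem-c10c2even}) reduces the possible pairs to
\[
(c_{1}, c_{2}) \in \{(0, 2), (0, 4), (-1, 1), (-1, 2), (-1, 3), (-1, 4)\},
\]
so it remains only to exclude $(c_{1}, c_{2}) = (0, 4)$.

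Suppose for contradiction that $\mcE$ is a normalized rank two weak Fano bundle on $\Q^{3}$ with $c_{1}=0$ and $c_{2}=4$. The twist $\mcE(1)$ then has $c_{1}(\mcE(1)) = 2H_{\Q^{3}} = (i_{\Q^{3}}-1) H_{\Q^{3}}$, is still weak Fano and slope semistable, and satisfies the hypotheses of Theorem~\ref{thm-conic}: $(\dag)$ holds for $(\Q^{3}, H_{\Q^{3}})$ because the discriminant divisor on each component of the Hilbert scheme of conics is ample by Remark~\ref{rem-discriminant}, while the Bogomolov-type positivity of Proposition~\ref{prop-conicnef} follows from the fact that $K_{\Q^{3}}^{2}$ and $\Delta(\mcE)$ are both positive multiples of the class of a line. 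Theorem~\ref{thm-conic}(1) then forces $\mcE(1)|_{l}$ to be nef on every line $l$, equivalently $\mcE|_{l} \simeq \mcO \oplus \mcO$ or $\mcO(1) \oplus \mcO(-1)$. Meanwhile the threshold $\tfrac{9}{2} - \tfrac{108}{19} = -\tfrac{45}{38}$ in Lemma~\ref{lem-wFbQ3}(4) at $c_{1}=0$, $c_{2}=4$, together with the choice $a=-1$, yields $h^{0}(\mcE(1)) = 0$; combined with $\chi(\mcE(1)) = 0$ from Riemann--Roch and $h^{\geq 2}(\mcE(1)) = 0$ from Lemma~\ref{lem-wFbQ3}(2), we conclude $\RG(\Q^{3}, \mcE(1)) = 0$.

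To close the argument we follow the strategy indicated in the outline of Section~\ref{sec-intro}: the Sols--Szurek--Wi\'sniewski criterion, applied to the contraction of the $K_{\P(\mcE)}$-trivial extremal ray, upgrades $\mcE$ from weak Fano to Fano. The contraction cannot be divisorial because the ampleness of the discriminant divisor on the Hilbert scheme of conics forbids a one-parameter family of smooth conics on $\Q^{3}$ as the centre, and the splitting restriction derived above rules out lines (as their only allowed splittings force nefness of $-K_{\P(\mcE)}$ along the corresponding sections to be strict). The Mu\~noz--Occhetta--Sol\'a~Conde classification of rank two Fano bundles on $\Q^{3}$ then forces, in the $c_{1}=0$ case, either the trivial bundle $\mcO^{\oplus 2}$ or the null-correlation pull-back (case (ii) of Theorem~\ref{main-Q3}), both of which have $c_{2} \in \{0, 2\}$; in particular $c_{2} \neq 4$, giving the desired contradiction. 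The main obstacle is precisely this Sols--Szurek--Wi\'sniewski upgrade, which requires a careful extremal-ray analysis on the fourfold $\P(\mcE)$ in the spirit of the proof of Theorem~\ref{thm-conic}.
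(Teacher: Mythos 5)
Your first two paragraphs coincide with the paper's reduction: slope stability and $c_{2}\geq 1$ from Proposition~\ref{prop-unstableQ3}, the bound $c_{2}\leq 4$ from Lemma~\ref{lem-wFbQ3}~(1), and the parity constraint from Remark~\ref{rem-c10c2even}, leaving only $(0,4)$ to exclude; and invoking the nefness of $\mcE(1)$ along lines and smooth conics (Proposition~\ref{prop-conicnef}, i.e.\ Theorem~\ref{thm-conic}~(1)) is indeed the right tool. Two small points there: the condition $(\dag)$ for $(\Q^{3},\mcO_{\Q^{3}}(1))$ does not follow from Remark~\ref{rem-discriminant} alone, since that remark only reduces $(\dag)$ to the ampleness of the discriminant divisor, which is exactly what the paper imports from \cite{BH18}; and your computation $\RG(\mcE(1))=0$ is correct but is never used afterwards.

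The genuine gap is in your closing paragraph. You try to show that the $K_{\P(\mcE)}$-trivial contraction cannot exist by excluding contracted curves lying over lines and conics, conclude that $\mcE$ is Fano, and then appeal to the classification of Fano bundles. But a contracted curve $\Gamma_{0}$ only satisfies $(2\xi+3H).\Gamma_{0}=0$, so $H.\Gamma_{0}$ can be any positive even integer: $\Gamma_{0}$ may lie over a rational curve of degree $4,6,\dots$ with correspondingly negative splitting (e.g.\ a quartic with $\mcE$ restricting as $\mcO(6)\oplus\mcO(-6)$), and neither Theorem~\ref{thm-conic}~(1) nor the ampleness of the discriminant on the Hilbert scheme of conics says anything about such curves; Proposition~\ref{prop-semiAtiyah} only bounds degrees in the family relative to the initial contracted curve, and it forces the family to consist of conics only when one already starts from a curve of $A$-degree one, i.e.\ after a jumping conic is known to exist. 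So "the contraction cannot exist, hence $\mcE$ is Fano" does not follow from what you have established. The paper closes the argument in the opposite direction: it quotes from the proof of Proposition~2 of \cite{ssw} that every slope stable rank two bundle on $\Q^{3}$ with $(c_{1},c_{2})=(0,4)$ already possesses a smooth conic $C$ with $\mcE|_{C}\simeq\mcO_{\P^{1}}(-d)\oplus\mcO_{\P^{1}}(d)$ for some $d\geq 3$, which directly contradicts the nefness of $\mcE(1)|_{C}$ supplied by Proposition~\ref{prop-conicnef}. Your proposal never produces such a jumping conic (the Sols--Szurek--Wi\'sniewski input you mention is mischaracterized as an extremal-ray "upgrade to Fano" criterion), so without this or an equivalent ingredient the exclusion of $(0,4)$ is not proved.
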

\begin{proof}
Suppose that $\mcE$ is not a direct sum of line bundles. 
By Proposition~\ref{prop-unstableQ3}, 
$\mcE$ is slope stable and $c_{2} > 0$. 
By Remark~\ref{rem-c10c2even} and Lemma~\ref{lem-wFbQ3}, 
we have $c_{2} \leq 4$ and $c_{2}$ is even if $c_{1}=0$. 
Hence it suffices to show $(c_{1},c_{2}) \neq (0,4)$. 

Suppose that there is a rank $2$ weak Fano bundle $\mcE$ with $c_{1}=0$ and $c_{2}=4$. 
By Proposition~\ref{prop-unstableQ3}, $\mcE$ is slope stable. 
For such a slope stable bundle $\mcE$, Sols--Szurek--Wi\'sniewski proved in \cite[Proof~of~Proposition~2]{ssw} that there is a smooth conic $C$ such that $\mcE|_{C} \simeq \mcO_{\P^{1}}(-d) \oplus \mcO_{\P^{1}}(d)$ for $d \geq 3$. 
In particular, there is a conic $C$ such that $\mcE(1)|_{C}$ is not nef. 
However, since $(\Q^{3},\mcO_{\Q^{3}}(1))$ satisfies $(\dag)$ by \cite[Lemma~4.3, Remark~4.4]{BH18} and $(K_{\Q^{3}}^{2}+\Delta(\mcE(1))).\mcO_{\Q^{3}}(1) =22 > 0$, Proposition~\ref{prop-conicnef} shows that $\mcE(1)|_{C}$ is nef for every conic $C$. 
This is a contradiction and hence $\mcE$ is not a weak Fano bundle. 
We complete the proof. 
\end{proof}

\begin{rem}
In \cite{ssw}, Sols--Szurek--Wi\'sniewski showed that every rank $2$ stable bundle $\mcE$ with $(c_{1},c_{2})=(0,4)$ is never a Fano bundle. 
Proposition~\ref{prop-nonexistQ3} shows that every such an $\mcE$ also is not a weak Fano bundle. 
\end{rem}
For the rest of this paper,
we study a rank two normalized weak Fano bundle $\mcE$ on $\Q^{3}$ that is slope stable. 
Of the possible values of $(c_{1},c_{2})$ given in \ref{eq-candidates}, when $c_{2} \leq 2$, geometric details of $\mcE$ have been obtained as introduced in the following theorem, and in particular, the existence of such an $\mcE$ is also known.
\begin{thm}[{\cite{ssw, SW90, Ott90, OS94}}] \label{thm-FanoClsf}
Let $\mcE$ be a normalized rank two slope stable bundle on $\Q^{3}$.
Then the following holds.
\begin{enumerate}
\item If $(c_{1}, c_{2}) = (-1,1)$, then $\mcE$ is isomorphic to the (negative) Spinor bundle. This is a Fano bundle. 
\item If $(c_{1}, c_{2}) = (0,2)$, then $\mcE$ is isomorphic to the pull-back of the null-correlation bundle on $\P^3$ via a double covering $\Q^{3} \to \P^{3}$. This is a Fano bundle. 
\item If $(c_{1},c_{2})=(-1,2)$, then $\mcE$ is isomorphic to the restriction of a Cayley bundle on $\Q^{5}$ via a linear embedding $\Q^{3} \hra \Q^{5}$. This is not a Fano bundle but a weak Fano bundle. 
\end{enumerate}
\end{thm}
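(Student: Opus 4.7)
The plan is to treat the three cases (1)--(3) in turn, following a common two-step pattern: identify $\mcE$ with the named bundle via a Hartshorne-Serre construction, and then verify whether $-K_{\P(\mcE)}$ is ample or merely nef and big. Since each identification is the content of one of the cited works, the strategy is to explain how each reduces to a uniform Serre-type argument and then to settle the Fano versus weak-Fano dichotomy by a direct computation on $\P(\mcE)$.

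For the identification step, I would first combine the Riemann-Roch formula (\ref{eq-RR}) with the vanishing of Lemma~\ref{lem-wFbQ3} to produce enough global sections of an appropriate twist $\mcE(k)$. A general section has a zero locus of small, predictable degree: a line $\ell \subset \Q^{3}$ in case (1) (working with $\mcE(1)$), a degree-two curve tied to the double cover $\Q^{3} \to \P^{3}$ in case (2), and a curve naturally associated with the Cayley construction in case (3). The Hartshorne-Serre correspondence then reconstructs $\mcE$ uniquely from such a zero locus, and since $\Aut(\Q^{3})$ (respectively the automorphisms of $\P^{3}$ or $\Q^{5}$) acts transitively on the relevant loci, the bundle is determined up to isomorphism. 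This yields the identification with the (negative) spinor bundle following \cite{Ottaviani88, ssw}, with the pull-back of the null-correlation bundle along a double covering $\Q^{3} \to \P^{3}$ following \cite{SW90}, and with the restriction of a Cayley bundle along a linear embedding $\Q^{3} \hookrightarrow \Q^{5}$ following \cite{Ott90, OS94}, respectively.

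For the Fano versus weak-Fano step, I would write $-K_{\P(\mcE)} = 2\xi + (3-c_{1}) H$ and test positivity on curves in $\P(\mcE)$. In cases (1) and (2), the standard facts that the spinor bundle $\mcS$ and the pull-back of the null-correlation bundle $\mcN$ have globally generated twists imply that the restriction $\mcE|_{\Gamma}$ is sufficiently positive for every irreducible curve $\Gamma \subset \Q^{3}$, which makes $-K_{\P(\mcE)}$ ample. In case (3), I would exhibit a smooth conic $C \subset \Q^{3}$ on which the Cayley restriction splits as $\mcO_{\P^{1}}(-1) \oplus \mcO_{\P^{1}}(1)$; the minimal section of $\P(\mcE|_{C})$ is then a curve in $\P(\mcE)$ of zero anticanonical degree, while nefness of $-K_{\P(\mcE)}$ is inherent from the weak-Fano assumption (and is already granted by Proposition~\ref{prop-conicnef}). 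This gives the ``weak Fano but not Fano'' conclusion.

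The main technical obstacle is case (3): pinning down the specific conic along which the Cayley restriction fails to be ample requires a careful analysis of the embedding $\Q^{3} \hookrightarrow \Q^{5}$ and of the splitting behavior of the Cayley bundle on its linear subsections, essentially leveraging the geometric constructions of \cite{Ott90, OS94}. The other two cases are comparatively routine once the Hartshorne-Serre identification is in hand.
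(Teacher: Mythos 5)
Your handling of cases (1) and (2) amounts to deferring to the cited literature, which is what the paper does, so the real test is case (3), and there your proposal has two genuine gaps. First, the identification: your scheme ``general section of a twist vanishes on a nice curve, Hartshorne--Serre reconstructs $\mcE$, transitivity of automorphisms pins it down'' cannot work here, because bundles with $(c_{1},c_{2})=(-1,2)$ form a positive-dimensional moduli space (the statement itself allows varying Cayley bundles and varying linear embeddings), so the bundle is not determined by rigidity; moreover the extension class over a two-component zero locus is not unique. The actual content of the paper's supplementary proof is a mechanism for passing from $\Q^{3}$ to the ambient quadrics: using \cite[Propositions~4.2 and 4.4]{OS94} one shows $h^{0}(\mcE(1))=2$, that the two sections cut a base-point-free $g^{1}_{2}$ on a smooth hyperplane section, and that a general section vanishes exactly on a disjoint union of two lines $l_{1}\sqcup l_{2}$; one then chooses disjoint $2$-planes $P_{1},P_{2}\subset \Q^{4}$ with $(\Q^{3},l_{1},l_{2})$ a linear section of $(\Q^{4},P_{1},P_{2})$, performs the Hartshorne--Serre construction on $\Q^{4}$ along $P_{1}\sqcup P_{2}$ to get $\mcC_{\Q^{4}}$ with $\mcC_{\Q^{4}}|_{\Q^{3}}\simeq\mcE$, and invokes \cite[Remark~3.4]{Ott90} to reach the Cayley bundle on $\Q^{5}$. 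Your proposal contains no step that relates a bundle on $\Q^{3}$ to one on $\Q^{5}$, and this extension is the essential missing idea. (A smaller inaccuracy: in case (2), slope stability forces $h^{0}(\mcE)=0$, indeed $\chi(\mcE)=-1$, so there is no section with a degree-two zero scheme; the identification in \cite{SW90} is not obtained that way.)

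Second, your verification that the $(-1,2)$ bundle is ``weak Fano but not Fano'' is both circular and numerically wrong. The hypothesis of Theorem~\ref{thm-FanoClsf} is only slope stability, so nefness of $-K_{\P(\mcE)}$ must be proved, not ``inherited from the weak-Fano assumption''; and Proposition~\ref{prop-conicnef} assumes $\mcE$ is weak Fano, so quoting it here begs the question. The curve you propose also does not exist: for a conic $C\subset\Q^{3}$ one has $\deg(\mcE|_{C})=c_{1}(\mcE)\cdot C=-2$, so a splitting $\mcO_{\P^{1}}(-1)\oplus\mcO_{\P^{1}}(1)$ is impossible, and even for a splitting $\mcO(a)\oplus\mcO(-2-a)$ the minimal section has $(-K_{\P(\mcE)})$-degree $2a+8$ (here $-K_{\P(\mcE)}=2\xi+4H$), which never vanishes in the relevant range; the $K$-trivial curves actually arise over jumping \emph{lines} with $\mcE|_{l}\simeq\mcO(-2)\oplus\mcO(1)$. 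The clean route, as in Theorem~\ref{thm-Q3CaylayResol}, is to observe that $\mcE(2)$ is globally generated with $c_{1}(\mcE(2))=c_{1}(\Q^{3})$ and $s_{3}(\mcE(2))=18>0$, so $-K_{\P(\mcE)}=2\xi_{\mcE(2)}$ is nef and big, while ``not Fano'' follows from the classification of rank-two Fano bundles on $\Q^{3}$ in \cite{ssw,SW90} (or by exhibiting a jumping line).
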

\begin{proof}
For the proofs of (1) and (2), we refer to \cite[Proposition~3.2 and Corollary]{SW90}. 
(3) is originally stated in \cite[Remark~4.7]{OS94} and we add a proof here as a supplement. 

Let $\mcE$ be a rank two slope stable bundle on $\Q^{3}$ with $c_{1}(\mcE) = -1$ and $c_{2}(\mcE) = 2$.
Then $\RG(\mcE(1)) \simeq \C^{\oplus 2}$ by \cite[Proposition~4.2]{OS94}. 
The slope stability of $\mcE$ shows that the evaluation map $\ev \colon \mcO^{\oplus 2} \to \mcE(1)$ is injective.
Since $c_{1}(\mcE(1)) = 1$, the support of $\Cok(\ev)$ is a hyperplane section $H \subset \Q^{3}$.
Then it follows from \cite[Proposition~4.4]{OS94} that $H \simeq (\P^{1})^{2}$ is a smooth hyperplane section on $\Q^{3}$ and $\Cok(\ev) \simeq \mcO(1,-1)$.
Moreover, the zero set of a non-zero section of $H^{0}(\mcE(1)) \simeq \C^{\oplus 2}$ is a member of $\lvert \mcO_{H}(2,0) \rvert$ and two linearly independent sections $s,t \in H^{0}(\mcE(1))$ cut a system $g^{1}_{2}$ without base points. 
In particular, for a general non-zero section $s \in H^{0}(\mcE(1))$, the zero locus $Z$ of $s$ is a disjoint union of two lines $l_{1} \sqcup l_{2}$: 
\[0 \to \mcO \to \mcE(1) \to \mcI_{l_{1} \sqcup l_{2}}(1) \to 0.\]
Fix an embedding $\Q^{3} \hra \Q^{4}$. 
Let $P_{1}, P_{2} \subset \Q^{4}$ be disjoint $2$-planes such that $(\Q^{3}, l_{1}, l_{2})$ is the linear section of $(\Q^{4}, P_{1}, P_{2})$. 
Then we obtain a vector bundle $\mcC_{\Q^{4}}$ by applying the Hartshorne--Serre construction along $P_{1} \sqcup P_{2}$ as
$0 \to \mcO_{\Q^{4}}(-1) \to \mcC_{\Q^{4}} \to \mcI_{P_{1} \sqcup P_{2}/\Q^{4}} \to 0$. 
By construction, $\mcC_{\Q^{4}}|_{\Q^{3}} \simeq \mcE$. 
Then \cite[Remark~3.4]{Ott90} shows (3). 
\end{proof}

\section{Resolutions of rank $2$ weak Fano bundles on $\Q^{3}$}\label{sec-Q3Resol}

From the above discussion, every indecomposable weak Fano bundle is slope stable, and furthermore, its isomorphism class is determined when $c_{2} \leq 2$.
The remaining cases are when $c_{2} \geq 3$, and more specifically, when $(c_{1},c_{2}) \in \{(-1,3),(-1,4)\}$ by Proposition~\ref{prop-nonexistQ3}. 
To obtain geometric expressions for weak Fano bundles in these cases, we will provide resolutions of them. 
In this section, we adopt the terminology of \cite[Section~2]{FHI2}.

The following theorem by Kapranov provides an example of a full exceptional collection.

\begin{thm}[\cite{Kap88}]
The derived category $\Db(\Q^{3})$ of the quadric threefold $\Q^{3}$ admits a full exceptional collection
\[ \Db(\Q^{3}) = \langle \mcS(-2), \mcO(-2), \mcO(-1), \mcO  \rangle. \]
\end{thm}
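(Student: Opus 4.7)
The plan is to follow Kapranov's original approach \cite{Kap88}, proving the theorem in two stages: first verify that the ordered sequence $(\mcS(-2), \mcO(-2), \mcO(-1), \mcO)$ is exceptional, and then establish fullness via a Koszul-type resolution of the structure sheaf of the diagonal on $\Q^3 \times \Q^3$.

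For exceptionality and semi-orthogonality, the Ext groups between the line bundle entries reduce to the standard cohomology $H^{\bullet}(\Q^3, \mcO(k))$, which by Bott vanishing is concentrated in degree $0$ for $k \geq 0$ and in degree $3$ for $k \leq -4$. In the relevant range $k \in \{-2,-1,0,1,2\}$ only $H^{0}$ can be nonzero, which gives all vanishings among the $\mcO(-i)$'s and $\End(\mcO(-i)) = \C$. For the Ext groups involving $\mcS(-2)$, the tautological sequence
\[
0 \to \mcS \to \mcO_{\Q^3}^{\oplus 4} \to \mcS(1) \to 0
\]
together with the self-duality $\mcS^{\vee} \simeq \mcS(1)$ reduces all cohomology of twists of $\mcS$ to cohomology of line bundles. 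This yields, for instance, $\Ext^{\bullet}(\mcO(-i), \mcS(-2)) = H^{\bullet}(\mcS(i-2)) = 0$ for $i \in \{0,1,2\}$, and $\End(\mcS(-2)) = \C$ follows either from this computation or from slope stability of $\mcS$.

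For fullness, one constructs a resolution of $\mcO_{\Delta}$ on $\Q^3 \times \Q^3$ whose terms are external tensor products of the four objects $\mcS(-2), \mcO(-2), \mcO(-1), \mcO$ with suitable sheaves on the second factor. The guiding principle is to start from the Beilinson/Koszul resolution of the diagonal on the ambient $\P^4 \times \P^4$, restrict to $\Q^3 \times \Q^3$, and then use the defining sequence of the spinor bundle to replace the extra term that arises from the restriction of the quadratic equation. Once such a resolution exists, the projection formula and the fact that Fourier--Mukai with kernel $\mcO_{\Delta}$ is the identity functor show that every object of $\Db(\Q^3)$ lies in the triangulated subcategory generated by the four objects.

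The main obstacle is the construction of the explicit resolution of $\mcO_{\Delta}$ and the verification of its exactness; this is precisely the place where spinor bundles necessarily enter, since the restriction of the Beilinson-type resolution on $\P^4$ alone does not suffice and the Clifford-algebra structure on $\Q^3$ must be invoked to produce the correct replacement term.
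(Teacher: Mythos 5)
The paper does not prove this statement at all: it is quoted verbatim from Kapranov \cite{Kap88}, so there is no internal proof to compare against, and a citation is all that is expected here. Your first half is fine: the vanishings $H^{\bullet}(\Q^{3},\mcO(k))=0$ for $k=-1,-2$ and $H^{\bullet}(\Q^{3},\mcS(t))=0$ for $t\in\{-2,-1,0\}$, obtained from $\mcS^{\vee}\simeq\mcS(1)$ and the tautological sequence $0\to\mcS\to\mcO^{\oplus 4}\to\mcS(1)\to 0$, are exactly what is needed to see that $(\mcS(-2),\mcO(-2),\mcO(-1),\mcO)$ is an exceptional collection, and the exceptionality of each individual object is standard.

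The genuine gap is fullness. You only state the strategy and explicitly defer ``the main obstacle,'' namely the construction and exactness of a resolution of $\mcO_{\Delta}$ on $\Q^{3}\times\Q^{3}$; but that construction \emph{is} the proof of fullness, so nothing is actually established. Moreover, the shortcut you sketch --- restrict the Beilinson/Koszul resolution of $\mcO_{\Delta_{\P^{4}}}$ to $\Q^{3}\times\Q^{3}$ and ``replace the extra term'' via the spinor sequence --- does not work as stated: $\Delta_{\Q^{3}}$ has codimension $3$ in $\Q^{3}\times\Q^{3}$ while the restricted Koszul complex has length $4$, and $\Delta_{\P^{4}}$ is not Tor-independent of $\Q^{3}\times\Q^{3}$, so the restricted complex acquires nontrivial extra homology and cannot be repaired by swapping in a single spinor term. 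Kapranov's argument instead builds a different complex $\cdots\to\psi_{i}\boxtimes\mcO(-i)\to\cdots\to\mcO\boxtimes\mcO\to\mcO_{\Delta}\to 0$ whose terms $\psi_{i}$ are truncated Clifford-module bundles, and then verifies that each $\psi_{i}$ (for $\Q^{3}$, the last one is built from $\mcS$) lies in the subcategory generated by the four objects. Unless you carry out that construction, or substitute another complete generation argument (e.g.\ showing that the admissible subcategory $\langle\mcS(-2),\mcO(-2),\mcO(-1),\mcO\rangle$ contains $\mcO(k)$ for all $k\in\Z$ and hence has trivial orthogonal), your proposal proves only that the sequence is an exceptional collection, not that it is full.
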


Mutations applied to Kapranov's collection produce another exceptional collection; for example
\[ \Db(\Q^{3}) = \langle \mcO(-2), \mcO(-1), \mcS, \mcO  \rangle. \]

\subsection{When $(c_{1}, c_{2}) = (-1,2)$}

\begin{thm} \label{thm-Q3CaylayResol}
Let $\mcE$ be a rank two vector bundle on $\Q^{3}$ with $(c_{1},c_{2}) = (-1, 2)$.
Then the following conditions are equivalent.
\begin{enumerate}[label=(\alph*)]
\item $\mcE$ is slope stable.
\item $\mcE$ is a weak Fano bundle. 
\item There exists an exact sequence
\begin{equation} \label{ex-Q3CayleyResol}
0 \to \mcO(-2) \to \mcO(-1)^{\oplus 5} \to \mcO^{\oplus 2} \oplus \mcS^{\oplus 2} \to \mcE(1) \to 0.
\end{equation}
\end{enumerate}
\end{thm}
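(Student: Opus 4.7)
The plan is to establish the three-way equivalence by proving (b) $\Rightarrow$ (a), (a) $\Rightarrow$ (c), and (c) $\Rightarrow$ (a) $\Rightarrow$ (b). The equivalence (a) $\Leftrightarrow$ (b) is essentially free from results already in this section: (b) $\Rightarrow$ (a) is immediate from Proposition~\ref{prop-unstableQ3} since $c_{2} = 2 > 0$ forces slope stability, while (a) $\Rightarrow$ (b) is part of Theorem~\ref{thm-FanoClsf}(3). The real content is therefore the equivalence (a) $\Leftrightarrow$ (c).

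For (a) $\Rightarrow$ (c), my approach is to apply a Beilinson-type resolution relative to the mutation $\Db(\Q^{3}) = \langle \mcO(-2), \mcO(-1), \mcS, \mcO \rangle$ of Kapranov's full exceptional collection. The key computation is the graded vector spaces $\RHom(E_{j}, \mcE(1))$ for each building block $E_{j}$ of this collection. For the line-bundle blocks $\mcO(-2), \mcO(-1), \mcO$, higher Ext's vanish by the Le Potier- and Kawamata--Viehweg-type vanishings recorded in Lemma~\ref{lem-wFbQ3} together with Serre duality, and the $\Hom$-dimensions are computed via Hirzebruch--Riemann--Roch combined with the slope-stability vanishings $h^{0}(\mcE) = h^{0}(\mcE(-1)) = 0$ coming from Lemma~\ref{lem-wFbQ3}(4). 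For the spinor block $E_{j} = \mcS$, I would tensor the standard short exact sequence defining $\mcS$ on $\Q^{3}$ by $\mcE(1)$ and chase cohomology to reduce to the line-bundle computations. Feeding the resulting multiplicities $1, 5, 2, 2$ into the Beilinson-type spectral sequence yields the complex in (\ref{ex-Q3CayleyResol}).

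For (c) $\Rightarrow$ (a), I would tensor the resolution (\ref{ex-Q3CayleyResol}) by $\mcO(-1)$ to obtain
\begin{equation*}
0 \to \mcO(-3) \to \mcO(-2)^{\oplus 5} \to \mcO(-1)^{\oplus 2} \oplus \mcS(-1)^{\oplus 2} \to \mcE \to 0,
\end{equation*}
and split this into two short exact sequences through the intermediate kernel sheaf. A direct cohomology chase using $H^{1}(\mcO(n)) = H^{2}(\mcO(n)) = 0$ for all $n$, $H^{0}(\mcO(n)) = 0$ for $n \leq -1$, and $H^{0}(\mcS(-1)) = 0$ (which follows from slope stability of $\mcS$) yields $H^{0}(\mcE) = 0$. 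For a normalized rank two bundle with $c_{1} = -1$, the vanishing $H^{0}(\mcE) = 0$ is equivalent to slope stability, so (a) follows. The Chern class identities $(c_{1}(\mcE), c_{2}(\mcE)) = (-1, 2)$ can be read off directly from the resolution.

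The principal obstacle is the computation of $\RHom(\mcS, \mcE(1))$, which is the only piece that goes beyond line bundles: it requires combining the structure sequence for $\mcS$ on $\Q^{3}$ with careful cohomology of several twists of $\mcE$, whereas the rest of the argument is essentially formal bookkeeping once these dimensions are in hand.
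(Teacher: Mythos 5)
Your treatment of (b) $\Rightarrow$ (a) via Proposition~\ref{prop-unstableQ3}, of (a) $\Rightarrow$ (b) via Theorem~\ref{thm-FanoClsf}(3), and your cohomology chase proving (c) $\Rightarrow$ (a) are all fine (the paper instead proves (c) $\Rightarrow$ (b) directly, by observing that (\ref{ex-Q3CayleyResol}) makes $\mcE(2)$ globally generated with $s_{3}(\mcE(2))=18>0$, but your route through $h^{0}(\mcE)=0$ works). The gap is in (a) $\Rightarrow$ (c): the exponents in (\ref{ex-Q3CayleyResol}) are \emph{not} the dimensions of $\RHom(E_{j},\mcE(1))$ for the members $E_{j}$ of the collection $\langle\mcO(-2),\mcO(-1),\mcS,\mcO\rangle$, so the computation you propose cannot output the multiplicities $1,5,2,2$. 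Only the $\mcO$-term matches, since $\RHom(\mcO,\mcE(1))=\RG(\mcE(1))=\C^{2}$ by \cite[Proposition~4.2]{OS94}. For the others, Riemann--Roch already rules your numbers out: $\chi(\RHom(\mcO(-1),\mcE(1)))=\chi(\mcE(2))=13$, $\chi(\RHom(\mcO(-2),\mcE(1)))=\chi(\mcE(3))=36$, and $\chi(\RHom(\mcS,\mcE(1)))=\chi(\mcS(1)\otimes\mcE(1))=10$, so these graded Hom spaces have total dimension at least $13$, $36$, $10$ and can never yield $5$, $1$, $2$. The underlying issue is that in a Beilinson-type spectral sequence the multiplicity of $E_{j}$ is governed by Hom's against the \emph{dual} exceptional collection, not against the $E_{j}$ themselves; the collection here is only semiorthogonal, not orthogonal, so the naive recipe fails, and in particular the computation you single out as the principal obstacle, $\RHom(\mcS,\mcE(1))$, is not the quantity that controls the $\mcS^{\oplus 2}$ term.

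The paper's argument shows what the correct inputs are, and they are asymmetric: the exponent of $\mcO$ comes from $\RHom(\mcO,\mcE(1))=\C^{2}$ (left mutation $\LL_{\mcO}$), the exponent of $\mcO(-2)$ comes from a Hom in the \emph{opposite} direction, $\RHom(\mcE(1),\mcO(-2))\simeq\RG(\mcE(-2))\simeq\C[-2]$ (right mutation $\RR_{\mcO(-2)}$ applied to the torsion sheaf $\mcL_{H}=\Cok(\mcO^{\oplus2}\to\mcE(1))$), and the exponents $5$ and $2$ on $\mcO(-1)$ and $\mcS$ are not obtained from any cohomology computation at all: once the mutated object is shown to lie in $\langle\mcO(-1),\mcS\rangle$, \cite[Lemma~2.7]{FHI2} produces a two-term decomposition, the unwanted connecting maps are killed using $\Hom(\mcS,\mcO(-2))=0$ and the torsionness of $\mcL_{H}$, and then $x=5$, $y=2$ are forced by comparing rank and $c_{1}$. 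If you insist on a spectral-sequence formulation you would first have to compute the left dual collection of $\langle\mcO(-2),\mcO(-1),\mcS,\mcO\rangle$, take Hom's from those dual objects, and argue degeneration into a four-term exact sequence; otherwise the mutation-by-mutation argument sketched above is the natural repair.
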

\begin{proof}
The implication (b) $\ra$ (a) follows from Proposition~\ref{prop-unstableQ3}. 
The implication (c) $\ra$ (b) also holds since if $\mcE$ fits into an exact sequence (\ref{ex-Q3CayleyResol}),
then $\mcE(2)$ is globally generated with $c_{1}(\mcE(2))=3$ and $s_{3}(\mcE(2))=18$. 
Hence it suffices to prove the implication (a) $\ra$ (c). 

Assume that $\mcE$ is slope stable.
Then $\RG(\mcE(1)) \simeq \C^{\oplus 2}$ by \cite[Proposition~4.2]{OS94}. 
Thus $\LL_{\mcO}(\mcE(1)) \simeq \mcL_{H}$, where $\mcL_{H}$ is the cokernel of the injective evaluation map $\ev \colon \mcO^{\oplus 2} \to \mcE(1)$ that appeared in
the proof of Theorem~\ref{thm-FanoClsf}~(3). 
This sheaf $\mcL_{H}$ is a torsion sheaf with $c_{1}(\mcL_{H}) = 1$.

Put $\mcV := \RR_{\mcO(-2)}(\mcL_{H})$.
Since $\RHom(\mcL_{H}, \mcO(-2)) \simeq \RHom(\mcE(1), \mcO(-2)) \simeq \RG(\mcE(-2)) \simeq \C[-2]$ by \cite[Proposition~4.2]{OS94},
the complex $\mcV$ fits in an exact triangle
\[ \mcV \to \mcL_{H} \to \mcO(-2)[2] \xrightarrow{+1} , \]
and hence
\begin{align*}
\mcH_{\mathrm{coh}}^{i}(\mcV) \simeq \begin{cases}
\mcO(-2) & \text{if $i = -1$}, \\
\mcL_{H} & \text{if $i = 0$, and} \\
 0 & \text{if $i \neq 0, -1$.}
\end{cases}
\end{align*}
Note that $\mcV \in \langle \mcO \rangle^{\bot} \cap {}^{\bot}\langle \mcO(-2) \rangle = \langle \mcO(-1), \mcS \rangle$.
Thus applying \cite[Lemma 2.7]{FHI2} gives an exact triangle
\[ \RHom(\mcS, \mcV) \otimes \mcS \to \mcV \to \RHom(\mcO(-1), \mcV)^{\vee} \otimes \mcO(-1) \xrightarrow{+1}. \]
As a part of the cohomology long exact sequence from the above triangle, there is an exact sequence
\[ \cdots \to \mcS^{\oplus a} \xrightarrow{\phi} \mcO(-2) \to \mcO(-1)^{\oplus x} \to \mcS^{\oplus y} \to \mcL_{H} \xrightarrow{\psi} \mcO(-1)^{\oplus b} \to \cdots, \]
for some $a,b,x,y \in \Z_{\geq 0}$.
The vanishing $\Hom(\mcS, \mcO(-2)) = 0$ implies that the morphism $\phi$ is zero, 
and since $\mcL_{H}$ is torsion, $\psi$ is also zero.
Thus there is an exact sequence
\[ 0 \to \mcO(-2) \to \mcO(-1)^{\oplus x} \to \mcS^{\oplus y} \xrightarrow{\alpha} \mcL_{H} \to 0. \]
Comparing the rank and $c_{1}$ of these sheaves gives $1 - x + 2y = 0$ and $-2 + x - y - 1 = 0$, which implies $x = 5$ and $y = 2$.
Put $\mcK := \Ker(\alpha)$.
Then pulling back the exact sequence $0 \to \mcO^{\oplus 2} \to \mcE(1) \to \mcL_{H} \to 0$ by $\alpha$ yields a commutative diagram of exact sequences
\[ \begin{tikzcd}
 & & 0 \arrow[d] & 0 \arrow[d]  & \\
 & & \mcK \arrow[d]  \arrow[r ,equal] & \mcK \arrow[d]  & \\
0 \arrow[r] & \mcO^{\oplus 2} \arrow[d, equal]  \arrow[r] & \mcF \arrow[d]  \arrow[r] & \mcS^{\oplus 2} \arrow[d, "\alpha"] \arrow[r]  & 0 \\
0 \arrow[r] & \mcO^{\oplus 2} \arrow[r] & \mcE(1) \arrow[d] \arrow[r] & \mcL_{H}  \arrow[d] \arrow[r] & 0 \\
&& 0 & 0 &
\end{tikzcd} \]
Since $\Ext^{1}(\mcS, \mcO) = 0$, $\mcF \simeq \mcO^{\oplus 2} \oplus \mcS^{\oplus 2}$.
Connecting two exact sequences $0 \to \mcK \to \mcO^{\oplus 2} \oplus \mcS^{\oplus 2} \to \mcE(1) \to 0$ and $0 \to \mcO(-2) \to \mcO(-1)^{\oplus 5} \to \mcK \to 0$ gives the required exact sequence.
\end{proof}

\subsection{When $(c_{1}, c_{2}) = (-1,3)$}

\begin{lem} \label{lem-orth3}
Let $\mcE$ be a rank two vector bundle on $\Q^{3}$ with $(c_{1}(\mcE), c_{2}(\mcE)) = (-1, 3)$.
Then the following holds.
\begin{enumerate}[label=(\arabic*)]
\item If $\mcE$ is slope stable, then $\RG(\mcE(-1)) = 0$ holds.
\item The following conditions are equivalent.
\begin{enumerate}[label=(\alph*)]
\item $\mcE$ is weak Fano.
\item $h^{0}(\mcE(1)) = 0$.
\item $\mcE$ is $2$-regular.
\end{enumerate}
If $\mcE$ satisfies one of (and hence all of) the above equivalent conditions, $\RG(\mcE(1)) = 0$ holds.
\end{enumerate}
\end{lem}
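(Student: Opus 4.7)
The plan is to address the two parts of the lemma in turn.

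For Part~(1), the target $\RG(\mcE(-1))=0$ reduces to a single cohomology vanishing. Applying the Hirzebruch--Riemann--Roch formula \eqref{eq-RR} to $\mcE(-1)$ (with twisted Chern integers $(c_{1},c_{2})=(-3,7)$) gives $\chi(\mcE(-1))=0$. Slope stability of $\mcE$ implies $h^{0}(\mcE(-1))=0$ (negative slope), and Serre duality combined with the rank-two identity $\mcE^{\vee}\simeq\mcE(1)$ (valid since $\det\mcE=\mcO(-1)$) yields $h^{3}(\mcE(-1))=0$. Moreover, because $\det\mcE(-1)=\omega_{\Q^{3}}$, the bundle $\mcE(-1)$ is self-Serre-dual, forcing $h^{1}(\mcE(-1))=h^{2}(\mcE(-1))$ (consistent with $\chi=0$). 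The remaining vanishing $h^{1}(\mcE(-1))=0$ I plan to establish via the Koszul restriction $0\to\mcE(-2)\to\mcE(-1)\to\mcE(-1)|_{H'}\to 0$ for a generic hyperplane $H'\cong\Q^{2}$, using Mehta--Ramanathan to keep $\mcE|_{H'}$ slope stable on $\Q^{2}$ and exploiting negative-slope vanishings coupled with Serre duality on $\Q^{2}$. Pinning down this single vanishing is the main obstacle of Part~(1).

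For Part~(2), the implication (c) $\Rightarrow$ (a) is the easy one: $2$-regularity makes $\mcE(2)$ globally generated, hence $\xi+2H$ is nef on $\P(\mcE)$ and $-K_{\P(\mcE)}=2(\xi+2H)$ is nef; combined with $(-K_{\P(\mcE)})^{4}=48\cdot 4>0$ from \eqref{eq-acself}, this yields weak Fano. For the rest, both (a) and (b) force slope stability — (a) by Proposition~\ref{prop-unstableQ3}, since $(c_{1},c_{2})=(-1,3)$ does not appear in its split list, and (b) because non-stability would yield $h^{0}(\mcE)\geq 1$ and hence $h^{0}(\mcE(1))\geq 1$ by multiplication with a section of $\mcO(1)$ — so Part~(1) applies throughout. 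I would then chain two Koszul-restriction arguments: first, $0\to\mcE(-1)\to\mcE\to\mcE|_{H'}\to 0$ combined with $\RG(\mcE(-1))=0$ gives $H^{i}(\mcE)\simeq H^{i}(\mcE|_{H'})$, and Mehta--Ramanathan stability on $\Q^{2}$ together with Serre duality (using $\omega_{\Q^{2}}=\mcO(-2)$) forces $h^{2}(\mcE|_{H'})=h^{0}(\mcE(-1)|_{H'})^{\vee}=0$, whence $h^{2}(\mcE)=0$; second, a similar Koszul for $0\to\mcE(-3)\to\mcE(-2)\to\mcE(-2)|_{H'}\to 0$ together with $h^{1}(\mcE(-2))=h^{2}(\mcE)=0$ (by Serre duality) and $h^{0}(\mcE(-2)|_{H'})=0$ yields $h^{1}(\mcE(-3))=0$, i.e., $h^{2}(\mcE(1))=0$ by Serre duality. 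Combined with $h^{3}(\mcE(1))=0$ (stability) and $\chi(\mcE(1))=0$, one concludes $h^{0}(\mcE(1))=h^{1}(\mcE(1))$, establishing (b) $\Leftrightarrow$ (c) and $\RG(\mcE(1))=0$ simultaneously.

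The most subtle implication is (a) $\Rightarrow$ (b). I would argue by contradiction: a nonzero $s\in H^{0}(\mcE(1))$ yields $0\to\mcO\to\mcE(1)\to\mcI_{Z}(1)\to 0$ with $Z$ a codimension-two subscheme of class $c_{2}(\mcE(1))=3$; the Hartshorne--Serre relation $\omega_{Z}\simeq\mcO_{Z}(-2)$ forces $p_{a}(Z)=-2$, so generically $Z$ is a disjoint union of three lines. Restricting the dualized inclusion $0\to\mcO(-1)\to\mcE\to\mcI_{Z}\to 0$ to a component $l\subset Z$ (and computing the relevant $\Tor$-terms in terms of the normal bundle $\mcN_{l/\Q^{3}}$) will constrain the splitting type of $\mcE|_{l}$; the hope is that on some line $l$ with a sufficiently jumping normal bundle, $\mcE(2)|_{l}$ fails to be nef, contradicting the weak Fano hypothesis. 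Completing this geometric analysis — potentially by running over the full family of components and exploiting the positivity of the conic-Hilbert discriminant — is the principal obstacle of the proof.
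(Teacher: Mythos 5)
Your proposal leaves unproved exactly the two points that carry the lemma, so it does not close. In Part~(1) the only nontrivial statement is $h^{1}(\mcE(-1))=0$: stability, Serre duality and $\chi(\mcE(-1))=0$ are all easy (and you do them correctly), but your plan for the $h^{1}$ vanishing via the Koszul sequence $0\to\mcE(-2)\to\mcE(-1)\to\mcE(-1)|_{H'}\to 0$ does not work as sketched, because the long exact sequence only bounds $H^{1}(\mcE(-1))$ by $H^{1}(\mcE(-2))$ plus $H^{1}(\mcE(-1)|_{H'})$ and you have no control of $H^{1}(\mcE(-2))$ at this stage (its vanishing is essentially equivalent to what you are trying to prove, via Serre duality it is $H^{2}(\mcE)^{\vee}$); moreover Mehta--Ramanathan does not give stability of the restriction to a \emph{hyperplane} section, only to general hypersurfaces of large degree. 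The paper simply quotes Ein--Sols \cite[Corollary~2.4]{ES84} for this vanishing, and then the chain (b) $\Rightarrow$ (c) is done by the same citation plus Riemann--Roch, much as you outline.

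The deeper gap is (a) $\Rightarrow$ (b), which you yourself flag as unfinished, and the specific contradiction you hope for cannot occur. If $s\in H^{0}(\mcE(1))$ is nonzero, its zero scheme $Z$ is indeed (generically) three pairwise disjoint lines, but the Koszul/Hartshorne--Serre description gives $\mcE(1)|_{Z}\simeq \mcN_{Z/\Q^{3}}$, and the normal bundle of a line in $\Q^{3}$ is either $\mcO\oplus\mcO(1)$ or $\mcO(-1)\oplus\mcO(2)$; in both cases $\mcE(2)|_{l}$ is $\mcO(1)\oplus\mcO(2)$ or $\mcO\oplus\mcO(3)$, which is nef, so no failure of nefness (hence no contradiction with weak Fano) can be detected on the components of $Z$, and nothing in your sketch produces another curve where nefness fails. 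The paper's argument is entirely different: a general section of the globally generated bundle $\mcE(2)$ vanishes on an elliptic curve $C$ of degree $7$, $\Bl_{C}\Q^{3}$ is weak Fano with a flopping anticanonical contraction (\cite{JPR05}, \cite{JPR11}), and the Ravindra--Srinivas Noether--Lefschetz statement (Corollary~\ref{cor-RSNL}) produces a smooth $S\in\lvert\mcI_{C}(3)\rvert$ with $\Pic(S)=\Z[H_{S}]\oplus\Z[C]$; if $h^{0}(\mcE(1))\neq 0$ then $D\sim 2H_{S}-C$ is effective with $D^{2}=-4$, hence contains a $(-2)$-curve $\Gamma\sim aH_{S}+bC$, and $\Gamma^{2}=6a^{2}+14ab=-2$ has no integer solutions. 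Without an argument of this kind (or some substitute for the lattice computation on the K3), your proof of the key implication is missing.
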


\begin{proof}
(1) Note that $h^{i}(\mcE(-1)) = h^{3-i}(\mcE(-1))$ by Serre duality.
Then $h^{0}(\mcE(-1))=0$ holds since $\mcE$ is slope stable, and $h^{1}(\mcE(-1))=0$ by \cite[Corollary 2.4]{ES84}. This proves (1).

(2) First we show the implication (a) $\ra$ (b).
Suppose that $\mcE$ is weak Fano. 
Let $\mcF:=\mcE(2)$, $\pi \colon Y:=\P_{\Q^{3}}(\mcF) \to \Q^{3}$ the projectivization, $\xi_{\mcF}$ the tautological divisor, and $H$ the pull-back of a hyperplane section of $\Q^{3}$. 
Note that $-K_{Y} \sim 2\xi$ and hence $\xi$ is nef and big. 
Then $\mcF$ is globally generated by \cite[Theorem~1.7]{FHI1}. 
In particular, $\xi$ is base point free. 
Let $\wt{X} \in \ls{\xi}$ be a general member and $s \in H^{0}(\mcF)$ a corresponding section. 
Then $\wt{X}$ is the blowing-up of $\Q^{3}$ along the zero-locus of $s$, denote by $C$. 
Since $c_{1}(\mcF)=c_{1}(\Q^{3})$ and $c_{2}(\mcF)=7$, each connected component of $C$ is a smooth elliptic curve and $\deg C=7$. 
If $C$ is not connected, then there is a connected component  $C'$ of $C$ such that $\deg C' = 3$, which contradicts that $\Q^{3}$ does not contains a plane cubic. 
Hence $C$ is connected and hence an elliptic curve of degree $7$. 

Since $-K_{\wt{X}} \sim \xi|_{\wt{X}}$ is nef and big, 
$\wt{X}=\Bl_{C}\Q^{3}$ is a weak Fano $3$-fold. 
Let $\psi_{\wt{X}} \colon \wt{X} \to \ol{X}$ be the crepant contraction given by $-K_{\wt{X}}$. 
By \cite{JPR05}, $\psi_{\wt{X}}$ is not divisorial. 
Hence $\psi_{\wt{X}}$ is a flopping contraction. 
By \cite[Proposition~2.7]{JPR11}, $-K_{\ol{X}}$ is very ample since $-K_{\wt{X}}^{3}=\xi^{4}=s_{3}(\mcF)=12$. 
Therefore, it follows from Corollary~\ref{cor-RSNL} that a very general member $S \in \ls{\mcI_{C/\Q^{3}}(3)}$ is smooth and satisfies $\Pic(S) \simeq \Z[H_{S}] \oplus \Z[C]$, where $H_{S}$ is the restriction of a hyperplane section on $\Q^{3}$. 
We also regard $S$ as an anticanonical member of $\wt{X}=\Bl_{C}\Q^{3}$. 
Let $\xi_{S}:=\xi|_{S}$. Then $\xi_{S} \sim 3H_{S}-C$. 

Suppose $H^{0}(\mcE(1)) \neq 0$. 
This is equivalent to saying that $\ls{\xi-H}$ is non-empty. 
Since the morphisms $H^{0}(Y,\mcO(\xi-H)) \to H^{0}(\wt{X},\mcO((\xi-H)|_{\wt{X}})) \to H^{0}(S,\mcO(\xi_{S}-H_{S}))$ are isomorphisms, $D:=\xi_{S}-H_{S} \sim 2H_{S}-C$ is an effective divisor on $S$. 
Since $D^{2}=-4$, $D$ must contain a $(-2)$-curve $\Gamma$. 
On the other hand, since $\Pic(S)$ is generated by $H_{S}$ and $C$, we can pick $a,b \in \Z$ such that $\Gamma \sim aH_{S}+bC$. 
Then $-2=\Gamma^{2} = 6a^{2}+14ab$. 
Thus $a=\frac{-7b \pm \sqrt{(7b)^{2}-12}}{6}$, which cannot be an integer. 
This is a contradiction and hence $h^{0}(\mcE(1)) = 0$. 

(b) $\ra$ (c): Assume that $h^{0}(\mcE(1)) = 0$.
Then $\mcE$ is stable, and thus applying \cite[Corollary 2.4]{ES84} together with Serre duality shows that $h^{i}(\mcE(1)) = 0$ for $i = 2,3$ and $h^{2}(\mcE) = 0$.
Now the Hirzebruch--Riemann--Roch formula gives that $h^{1}(\mcE(1)) = - \chi(\mcE(1)) = 0$.
Therefore $\mcE$ is $2$-regular.

(c) $\ra$ (a): If $\mcE$ is $2$-regular, then $\mcE(2)$ is globally generated with $c_{1}(\mcE(2))=3$ and $s_{3}(\mcE(2))=12$. 
Hence $\mcE$ is a weak Fano bundle. 
\end{proof}

\begin{thm}\label{thm-Q3Resol7}
Let $\mcE$ be a rank two vector bundle with $(c_{1}(\mcE), c_{2}(\mcE)) = (-1, 3)$.
Then the following conditions are equivalent.
\begin{enumerate}[label=(\alph*)]
\item $\mcE$ is weak Fano.
\item $\mcE$ lies in the exact sequence
\[ 0 \to \mcO_{X}(-2)^{\oplus 2} \to \mcO_{X}(-1)^{\oplus 10} \to \mcS^{\oplus 5} \to \mcE(1) \to 0. \]
\end{enumerate}
\end{thm}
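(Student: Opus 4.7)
The proof splits into (b)$\Rightarrow$(a), which is a direct unwinding, and (a)$\Rightarrow$(b), which adapts the mutation-based strategy from the proof of Theorem~\ref{thm-Q3CaylayResol}.

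For (b)$\Rightarrow$(a): Twisting the given exact sequence by $\mcO(1)$ produces a surjection $\mcS(1)^{\oplus 5} \epm \mcE(2)$. Since $\mcS(1)$ is globally generated via the standard sequence $0 \to \mcS \to \mcO^{\oplus 4} \to \mcS(1) \to 0$, so is $\mcE(2)$, and hence $\xi+2H$ is base-point-free on $\P(\mcE)$. The identity $-K_{\P(\mcE)} = 2(\xi+2H)$ gives nefness of $-K_{\P(\mcE)}$, while (\ref{eq-acself}) gives $(-K_{\P(\mcE)})^{4} = 48(c_{1}^{2}-2c_{2}+9) = 48 \cdot 4 > 0$ for bigness.

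For (a)$\Rightarrow$(b): By Lemma~\ref{lem-orth3}, $\RG(\mcE(1)) = 0$, which combined with slope-stability gives $H^{\bullet}(\mcE) = \C^{2}[-1]$, and $\mcE$ is $2$-regular. Working with Kapranov's collection $\Db(\Q^{3}) = \langle \mcO(-2), \mcO(-1), \mcS, \mcO \rangle$, the vanishing $\RG(\mcE(1)) = 0$ places $\mcE(1) \in \langle \mcO \rangle^{\perp} = \langle \mcO(-2), \mcO(-1), \mcS \rangle$. Set $\mcV := \RR_{\mcO(-2)}(\mcE(1)) \in \langle \mcO(-1), \mcS \rangle$; Serre duality together with $h^{1}(\mcE) = 2$ yields $\RHom(\mcE(1), \mcO(-2)) \simeq \C^{2}[-2]$, placing $\mcV$ in a triangle
\[ \mcV \to \mcE(1) \to \mcO(-2)^{\oplus 2}[2] \xrightarrow{+1} \]
whose cohomology sheaves are $\mcH^{-1}(\mcV) \simeq \mcO(-2)^{\oplus 2}$ and $\mcH^{0}(\mcV) \simeq \mcE(1)$.

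Next apply \cite[Lemma 2.7]{FHI2} in $\langle \mcO(-1), \mcS \rangle$ to obtain a triangle whose outer terms are built from $\RHom(\mcS, \mcV) \otimes \mcS$ and $\RHom(\mcO(-1), \mcV)^{\vee} \otimes \mcO(-1)$. The vanishings $H^{\bullet}(\mcO(-1)) = 0$ and $H^{\bullet}(\mcS(-1)) = 0$ reduce both $\RHom$'s against $\mcV$ to those against $\mcE(1)$: namely, $\RHom(\mcO(-1), \mcV) \simeq \RG(\mcE(2))$ and $\RHom(\mcS, \mcV) \simeq H^{\bullet}(\mcS(2) \otimes \mcE)$. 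By $2$-regularity, $\RG(\mcE(2)) = \C^{10}[0]$; chasing the long exact sequences from $0 \to \mcS(k) \to \mcO(k)^{\oplus 4} \to \mcS(k+1) \to 0$ for $k = 0, 1$ together with $\RG(\mcE(1)) = 0$ and $H^{\bullet}(\mcE) = \C^{2}[-1]$ shows that $H^{\bullet}(\mcS(2) \otimes \mcE)$ is concentrated in degree $0$ with dimension $5$ (the Euler characteristic being pinned down by Hirzebruch--Riemann--Roch). Taking the resulting long exact sequence of cohomology sheaves, and using the vanishings $\Hom(\mcS, \mcO(-2)) = 0$ and the exceptional-collection vanishings among $\mcO(-2), \mcO(-1), \mcS$ to eliminate extraneous morphisms, the cohomology $\mcH^{-1}(\mcV) \simeq \mcO(-2)^{\oplus 2}$ and $\mcH^{0}(\mcV) \simeq \mcE(1)$ fit into the desired $4$-term sequence $0 \to \mcO(-2)^{\oplus 2} \to \mcO(-1)^{\oplus 10} \to \mcS^{\oplus 5} \to \mcE(1) \to 0$.

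The main obstacle is the last step: carefully matching the non-trivial cohomology of $\mcV$ (in two degrees) against the long exact sequence from the triangle and correctly tracking the graded pieces of the $\RHom$-spaces to identify $\mcO(-2)^{\oplus 2}$ as the kernel and $\mcE(1)$ as the cokernel. This is the direct analog of the step after the long exact sequence in the proof of Theorem~\ref{thm-Q3CaylayResol}, where the vanishings $\Hom(\mcS, \mcO(-2)) = 0$ and torsion-freeness of the object being resolved are used to conclude that the extremal connecting maps vanish.
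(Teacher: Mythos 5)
Your proposal is correct, but it proves (a)$\Rightarrow$(b) by a genuinely different route than the paper. The paper does not mutate $\mcE(1)$ over $\mcO(-2)$; instead it works with the collection $\langle \mcO, \mcS(1), \mcO(1), \mcO(2) \rangle$, replaces $\mcO(1),\mcO(2)$ by the exceptional bundle $\Omega_{\P^{4}}(2)|_{\Q^{3}}$ via the Euler sequence, uses $\RG(\mcE(1))=\RG(\mcE(-1))=0$ to place $\mcE$ itself in $\langle \mcS(1), \Omega_{\P^{4}}(2)|_{\Q^{3}} \rangle$, kills one arrow of the resulting five-term sequence by showing $\Hom(\Omega_{\P^{4}}(2)|_{\Q^{3}},\mcE)=0$, solves the exponents by rank and $c_{1}$ to get $0 \to \mcE \to \mcS(1)^{\oplus 5} \to \Omega_{\P^{4}}(2)|_{\Q^{3}}^{\oplus 2} \to 0$, and finally splices with the Euler sequence and dualizes. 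Your argument instead transplants the paper's proof of Theorem~\ref{thm-Q3CaylayResol} to this case: $\mcV:=\RR_{\mcO(-2)}(\mcE(1))$ lands in $\langle \mcO(-1), \mcS \rangle$ with $\mcH^{-1}(\mcV)\simeq\mcO(-2)^{\oplus 2}$, $\mcH^{0}(\mcV)\simeq\mcE(1)$, and the decomposition there yields the four-term resolution directly, with no dualization and no auxiliary bundle; your dimension counts ($\RHom(\mcS,\mcV)\simeq\C^{5}$, and a ten-dimensional multiplicity space for $\mcO(-1)$) are correct. What your route buys is uniformity with the $(-1,2)$ case; what the paper's route buys is that one only ever manipulates sheaves mapping to or from $\mcE$, with the exponents fixed by a one-line rank/$c_{1}$ count.

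Two small points to tighten. First, the multiplicity space of $\mcO(-1)$ in the decomposition is $\RHom(\mcV,\mcO(-1))^{\vee}\simeq\C^{10}[1]$, i.e.\ it sits in cohomological degree $-1$, not the degree-$0$ space $\RG(\mcE(2))$ you compute (the dimensions agree, but the degree placement is what puts $\mcO(-1)^{\oplus 10}$ \emph{before} $\mcS^{\oplus 5}$ in the four-term sequence); equivalently, follow the paper's Cayley argument verbatim, leaving the exponents unknown in the long exact sequence and solving them by rank and $c_{1}$ at the end. Second, the map that must vanish on the right of the long exact sequence is $\mcE(1)\to\mcO(-1)^{\oplus b}$, which is killed by $\Hom(\mcE(1),\mcO(-1))=H^{0}(\mcE(-1))=0$ (slope stability, i.e.\ Lemma~\ref{lem-orth3}~(1)), not by semiorthogonality among $\mcO(-2),\mcO(-1),\mcS$; together with $\Hom(\mcS,\mcO(-2))=\RG(\mcS(-1))=0$ on the left this truncates the sequence exactly as in the Cayley case, and the argument closes.
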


\begin{proof}
Assume that $\mcE$ is a weak Fano bundle. 
By Proposition~\ref{prop-unstableQ3}, $\mcE$ is slope stable.
Then Lemma~\ref{lem-orth3} implies that 
$\RHom(\mcE,\mcO_{\Q^{3}}) = \RG(\mcE^{\vee}) = \RG(\mcE(1)) = 0$
and
$\RHom(\mcO_{\Q^{3}}(1),\mcE)=\RG(\mcE(-1)) = 0$. 
Consider the exceptional collection
\[ \Db(\Q^{3}) = \langle \mcO_{\Q^{3}}, \mcS(1), \mcO_{\Q^{3}}(1), \mcO_{\Q^{3}}(2) \rangle. \]
The vector bundle $\Omega_{\P^{4}}(2)|_{\Q^{3}}$ fits into
\begin{align}\label{ex-EulerP4}
0 \to \Omega_{\P^{4}}(2)|_{\Q^{3}} \to \mcO_{\Q^{3}}(1)^{\oplus 5} \to \mcO_{\Q^{3}}(2) \to 0
\end{align}
and hence is also an exceptional vector bundle on $\Q^{3}$ giving a new exceptional collection
\[ \Db(\Q^{3}) = \langle \mcO_{\Q^{3}}, \mcS(1), \Omega_{\P^{4}}(2)|_{\Q^{3}}, \mcO_{\Q^{3}}(1) \rangle. \]
This exceptional collection implies that $\mcE \in \langle \mcS(1), \Omega_{\P^{4}}(2)|_{\Q^{3}} \rangle$, and hence 
\cite[Lemma 2.7]{FHI2} gives an exact sequence
\begin{align} \label{ex-seq1c27}
0 \to \mcS(1)^{\oplus c} \to \Omega_{\P^{4}}(2)|_{\Q^{3}}^{\oplus d} \to \mcE \to \mcS(1)^{\oplus a} \to \Omega_{\P^{4}}(2)|_{\Q^{3}}^{\oplus b} \to 0 
\end{align} 
for some $a, b, c, d \in \Z_{\geq 0}$.
Applying the functor $\RHom(-, \mcE)$ to the exact sequence (\ref{ex-EulerP4}) gives
\[ 0=\Hom(\mcO_{\Q^{3}}(1)^{\oplus 5},\mcE) 
\to \Hom(\Omega_{\P^{4}}(2)|_{\Q^{3}},\mcE) 
\to \Ext^{1}(\mcO_{\Q^{3}}(2), \mcE) 
\to \Ext^{1}(\mcO_{\Q^{3}}(1)^{\oplus 5},\mcE)=0,
\]
where the first term and the last terms vanish since $\RG(\mcE(-1))=0$. 
$\Ext^{1}(\mcO_{\Q^{3}}(2), \mcE) = H^{1}(\mcE(-2)) = H^{2}(\mcE)^{\vee} = 0$ also follows from Serre duality and Lemma~\ref{lem-wFbQ3}~(2). 
Thus, the homomorphism $\Omega_{\P^{4}}(2)|_{\Q^{3}}^{\oplus d} \to \mcE$ in (\ref{ex-seq1c27}) is zero, which yields an exact sequence
\[ 0 \to \mcE \to \mcS(1)^{\oplus a} \to \Omega_{\P^{4}}(2)|_{\Q^{3}}^{\oplus b} \to 0. \]
Comparing the rank and $c_{1}$ in the exact sequence above gives equations $2a = 4b+1$ and $a = 3b -1$ respectively.
Thus $a =5$ and $b = 2$. 
Combining this with the exact sequence (\ref{ex-EulerP4}) gives the exact sequence
\[ 0 \to \mcE \to \mcS(1)^{\oplus 5} \to \mcO_{\Q^{3}}(1)^{\oplus 10} \to \mcO_{\Q^{3}}(2)^{\oplus 2} \to 0, \]
and dualizing this yields the desired exact sequence.

Conversely, the exact sequence in (b) implies that $\mcE(2)$ is globally generated since $\mcS(1)$ is also globally generated. 
As $c_{1}(\mcE(2))=3$ and $s_{3}(\mcE(2))=12$, $\mcE$ is weak Fano. 
\end{proof}

\begin{rem}
Let $\mcE$ be a rank two weak Fano bundle with $(c_{1}(\mcE), c_{2}(\mcE)) = (-1, 3)$.
In the proof of \cite[Theorem 5.2]{OS94}, it is shown that $\mcE$ fits an exact sequence
\[ 0 \to \mcS^{\oplus 3} \to \left(\Omega_{\P^4}(1)|_{\Q^{3}} \right)^{\oplus 2} \to \mcE(1) \to 0, \]
and this exact sequence was used to analyze the structure of the moduli space.
\end{rem}

\subsection{When $(c_{1}, c_{2}) = (-1,4)$}

\begin{lem} \label{lem-c2=8}
Let $\mcE$ be a weak Fano bundle on $\Q^{3}$ with $c_{1}(\mcE) = -1$ and $c_{2}(\mcE) = 4$.
Then the following holds.
\begin{enumerate}
\item $H^{0}(\mcE(a)) = 0$ for all $a \leq 1$.
\item $\Hom(\mcS(1), \mcE(2)) = 0$.
\item $\RHom(\mcS(1), \mcE(2)) = 0$.
\end{enumerate}
\end{lem}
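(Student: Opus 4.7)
The plan is to prove the three parts in order, each building on the previous.

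For (1), I would invoke Lemma~\ref{lem-wFbQ3}~(4) with $(c_{1},c_{2})=(-1,4)$: the threshold in that lemma becomes $4-\frac{54}{14-4}=-\frac{7}{5}$, hence $H^{0}(\mcE(-a))=0$ whenever $a>-\frac{7}{5}$, equivalently $a\geq -1$. This is exactly the asserted vanishing $H^{0}(\mcE(b))=0$ for all $b\leq 1$.

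For (2), note first that $\Hom(\mcS(1),\mcE(2))\simeq H^{0}(\mcS\otimes\mcE(2))$ since $\mcS^{\vee}\simeq\mcS(1)$. I would use the Hartshorne--Serre correspondence for $\mcE(2)$: because $\mcE$ is slope stable by Proposition~\ref{prop-unstableQ3} (using $c_{2}>0$) and $\mcE(2)$ is globally generated by \cite[Theorem~1.7]{FHI1} (using $c_{1}(\mcE(2))=c_{1}(\Q^{3})$), a sufficiently general section $s\in H^{0}(\mcE(2))$ cuts out a smooth elliptic curve $C\subset\Q^{3}$ of degree $c_{2}(\mcE(2))=8$, producing the short exact sequence $0\to\mcO\to\mcE(2)\to\mcI_{C}(3)\to 0$. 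Tensoring with $\mcS$ and using the standard vanishings $H^{i}(\mcS)=0$ for all $i$ (coming from Kapranov's exceptional collection) yields the identification $\Hom(\mcS(1),\mcE(2))\simeq H^{0}(\mcS(3)\otimes\mcI_{C})$. A nonzero section $\sigma$ of $\mcS(3)$ vanishing on $C$ would, by slope stability of $\mcS(3)$ (slope $\tfrac{5}{2}$), have at most a divisorial component of degree $\leq 2$; since $c_{2}(\mcS(3))=7<8=\deg C$, the residual codimension-two part of the zero locus of $\sigma$ cannot contain $C$, so $\sigma$ must vanish along a divisor of degree $\leq 2$ containing $C$. A dimension count in the Hilbert scheme of elliptic octic curves on $\Q^{3}$ then excludes this configuration for a general enough choice of $s$.

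For (3), I would combine (2) with a Riemann--Roch and duality count. A direct Hirzebruch--Riemann--Roch computation yields $\chi(\mcS\otimes\mcE(2))=0$. By Serre duality, $\Ext^{3}(\mcS(1),\mcE(2))\simeq\Hom(\mcS(1),\mcE(-3))^{\vee}$; the slope gap $\tfrac{1}{2}-(-\tfrac{7}{2})=4$ excludes both a rank-$1$ and a rank-$2$ injective morphism, in the same spirit as the rank analysis of (2). Tensoring the spinor resolution $0\to\mcS(-1)\to\mcO(-1)^{\oplus 4}\to\mcS\to 0$ with $\mcE(2)$ and applying Lemma~\ref{lem-wFbQ3}~(2) reduces $\Ext^{2}(\mcS(1),\mcE(2))$ to $\Ext^{3}(\mcS(1),\mcE(1))$, which again vanishes by Serre duality and a slope argument. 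Together with (2), the Euler characteristic identity $\chi=0$ then forces $\Ext^{1}(\mcS(1),\mcE(2))=0$, completing the proof that $\RHom(\mcS(1),\mcE(2))=0$.

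The main obstacle is the genericity step at the end of (2): rigorously justifying that the zero locus of a general section of $\mcE(2)$ fails to lie on any divisor of $\Q^{3}$ of degree at most $2$, which I anticipate requires a careful dimension comparison in the relevant Hilbert scheme.
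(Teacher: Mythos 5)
Parts (1) and (3) of your proposal are essentially sound and close to the paper: (1) is exactly the paper's application of Lemma~\ref{lem-wFbQ3}~(4), and your treatment of (3) (reduce $\Ext^{2}$ to $\Ext^{3}$ of a twist via the spinor sequence, kill $\Ext^{3}$ by Serre duality plus stability of $\mcE$ and $\mcS$, then use $\chi(\mcS(1),\mcE(2))=0$ together with (2)) is a correct variant of the paper's argument, which instead kills $\Ext^{2},\Ext^{3}$ using the surjections $H^{3}(\mcE(2-a))^{\oplus 4}\to\Ext^{3}(\mcS(a),\mcE(2))$ and Lemma~\ref{lem-wFbQ3}~(2). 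The problem is part (2), which carries the whole lemma.

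Your reduction of (2) to showing that no section of $\mcS(3)$ vanishes on the degree-$8$ elliptic curve $C=Z(s)$ is fine, but the key numerical step is wrong: $c_{2}(\mcS(3))=1-2\cdot 3+2\cdot 3^{2}=13$, not $7$. Since $13>8$, you cannot conclude that the purely codimension-two zero scheme of a section of $\mcS(3)$ fails to contain $C$, so the dichotomy ``$\sigma$ must vanish on a divisor of degree $\leq 2$ containing $C$'' does not follow. (Incidentally, the divisorial case is the easy one: $C\subset D\in\lvert\mcO_{\Q^{3}}(d)\rvert$ with $d\leq 2$ would give $0\neq H^{0}(\mcI_{C}(d))\simeq H^{0}(\mcE(d-1))$, contradicting your own part (1); it is the codimension-two case that remains open.) Moreover, the step you flag as the main obstacle cannot be repaired by a dimension count in the Hilbert scheme of elliptic octics: the curve $C$ is not a general such curve but ranges only over zero loci of sections of the \emph{fixed} bundle $\mcE(2)$ (a family of dimension at most $h^{0}(\mcE(2))-1=6$), and the statement must hold for \emph{every} weak Fano $\mcE$ with $(c_{1},c_{2})=(-1,4)$, not a generic one, so genericity of $C$ among elliptic octics is unavailable. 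The paper's proof avoids all of this: a nonzero $\alpha\colon\mcS(1)\to\mcE(2)$ is injective because $H^{0}(\mcE(1))=0$ by (1), one checks $\Hom(\mcS(a+1),\mcE(2))=0$ for $a>0$ again from (1) via the surjection $\mcO^{\oplus 4}\to\mcS(1)$, and then the Chern-class inequality of \cite[Lemma~4.1]{FHI2} applied to the rank-two subsheaf $\mcS^{\vee}=\mcS(1)\subset\mcE(2)$ evaluates to $-1\geq 0$, a contradiction. To salvage your route you would need an actual argument excluding a section of $\mcS(3)$ whose $13$-point-degree, pure one-dimensional zero scheme contains $C$; as written, that case is untouched, so the proposal has a genuine gap.
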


\begin{proof}
(1) This statement directly follows from Lemma~\ref{lem-wFbQ3}~(4).

(2) Put $\mcF = \mcE(2)$, and assume that there exists a non-zero morphism $\alpha \colon \mcS(1) \to \mcF$ towards a contradiction. 
Since $H^{0}(\mcF(-1)) = 0$ by (1), the morphism $\alpha$ is injective.
On the other hand, $\Hom(\mcS(a+1), \mcF) = 0$ for all $a > 0$.
Indeed, the surjection $\mcO_{\Q^{3}}^{\oplus 4} \to \mcS(1)$ gives an injection $\Hom(\mcS(a+1), \mcF) \subset H^{0}(\mcF(-a))^{\oplus 4}$, and the right hand side is zero for all $a > 0$ by (1).
Applying \cite[Lemma~4.1]{FHI2} to $\mcS(1) = \mcS^{\vee} \subset \mcF$, we have
\begin{align*}
0 &\leq \left(c_{1}(\mcF)^{3} - 2c_{1}(\mcF)c_{2}(\mcF) \right) - \left(c_{1}(\mcF)^{2}- c_{2}(\mcF)\right) c_{1}(\mcS^{\vee}) + c_{1}(\mcF)c_{2}(\mcS^{\vee}) \\
&= \left( (3H)^{3} - 2 \cdot 3H \cdot 8l \right) 
- \left( (3H)^{2} - 8l \right) \cdot H
+ (3H) \cdot l \\
&= \left( 54 - 48 \right) 
- \left( 18 - 8 \right)
+ 3
= -1
\end{align*}
which leads to a contradiction.

(3) The injective morphism $\mcS \subset \mcO_{\Q^{3}}^{\oplus 4}$ gives a surjection 
$H^{3}(\mcE(2-a))^{\oplus 4} \to \Ext^{3}(\mcS(a), \mcE(2))$ for all $a \in \Z$.
Thus Lemma~\ref{lem-wFbQ3}~(2) implies that $\Ext^{3}(\mcS(a), \mcE(2)) = 0$ for all $a \leq 2$.
Next, the exact sequence
\[ 0 \to \mcS(1) \to \mcO_{\Q^{3}}(1)^{\oplus 4} \to \mcS(2) \to 0 \]
together with Lemma~\ref{lem-wFbQ3}~(2) shows that $\Ext^{2}(\mcS(1), \mcE(2)) \simeq \Ext^{3}(\mcS(2), \mcE(2)) = 0$.
The Hirzebruch--Riemann--Roch formula gives $\chi(\mcS(1), \mcE(2)) = 0$, which shows $\Ext^{1}(\mcS(1), \mcE(2)) = 0$. 
Hence the statement holds. 
\end{proof}

\begin{thm} \label{thm-Q3Resol8}
Let $\mcE$ be a rank two bundle with $(c_{1}(\mcE), c_{2}(\mcE)) = (-1, 4)$.
Then the following conditions are equivalent.
\begin{enumerate}[label=(\alph*)]
\item $\mcE$ is weak Fano.
\item $\mcE$ lies in the exact sequence
\[ 0 \to \mcO_{\Q^{3}}(-2)^{\oplus 2} \to \mcO_{\Q^{3}}(-1)^{\oplus 7} \to \mcO_{\Q^{3}}^{\oplus 7} \to \mcE(2) \to 0. \]
\end{enumerate}
\end{thm}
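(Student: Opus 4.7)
The plan is to prove the two directions of the equivalence separately, following the strategy of Theorem~\ref{thm-Q3Resol7}.

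For the direction $(\mathrm{b})\Rightarrow(\mathrm{a})$: if $\mcE$ fits into the displayed four-term exact sequence, then $\mcE(2)$ is a quotient of $\mcO_{\Q^{3}}^{\oplus 7}$ and hence globally generated, so the tautological divisor $\xi_{\mcE(2)}$ on $\P(\mcE)$ is base-point-free. Combined with the numerical computation $(-K_{\P(\mcE)})^{4} = 48((-1)^{2}-2\cdot 4+9) = 96 > 0$ coming from (\ref{eq-acself}), we conclude that $-K_{\P(\mcE)} = 2\xi_{\mcE(2)}$ is nef and big, so $\mcE$ is weak Fano.

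For the substantive direction $(\mathrm{a})\Rightarrow(\mathrm{b})$, assume $\mcE$ is weak Fano. By Proposition~\ref{prop-unstableQ3}, $\mcE$ is slope stable. The two crucial vanishings are $\RHom(\mcS(1), \mcE(2)) = 0$ from Lemma~\ref{lem-c2=8}(3), and $\Ext^{\geq 1}(\mcO_{\Q^{3}}(-k), \mcE(2)) = H^{\geq 1}(\mcE(k+2)) = 0$ for all $k \geq 0$ from Lemma~\ref{lem-wFbQ3}(3). Applying the inverse Serre functor to Kapranov's collection (i.e., moving $\mcS(-2)$ to the far right) yields, up to shift, the full exceptional collection
\[
\Db(\Q^{3}) = \langle \mcO_{\Q^{3}}(-2), \mcO_{\Q^{3}}(-1), \mcO_{\Q^{3}}, \mcS(1) \rangle,
\]
and the vanishing $\RHom(\mcS(1), \mcE(2)) = 0$ places $\mcE(2)$ in the strong exceptional subcategory $\langle \mcO_{\Q^{3}}(-2), \mcO_{\Q^{3}}(-1), \mcO_{\Q^{3}} \rangle$.

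Iterate left mutations through this subcollection. Since $\mcE(2)$ is globally generated by \cite[Theorem~1.7]{FHI1} with $h^{0}(\mcE(2)) = \chi(\mcE(2)) = 7$ (via (\ref{eq-RR}) together with the higher-cohomology vanishing in Lemma~\ref{lem-wFbQ3}(3)), the first mutation gives a triangle identifying $\LL_{\mcO_{\Q^{3}}}(\mcE(2)) = \mcK[1]$, where $0 \to \mcK \to \mcO_{\Q^{3}}^{\oplus 7} \to \mcE(2) \to 0$. A parallel analysis of $\LL_{\mcO_{\Q^{3}}(-1)}(\mcK[1])$, once $\RG(\mcK(1))$ is shown to be concentrated in a single cohomological degree, identifies it as a shift of $\mcO_{\Q^{3}}(-2)^{\oplus a}$. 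Unwinding the two mutation triangles produces a four-term exact sequence
\[
0 \to \mcO_{\Q^{3}}(-2)^{\oplus a} \to \mcO_{\Q^{3}}(-1)^{\oplus b} \to \mcO_{\Q^{3}}^{\oplus 7} \to \mcE(2) \to 0,
\]
and comparing ranks ($a - b + 7 - 2 = 0$) and first Chern classes ($-2a + b - 3 = 0$) forces $(a, b) = (2, 7)$.

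The main obstacle is showing that $\RG(\mcK(1))$ is concentrated in degree $0$, equivalently that the multiplication map $H^{0}(\mcO_{\Q^{3}}(1)) \otimes H^{0}(\mcE(2)) \to H^{0}(\mcE(3))$ is surjective (so that $H^{1}(\mcK(1)) = 0$). This cannot be deduced directly from Castelnuovo--Mumford regularity, since $\mcE(2)$ is not $0$-regular: indeed $h^{1}(\mcE(1)) = 2$, by combining (\ref{eq-RR}) with Lemma~\ref{lem-c2=8}(1) and Lemma~\ref{lem-wFbQ3}(2). The desired surjectivity must therefore be established by a separate cohomological argument, likely exploiting the slope stability of $\mcE$ together with the particular structure of $\mcK$ arising from the first mutation.
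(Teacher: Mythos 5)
Your direction (b)~$\Rightarrow$~(a) and your setup for (a)~$\Rightarrow$~(b) --- global generation of $\mcE(2)$ with $h^{0}(\mcE(2))=7$, the collection $\langle \mcO_{\Q^{3}}(-2),\mcO_{\Q^{3}}(-1),\mcO_{\Q^{3}},\mcS(1)\rangle$, and the placement $\mcE(2)\in\langle \mcO_{\Q^{3}}(-2),\mcO_{\Q^{3}}(-1),\mcO_{\Q^{3}}\rangle$ via Lemma~\ref{lem-c2=8}~(3) --- agree with the paper. However, your argument for (a)~$\Rightarrow$~(b) is not complete: the step you defer, namely that $\RG(\mcK(1))$ is concentrated in degree $0$, equivalently that $H^{0}(\mcO_{\Q^{3}}(1))\otimes H^{0}(\mcE(2))\to H^{0}(\mcE(3))$ is surjective, is precisely the substantive point, and you give no proof of it. As you note, it does not follow from Castelnuovo--Mumford regularity because $h^{1}(\mcE(1))=2$; moreover, $H^{1}(\mcK(1))=0$ is exactly what one reads off from the resolution you are trying to establish (twist $0\to\mcO(-2)^{\oplus 2}\to\mcO(-1)^{\oplus 7}\to\mcK\to 0$ by $\mcO(1)$), so the deferred step carries essentially the full content of the theorem. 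As written, this is a genuine gap.

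The paper closes exactly this gap by mutating in the other direction. Instead of computing $\LL_{\mcO(-1)}(\mcK[1])$, it computes $\RHom(\mcK,\mcO_{\Q^{3}}(-2))\simeq\RHom(\mcE(2),\mcO_{\Q^{3}}(-2))[1]\simeq\RG(\mcE(-3))[1]\simeq\C^{\oplus 2}[-1]$, which needs only the vanishing $\RG(\mcO_{\Q^{3}}(-2))=0$, Serre duality, and the cohomology of $\mcE(1)$ (namely $h^{0}=h^{2}=h^{3}=0$ and $h^{1}=2$); note that the very number $h^{1}(\mcE(1))=2$ you regard as the obstruction is what produces the $\mcO(-2)^{\oplus 2}$ term. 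One then takes the universal extension $0\to\mcO_{\Q^{3}}(-2)^{\oplus 2}\to\mcV\to\mcK\to 0$, which is a sheaf by construction and is identified with $\RR_{\mcO_{\Q^{3}}(-2)}\LL_{\mcO_{\Q^{3}}}(\mcE(2))[-1]$; since this double mutation lies in $\langle\mcO_{\Q^{3}}(-1)\rangle$ and $\mcV$ is a sheaf of rank $7$, necessarily $\mcV\simeq\mcO_{\Q^{3}}(-1)^{\oplus 7}$, and splicing the two short exact sequences gives (b). No a priori concentration statement for $\RG(\mcK(1))$ is needed --- it drops out a posteriori. So either supply an independent proof of the surjectivity of the multiplication map, or switch the second mutation to the right mutation over $\mcO_{\Q^{3}}(-2)$ as above.
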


\begin{proof}
Assume that $\mcE$ is weak Fano.
Then $\mcE(2)$ is globally generated by \cite[Theorem~1.7]{FHI1}, and $H^{0}(\mcE(2)) \simeq \C^{\oplus 7}$ by the Hirzebruch--Riemann--Roch formula.
Thus there exists a surjective morphism $\mcO_{\Q^{3}}^{\oplus 7} \to \mcE(2)$.
Let $\mcK$ be the kernel of the morphism.
Since $\RHom(\mcK, \mcO(-2)) \simeq \RHom(\mcE(2), \mcO_{\Q^{3}}(-2))[1] \simeq \RG(\mcE(-3))[1] \simeq \C^{\oplus 2}[-1]$, 
there exists a canonical non-trivial extension
\[ 0 \to \mcO_{\Q^{3}}(-2)^{\oplus 2} \to \mcV \to \mcK \to 0.\]
This locally free sheaf $\mcV$ has rank $7$ and admits an exact sequence
\[ 0 \to \mcO_{\Q^{3}}(-2)^{\oplus 2} \to \mcV \to \mcO_{\Q^{3}}^{\oplus 7} \to \mcE(2) \to 0. \]
Consider the semiorthogonal decomposition $\Db(\Q^{3})  = \langle \mcO_{\Q^{3}}(-2), \mcO_{\Q^{3}}(-1), \mcO_{\Q^{3}},\mcS(1) \rangle$.
Then this construction implies $\mcV \simeq \RR_{\mcO_{\Q^{3}}(-2)}\LL_{\mcO_{\Q^{3}}}(\mcE(2))[-1]$.
Since $\RHom(\mcS(1), \mcE(2)) = 0$ by Lemma~\ref{lem-c2=8}~(3), the semiorthogonal decomposition above yields $\mcV \in \langle \mcO_{\Q^{3}}(-1) \rangle$.
Therefore $\mcV \simeq \mcO_{\Q^{3}}(-1)^{\oplus 7}$, which confirms the existence of the desired exact sequence.
The implication (b) $\ra$ (a) follows from the observation that $\mcE(2)$ is globally generated, $c_{1}(\mcE(2))=3$, and $s_{3}(\mcE(2))=6$. 
\end{proof}

\subsection{Proof~of~Theorem~\ref{main-Q3}}\label{subsec-proof-main-Q3}
By Theorems~\ref{thm-FanoClsf}, \ref{thm-Q3CaylayResol}, \ref{thm-Q3Resol7}, and \ref{thm-Q3Resol8},  it is easy to see that the vector bundle appearing in Theorem~\ref{main-Q3} is weak Fano. 
Conversely, let $\mcE$ be an arbitrary normalized rank $2$ weak Fano bundle. 
By Proposition~\ref{prop-unstableQ3}, 
$\mcE$ is slope stable or isomorphic to one of the direct sums of line bundles in Theorem~\ref{main-Q3}~(i). 
Suppose that $\mcE$ is slope stable. 
Then the pair of Chern classes $(c_{1},c_{2})$ of $\mcE$ satisfies (\ref{eq-candidates}). 
If $(c_{1},c_{2})=(0,2)$, $(-1,1)$, $(-1,2)$, $(-1,3)$, or $(-1,4)$, then by Theorems~\ref{thm-FanoClsf}, \ref{thm-Q3CaylayResol}, \ref{thm-Q3Resol7}, and \ref{thm-Q3Resol8}, 
$\mcE$ is isomorphic to (ii), (iii), (iv), (v), or (vi) in Theorem~\ref{main-Q3}, respectively. 
Finally, we show the existence of an example for each of (i) -- (vi) in Theorem~\ref{main-Q3}. 
This statement is trivial for the cases (i) -- (iv). 
Hence it suffices to show that, for a given $c \in \{3,4\}$, there exists weak Fano bundle $\mcE$ such that $c_{1}(\mcE)=-1$ and $c_{2}(\mcE)=c$. 
By the same argument as in \cite[Section~5.1]{FHI2}, 
this is reduced to the existence of elliptic curves $C$ with $-K_{\Q^{3}}.C=4+c$ for which $\Bl_{C}\Q^{3}$ is weak Fano. 
This existence directly follows from \cite[Theorem~3.2]{ACM17}.
This completes the proof of Theorem~\ref{main-Q3}. 

\begin{rem}
The existence for the case (viii) can also be proved by using a result in \cite{OS94}.
Indeed, [ibid, Theorem~5.2] showed that there exists a rank two slope stable bundle $\mcE$ on $\Q^{3}$ with $(c_{1}(\mcE),c_{2}(\mcE)) = (-1,3)$ and $h^{0}(\mcE(1)) = 0$.
Then Lemma~\ref{lem-orth3}~(2) implies that this $\mcE$ must be weak Fano.
\end{rem}

\section{The moduli space $M^{\wF}_{-1,4}$}\label{sec-Q3Moduli}

Following \cite[Section~5]{FHI2}, let $M_{c_{1},c_{2}}^{\wF}$ denote the moduli space of rank $2$ weak Fano bundles on $\Q^{3}$ with Chern classes $c_{1}$ and $c_{2}$.
For the precise definition of $M_{c_{1},c_{2}}^{\wF}$, see \cite[Definition 5.1]{FHI2}. 
The main ingredient of this section is to study the moduli space $M_{-1,4}^{\wF}$ using Theorem~\ref{main-Q3} and prove Theorem~\ref{main-moduli}. 
To see this, let us prepare the following notations, which are related to quiver representations.

\begin{nota}
\begin{itemize}
\item Put $\mcT := \mcO_{\Q^{3}}(-1) \oplus T_{\P^4}(-2)|_{\Q^{3}}$.
The endomorphism algebra $\End(\mcT)$ is isomorphic to the path algebra $\C$ of the $5$-Kronecker quiver.

\item Let $v_{0}$ (resp. $v_{1}$) be the vertex of $Q$ corresponding to $\mcO_{\Q^{3}}(-1)$ (resp. $T_{\P^4}|_{\Q^{3}}(-2)$). 
For a representation $M$ of $Q$ over $\C$, 
the dimension vector $\underline{\dim}(M)$ is defined as $(\dim_{\C}M_{0},\dim_{\C}M_{1})$, 
where we identify $M$ as a right $\C Q$-module and $M_{i}:=M \cdot e_{v_{i}}$, where $e_{v_{i}}$ is the corresponding idempotent. 

\item Define a stability function $\Theta \colon \Z^{\oplus 2} \to \Z$ as $\Theta(a,b) := 7b-2a$. 

\item For the stability condition $\Theta$, let $M^{\Theta\text{-st}}_{\mathbf{v}}(Q)$ denote the moduli space of stable $Q$-representations of dimension vector $\mathbf{v} \in \Z^{\oplus 2}$ with $\Theta(\mathbf{v}) = 0$
\cite[Proposition 5.2]{King94}.
If $\mathbf{v} = (7,2)$, since it is primitive, $M^{\Theta\text{-st}}_{(7,2)}(Q)$ is a smooth projective variety of dimension $18$
\cite[Section 3.5]{Reineke08}.
\item The bundle $\mcT$ associates an equivalence of triangulated categories
\[ \Phi \colon \langle \mcO_{\Q^{3}}(-1), T_{\P^4}|_{\Q^{3}}(-2) \rangle \ni \mcF \to \RHom(\mcT,\mcF) \in \Db(\mod{\mhyphen}\C Q). \]
Let $\mcA := \Phi^{-1}(\mod{\mhyphen}\C Q)$ be the pull-back of the standard heart under $\Phi$.
Then $\underline{\dim}$ (resp.~$\Theta$) defines the dimension vector (resp.~a stability function) on the abelian category $\mcA$.
\item Let $S_0$ and $S_1$ be the full collection of simple $Q$-representations corresponding to vertices $v_0$ and $v_1$, respectively.
Explicitly, they are given as $S_0 = \Phi(\mcO_{\Q^{3}}(-1))$ and $S_1 = \Phi(\mcO_{\Q^{3}}(-2)[1])$. 
\end{itemize}
\end{nota}

\begin{prop} \label{stab of K}
Let $\mcE$ be a rank two weak Fano bundle on $\Q^{3}$ with $c_{1} = -1$ and $c_{2} = 4$.
Put $\mcF := \mcE(2)$ and $\mcK_{\mcF} := \Ker\left(H^{0}(\mcF) \otimes \mcO_{\Q^{3}} \twoheadrightarrow \mcF\right)$.
Then $\mcK_{\mcF} \in \mcA$ with dimension vector $\underline{\dim}(\mcK_{\mcF}) = (7,2)$, and $\mcK_{\mcF}$ is $\Theta$-stable.
\end{prop}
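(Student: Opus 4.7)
My plan is to verify first that $\mcK_{\mcF} \in \mcA$ with $\underline{\dim}(\mcK_{\mcF}) = (7, 2)$ by applying the equivalence $\Phi$ to a resolution, and then to establish $\Theta$-stability by analyzing the arrow action on $\Phi(\mcK_{\mcF})$. For the first part, I extract from Theorem~\ref{thm-Q3Resol8} (via the factorization $\mcK_{\mcF} = \Ker(\mcO_{\Q^{3}}^{\oplus 7} \to \mcF)$) the short exact sequence
\[ 0 \to \mcO_{\Q^{3}}(-2)^{\oplus 2} \to \mcO_{\Q^{3}}(-1)^{\oplus 7} \to \mcK_{\mcF} \to 0. \]
Using $\Phi(\mcO_{\Q^{3}}(-1)) = S_{0}$ and $\Phi(\mcO_{\Q^{3}}(-2)) = S_{1}[-1]$, applying $\Phi$ gives the exact triangle $S_{1}^{\oplus 2}[-1] \to S_{0}^{\oplus 7} \to \Phi(\mcK_{\mcF}) \to S_{1}^{\oplus 2}$ in $\Db(\mod\mhyphen \C Q)$. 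Since $S_{1}^{\oplus 2}[-1]$ is concentrated in cohomological degree one and $S_{0}^{\oplus 7}$ in degree zero, the cohomology long exact sequence in $\mod\mhyphen\C Q$ collapses to
\[ 0 \to S_{0}^{\oplus 7} \to \Phi(\mcK_{\mcF}) \to S_{1}^{\oplus 2} \to 0, \]
yielding $\mcK_{\mcF} \in \mcA$ and $\underline{\dim}(\mcK_{\mcF}) = (7, 2)$.

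For $\Theta$-stability, since $(7,2)$ is primitive and $\Theta(7,2) = 0$, King's criterion requires $\Theta(N) < 0$ for every proper nonzero submodule $N \subset M := \Phi(\mcK_{\mcF})$. Enumerating dimension vectors $(a,b)$ with $0 \leq a \leq 7$ and $0 \leq b \leq 2$, the only dangerous cases are $(a,1)$ with $a \leq 3$ and $(a,2)$ with $a < 7$. To control them, I plan to identify the arrow action $V \otimes M_{1} \to M_{0}$, where $V := \Ext^{1}_{\mod\mhyphen\C Q}(S_{1}, S_{0}) \simeq H^{0}(\mcO_{\Q^{3}}(1))$ parametrizes the 5 arrows of $Q$, with a concrete geometric map. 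Applying $\Hom(-, \mcK_{\mcF})$ to the Euler sequence $0 \to \mcO_{\Q^{3}}(-2) \to \mcO_{\Q^{3}}(-1)^{\oplus 5} \to T_{\P^{4}}(-2)|_{\Q^{3}} \to 0$ identifies $M_{0} = H^{0}(\mcK_{\mcF}(1))$ and exhibits $M_{1}$ as the kernel of the multiplication $H^{0}(\mcK_{\mcF}(1))^{\oplus 5} \to H^{0}(\mcK_{\mcF}(2))$; the arrow action is then the natural pairing, given up to this identification by the 5 component projections $M_{1} \subset M_{0}^{\oplus 5} \to M_{0}$.

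Under this dictionary, a destabilizer of type $(a,2)$ gives a proper subspace $W \subsetneq M_{0}$ containing every component of every element of $M_{1}$, while a destabilizer of type $(a,1)$ corresponds to a nonzero syzygy $(\phi_{1}, \ldots, \phi_{5})$ with $\sum x_{i}\phi_{i} = 0$ in $H^{0}(\mcK_{\mcF}(2))$ and $\dim \langle \phi_{1}, \ldots, \phi_{5} \rangle \leq 3$. To rule these out, I will use the slope stability of $\mcE$ (from Proposition~\ref{prop-unstableQ3}), the vanishings of Lemma~\ref{lem-c2=8}, and the global generation of $\mcK_{\mcF}(1)$ implied by the resolution: each such degenerate structure produces a low-rank subsheaf of $\mcK_{\mcF}$ that destabilizes it in the slope sense, contradicting a Bogomolov-type constraint inherited from the slope stability of $\mcE$. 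The main obstacle is the $b = 1$ case, where translating a small-span syzygy back to an honest destabilizing subsheaf of $\mcK_{\mcF}$ requires delicate use of the Koszul structure of linear syzygies on $\Q^{3} \subset \P^{4}$ together with the specific Chern-class arithmetic of $\mcE$.
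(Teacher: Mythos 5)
Your first half is fine and matches the paper: applying $\Phi$ to the resolution $0 \to \mcO_{\Q^{3}}(-2)^{\oplus 2} \to \mcO_{\Q^{3}}(-1)^{\oplus 7} \to \mcK_{\mcF} \to 0$ from Theorem~\ref{thm-Q3Resol8} and taking cohomology in the standard heart gives $0 \to S_{0}^{\oplus 7} \to \Phi(\mcK_{\mcF}) \to S_{1}^{\oplus 2} \to 0$, hence $\mcK_{\mcF} \in \mcA$ with $\underline{\dim} = (7,2)$; your enumeration of the dangerous dimension vectors $(a,1)$, $a \leq 3$, and $(a,2)$, $a \leq 6$, is also the correct reduction.

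The stability half, however, is a plan rather than a proof, and the missing steps are exactly the content of the proposition. You reformulate the two dangerous cases as linear-algebra conditions on the syzygy space $M_{1} \subset H^{0}(\mcK_{\mcF}(1))^{\oplus 5}$ (a proper subspace of $M_{0}$ containing all syzygy components, resp.\ a nonzero syzygy whose five components span a subspace of dimension at most $3$), but you never rule either out: you only assert that they ``produce a low-rank subsheaf of $\mcK_{\mcF}$ that destabilizes it in the slope sense, contradicting a Bogomolov-type constraint inherited from the slope stability of $\mcE$,'' and you yourself flag the $b=1$ case as the unresolved obstacle. This mechanism is unsubstantiated: slope (semi)stability of the syzygy bundle $\mcK_{\mcF}$ is nowhere established and does not follow formally from the stability of $\mcE$, so even a correct translation of a destabilizing subrepresentation into a subsheaf of $\mcK_{\mcF}$ of larger slope would not yet yield a contradiction. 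For comparison, the paper closes these cases by different, and much more economical, means: for $b=2$ the quotient in $\mcA$ has dimension vector $(7-a,0)$, hence is $\mcO_{\Q^{3}}(-1)^{\oplus(7-a)}$, contradicting $\Hom(\mcK_{\mcF},\mcO_{\Q^{3}}(-1))=0$, which follows from $\Ext^{1}(\mcF,\mcO_{\Q^{3}}(-1)) \simeq H^{1}(\mcE(-2))=0$; for $b=1$ the subobject $M$ is shown to be a coherent sheaf with a resolution $0 \to \mcO_{\Q^{3}}(-2) \to \mcO_{\Q^{3}}(-1)^{\oplus a} \to M \to 0$, $a \leq 3$, so the defining map degenerates somewhere on the threefold and $M$ is not locally free, while the resolution $0 \to \mcO_{\Q^{3}}(-2) \to \mcO_{\Q^{3}}(-1)^{\oplus(7-a)} \to N \to 0$ of the quotient gives $\mcE{xt}^{i}(N,\mcO_{\Q^{3}})=0$ for $i \geq 2$ and hence $\mcE{xt}^{i}(M,\mcO_{\Q^{3}})=0$ for $i \geq 1$, forcing $M$ to be locally free --- a contradiction. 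Until you supply an argument of comparable force for your two linear-algebra statements (or prove the slope statement about $\mcK_{\mcF}$ you are implicitly relying on), the stability assertion remains unproved.
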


\begin{proof}
By Theorem~\ref{thm-Q3Resol8}, $\Phi(\mcK_{\mcF})$ fits in an distinguished triangle
\[ S_0^{\oplus 7} \to \Phi(\mcK_{\mcF}) \to S_1^{\oplus 2} \xrightarrow{+1} \]
which is a short exact sequence in $\mod{\mhyphen}\C Q$.
This proves $\mcK_{\mcF} \in \mcA$ with $\underline{\dim}(\mcK_{\mcF}) = (7,2)$.

In order to show the $\Theta$-stability, let $M \subset \mcK_{\mcF}$ be a destabilizing subobject in $\mcA$.
Put $\underline{\dim}(M) := (a,b)$, then the integers $a$ and $b$ satisfies 
$0 \leq a \leq 7$, $0 \leq b \leq 2$, and $7b - 2a \geq 0$.

First, note that there is no non-zero morphism $\mcK_{\mcF} \to \mcO_{\Q^{3}}(-1)$.
Indeed, applying $\Hom(-,\mcO_{\Q^{3}}(-1))$ to the sequence $0 \to \mcK_{\mcF} \to \mcO_{\Q^{3}}^{\oplus 7} \to \mcF \to 0$ and using $\Ext^{1}(\mcF, \mcO_{\Q^{3}}(-1)) \simeq H^{1}(\mcE(-2)) = 0$ show that $\Hom(\mcK_{\mcF}, \mcO_{\Q^{3}}(-1)) = 0$.
Similarly, there is no non-zero morphism $\mcO_{\Q^{3}}(-2)[1] \to \mcK_{\mcF}$ since $\mcK_{\mcF}$ is a coherent sheaf.

Let us prove the non-existence of $M$ that destabilizes $\mcK_{\mcF}$.
First, the inequality $7b - 2a \geq 0$ shows $b \in \{1, 2 \}$.
If $b = 2$, then the quotient $N := \mcK_{\mcF}/M$ (in $\mcA$) has dimension vector $(7-a,0)$, 
and thus $N \simeq \mcO_{\Q^{3}}(-1)^{\oplus (7-a)}$, which is a contradiction.

Now the only remaining possibility is $b =1$, 
and in this case $\underline{\dim}(M) = (a,1)$ and $\underline{\dim}(N) = (7-a, 1)$ with $0 \leq a \leq 3$.
Note that $M$ fits in an exact triangle
\[ \mcO_{\Q^{3}}(-2) \xrightarrow{\alpha} \mcO_{\Q^{3}}(-1)^{\oplus a} \to M \to \mcO_{\Q^{3}}(-2)[1]. \]
If $\alpha = 0$, then $M$ contains $\mcO_{\Q^{3}}(-2)[1]$ as a summand, which is a contradiction since it gives a non-zero morphism $\mcO_{\Q^{3}}(-2)[1] \to \mcK_{\mcF}$.
Thus $\alpha$ is non-zero, and hence is an injective morphism of sheaves.
This implies that $M$ is (isomorphic to) a coherent sheaf. 
Since $a \leq 3$, the locus $(\alpha=0)$ is non-empty, and hence $M$ is not locally free. 
A similar argument proves $N$ is also a coherent sheaf that fits in an exact sequence
\begin{equation} \label{ex-forN}
0 \to \mcO_{\Q^{3}}(-2) \to \mcO_{\Q^{3}}(-1)^{\oplus (7- a)} \to N \to 0.
\end{equation}
In particular, the exact sequence
\begin{equation} \label{ex-forK}
0 \to M \to \mcK_{\mcF} \to N \to 0 
\end{equation}
in $\mcA$ is also a short exact sequence of coherent sheaves.

The exact sequence (\ref{ex-forN}) shows that 
$\mcE{xt}^{i}(N,\mcO_{\Q^{3}})=0$ for $i \geq 2$. 
Thus the other exact sequence (\ref{ex-forK}) shows that 
$\mcE{xt}^{i}(M,\mcO_{\Q^{3}})=0$ for $i \geq 1$. 
Hence $M$ is locally free, which is a contradiction. 
Therefore, $\mcK_{\mcF}$ cannot be destabilized, which means that it is $\Theta$-stable.
\end{proof}

\subsection{Proof of Theorem~\ref{main-moduli}}
First, we check that $M_{-1,4}^{\wF}$ is smooth of dimension $18$. 
Note that the Hirzebruch--Riemann--Roch formula together with Lemma~\ref{lem-wFbQ3} and \ref{lem-c2=8} implies that
\begin{center}
$\RG(\mcE(-1)) = 0$, $\RG(\mcE(-2)) \simeq \C^{\oplus 21}[-2]$, and $\RG(\mcE(-3)) \simeq \C^{\oplus 4}[-2]$.
\end{center}
Applying the functor $\RHom(\mcE(2), -)$ to the sequence in Theorem~\ref{thm-Q3Resol8} gives an exact sequence
\begin{align*}
\cdots \to H^{1}(\mcE(-2)) &\to \Hom(\mcE, \mcE) \to H^{2}(\mcE(-3)) \to H^{2}(\mcE(-2)) \to \Ext^{1}(\mcE, \mcE) \\
 &\to H^{3}(\mcE(-3)) \to H^{3}(\mcE(-2)) \to \Ext^{2}(\mcE, \mcE) \to 0.
\end{align*}
This shows $\Ext^{1}(\mcE, \mcE) \simeq \C^{18}$ and $\Ext^{2}(\mcE, \mcE) = 0$. 
Hence $M_{-1,4}^{\wF}$ is smooth of dimension $18$. 
With Proposition~\ref{stab of K} already established, 
the proof of \cite[Propositions~6.6 and 6.7]{FHI2} can apply almost verbatim to show 
the existence of an open immersion 
$M_{-1,4}^{\wF} \ni \mcE \mapsto \Phi(\mcK_{\mcE(2)}) \in M^{\Theta\text{-st}}_{(7,2)}(Q)$. 
Since the dimension vector $(7,2)$ is primitive, $M^{\Theta\text{-st}}_{(7,2)}(Q)$ has the universal family, and hence so does $M_{-1,4}^{\wF}$.
In particular, the moduli space $M_{-1,4}^{\wF}$ is both irreducible and fine.
\qed

\begin{rem}
Let $M_{-1,4}^{\mathrm{ss}}$ denote the moduli space of rank two semistable bundles on $\Q^{3}$ with $c_{1} = -1$ and $c_{2} = 4$.
Then $M_{-1,4}^{\wF}$ is an open subscheme of $M_{-1,4}^{\mathrm{ss}}$. 
The property that $M_{-1,4}^{\wF}$ is fine can be derived from the fact that $M_{-1,4}^{\mathrm{ss}}$ is fine, which Ottaviani-Szurek showed in \cite[Corollary~of~Proposition~(2.2)]{OS94}. 
However, it seems that the irreducibility of $M_{-1,4}^{\mathrm{ss}}$ is still open.
\end{rem}

\section{Classification on Fano threefolds of index one: $c_{1}(\mcF)$ is even}\label{sec-MukaiEven}

As in \cite{FHI1,FHI2} and the previous discussions in this paper, 
we classified rank $2$ weak Fano bundles on a Fano $3$-fold $X$ of Picard rank $\rho(X)=1$ provided that the Fano index $i_{X}$ is greater than $1$. 
In this section, we give their classification when $i_{X}=1$. 
For a Fano $3$-fold $X$ with $\rho(X)=i_{X}=1$, when $-K_{X}$ is (resp. is not) very ample, $X$ is called a \emph{prime} (resp. \emph{hyperelliptic}) Fano $3$-fold. 

\subsection{Triviality when the case $c_{1}$ is even}

Let $X$ be a Fano $3$-fold with $\rho(X)=i_{X}=1$ and $\mcF$ a rank $2$ weak Fano bundle on $X$. 
Since $\Pic(X) = \Z \cdot c_{1}(X)$, we may assume that $c_{1}(\mcF)=0$ or $c_{1}(\mcF) = c_{1}(X)$. 
First, we treat the case $c_{1}(\mcF)=0$. 
The result is the following theorem.
\begin{thm}\label{thm-split}
Let $X$ be a Fano $3$-fold with $\rho(X)=i_{X}=1$ and $\mcF$ a rank $2$ weak Fano bundle with $c_{1}(\mcF)=0$. 
Then $\mcF=\mcO_{X}^{\oplus 2}$. 
\end{thm}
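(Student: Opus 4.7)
The plan consists of three parts: (a) establish that $\mcF$ is automatically $H_X$-slope semistable; (b) apply Theorem~\ref{thm-conic}~(2) in the prime case; (c) handle the hyperelliptic case (which lies outside the direct scope of Theorem~\ref{thm-conic}) by an explicit analysis of the crepant contraction of $\P(\mcF)$.

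For (a), suppose there is a line subbundle $\mcO_X(a) \hookrightarrow \mcF$ with $a \geq 1$. After saturation we may assume $\mcF/\mcO_X(a) \simeq \mcI_Z(-a)$ for some closed subscheme $Z \subset X$ of codimension $\geq 2$ (possibly empty). The composition $\mcF \twoheadrightarrow \mcI_Z(-a) \hookrightarrow \mcO_X(-a)$ defines a rational section of $\pi \colon \P(\mcF) \to X$ which, by smoothness of $X$ and properness of $\pi$, extends uniquely to a morphism $\sigma \colon X \to \P(\mcF)$ with $\sigma^{\ast}\mcO_{\P(\mcF)}(\xi) \simeq \mcO_X(-a)$. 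Then
\[
\sigma^{\ast}(-K_{\P(\mcF)}) \sim 2\sigma^{\ast}\xi + H_X \sim (1-2a)H_X,
\]
which is anti-ample for $a \geq 1$, contradicting the nefness of $-K_{\P(\mcF)}$.

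In the prime case ($H_X = -K_X$ very ample), I verify the hypotheses of Theorem~\ref{thm-conic}~(2): $b_2(X) = b_4(X) = 1$ follows from $\rho(X) = 1$ and Poincar\'{e} duality; smooth conics on prime Fano threefolds exist by classical results; and $(X, -K_X)$ satisfies $(\dag)$ because on each component of the Hilbert scheme of conics the discriminant divisor $\Delta_Z$ is ample (cf.\ Remark~\ref{rem-discriminant}). Combined with (a), Theorem~\ref{thm-conic}~(2) applies directly and yields $\mcF \simeq \mcO_X^{\oplus 2}$.

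In the hyperelliptic case, $X$ admits a double cover $\pi \colon X \to Y$ with $Y \in \{\P^3, \Q^3\}$, and $-K_X$ is only base-point-free. I consider the crepant contraction $\psi \colon \P(\mcF) \to \ol{\P(\mcF)}$; by Claim~\ref{claim-MOS} and Proposition~\ref{prop-semiAtiyah}, any contracted curve is the negative section of $\P(\mcF|_\Gamma)$ for some smooth rational curve $\Gamma \subset X$ with $-K_X \cdot \Gamma$ an even positive integer, so $\Gamma$ is not a line (since $i_X = 1$ is odd). The crucial step is to rule out $\Gamma$ being a smooth rational conic: by Proposition~\ref{prop-semiAtiyah} and a bend-and-break argument such a $\Gamma$ would yield a positive-dimensional family of smooth rational conics on $X$, which I rule out by adapting the ampleness-of-discriminant argument to the Hilbert scheme of smooth rational curves of anticanonical degree $2$ on $X$ via the double cover $\pi$. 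This forces $\psi$ to be trivial, so $\mcF$ is in fact a Fano bundle, and Mu\~{n}oz--Occhetta--Sol\'{a}~Conde's classification of rank $2$ Fano bundles on Fano threefolds of Picard rank one \cite{mos2} then gives $\mcF \simeq \mcO_X^{\oplus 2}$. The main obstacle is precisely this last sub-step, since the $(\dag)$ framework (Definition~\ref{def-dag}) requires $H_X$ to be very ample; establishing the appropriate substitute here requires an explicit case-by-case analysis of $V_2$ and the hyperelliptic $V_4$.
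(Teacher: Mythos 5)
There is a genuine gap, and it is precisely the step your plan leaves unsupported. The paper's proof of Theorem~\ref{thm-split} does not apply Theorem~\ref{thm-conic}~(2) to an arbitrary prime Fano threefold: condition $(\dag)$ (ampleness, or at least positivity strong enough to exclude proper curves of smooth conics, of the discriminant locus) is only established in Proposition~\ref{prop-discample} for genus $g\in\{10,12\}$, using the very special structure of the Hilbert scheme of conics there (an abelian surface for $g=10$, $\P^{2}$ for $g=12$). Your assertion that $\Delta_{Z}$ is ample on every component of the Hilbert scheme of conics of \emph{every} prime Fano threefold is not proved in the paper, is not obviously true for low genus, and you give no argument for it. What makes the paper's proof work is a numerical reduction that your proposal omits entirely: with $c_{1}(\mcF)=0$ one has $\chi(\mcF)=2-\tfrac{1}{2}c_{2}$ (so $c_{2}$ is even), Le Potier vanishing gives $h^{0}(\mcF)\geq\chi(\mcF)$, and if $c_{2}<4$ a section with empty zero locus forces $\mcF\simeq\mcO_{X}^{\oplus 2}$ (using $(-K_{\P(\mcF)})^{3}\xi=2g-2-12c_{2}\geq 0$ to kill the case $c_{2}=2$, as $g\leq 12$); on the other hand $(-K_{\P(\mcF)})^{4}=8(2g-2-4c_{2})>0$ forces $c_{2}<\tfrac{g-1}{2}$. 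Hence a nontrivial $\mcF$ could only exist for $g\in\{10,12\}$, and only there does one need $(\dag)$. This same reduction also disposes of the hyperelliptic case ($g\in\{2,3\}$) with no extra work, whereas your proposed treatment of it via the double cover is left, by your own admission, with an unresolved key sub-step; as written, that part is not a proof.

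Two smaller points. In part (a), a surjection $\mcF\twoheadrightarrow\mcI_{Z}(-a)$ does not extend to a global section $\sigma\colon X\to\P(\mcF)$: the rational section is only defined away from $Z$ (codimension $\geq 2$), and properness of $\pi$ plus smoothness of $X$ does not remove the indeterminacy. The conclusion is still salvageable (intersect with a general complete intersection curve avoiding $Z$, on which $(2\xi+\pi^{\ast}H_{X})$ has negative degree against the induced section), and it matches in substance the paper's Claim that $\beta\geq 0$ because $\mcF(1)$ is ample; so this is a repairable inaccuracy rather than a real obstruction. But the missing $c_{2}$-bounds and the unverified $(\dag)$ for general prime $X$, together with the incomplete hyperelliptic analysis, mean the proposal does not yet prove the theorem.
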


Note that Corollary~\ref{maincor-parity} now directly follows from this result and the previous works \cite{yas, Ishikawa16, FHI1, FHI2}. 

Our proof of Theorem~\ref{thm-split} uses Theorem~\ref{thm-conic}, which we also used for classification on $\Q^{3}$. 
To use Theorem~\ref{thm-conic}, we examine the discriminant locus of the Hilbert scheme of conics of prime Fano threefolds with genus $g \in \{10,12\}$, as in the following proposition.

\begin{prop}\label{prop-discample}
Let $X$ be a prime Fano $3$-fold of genus $g \in \{10,12\}$. 
Then $-K_{X}$ is very ample, $X$ has a smooth conic with respect to $-K_{X}$,  and $(X,-K_{X})$ satisfies $(\dag)$ (see Definition~\ref{def-dag}). 
\end{prop}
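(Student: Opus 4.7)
Three assertions must be verified. Very-ampleness of $-K_{X}$ is part of the definition of a prime Fano threefold, and the existence of a smooth conic on a prime Fano threefold of genus $g \geq 4$ is classical: conics form a positive-dimensional family whose general member is smooth. The content therefore lies in the condition $(\dag)$.

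By Remark~\ref{rem-discriminant}, $(\dag)$ holds as soon as the discriminant locus $\Delta_{Z}$ is ample on each irreducible component $Z$ of $\Hilb_{2t+1}(X,-K_{X})$ that contains a smooth conic. My plan is to invoke the explicit descriptions of this Hilbert scheme available in the literature for the two genera at hand. For $X = V_{22}$ (genus $12$), a classical result of Mukai identifies $\Hilb_{2t+1}(X,-K_{X})$ with $\P^{2}$; every non-trivial effective Cartier divisor on $\P^{2}$ is ample, and $\Delta_{Z}$ is non-empty because smooth conics form a proper open subset of a positive-dimensional scheme, so ampleness follows at once. For $X = V_{18}$ (genus $10$), work of Logachev and of Iliev--Markushevich describes the Hilbert scheme of conics as a smooth irreducible surface closely tied to an associated genus-two curve; the plan is to read off the class of $\Delta_{Z}$ from this description using the determinantal formula of Remark~\ref{rem-discriminant} and then verify ampleness, exploiting the smallness of the Picard rank of the Hilbert scheme.

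The main obstacle is the genus-$10$ case, where ampleness of $\Delta_{Z}$ is not automatic from non-triviality and requires some computation in the Néron--Severi group of the Hilbert scheme. If a direct ampleness argument proves delicate, a fallback is available: by the equivalent formulation of $(\dag)$ in Remark~\ref{rem-discriminant}, it is enough to show that $Z \setminus \Delta_{Z}$ contains no proper curves. A hypothetical proper curve $T \subset Z \setminus \Delta_{Z}$ would produce, via the evaluation morphism of the universal family $U \to X$, a non-trivial flat one-parameter family of smooth conics on $X$, and the rigidity supplied by the Logachev / Iliev--Markushevich description of conics on $V_{18}$ should rule out such a family and complete the proof.
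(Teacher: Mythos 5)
Your overall strategy—reduce $(\dag)$ to ampleness of the discriminant locus via Remark~\ref{rem-discriminant} and use the known description of the conic Hilbert scheme—is the same as the paper's, but the proposal does not carry it out, and the genus-$10$ case contains a genuine gap. Before that, a smaller issue in genus $12$: on $Z\simeq\P^{2}$ it is indeed enough that $\Delta_{Z}$ be a non-zero effective divisor, but you assert non-emptiness of $\Delta_{Z}$ ("smooth conics form a proper open subset") without proof. This is exactly the point that needs an argument: if every conic on $X=V_{22}$ were smooth, then $Z^{\sm}=Z=\P^{2}$ and $(\dag)$ would fail. The paper proves $\Delta_{Z}\neq\emptyset$ by a case split: for the Mukai--Umemura threefold the discriminant is the conic parametrizing double lines, and for any other $V_{22}$ every line meets a second line transversally, so a reducible conic exists.

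The real gap is genus $10$. There the Hilbert scheme of conics is an abelian surface, so, as you yourself note, a non-zero effective divisor need not be ample (it could, e.g., be supported on fibers of a fibration onto an elliptic curve); non-triviality of $\Delta_{Z}$ therefore proves nothing, and neither the promised "read off the class and verify ampleness" computation nor the fallback rigidity argument is actually supplied—both are left as "should" statements, and no concrete description is invoked that would pin down the class of $\Delta_{Z}$ in the N\'eron--Severi group. The paper closes this case by a different and concrete route: it embeds $X$ as a codimension-$2$ linear section of the smooth Mukai fourfold $F$ of genus $10$, uses the Kapustka--Ranestad identification of the conic Hilbert scheme $H_{F}$ with the blowup of $\P^{5}$ along a Veronese surface (equivalently, the graph of the associated quadro-cubic Cremona transformation), computes the restriction of the discriminant to each exceptional $\P^{2}$-bundle to conclude $\Delta_{F}\sim H_{1}+H_{2}$, which is ample, and then restricts to the surface of conics of $X$ sitting inside $H_{F}$. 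Without either this fourfold argument or an explicit computation on the abelian surface, the proposition is not established for $g=10$, so the proof is incomplete as it stands.
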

\begin{proof}
We first note that $-K_{X}$ is very ample 
(c.f. \cite[Theorem~1.10]{MukaiDev}) 
and $X$ has a smooth conic 
(c.f. \cite[Theorem~2.1]{Takeuchi89})
with respect to the embedding $X \hra \P^{g+1}$ given by $-K_{X}$. 
Let $S$ be the Hilbert scheme of the conics on $X$. 
Then $S$ is an abelian surface when $g=10$ and $S \simeq \P^{2}$ when $g=12$ \cite[Theorem~1.1.1]{KPS18}. 
In particular, the discriminant locus $\Delta$ is an effective divisor on $S$ by Remark~\ref{rem-discriminant}.
We show the ampleness of $\Delta$ for each $g \in \{10,12\}$. 

Suppose $g=12$. 
In this case, since $S=\P^{2}$, it suffices to show $\Delta>0$. 
When $X$ is the Mukai-Umemura $3$-fold, then \cite[Remark~5.2.15]{Fanobook} shows that $\Delta \subset \P^{2}$ is quadratic and parametrizes the double lines on $X$. 
When $X$ is not the Mukai-Umemura $3$-fold, then 
for each line $l_{1} \subset X$, there exists a different line $l_{2} \subset X$ meeting transversally $l_{1}$ \cite[Remark~4.3.6]{Fanobook}. 
Then $l_{1} \cup l_{2}$ is a degenerated conic, which implies $\Delta >0$. 

Suppose $g=10$. 
Then $X$ is a codimension $2$ linear section of $K(G_{2})$. 
Hence there is a smooth Mukai $4$-fold $F$ of genus $10$ containing $X$. 
We use the explicit description of the Hilbert scheme $H_{F}$ on the conics on $F$ given by Kapustka and Ranestad in \cite[Proposition~3.13]{KR13} as follows. 
They showed that $H_{F}$ is isomorphic to the blowing-up $\Bl_{V_{1}}\P^{5}_{1}$ of the $5$-dimensional projective space $\P^{5}_{1}$ along a Veronese surface $V_{1} \subset \P^{5}_{1}$. 
Thus $H_{F}$ is the graph of the Cremona transformation $\P^{5}_{1} \dra \P^{5}_{2}$ defined by the linear system of quadrics containing $V_{1}$. 
Let $p_{1} \colon H_{F} \to \P^{5}_{1}$ be the blowing-up along $V_{1}$ and $p_{2} \colon H_{F} \to \P^{5}_{2}$ be the restriction of the second projection. 
Then $p_{2}$ is also the blowing-up along a Veronese surface $V_{2} \subset \P^{5}_{2}$. 
Moreover, each $V_{i}$ corresponds to a component of the Hilbert scheme of the cubic surface scrolls $\P_{\P^{1}}(\mcO(1) \oplus \mcO(2))$ in $F$. 
For each point $x \in V_{i}$, the fiber $p_{i}^{-1}(x) \simeq \P^{2}$ corresponds to the Hilbert scheme of conics on the cubic scroll corresponding to $x$. 

Let $H_{i}$ be the pull-back of a hyperplane section on $\P^{5}_{i}$ under $p_{i}$ and $E_{i}$ the exceptional divisor of $p_{i}$. 
Then $H_{2} \sim 2H_{1}-E_{1}$ and $E_{2} \sim 3H_{1}-2E_{2}$. 
Let $\Delta_{F} \subset H_{F}$ be the discriminant divisor on this Hilbert scheme of conics. 
Pick $i \in \{1,2\}$ and an arbitrary point $x \in V_{i}$ and let $T \subset F$ be the cubic scroll corresponding to $x$. 
Let $h$ be a tautological divisor on $T \simeq \F_{1}:=\P_{\P^{1}}(\mcO \oplus \mcO(1))$ and $f$ a ruling of this scroll. 
Then the fiber $p_{i}^{-1}(x)$ is naturally identified with the linear system $\lvert h \rvert$. 
On this linear system, the degenerated conics on $T$ parametrized by $C_{0} + \lvert f \rvert$, where $C_{0}$ is the unique member of $\lvert h-f \rvert$. 
Therefore, the intersection $p_{i}^{-1}(x) \cap \Delta_{F}$ is a line on $\P^{2} \simeq p_{i}^{-1}(x)$. 

Hence for each $i \in \{1,2\}$, $\Delta_{F}|_{E_{i}}$ is a tautological divisor of this $\P^{2}$-bundle $E_{i} \to V_{i} \simeq \P^{2}$. 
Putting $i':=3-i$, we have $\Delta_{F}|_{E_{i}} \sim_{V_{i}} H_{i'}|_{E_{i}}$. 
Hence $\Delta_{F} \sim H_{1} + H_{2}$ holds as $\Pic(H_{F}) = \Z \cdot [H_{1}] \oplus \Z \cdot [H_{2}]$.
In particular, $\Delta_{F}$ is ample, and so is $\Delta$ on the Hilbert scheme $S$ of conics on $X$. 
We complete the proof. 
\end{proof}

\begin{proof}[Proof of Theorem~\ref{thm-split}]
Let $X$ be a Fano $3$-fold with $\rho(X)=i_{X}=1$ 
and $\mcE$ a rank $2$ weak Fano bundle with $c_{1}(\mcE)=0$. 
Let $\pi \colon \P(\mcE) \to X$ be the projectivization. 
Let $\xi$ be a tautological divisor, $H_{X}=-K_{X}$, $g:=\frac{1}{2}H_{X}^{3}+1$, and $H=\pi^{\ast}H_{X}$. 
It is known that $g$ is a positive integer with $2 \leq g \leq 12$ and $g \neq 11$ (see e.g. \cite{MukaiDev}). 
By taking a line on $X$ \cite{Sho79}, we fix the identification $A^{2}(X)_{\Z} \simeq \Z$ and let $c_{2} \in \Z$ be the integer corresponding to $c_{2}(\mcE) \in A^{2}(X)_{\Z}$. 
Note that $-K_{\P(\mcE)}=2\xi+H$ is nef and big by definition and hence $\mcE(1)$ is ample. 
\begin{claim}\label{num-claim}
Let $\beta:=\min\{b \in \Z \mid h^{0}(\mcE(b))>0\}$. 
\begin{enumerate}
\item $\beta \geq 0$ and $d\beta^{2}+c_{2} \geq 0$. In particular, $\mcE$ is slope semistable. 
\item $\chi(\mcE)=2-\frac{1}{2}c_{2}$. In particular, $c_{2} \equiv 0 \pmod{2}$. 
\item $H^{p}(X,\mcE) = 0$ for every $p \geq 2$. 
\item $(-K_{\P(\mcE)})^{4}=8((2g-2)-4c_{2})$. 
\item $(-K_{\P(\mcE)})^{3} \xi = (2g-2)-12c_{2}$. 
\end{enumerate}
\end{claim}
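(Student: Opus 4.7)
The plan is to prove the five parts in the order (4), (5), (2), (3), (1), since the intersection formulas from (4) and (5) feed into the numerical case analysis needed for (1).

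For (4) and (5), the computation is direct: the Grothendieck relation $\xi^{2} = c_{1}(\mcE)\xi - c_{2}(\mcE)$ together with $c_{1}(\mcE) = 0$ yields $\xi^{4} = 0$, $\xi^{3}H = -c_{2}$, $\xi^{2}H^{2} = 0$, $\xi H^{3} = d = 2g-2$, and $H^{4} = 0$, so that binomial expansion of $(2\xi+H)^{4}$ and $(2\xi+H)^{3}\xi$ gives the required formulas. Part (2) is Hirzebruch--Riemann--Roch: using $\chi(\mcO_{X}) = 1$ (equivalently $c_{1}(X)c_{2}(X) = 24$) and $c_{1}(\mcE) = 0$ one reads off $\chi(\mcE) = 2 - c_{2}/2$, and integrality forces $c_{2}$ to be even.

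For (3), I would first show that $\mcE(1)$ is an ample vector bundle via Hartshorne's restriction-to-curves criterion. Given any morphism $f \colon \wt{C} \to X$ from a smooth projective curve and any quotient line bundle $f^{\ast}\mcE \twoheadrightarrow \mcL$, the associated section $\sigma \colon \wt{C} \to \P(f^{\ast}\mcE)$ satisfies $\sigma^{\ast}\xi = c_{1}(\mcL)$; pairing with the pull-back of the nef class $-K_{\P(\mcE)} = 2\xi+H$ yields $2\deg\mcL + \deg f^{\ast}H_{X} \geq 0$. Hence every quotient of $f^{\ast}\mcE(1)$ has strictly positive degree, so $\mcE(1)|_{\wt{C}}$ is ample. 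Then Le Potier's vanishing theorem applied to the ample rank-two bundle $\mcE(1)$ with $p = 3$ gives $H^{q}(X, \mcE) = H^{q}(X, K_{X} \otimes \mcE(1)) = 0$ for $q \geq 2$.

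For (1), given a non-zero $s \in H^{0}(\mcE(\beta))$ realizing the minimum $\beta$, the divisorial part of the zero locus of $s$ must be trivial, because otherwise (using $\Pic(X) = \Z\cdot[H_{X}]$) one could factor $s$ through some $\mcE(\beta - a)$ with $a \geq 1$, contradicting minimality. Hence $s$ fits into $0 \to \mcO_{X} \to \mcE(\beta) \to \mcI_{Z}(2\beta H_{X}) \to 0$ with $Z$ pure of codimension two, giving $[Z] = c_{2} + d\beta^{2} \geq 0$ and proving one half of (1). The harder half, $\beta \geq 0$, I would establish by combining the effectivity of $\xi + \beta H$ on $\P(\mcE)$ with the nefness and bigness of $-K_{\P(\mcE)}$: the inequality $(\xi + \beta H)(-K_{\P(\mcE)})^{3} \geq 0$ together with (4) and (5) expands to
\[
(\xi+\beta H)(-K_{\P(\mcE)})^{3} = (1+6\beta)d - (12+8\beta)c_{2} \geq 0.
\]
For $\beta = -1$, this combined with $d + c_{2} \geq 0$ forces $-d \leq c_{2} \leq -5d/4$, impossible since $d = 2g-2 > 0$. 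For $\beta \leq -2$, combining with $(-K_{\P(\mcE)})^{4} = 8(d-4c_{2}) \geq 0$ forces $c_{2} \leq d/4$ and simultaneously $c_{2} \geq 3d/4$, again impossible. Slope semistability then follows immediately, since a destabilizing sub-line-bundle $\mcO(aH_{X}) \subset \mcE$ with $a > 0$ would give $h^{0}(\mcE(-a)) \neq 0$, contradicting $\beta \geq 0$. The main obstacle is precisely this numerical case analysis for $\beta \geq 0$, which requires treating $\beta = -1$ and $\beta \leq -2$ separately via the explicit formulas from (4) and (5).
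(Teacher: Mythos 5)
Your parts (2), (4) and (5) coincide with the paper's proof, which disposes of them with one line each (Hirzebruch--Riemann--Roch and ``direct computation''); I checked your intersection numbers ($\xi^{3}H=-c_{2}$, $\xi H^{3}=d$, $(-K_{\P(\mcE)})^{3}H=6d-8c_{2}$, etc.) and they are right. In (1), the first half is exactly the paper's argument: minimality of $\beta$ kills any divisorial component of the zero locus, giving $0\to\mcO\to\mcE(\beta)\to\mcI_{Z}(2\beta)\to0$ with $Z$ of pure codimension two and $[Z]=d\beta^{2}+c_{2}\geq0$, and semistability follows as you say. Your proof of $\beta\geq0$, however, is genuinely different from the paper's: the paper notes that $\mcE(-\beta)$ has $\mcI_{Z}$ as a quotient and that $\mcE(1)$ is ample, so $\beta\leq-1$ would produce a degree-zero quotient of an ample bundle along a curve avoiding $Z$; you instead intersect the effective divisor $\xi+\beta H$ with $(-K_{\P(\mcE)})^{3}$ and combine with $(-K_{\P(\mcE)})^{4}>0$ and with $d\beta^{2}+c_{2}\geq0$, splitting into the cases $\beta=-1$ and $\beta\leq-2$. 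Both contradictions check out (for $\beta\leq-2$ one gets $c_{2}>\tfrac{3}{4}d$ versus $c_{2}<\tfrac{1}{4}d$). Your route is longer but uses only nefness and bigness of $-K_{\P(\mcE)}$, whereas the paper's one-line argument leans on ampleness of $\mcE(1)$.

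The one step that is not sound as written is your justification of that ampleness in part (3). There is no ``restriction-to-curves criterion'' for ampleness over a higher-dimensional base: a bundle --- even a line bundle, by Mumford's classical example --- can have strictly positive degree on every curve without being ample. Your computation correctly shows that every quotient of $f^{\ast}\mcE(1)$ has positive degree for every finite $f\colon\wt{C}\to X$, i.e.\ that $\xi+H$ is strictly positive on every irreducible curve of $\P(\mcE)$, but passing from this to ampleness of $\mcE(1)$ requires Kleiman's criterion on the closed cone of curves. That is available here precisely because $\P(\mcE)$ is weak Fano, so $\ol{\NE}(\P(\mcE))$ is rational polyhedral and spanned by classes of actual curves; this is the (implicit) argument behind the paper's sentence, stated just before the Claim, that nefness of $-K_{\P(\mcE)}=2\xi+H$ makes $\mcE(1)$ ample. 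With that one sentence added, your part (3) (Le Potier applied to the ample rank-two bundle $\mcE(1)$, using $\omega_{X}\otimes\mcE(1)\simeq\mcE$ since $i_{X}=1$) agrees with the paper's.
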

\begin{proof}
(1) There is an exact sequence 
\begin{align}\label{ex-beta}
0 \to \mcO(-\beta) \to \mcE \to \mcI_{Z}(\beta) \to 0,
\end{align}
where $Z$ is a purely $2$-codimensional closed subscheme such that $[Z] \sim c_{2}(\mcE(\beta))=d\beta^{2}+c_{2}(\mcE) \geq 0$. 
In particular, $\mcE(-\beta)$ has $\mcI_{Z}$ as a quotient. 
Since $\mcE(1)$ is ample, we obtain $\beta \geq 0$. 
(2) follows from the Hirzebruch--Riemann--Roch theorem. 
(3) follows from Le Potier's vanishing theorem \cite[Theorem~7.3.5]{laz2}. 
(4) and (5) follow from direct computations. 
\end{proof}

\begin{claim}\label{claim-numsplit}
If $c_{2} < 4$, then $\mcE \simeq \mcO_{X}^{\oplus 2}$. 
\end{claim}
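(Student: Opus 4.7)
The strategy combines a section-chasing lemma with a numerical obstruction coming from effectivity of the tautological class. First, by Claim~\ref{num-claim}~(2) and (3) the Riemann--Roch identity reads $h^0(\mcE) - h^1(\mcE) = \chi(\mcE) = 2 - c_2/2$, and in particular $c_2 \in 2\Z$. Under the hypothesis $c_2 < 4$, only the cases $c_2 \leq -2$, $c_2 = 0$, and $c_2 = 2$ remain to be analysed.

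The key lemma I would establish first is: if $h^0(\mcE) \geq 2$, then $\mcE \simeq \mcO_X^{\oplus 2}$. Given linearly independent sections $s_1, s_2 \in H^0(\mcE)$, the wedge $s_1 \wedge s_2$ lies in $H^0(\det \mcE) = H^0(\mcO_X) = \C$. If $s_1 \wedge s_2 \neq 0$, it is nowhere vanishing, so the morphism $(s_1,s_2) \colon \mcO_X^{\oplus 2} \to \mcE$ has invertible determinant and is a fibrewise isomorphism. Otherwise the image of $(s_1,s_2)$ has generic rank one; its saturation in $\mcE$ is a line subbundle $\mcL$, which satisfies $c_1(\mcL) \leq 0$ by slope semistability (Claim~\ref{num-claim}~(1)) and $c_1(\mcL) \geq 0$ since $\mcL$ admits a non-zero global section. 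Hence $\mcL \simeq \mcO_X$, and $h^0(\mcO_X) = 1$ contradicts the assumed linear independence.

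Granting the lemma, any $\mcE \not\simeq \mcO_X^{\oplus 2}$ satisfies $h^0(\mcE) \leq 1$, so $h^1(\mcE) \leq c_2/2 - 1$; non-negativity of $h^1$ forces $c_2 \geq 2$, immediately ruling out $c_2 \leq -2$. When $c_2 = 0$, the Euler characteristic gives $h^0(\mcE) \geq 2$, and the lemma closes the case with $\mcE \simeq \mcO_X^{\oplus 2}$.

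The main case is $c_2 = 2$, where $\chi(\mcE) = 1$ and the constraints above pin $h^0(\mcE) = 1$. The unique non-zero global section of $\mcE$, interpreted via $\pi_*\mcO_{\P(\mcE)}(\xi) = \mcE$, realises $\xi$ as an effective Cartier divisor class on $\P(\mcE)$. Decomposing an effective representative into prime components $E_i$, the restriction of the nef divisor $-K_{\P(\mcE)}$ to each projective threefold $E_i$ is again nef, so Kleiman's theorem yields $((-K_{\P(\mcE)})|_{E_i})^3 \geq 0$; summing with the non-negative multiplicities gives $(-K_{\P(\mcE)})^3 \cdot \xi \geq 0$. On the other hand, Claim~\ref{num-claim}~(5) evaluates this intersection as $(2g-2) - 12 c_2 = 2g - 26$, which is strictly negative because every Fano threefold of Picard rank one has $g \leq 12$. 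The ensuing contradiction rules out $c_2 = 2$ and completes the proof. I expect this last step---the recognition that effectivity of $\xi$ forces a numerical constraint incompatible with weak Fanoness---to be the only non-formal ingredient; the rest is the lemma and standard cohomological bookkeeping.
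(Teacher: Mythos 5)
Your proof is correct and takes essentially the same route as the paper: the decisive step in both is that a nonzero global section makes $\xi$ effective, so nefness of $-K_{\P(\mcE)}$ together with Claim~\ref{num-claim}~(5) forces $(-K_{\P(\mcE)})^{3}\xi=2g-2-12c_{2}\geq 0$, contradicting $c_{2}=2$ since $g\leq 12$ (valid here because $i_{X}=1$). Your wedge-of-two-sections lemma merely replaces the paper's use of Claim~\ref{num-claim}~(1) (namely $\beta\geq 0$ and $c_{2}(\mcE(\beta))\geq 0$) and its splitting of $0\to\mcO_{X}\to\mcE\to\mcI_{Z}\to 0$ with $Z=\emptyset$; this is a cosmetic difference in the easy cases $c_{2}\leq 0$.
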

\begin{proof}
By Claim~\ref{num-claim}~(2) and (3), we have $c_{2} \leq 2$ and $h^{0}(\mcE) \geq \chi(\mcE)>0$. 
Hence $\beta=0$ and $\xi$ is effective. 
By Claim~\ref{num-claim}~(1) and (2), we have $c_{2} \in \{0,2\}$. 
Then Claim~\ref{num-claim}~(5) implies that $c_{2}=0$ since $g \leq 12$. 
Thus the closed subscheme $Z$ in (\ref{ex-beta}) is empty and the exact sequence splits. 
Hence $\mcE = \mcO_{X}^{\oplus 2}$. 
\end{proof}

Suppose that $\mcE \not\simeq \mcO^{\oplus 2}$. 
Note that $\mcE$ is semistable by Claim~\ref{num-claim}~(1). 
By Claim~\ref{claim-numsplit}, we have $4 \leq c_{2}$. 
By Claim~\ref{num-claim}~(4), we have $c_{2} < (g-1)/2$. 
Hence we have $g \in \{10,12\}$. 
By Proposition~\ref{prop-discample}, 
$(X,H_{X})$ satisfies the condition $(\dag)$ in Definition~\ref{def-dag}. 
Thus we can apply Theorem~\ref{thm-conic}~(2) for $X$ and $\mcE$, which implies $\mcE \simeq \mcO_{X}^{\oplus 2}$, a contradiction. 
This completes the proof of Theorem~\ref{thm-split}. 
\end{proof}

\section{Classification on Fano threefolds of index one: $c_{1}(\mcF)$ is odd}\label{sec-MukaiOdd}

Let $X$ be a Fano $3$-fold with $\rho(X)=i_{X}=1$ and $(-K_{X})^{3}=2g-2$. 
The remaining classification problem is now the classification of rank $2$ weak Fano bundle $\mcF$ with $c_{1}(\mcF)=c_{1}(X)$. 
As we will see, if $\mcF$ is indecomposable, then $g \geq 6$ and the range of values that the $2$nd Chern class $c_{2}(\mcF)$ can take is $\lfloor \frac{g+3}{2} \rfloor \leq c_{2}(\mcF) \leq g-2$. 
The most difficult part of our classification is whether such an $\mcF$ actually exists on an arbitrary prime Fano $3$-fold of genus $g$ that satisfies $c_{2}(\mcF)=d$ for each $d$ with $\lfloor \frac{g+3}{2} \rfloor \leq d \leq g-2$. 

In the case $X$ is $\P^{3}$, $\Q^{3}$, or a del Pezzo $3$-fold of degree $5$, since $X$ itself has no moduli, we were able to prove the existence of such an $\mcF$ by 
applying Arap-Cutrone-Marshburn's construction of an elliptic curve with the desired properties \cite{ACM17}
as done in \cite{FHI2} and Section~\ref{subsec-proof-main-Q3} of this paper. 
By similar methodology, for fixed integers $g$ and $d$ satisfying inequality $\lfloor \frac{g+3}{2} \rfloor \leq d \leq g-2$, 
it is possible to show that such a pair $(X,\mcF)$ with $c_{2}(\mcF)=d$ exists. 
However, when $X$ is a prime Fano $3$-fold, we need to show that such an $\mcF$ for an \emph{arbitrary} $X$, because $X$ has non-trivial moduli.
To prove the existence of such an $\mcF$ for any $X$, we use the results for ACM bundles obtained in \cite{BF11,CFK}. 

\subsection{Numerical preparation}\label{subsec-prep}

In Section~\ref{subsec-prep}, we employ the following notation. 
\begin{nota}\label{nota-c1Fc1X}
\begin{itemize}
\item Let $X$ be a Fano $3$-fold of Picard rank $1$ whose Fano index is $i_{X}$ and denote a fundamental divisor by $H_{X}$. 
\item Let $\mcF$ be a weak Fano vector bundle with $c_{1}(\mcF)=c_{1}(X)$ and $\rk \mcF=2$. 
\item Let $\pi \colon Y:=\P_{X}(\mcF) \to X$ be the projectivization of $\mcF$ and $\xi$ a tautological divisor. 
Since $-K_{Y} \sim 2\xi$, $\xi$ is semi-ample by the Kawamata--Shokurov base point free theorem \cite[Theorem~3.3]{KM98}. 
\item Let $\psi \colon Y \to \ol{Y}$ be the contraction induced by $\xi$ and $\ol{\xi}$ an ample Cartier divisor on $\ol{Y}$ such that $\xi = \psi^{\ast}\ol{\xi}$. Note that $-K_{\ol{Y}} \sim 2\ol{\xi}$. 
\item We take a smooth ladder from $\lvert \xi \rvert$, say 
\begin{align}\label{eq-ladder}
S \subset \wt{X} \subset Y, 
\end{align}
which exists according to \cite[Theorem~4.1]{FHI1}. 
\item Let $\ol{S} \subset \ol{X} \subset \ol{Y}$ denotes the corresponding ladder on $\ol{Y}$. 
\item By the same argument as in \cite[Section~4.3.1 and Claim~4.4]{FHI1}, 
the weak Fano $3$-fold $\wt{X}$ is the blowing-up along a (possibly disconnected or empty) smooth curve $C$ on $X$. 
Moreover, each connected component $C_{i}$ of $C$ is an elliptic curve. 
Here we have an exact sequence
\begin{align}\label{ex-wtX}
0 \to \mcO_{X} \to \mcF \to \mcI_{C}(-K_{X}) \to 0 \text{ with }c_{2}(\mcF) \equiv C.
\end{align}
When $\mcF$ is globally generated moreover, we may assume that $S \to \pi(S)$ is isomorphic. 
In this case, we often regard $S$ as a smooth anticanonical member of $X$ containing $C$ through this identification.
\end{itemize}
\end{nota}

\begin{lem}\label{lem-c20}
The following conditions are equivalent: 
\begin{enumerate}
\item[(a)] $c_{2}(\mcF)=0$. 
\item[(b)] $\mcF \simeq \mcO_{X} \oplus \mcO(-K_{X})$. 
\item[(c)] $\psi$ is divisorial and $\dim \psi(\Exc(\psi)) \leq 1$. 
\item[(d)] $\psi$ has a fiber whose dimension is greater than $1$. 
\end{enumerate}
\end{lem}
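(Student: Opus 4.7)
The plan is to establish the equivalences in the cyclic order $(a) \Leftrightarrow (b) \Rightarrow (c) \Rightarrow (d) \Rightarrow (a)$; the first three implications are short, and the substantive content lies in $(d) \Rightarrow (a)$.

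For $(a) \Leftrightarrow (b)$, I would apply the ladder sequence (\ref{ex-wtX}), in which $c_{2}(\mcF) \equiv [C]$. If $c_{2}(\mcF) = 0$, then the disjoint union $C$ of smooth elliptic curves has vanishing class in $A^{2}(X) \simeq \Z$ and is therefore empty. The resulting exact sequence $0 \to \mcO_{X} \to \mcF \to \mcO(-K_{X}) \to 0$ then splits, because $\Ext^{1}(\mcO(-K_{X}), \mcO_{X}) = H^{2}(\mcO_{X})^{\vee} = 0$ on the Fano $3$-fold $X$. Conversely, $c_{2}(\mcO_{X} \oplus \mcO(-K_{X})) = 0$ by direct computation.

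For $(b) \Rightarrow (c)$, when $\mcF = \mcO \oplus \mcO(-K_{X})$ the projection $\mcF \twoheadrightarrow \mcO$ gives a section $\sigma \colon X \hookrightarrow Y$ along which $\xi|_{\sigma(X)}$ is trivial. Hence the divisor $\sigma(X) \cong X$ is contracted to a single point by $\psi$, which is therefore divisorial with $\dim \psi(\Exc \psi) = 0$. The implication $(c) \Rightarrow (d)$ is immediate, since $\Exc(\psi)$ has dimension $3$, so a general $\psi$-fibre over its centre has dimension at least $3 - 1 = 2$.

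For $(d) \Rightarrow (a)$, let $F$ be an irreducible component, with reduced structure, of a $\psi$-fibre with $\dim F \geq 2$. Since every $\pi$-fibre $\ell$ satisfies $\xi \cdot \ell = 1 > 0$ while $\xi|_{F}$ is numerically trivial, no $\pi$-fibre lies in $F$; hence $\pi|_{F}$ is finite and $\pi(F)$ has the same dimension as $F$. I would then expand $[F]$ in $N^{\ast}(Y)_{\Q}$ as a polynomial in $\xi$ and $H$ of the appropriate codegree (degree $1$ when $\dim F = 3$, degree $2$ when $\dim F = 2$), and use $\xi \cdot D \cdot [F] = 0$ for $D \in \{\xi, H\}$ together with the intersection numbers
\begin{align*}
\xi^{4} &= (2g-2) - 2c_{2}(\mcF), & \xi^{3}H &= (2g-2) - c_{2}(\mcF), \\
\xi^{2}H^{2} &= \xi H^{3} = 2g-2, & H^{4} &= 0
\end{align*}
on $Y$ to extract a linear relation forcing a multiple of $c_{2}(\mcF)$ to equal the $\pi_{\ast}$-coefficient of $[F]$. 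The main point — and the one I expect to require genuine care — is that $\pi_{\ast}[F] \neq 0$, which holds because $\pi|_{F}$ is finite and surjective onto the nonzero effective cycle $\pi(F) \subset X$. Combined, these force $c_{2}(\mcF) = 0$, giving $(a)$.
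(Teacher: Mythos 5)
Your proposal is correct and takes essentially the same route as the paper: the only substantive implication, (d) $\Rightarrow$ (a), is handled there by restricting the Grothendieck relation $\xi^{2}-\pi^{\ast}c_{1}\xi+\pi^{\ast}c_{2}=0$ to a contracted surface $J$ (where $\xi|_{J}\equiv 0$ and $\pi|_{J}$ is finite) to get $(\pi|_{J})^{\ast}c_{2}(\mcF)=0$, and your expansion of $[F]$ in the classes $\xi,H$ together with $\pi_{\ast}[F]\neq 0$ is just an unpacked version of that same computation. The remaining implications (the splitting via the sequence (\ref{ex-wtX}) and $H^{2}(\mcO_{X})=0$, and (b) $\Rightarrow$ (c) $\Rightarrow$ (d)) match what the paper treats as immediate.
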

\begin{proof}
The implication (a) $\ra$ (b) follows from the exact sequence (\ref{ex-wtX}).
The implications (b) $\ra$ (c) and (c) $\ra$ (d) are clear. 
Let us show (d) $\ra$ (a). 
Suppose there is a surface $J$ such that $\psi(J)$ is a point. 
Then $\pi|_{J} \colon J \to X$ is finite. 
Since $\xi|_{J} \sim 0$, the Grothendieck relation gives 
$0=\pi|_{J}^{\ast}c_{2}(\mcF)$. 
Hence $c_{2}(\mcF)=0$. 
We complete the proof. 
\end{proof}

From now on, we mainly treat the case that $C \neq \emp$, which is equivalent to saying that $c_{2}>0$, and $\xi$ is not ample. 
The main aim of this section is to prove the following proposition: 
\begin{prop}\label{prop-BNineq}
Suppose that $c_{2}(\mcF) \neq 0$ and $\xi$ is not ample. 
\begin{enumerate}
\item $C$ is connected, which is equivalent to $H^{1}(\mcF^{\vee})=H^{2}(\mcF^{\vee})=0$. 
\item $H_{X}$ is very ample. 
\item If $i_{X} = 1$, then $c_{2} \geq 4$ and $c_{1}(X)^{3} \geq 10$. 
If $i_{X}=2$, then $c_{2} \geq H_{X}^{3}+1$. 
If $i_{X}=3,4$, then $c_{2} \geq 6$. 
\item If $i_{X} = 1$, put $g:=\frac{c_{1}(X)^{3}}{2}+1$. 
Then the following inequality hold:
\[\lfloor \frac{g+3}{2} \rfloor \leq c_{1}(\mcF)c_{2}(\mcF) \leq g-2.\]
In particular, we have $g \geq 6$. 
\end{enumerate}
\end{prop}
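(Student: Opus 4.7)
The plan is to extract the connectedness statement (1) from a cohomology chase in the defining sequence, to deal with (2) via the Iskovskikh--Mukai classification of low-index and hyperelliptic Fano threefolds, and to derive the numerical bounds in (3) and (4) from the positivity of $-K_{\wt{X}}$ together with the Brill--Noether constraints on the K3 ladder surface.

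For (1), I would twist $0 \to \mcO_X \to \mcF \to \mcI_C(-K_X) \to 0$ by $\mcO_X(K_X)$ to obtain
\[ 0 \to \mcO_X(K_X) \to \mcF^{\vee} \to \mcI_C \to 0, \]
and a diagram chase using $H^i(\mcO_X) = H^i(\mcO_X(K_X)) = 0$ for $1 \leq i \leq 2$ (Kodaira vanishing on the Fano threefold $X$) together with $H^0(\mcO_C) = H^1(\mcO_C) = \C^n$ (each component of $C$ being elliptic) yields $h^1(\mcF^{\vee}) = h^2(\mcF^{\vee}) = n - 1$, establishing the equivalence asserted in (1). The nontrivial point is then to show $n = 1$. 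Via the Hartshorne--Serre correspondence, locally free extensions producing such an $\mcF$ are parameterized by nowhere-vanishing sections of $\det N_{C/X} \otimes \mcO_C(K_X|_C) \simeq \mcO_C$ (the isomorphism coming from adjunction, using $\omega_C = \mcO_C$ for each elliptic component); if $n \geq 2$, this parameter space has positive dimension. I would then argue that the global generation of $\mcF$ (provided by \cite{FHI1}, Theorem~1.7) selects a unique extension class up to scalar, which together with the indecomposability assumption $c_2(\mcF) \neq 0$ (equivalent to $\mcF \not\simeq \mcO_X \oplus \mcO_X(-K_X)$ by Lemma~\ref{lem-c20}) forces $n = 1$.

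For (2), assume $H_X$ is not very ample. By the Iskovskikh--Mukai classification, the exceptions are: $i_X = 2$ with $H_X^3 \leq 2$ (hyperelliptic del Pezzo threefolds), or $i_X = 1$ with $X$ a hyperelliptic prime Fano threefold. In each of these cases $X$ admits a finite double cover $f \colon X \to X'$ onto a Fano variety $X'$ of larger Fano index. I would pull back the defining sequence and the contraction $\psi$ along $f$, and use the nefness and bigness of $\xi$ on $Y = \P_X(\mcF)$ together with the projection formula for $f_{\ast}\mcF$ to force $\mcF \simeq f^{\ast}\mcF'$ for some rank-two bundle $\mcF'$ on $X'$. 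Combined with the classification already available for $\P^3$, $\Q^3$, and del Pezzo threefolds \cite{yas,Ishikawa16,FHI1,FHI2}, this contradicts the hypothesis $c_2(\mcF) \neq 0$, ruling out the non-very-ample cases.

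For (3) and (4), the blow-up formulas combined with $g_C = 1$ give
\[ (-K_{\wt{X}})^{3} = (-K_X)^{3} - 2\, c_1(\mcF) c_2(\mcF), \]
and bigness of $-K_{\wt{X}}$ forces the left side to be positive, yielding the upper bound $c_1(\mcF) c_2(\mcF) \leq g - 2$ in (4) and analogous upper bounds in the other index cases. For the lower bounds, I will work on the K3 ladder surface $\wt{S}$ containing $C$ (see Notation~\ref{nota-c1Fc1X}) and apply the Brill--Noether generality of $(\wt{S}, \mu^{\ast}(-K_X|_S))$ from Theorem~\ref{thm-LazBN}~(3). The semi-ampleness of $\xi$ forces $|\mcI_C(-K_X)|$ to have controlled base locus, so its restriction to $\wt{S}$ provides a divisor linearly equivalent to $\mu^{\ast}(-K_X|_S) - C$ with positive-dimensional linear system, and the Brill--Noether inequality $h^0(\mcO_{\wt{S}}(C)) \cdot h^0(\mcO_{\wt{S}}(\mu^{\ast}(-K_X|_S) - C)) \leq g$ translates into $c_1(\mcF) c_2(\mcF) \geq \lfloor (g+3)/2 \rfloor$. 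The main obstacle in this step is establishing the lower bound uniformly across all prime Fano threefolds of genus $g$: because $X$ itself varies in moduli, the argument must go through the quasi-polarized K3 $\wt{S}$ rather than the specific projective geometry of $X$.
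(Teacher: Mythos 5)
Your reduction in (1) of connectedness to $h^{1}(\mcF^{\vee})=0$ is fine, but the step that is supposed to force $n=1$ does not work. The Hartshorne--Serre correspondence produces locally free extensions from a \emph{disconnected} union of elliptic curves just as happily as from a connected one (one only needs the extension class, viewed as a section of the relevant twisted dualizing sheaf, to be nonvanishing on every component), and neither global generation of $\mcF$ nor the hypothesis $c_{2}(\mcF)\neq 0$ ``selects a unique extension class up to scalar'': nothing in your sketch excludes a weak Fano $\mcF$ arising from a two-component $C$. The paper's mechanism is genuinely different: connectedness of $C$ is equivalent to $\rho(\wt{X})=2$ for a general member $\wt{X}\in\lvert\xi\rvert$, and this is obtained from the Noether--Lefschetz-type Theorem~\ref{thm-RSGNL} applied to the weak Fano fourfold $Y=\P_{X}(\mcF)$ with $H=\xi$, whose hypothesis (the crepant contraction $\psi$ is small, or divisorial with $\dim\psi(\Exc(\psi))\geq 2$) is exactly what Lemma~\ref{lem-c20} extracts from $c_{2}(\mcF)\neq 0$. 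That input is absent from your argument, and without it (1) is not proved. Part (2) has a similar gap: a vector bundle on a double cover $f\colon X\to X'$ is a pull-back only if it carries descent data (an equivariant structure for the covering involution), and nefness and bigness of $\xi$ together with the projection formula give no such thing, so ``$\mcF\simeq f^{\ast}\mcF'$'' is unjustified. The paper avoids this entirely: since $\xi$ is nef and big, $s_{3}(\mcF)=c_{1}^{3}-2c_{1}c_{2}=\xi^{4}>0$, and global generation of $-K_{X}|_{C}$ on the elliptic curve $C$ gives $c_{1}c_{2}\geq 2$, hence $c_{1}(X)^{3}>4$, which already rules out the index-one non-very-ample (hyperelliptic) cases; the remaining exceptions are del Pezzo threefolds of degree $\leq 2$, where \cite[Theorem~1.5]{FHI2} says every rank two weak Fano bundle splits, contradicting $c_{2}\neq 0$ with $\xi$ not ample.

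Part (3) is also only partially addressed: the lower bounds for $i_{X}=2,3,4$ are taken in the paper from the earlier classifications (\cite{yas}, \cite{FHI2}, Theorem~\ref{main-Q3}), which you never invoke, and for $i_{X}=1$ you would either have to observe that $c_{2}\geq 4$ and $c_{1}(X)^{3}\geq 10$ follow from (4) once $g\geq 6$ is known, or give the paper's direct argument (for $c_{1}(X)^{3}>6$ the anticanonical image is cut out by quadrics, so a degree-$3$ elliptic curve would be a plane cubic and force a plane inside $X$, impossible for Picard rank one). Your (4) is essentially the paper's argument: the upper bound from $\xi^{4}>0$ and the lower bound from Brill--Noether generality of the anticanonical K3 surface containing $C$ via Theorem~\ref{thm-LazBN}~(3) (the paper works with the smooth ladder surface $S\in\lvert\xi|_{\wt X}\rvert$ rather than a resolved Du Val member, but that is cosmetic). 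Even there, ``positive-dimensional linear system'' is not enough: you need the explicit values $h^{0}(\mcO_{S}(C))=2$ and $h^{0}(\mcO_{S}(H_{S}-C))=\tfrac{s_{3}(\mcF)}{2}+2=g+1-c_{2}$, so that Brill--Noether generality gives $2(g+1-c_{2})\leq g$, i.e. $c_{2}\geq\lfloor\frac{g+3}{2}\rfloor$.
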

\begin{proof}
Since we assume that $\xi$ is not ample, $\mcF$ is globally generated by \cite[Theorem~1.7]{FHI1}. 

(1) Since $\mcF^{\vee} \simeq \mcF(K_{X})$, we have $h^{1}(\mcF^{\vee})=h^{2}(\mcF^{\vee})$. 
Moreover, (\ref{ex-wtX}) shows $h^{1}(\mcF^{\vee})=h^{1}(\mcF(K_{X}))=h^{1}(\mcI_{C})$. 
Hence the condition that $C$ is connected is equivalent to $h^{1}(\mcF^{\vee})=0$. 
This is also equivalent to $\rho(\wt{X})=2$ for a general member $\wt{X} \in \lvert \xi \rvert$. 
By Lemma~\ref{lem-c20}, if the contraction $\psi$ is divisorial, then $\dim \psi(\Exc(\psi)) = 2$. 
Then it follows from Theorem~\ref{thm-RSGNL} that $\rho(\wt{X})=2$ for general $\wt{X}$. 

(2) 
Since $-K_{X}|_{C}$ is globally generated, we have $-K_{X}.C \geq 2$. 
Note that
\begin{align}\label{eq-s3}
s_{3}(\mcF)=c_{1}(\mcF)^{3}-2c_{1}(\mcF)c_{2}(\mcF)>0. 
\end{align}
Hence $c_{1}(X)^{3} > 4$. 
Thus Fujita-Iskovskikh classification of del Pezzo $3$-fold shows that if $H_{X}$ is not very ample, then $X$ is a del Pezzo $3$-fold of degree $d \leq 2$ (see e.g. \cite[Proposition~3.2.4]{Fanobook}). 
By \cite[Theorem~1.5]{FHI2}, $\mcF$ is a direct sum of line bundles, which contradicts our assumptions that $c_{2} \neq 0$ and $\xi$ is not ample. 
Therefore, $H_{X}$ is very ample. 

(3) 
Suppose $i_{X}=1$. 
Then $c_{1}(\mcF)c_{2}(\mcF) \geq 3$ holds by (2) and (\ref{eq-s3}) implies $c_{1}(X)^{3} > 6$. 
Hence $X$ is defined by quadratic equations in $\P(H^{0}(-K_{X}))$. 
If $ -K_{X}.C = 3$ holds, then the degree-genus inequality shows $C$ is a plane cubic curve, which is a contradiction since $X$ must contain the $2$-plane $\gen{C}$ spanned by $C$. Hence $c_{2} \geq 4$, and then (\ref{eq-s3}) implies $c_{1}(X)^{3} \geq 10$. 
The remaining inequalities in (2) for the case $i_{X} \geq 2$ follow from the classification results including \cite{yas}, \cite{FHI2} and Theorem~\ref{main-Q3}.

(4) 
The inequality $c_{2} \leq g-2$ is just a rephrasing of (\ref{eq-s3}). 
By (2), we may assume that $-K_{X}$ is very ample. 
Thus the Brill--Noether generality of $S$ deduces the second inequality as follows. 
Since $\mcF$ is globally generated, we may further assume that $S \to \pi(S)$ is isomorphic and thus $S \simeq \pi(S) \subset X$ contains $C$. 
Write $H_{S}:=-K_{X}|_{S}$. 
Then $h^{0}(S,\mcO_{S}(C))=2$ and $h^{0}(S,\mcO(H_{S}-C))=\frac{s_{3}(\mcF)}{2}+2$. 
By Theorem~\ref{thm-LazBN}~(3), this polarized K3 surface $(S,H_{S})$ of genus $g$ is Brill--Noether general, which implies that
$
g \geq h^{0}(\mcO_{S}(C)) \cdot h^{0}(\mcO_{S}(H_{S}-C)) \geq s_{3}(\mcF)+4
$. 
This is equivalent to the inequality $\lfloor \frac{g+3}{2} \rfloor \leq c_{2}$. Hence (4) holds. 
\end{proof}

\begin{rem}
On a prime Fano $3$-fold $X$, the lower bound $\lfloor \frac{g+3}{2} \rfloor \leq -K_{X}.c_{2}(\mcF)$ is originally proved by \cite{Madonna01} to classify ACM bundles on $X$ and also proved in \cite[Corollary~3.9]{CFK} to compute the dimension of the moduli of ACM bundles. 
Their proofs are slightly different from our way. 
\end{rem}

\subsection{Elliptic normal curves on prime Fano threefolds}\label{subsec-exprelim}

Let $X$ be a prime Fano $3$-fold of genus $g \geq 6$. 
Let $d$ be an integer such that $\lfloor \frac{g+3}{2} \rfloor \leq d \leq g-2$. 
By \cite[Theorem~1.1]{CFK} and its proof, there exists an elliptic normal curve $C$ on $X$ such that $-K_{X}.C=d$. 
Let 
\begin{align}\label{eq-defofHd}
\mathfrak{H}_{d}(X)' \subset \Hilb_{dt}(X)
\end{align}
be an irreducible locally closed subset that parametrizes elliptic normal curves on $X$ of degree $d$. 
By examining elliptic normal curves in more detail, we can show the following proposition, which strengthens [ibid, Proposition~3.8]. 
\begin{prop}{\cite[Proposition~3.8~(v)]{CFK}}\label{prop-CFK}
Let $X$ be a prime Fano $3$-fold of genus $g$.
Let $d$ be an integer that satisfies $\lfloor \frac{g+3}{2} \rfloor \leq d \leq g-2$. 
The following assertions hold for a general member $E_{d} \in \mathfrak{H}_{d}(X)'$. 
\begin{enumerate}
\item A general member $S \in \lvert \mcI_{E_{d}}(-K_{X}) \rvert$ is smooth or $\Sing(S)=\{p\} \subset E_{d}$. Moreover, if $S$ is singular, then $p$ is a rational double point of $S$. 
\item For every line $l$ on $X$, $l$ is away from $E_{d}$ or intersects $E_{d}$ transversally at one point. 
\item When $g \geq 9$, then for every conic $\gamma$ on $X$, $\gamma$ is away from $E_{d}$ or $\lg(\mcO_{\gamma \cap E_{d}}) \leq 2$. 
\end{enumerate}
\end{prop}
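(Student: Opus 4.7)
The plan is to take (1) essentially from \cite[Proposition~3.8(v)]{CFK} and establish the new incidence statements (2) and (3) via a parameter-count argument, exploiting that $\mathfrak{H}_d(X)'$ is irreducible of dimension $d$ while the Hilbert schemes $\mathcal{L}$ and $\mathcal{C}$ of lines and conics on $X$ are of dimension at most $1$ and $2$, respectively. First I would recall the CFK construction of a general $[E_d]$: choose a smooth K3 surface $S \in \lvert -K_X \rvert$ whose Picard lattice is of rank two and is generated by $H_S := -K_X|_S$ and a primitive class $E$ with $E^2 = 0$ and $E \cdot H_S = d$, and let $E_d$ be a general member of the elliptic pencil $\lvert E \rvert$. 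Part (1) then follows: the base locus of $\lvert \mcI_{E_d}(-K_X) \rvert$ is $E_d$ set-theoretically by the projective normality of the elliptic normal curve $E_d$, so Bertini yields smoothness of a general $S' \in \lvert \mcI_{E_d}(-K_X) \rvert$ away from $E_d$, and the local analysis of $N_{E_d/X}$ in \cite{CFK} forces any singularity on $E_d$ to be a rational double point.

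For (2) and (3) I would form the incidence varieties
\[ \mathcal{J}_1 := \{ ([E_d], [l]) \in \mathfrak{H}_d(X)' \times \mathcal{L} \mid \lgth(\mcO_{E_d \cap l}) \geq 2 \} \]
and
\[ \mathcal{J}_2 := \{ ([E_d], [\gamma]) \in \mathfrak{H}_d(X)' \times \mathcal{C} \mid \lgth(\mcO_{E_d \cap \gamma}) \geq 3 \}. \]
For a fixed line $l$, the condition $\lgth(\mcO_{E_d \cap l}) \geq 2$ forces $E_d$ to meet $l$ in two (possibly infinitely near) points, which I claim is two independent conditions on $E_d$; hence the fibers of $\mathcal{J}_1 \to \mathcal{L}$ have dimension at most $d - 2$ and $\dim \mathcal{J}_1 \leq d - 1$. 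An analogous count for $\mathcal{J}_2$, using three incidence or tangency conditions along $\gamma$, gives $\dim \mathcal{J}_2 \leq 2 + (d-3) = d-1$; the hypothesis $g \geq 9$ enters precisely to ensure $d \geq \lfloor (g+3)/2 \rfloor \geq 6$ so that this count beats the dimension of $\mathcal{C}$. In each case the first projection fails to dominate $\mathfrak{H}_d(X)'$, and therefore a general $E_d$ misses the excluded locus entirely, proving (2) and (3).

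The main obstacle is justifying the independence of the incidence conditions, that is, that passing through a prescribed point of $l$ (or $\gamma$) together with a further tangency or incidence genuinely cuts down $\mathfrak{H}_d(X)'$ by the expected codimension. I would address this by analyzing the first-order deformation map $H^0(E_d, N_{E_d/X}) \to (N_{E_d/X})_p$ and its jet refinements at a general point $p$, using the exact sequence
\[ 0 \to N_{E_d/S} \to N_{E_d/X} \to N_{S/X}|_{E_d} \to 0 \]
coming from the K3 surface $S$ of the CFK construction. Here $N_{E_d/S} \simeq \mcO_{E_d}$ because $E_d$ moves in a pencil on $S$, while $N_{S/X}|_{E_d} \simeq \mcO_{E_d}(-K_X)|_{E_d}$ has large positive degree, so the surjectivity of the relevant jet maps should reduce to a direct cohomological check on the elliptic curve $E_d$.
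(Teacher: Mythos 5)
Your plan for (2)--(3) rests on an incidence--variety dimension count, and the count only works if you can show that the fibers of $\mathcal{J}_{1}\to\mathcal{L}$ and $\mathcal{J}_{2}\to\mathcal{C}$ have at most the expected dimension over \emph{every} line and \emph{every} conic, including special ones: if some component of $\mathcal{J}_{2}$ dominated $\mathfrak{H}_{d}(X)'$, it could do so while sitting over a single special conic $\gamma_{0}$, so you need $\dim\{E_{d}\mid \lgth(\mcO_{E_{d}\cap\gamma_{0}})\geq 3\}\leq d-3$ uniformly in $\gamma_{0}$. This is exactly the hard point, and the tool you propose does not deliver it. Surjectivity of jet maps $H^{0}(N_{E_{d}/X})\to N_{E_{d}/X}\otimes\mcO_{Z}$ fails in general: in your exact sequence the subsheaf $N_{E_{d}/S}\simeq\mcO_{E_{d}}$ contributes only constant sections, so for a length-$2$ or length-$3$ subscheme $Z$ the image in the $\mcO_{Z}$-direction is one-dimensional and the full jet map cannot be onto. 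What one actually needs is a vanishing of the type $H^{1}(N_{E_{d}/X}\otimes\mcI_{Z/E_{d}})=0$ for all length-$3$ subschemes $Z$ supported at arbitrary (not general) points of $E_{d}$; from $0\to\mcO_{E_{d}}(-Z)\to N_{E_{d}/X}(-Z)\to\mcL(-Z)\to 0$ with $\mcL$ of degree $d$, the obstruction space $H^{1}(\mcO_{E_{d}}(-Z))\simeq\C^{3}$ does not vanish, and killing it requires surjectivity of a connecting map depending on the extension class and on the position of $Z$ --- a statement essentially as strong as (3) itself. Checking things at a general point of a general $E_{d}$ only shows that the universal family dominates $X$ with the expected fiber dimension over a general point; it says nothing about the points where lines and conics can meet $E_{d}$. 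Finally, your explanation of the hypothesis $g\geq 9$ (``to ensure $d\geq 6$'') does not correspond to any step of the count, which is a sign that the place where the genus restriction genuinely enters has not been identified.

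For comparison, the paper does not argue fiberwise at all. Parts (1)--(2) are quoted from \cite[Proposition~3.8]{CFK}, and (3) is proved globally inside $\lvert -K_{X}\rvert\simeq\P^{g+1}$: the locus $\mathfrak{S}_{\mathrm{conic}}$ of hyperplane sections containing a conic is an irreducible divisor, and by a Ravindra--Srinivas Noether--Lefschetz argument applied to the weak Fano blow-up $\Bl_{\gamma}X$ (this is where $g\geq 9$ is used, via very ampleness of the anticanonical divisor on the anticanonical model and a short lattice computation in $\Pic(S)=\Z[H]\oplus\Z[\gamma]$) a very general $S\in\mathfrak{S}_{\mathrm{conic}}$ carries no effective class $E$ with $E^{2}=0$ and $H.E\leq g-2$. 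Since every component of the CFK locus $\mathfrak{S}_{d}(X)$ of anticanonical sections containing some $E_{d}$ has dimension $g$ as well, it cannot lie inside $\mathfrak{S}_{\mathrm{conic}}$; hence a general $S$ containing a general $E_{d}$ contains no conic, and then $\lgth(\mcO_{E_{d}\cap\gamma})\leq\lgth(\mcO_{S\cap\gamma})=2$ for every conic $\gamma$ because $S$ is a hyperplane section. If you want to salvage your approach, you would have to prove the uniform expected-codimension statements above, which in practice means re-proving something equivalent to the paper's Noether--Lefschetz input.
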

The statements (1) and (2) of Proposition~\ref{prop-CFK} are included in \cite[Proposition~3.8~(v)]{CFK} and we recall them to use these properties explicitly in the next section. 
For our sake, the additional property (3) is necessary. 
The aim of Section~\ref{subsec-exprelim} is to prove (3). 

\begin{lem}\label{lem-RSNLconic}
Let $X$ be a prime Fano $3$-fold of genus $g \geq 9$ and $\gamma \subset X$ a smooth conic. 
Then for a very general member $S \in \lvert \mcI_{\gamma/X}(1) \rvert$, $S$ is a smooth K3 surface and its Picard group $\Pic(S)$ is equal to $\Z[-K_{X}|_{S}] \oplus \Z[\gamma]$. 
\end{lem}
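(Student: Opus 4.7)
The plan is to verify the three hypotheses of Corollary~\ref{cor-RSNL} with $C := \gamma$ (a single smooth conic) and then invoke the corollary; its conclusion reads exactly as the statement of the lemma, because $\Pic(X) = \Z \cdot [-K_X]$ and the class $[\gamma]$ restricts to the class of the conic in $\Pic(S)$ for any smooth $S$ containing $\gamma$. Concretely, we must establish: (i) $\wt X := \Bl_\gamma X$ is weak Fano; (ii) the anticanonical contraction $\psi \colon \wt X \to \ol X$ is a flopping contraction; (iii) $-K_{\ol X}$ is very ample.

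For (i), let $\sigma \colon \wt X \to X$ denote the blow-up with exceptional divisor $E$. Since $\deg N_{\gamma/X} = 0$ by adjunction (as $\gamma$ is a conic of arithmetic genus zero), the standard intersection calculation yields $(-K_{\wt X})^3 = (-K_X)^3 + 3\sigma^{\ast}(-K_X)\cdot E^2 - E^3 = 2g - 8 \geq 10$, so $-K_{\wt X} = \sigma^{\ast}(-K_X) - E$ is big once shown to be nef. Nefness is checked by class: fibers $f$ of $E \to \gamma$ satisfy $-K_{\wt X} \cdot f = 1$; on the exceptional Hirzebruch surface $E$, the restriction $-K_{\wt X}|_E = p^{\ast}\mcO_\gamma(2) + \xi_E$ is nef; and for the strict transform $\wt B$ of an irreducible curve $B \subset X$ not contained in $E$, one has $-K_{\wt X} \cdot \wt B = -K_X \cdot B - \mathrm{length}(B \cap \gamma)$. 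Bezout applied inside the plane $\gen{\gamma}$ gives $\mathrm{length}(B \cap \gamma) \leq -K_X \cdot B$ unless $B$ is a component of the plane section $X \cap \gen{\gamma}$; the hypothesis $g \geq 9$ rules out $X$ containing a $2$-plane and constrains the residual curve in $X \cap \gen{\gamma}$ (of degree $2g - 4 \geq 14$) enough to exclude the bad case.

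For (ii), $\wt X$ has Picard rank $2$, and one extremal ray is contracted by $\sigma$; the other ray, contracted by $\psi$, is seen to be small by a numerical argument using $E^3 = 0$ and $(-K_{\wt X})^3 > 0$, which excludes a divisorial exceptional class of the form $a\sigma^{\ast}(-K_X) - bE$. For (iii), $\ol X$ is a Gorenstein Fano threefold with canonical singularities and anticanonical degree $2g - 8 \geq 10$, whence very ampleness of $-K_{\ol X}$ follows from \cite[Proposition~2.7]{JPR11}. Once (i)--(iii) are in place, Corollary~\ref{cor-RSNL} delivers the conclusion directly.

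The main obstacle is step (i), namely the nefness of $-K_{\wt X}$: one must exclude the existence of irreducible curves $B \subset X$ with $\mathrm{length}(B \cap \gamma) > -K_X \cdot B$, and the assumption $g \geq 9$ enters essentially to ensure that the plane section $X \cap \gen{\gamma}$ is sufficiently non-degenerate for no such bad curve to exist.
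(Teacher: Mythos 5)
Your overall strategy is the same as the paper's: check the three hypotheses of Corollary~\ref{cor-RSNL} for $C=\gamma$, get very ampleness of $-K_{\ol X}$ from $(-K_{\ol X})^3=2g-8\geq 10$ via \cite[Proposition~2.7]{JPR11}, and conclude. The difference is that the paper obtains the two hard hypotheses (weak Fano and flopping) essentially for free: since $X$ is cut out by quadrics, the scheme-theoretic base locus of $\lvert\mcI_{\gamma}(1)\rvert$ is $\gamma$ itself, so $-K_{\wt X}$ is base point free (hence nef), and \cite[Corollary~4.4.3]{Fanobook} is cited for the fact that the anticanonical contraction of $\Bl_{\gamma}X$ is small. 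You instead try to reprove these two facts by hand, and it is exactly there that your argument has genuine gaps. First, nefness: your claim that $-K_{\wt X}|_{E}=\xi_{E}+p^{\ast}\mcO_{\gamma}(2)$ ``is nef'' is not automatic — since $\deg N_{\gamma/X}=0$, this class fails to be nef whenever $N_{\gamma/X}\simeq\mcO(a)\oplus\mcO(-a)$ with $a\geq 3$, so you need either a bound on the normal bundle of the conic or the base-point-freeness argument via quadrics; you supply neither. Also the identity $-K_{\wt X}\cdot\wt B=-K_X\cdot B-\mathrm{length}(B\cap\gamma)$ is false in general ($E\cdot\wt B$ is computed on the normalization of $B$ and can strictly exceed the length of the scheme $B\cap\gamma$, e.g.\ when $B$ is singular at a point of $\gamma$), and the discussion of a ``residual curve in $X\cap\gen{\gamma}$ of degree $2g-4$'' does not make sense: $\gen{\gamma}$ is a $2$-plane in $\P^{g+1}$, and once one uses that $X$ is an intersection of quadrics (the input you never invoke), $X\cap\gen{\gamma}$ has $1$-dimensional part equal to $\gamma$, which is what actually kills the bad curves.

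Second, smallness of $\psi$: the ``numerical argument using $E^{3}=0$ and $(-K_{\wt X})^{3}>0$'' cannot exclude a divisorial contraction as stated, because every divisor class on $\wt X$ is of the form $a\sigma^{\ast}(-K_X)-bE$, and there is a numerically admissible candidate — one computes $(-K_{\wt X})^{2}\cdot\sigma^{\ast}(-K_X)=2g-4$ and $(-K_{\wt X})^{2}\cdot E=4$, so $D\equiv 2\sigma^{\ast}(-K_X)-(g-2)E$ satisfies $(-K_{\wt X})^{2}\cdot D=0$. Ruling this out requires genuine geometric input (a description of the $K$-trivial curves, i.e.\ lines and low-degree curves meeting $\gamma$, or simply the cited classification), not intersection numbers alone. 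So as written the proposal does not close either of the two key hypotheses; to repair it you should either import the quadric-generation/base-point-free argument and the result of \cite[Corollary~4.4.3]{Fanobook} as the paper does, or genuinely carry out the normal-bundle and $K$-trivial-curve analysis that your sketch only gestures at. The remaining steps (degree computation $(-K_{\wt X})^{3}=2g-8$, the use of \cite{JPR11}, and the application of Corollary~\ref{cor-RSNL}) are correct and coincide with the paper.
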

\begin{proof}
Let $\s \colon \wt{X}:=\Bl_{\gamma}X \to X$ be the blowing-up. 
Since $X$ is defined by quadratic equations, $-K_{\wt{X}}$ is base point free. 
Let $\psi \colon \wt{X} \to \ol{X}$ be the contraction given as the Stein factorization of the morphism defined by $\lvert -K_{\wt{X}} \rvert$. 
Then it was shown in \cite[Corollary~4.4.3]{Fanobook} that $\psi$ is a flopping contraction. 
Since we assume $g \geq 9$, $-K_{\ol{X}}$ is very ample by \cite[Proposition~2.7]{JPR11}. 
Hence Corollary~\ref{cor-RSNL} shows the result. 
\end{proof}

\begin{lem}\label{lem-locusconic}
Let $X$ be a prime Fano $3$-fold of genus $g \geq 9$. 
Then the locus
\[\mathfrak{S}_{\mathrm{conic}}:=\{S \in \lvert \mcO_{X}(1) \rvert \mid S \text{ contains a conic } \}\]
is a prime divisor of $\lvert \mcO_{X}(1) \rvert \simeq \P^{g+1}$ and a very general member $S \in \mathfrak{S}_{\mathrm{conic}}$ is smooth and has no effective divisors $E$ with $E^{2}=0$ and $H.E \leq g-2$. 
\end{lem}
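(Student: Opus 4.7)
The plan is to analyze $\mathfrak{S}_{\mathrm{conic}}$ through the incidence variety
\[\mcI := \{(\gamma,S) \in \mcH \times \ls{\mcO_{X}(1)} : \gamma \subset S\},\]
where $\mcH$ denotes the Hilbert scheme of conics on $X$. For prime Fano threefolds of genus $g \geq 7$, $\mcH$ is an irreducible surface by the Kuznetsov--Prokhorov--Shramov work \cite{KPS18} already used in Proposition~\ref{prop-discample}. The first projection $\mcI \to \mcH$ is a $\P^{g-2}$-bundle, since a conic $\gamma$ spans a unique $2$-plane in $\P^{g+1}$ and the hyperplanes containing a fixed $2$-plane form a $\P^{g-2}$; consequently $\mcI$ is irreducible of dimension $g$, and its image $\mathfrak{S}_{\mathrm{conic}}$ under the second projection $p_{2} \colon \mcI \to \ls{\mcO_{X}(1)}$ is irreducible.

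To prove $\mathfrak{S}_{\mathrm{conic}}$ is a prime divisor, I will exhibit one $S_{0} \in \mathfrak{S}_{\mathrm{conic}}$ containing only finitely many conics, forcing $p_{2}$ to be generically finite and thus $\dim \mathfrak{S}_{\mathrm{conic}} = g$. Fix a smooth conic $\gamma_{0} \subset X$ and take $S_{0}$ very general in $\ls{\mcI_{\gamma_{0}}(1)}$; by Lemma~\ref{lem-RSNLconic}, $\Pic(S_{0}) = \Z[H_{0}] \oplus \Z[\gamma_{0}]$ with $H_{0} := -K_{X}|_{S_{0}}$ satisfying $H_{0}^{2} = 2g-2$, $H_{0}.\gamma_{0} = 2$, $\gamma_{0}^{2} = -2$. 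A direct Diophantine check using $D.H_{0} = 2$ and $D^{2} = -2$ shows that for $g \geq 9$ the only integer solution in this lattice is $D = \gamma_{0}$, so $\gamma_{0}$ is the unique conic on $S_{0}$. Smoothness of the very general member of $\mathfrak{S}_{\mathrm{conic}}$ then follows from the same lemma: the locus of $(\gamma,S) \in \mcI$ with both $\gamma$ and $S$ smooth is open and non-empty above the smooth-conic locus of $\mcH$, hence dense in $\mcI$, and so its image is dense in $\mathfrak{S}_{\mathrm{conic}}$.

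For the final assertion, I restrict to a very general $S \in \mathfrak{S}_{\mathrm{conic}}$, so that $\Pic(S) = \Z[H] \oplus \Z[\gamma]$ as above. A non-zero effective class $E = aH + b\gamma$ with $E^{2} = 0$ satisfies $b^{2} - 2ab - (g-1)a^{2} = 0$, hence $b = a(1 \pm \sqrt{g})$, which admits no non-trivial integer solution unless $g$ is a perfect square. When $g = k^{2}$ with $k \geq 3$, the primitive effective solutions reduce to $E_{-} = H - (k-1)\gamma$ and $E_{+} = H + (k+1)\gamma$ (effectivity of each follows from Riemann--Roch on the K3 $S$, the opposite-sign classes being excluded by $H \cdot (-E_{\pm}) < 0$), with $H$-degrees $2k(k-1)$ and $2k(k+1)$ respectively; both exceed $g - 2 = k^{2} - 2$ because $2k(k-1) - (k^{2}-2) = (k-1)^{2} + 1 > 0$, and positive multiples only inflate $H.E$. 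The main technical obstacle is securing the Picard-lattice description of a very general $S \in \mathfrak{S}_{\mathrm{conic}}$ used throughout, which is exactly the content of Lemma~\ref{lem-RSNLconic}; its proof rests on the Ravindra--Srinivas Grothendieck--Noether--Lefschetz theorem (Theorem~\ref{thm-RSGNL}) applied after one verifies via \cite{JPR11} that for $g \geq 9$ the anticanonical contraction of $\Bl_{\gamma}X$ is flopping.
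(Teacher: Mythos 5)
Your proposal is correct and follows essentially the same route as the paper: irreducibility of the conic Hilbert scheme from \cite{KPS18} plus the $(g-2)$-dimensional linear system of hyperplane sections through a fixed conic to get the prime divisor, then Lemma~\ref{lem-RSNLconic} and the lattice computation in $\Z[H]\oplus\Z[\gamma]$ to exclude effective classes with $E^{2}=0$ and $H.E\leq g-2$. The only differences are refinements of the same argument: you make the generic finiteness of $\mcI\to\ls{\mcO_{X}(1)}$ explicit by exhibiting an $S_{0}$ containing a unique conic (the paper leaves this implicit), and you treat all perfect squares $g=k^{2}$, $k\geq 3$, uniformly where the paper instead invokes $9\leq g\leq 12$ to reduce to $g=9$.
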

\begin{proof}
For each conic $\gamma \subset X$, 
the hyperplane sections containing $\gamma$ are parametrized by $\lvert \mcI_{\gamma} \otimes \mcO_{X}(1) \rvert$. 
Since $X$ is defined by quadratics equations, the sheaf $\mcI_{\gamma} \otimes \mcO_{X}(1)$ is globally generated, and hence $\dim \lvert \mcI_{\gamma} \otimes \mcO_{X}(1) \rvert = g-2$. 
As the Hilbert scheme of conics on $X$ is an irreducible smooth surface by \cite{KPS18}, $\mathfrak{S}_{\mathrm{conic}}$ is an irreducible hypersurface on $\lvert \mcO_{X}(1) \rvert=\P^{g+1}$. 

Moreover, Lemma~\ref{lem-RSNLconic} shows a very general member $S \in \mathfrak{S}_{\mathrm{conic}}$ contains a smooth conic $\gamma$ and $\Pic(S) \simeq \Z[H] \oplus \Z[\gamma]$, where $H:=-K_{X}|_{S}$. 
Suppose that $S$ has an effective divisor $E$ with $E^{2}=0$ and $d:=H.E \leq g-2$. 
Pick integers $a,b$ such that $E \sim aH+b\gamma$ in $\Pic(S)$. 
Then
\begin{align*}
0 &= E^{2} = a^{2}(2g-2)+4ab - 2b^{2} \text{ and } \\
d &= H.E = a(2g-2)+2b.
\end{align*}
Hence 
$a=\frac{d}{2(g-1)}(1 \pm \frac{1}{\sqrt{g}})$ and $b=\mp \frac{d}{2\sqrt{g}}$. 
Since $9 \leq g \leq 12$, we have $g=9$ and hence 
$(a,b)=(d/12,-d/6)$. 
Hence $d$ is divisible by $12$, which contradicts $d \leq g-2=7$. 
\end{proof}
\begin{proof}[Proof~of~Proposition~\ref{prop-CFK}]
(1) and (2) was shown in \cite[Proposition~3.8]{CFK}. 
To prove the remaining statement (3), suppose $g \geq 9$. 
Recall the locally closed subset $\mathfrak{S}_{d}(X)  \subset \lvert \mcO_{X}(1) \rvert$ which is given as follows in [page 7, ibid.]. 
Let $\ol{\mathfrak{H}_{d}}(X)$ be the Hilbert scheme parametrizing a locally complete intersection $1$-dimensional subscheme $E_{d}$ whose dualizing sheaf $\omega_{E_{d}}$ is trivial and $-K_{X}.E_{d}=d$. 
For each $E_{d} \in \ol{\mathfrak{H}_{d}}(X)$, 
we obtain a unique rank $2$ vector bundle $\mcF_{E_{d}}$ fitting into 
$0 \to \mcO \to \mcF_{E_{d}} \to \mcI_{E_{d}}(-K_{X}) \to 0$. 
For each injection $s=(s_{1},s_{2}) \colon \mcO^{\oplus 2} \to \mcF$, an anticanonical member is given by taking the determinant $\det s \colon \mcO \to \det \mcF=\mcO(-K_{X})$. 
This gives a regular map
\[w_{E_{d}} \colon \Gr(2,H^{0}(\mcF_{E_{d}})) \to \lvert \mcO_{X}(1) \rvert.\]
The locus $\mathfrak{S}_{d}(X)$ is then defined by 
\[\mathfrak{S}_{d}(X):=\bigcup_{E_{d} \in \ol{\mathfrak{H}_{d}}(X)} \Im(w_{E_{d}}).\]
From this construction, a member $S \in \Im(w_{E_{d}})$ contains $E_{d}$ as a closed subscheme. 

We return to the proof. By [ibid, Proposition~3.8], each irreducible component $\mathfrak{S}'_{d}$ of $\mathfrak{S}_{d}(X)$ has dimension $g$. 
Moreover, $\mathfrak{S}'_{d}$ can be written by $\bigcup_{E_{d} \in \ol{\mathfrak{H}}_{d}'(X)} \Im (w_{E_{d}})$, where $\ol{\mathfrak{H}}_{d}'(X)$ is an irreducible component of $\ol{\mathfrak{H}}_{d}(X)$. 
Therefore, if $\mathfrak{S}_{\mathrm{conic}}(X)$ contains $\mathfrak{S}'_{d}$, 
then a general member of $\mathfrak{S}_{\mathrm{conic}}(X)$ contains 
a locally complete intersection $1$-dimensional subscheme $E$ with $\omega_{E} \simeq \mcO_{E}$ and $-K_{X}.E \leq g-2$, which contradicts Lemma~\ref{lem-locusconic}. 

Therefore, for general $E_{d} \in \mathfrak{H}_{d}(X)'$ and a general member $S \in \Im(w_{E_{d}})$, $S$ does not contain any conics.
In particular, every conic $\gamma$ satisfies $\lg(\mcO_{S \cap \gamma})=2$ as $S$ is a hyperplane section, and hence $\lg(\mcO_{E_{d} \cap \gamma}) \leq 2$ as $E_{d} \subset S$. 
We complete the proof. 
\end{proof}

\subsection{Existence Theorem on prime Fano threefolds}

Finally, we show the existence for each possibility of the second Chern class $c_{2}(\mcF)$. 

\begin{thm}\label{thm-exist}
Let $X$ be a prime Fano $3$-fold of genus $g \geq 6$. 
Let $d$ be an integer satisfies that $\lfloor \frac{g+3}{2} \rfloor \leq d \leq g-2$. 

For an elliptic normal curve $C \in \mathfrak{H}_{d}(X)'$ of degree $d$, 
let $\mcF$ be a rank $2$ ACM vector bundle given by the Hartshorne--Serre correspondence fitting into 
\begin{align}\label{seq-3fold}
0 \to \mcO \xrightarrow{s} \mcF \to \mcI_{C}(-K_{X}) \to 0,
\end{align}
as in \cite[Theorem~1.2]{CFK}. 
For general $C \in \mathfrak{H}_{d}(X)'$, $\mcF$ is globally generated. 
\end{thm}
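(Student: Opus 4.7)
The plan is to reduce global generation of $\mcF$ to a base-point-freeness statement on the blowup $\wt X := \Bl_{C}X$ and then to deduce this from the weak Fano structure of $\wt X$.

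First, I would observe that $\mcF$ is globally generated on $X$ if and only if the twisted ideal sheaf $\mcI_{C}(-K_{X})$ is. Indeed, for $x \in X \setminus C$, the canonical section $s$ of \eqref{seq-3fold} is nonzero at $x$, and surjectivity of the evaluation $H^{0}(\mcF) \to \mcF(x)$ is equivalent to surjectivity of the induced quotient $H^{0}(\mcF)/\langle s\rangle \to \mcF(x)/\langle s(x)\rangle \simeq \mcI_{C}(-K_{X})(x)$, which coincides with the evaluation of $H^{0}(\mcI_{C}(-K_{X}))$ at $x$. For $x \in C$, the map $\mcO \to \mcF$ vanishes at $x$, inducing an isomorphism $\mcF(x) \xrightarrow{\sim} \mcI_{C}(-K_{X})(x)$ between two-dimensional fibers, and the evaluation $H^{0}(\mcF) \to \mcF(x)$ factors through the quotient $H^{0}(\mcF)/\langle s\rangle \simeq H^{0}(\mcI_{C}(-K_{X}))$ since $s(x)=0$. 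Via $\sigma_{*}\mcO_{\wt X}(-K_{\wt X}) = \mcI_{C}(-K_{X})$ for the blowup $\sigma \colon \wt X \to X$, global generation of $\mcF$ is therefore equivalent to base-point-freeness of $|-K_{\wt X}|$ on $\wt X$.

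Next, I would verify that $\wt X$ is weak Fano for general $C \in \mathfrak{H}_{d}(X)'$. With $E$ the exceptional divisor and $-K_{\wt X} = \sigma^{*}(-K_{X}) - E$, bigness follows from $(-K_{\wt X})^{3} = s_{3}(\mcF) = 2(g-1) - 2d \geq 2$, while nefness reduces, for any curve $C' \subset X$, to the inequality $-K_{X}.C' \geq \mult_{C}(C' \cap C)$. Proposition~\ref{prop-CFK}(2) covers lines (yielding $-K_{\wt X}.\wt l \in \{0,1\}$), and when $g \geq 9$ Proposition~\ref{prop-CFK}(3) covers conics; the higher-degree curves are controlled by restricting to a Du Val anticanonical surface $S \in |\mcI_{C}(-K_{X})|$ and exploiting the Brill--Noether generality of its minimal resolution (Theorem~\ref{thm-LazBN}(3)) to bound $C.C'$ against $-K_{X}.C'$.

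I would then study the anticanonical contraction $\psi \colon \wt X \to \ol X$. Since $c_{2}(\mcF) \equiv [C] \neq 0$, Lemma~\ref{lem-c20} forces every $\psi$-fiber to have dimension at most one, so $\psi$ is either small or divisorial. The divisorial case is ruled out by combining the classification of extremal crepant divisorial contractions from smooth weak Fano threefolds in \cite{JPR05} with Proposition~\ref{prop-CFK}(1)--(2): every $-K_{\wt X}$-trivial curve is the strict transform of a line meeting $C$ at one point, or (when $g \geq 9$) of a conic meeting $C$ in length two, and for general $C$ these do not sweep out a divisor. Hence $\psi$ is flopping, and $\ol X$ is a Gorenstein Fano threefold with terminal singularities and $(-K_{\ol X})^{3} = 2(g-1) - 2d \geq 2$. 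By \cite[Proposition~2.7]{JPR11} (or Shokurov's base-point-freeness theorem for Gorenstein terminal Fano threefolds) applied to $\ol X$, the system $|-K_{\ol X}|$ is base-point-free; pulling back via $\psi$ yields the desired base-point-freeness of $|-K_{\wt X}|$, completing the proof.

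The hardest step will be the analysis of $\psi$ in the third paragraph: excluding every divisorial extremal ray requires genuine geometric input about curves on $X$ meeting $C$, and verifying base-point-freeness of $|-K_{\ol X}|$ in the boundary case $d = g - 2$, where $(-K_{\ol X})^{3} = 2$, is delicate. A secondary subtlety in the second paragraph is that controlling the intersection of $C$ with higher-degree curves on $X$ may require Brill--Noether arguments on the K3 section beyond what Proposition~\ref{prop-CFK} directly supplies.
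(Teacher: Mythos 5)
Your opening reduction --- $\mcF$ is globally generated iff the sheaf $\mcI_{C}(-K_{X})$ is, iff $\lvert -K_{\wt{X}} \rvert$ is base point free on $\wt{X}=\Bl_{C}X$ (using $H^{1}(\mcO_{X})=0$) --- is correct, and it differs from the paper, which instead invokes \cite[Theorem~1.7]{FHI1} to replace global generation by the strictly weaker condition that $\mcF$ is \emph{nef}, and then reduces nefness to nefness of the Cartier divisor $H_{S'}-E_{S'}$ on a Du Val anticanonical K3 surface $S'$ (any $(-K_{\wt{X}})$-negative curve lies in every member of the system, hence in $S'$). The trouble is that after this your plan defers precisely the content of the theorem. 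Proving that $-K_{\wt{X}}$ is nef for general $C$ \emph{is} the hard part: Proposition~\ref{prop-CFK} controls only lines, and conics only for $g\geq 9$, so for $6\leq g\leq 8$ you have no control even of conics, and ``bounding $C.C'$ against $-K_{X}.C'$ for higher-degree curves by Brill--Noether generality'' cannot be run curve by curve --- no such bound exists or is needed. The paper's mechanism is different: a negative curve, viewed on the minimal resolution $\wt{S}$ of $S'$, is forced by Lemma~\ref{lem-K3dcp} (applicable because $H^{>0}(S',\mcO(H_{S'}-E_{S'}))=0$, proved via the ACM property of $\mcF$) to be a $(-2)$-curve $N$ in the fixed part with $(H_{S'}-E_{S'}).N=-1$; one checks it is neither $\mu$-exceptional nor the fiber $F$ over the singular point of $S$ (Claim~\ref{claim-K3exc} handles the singular member), excludes lines by Proposition~\ref{prop-CFK}~(2), and then Brill--Noether generality (Theorem~\ref{thm-LazBN}~(3)) applied to $\mu^{\ast}E_{S'}+N_{1}$ against $H_{S'}-E_{S'}$ gives $(2m_{1}+2)(1+g-d)<g+1$, forcing $m_{1}=0$ except in the single case $g=12$, $d=10$, where the offending curve is a conic meeting $C$ in length $3$, killed by Proposition~\ref{prop-CFK}~(3). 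None of this numerical machinery appears in your sketch, so the nefness step is a genuine gap, not a routine verification.

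Even granting nefness, the last two steps fail as stated. First, you cannot rule out the divisorial case of $\psi$ by generality of $C$: by Proposition~\ref{prop-divisorial} it occurs exactly when $X$ is a Gushel double cover of $V_{5}$ and $\mcF$ is the pullback of $\mcQ_{\Gr(5,2)}|_{V_{5}}$, i.e.\ genus $6$ and $d=4$, which lies inside your range; there the elliptic quartics arising as zero loci of sections of this bundle form a family of the same dimension as $\mathfrak{H}_{4}(X)'$, so the divisorial contraction occurs for general $C$, and your argument (which needs $\ol{X}$ terminal) breaks even though the conclusion is true. Second, the final appeal to base point freeness of $\lvert -K_{\ol{X}} \rvert$ for a Gorenstein terminal Fano threefold of degree as small as $2(g-1-d)=2$ is unsupported: \cite[Proposition~2.7]{JPR11} is used in this paper only under large-degree hypotheses (anticanonical degree $12$, resp.\ $\geq 8$), and there is no general ``Shokurov base point freeness theorem'' for singular Gorenstein Fano threefolds you can quote here; base loci can occur in this generality. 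The paper avoids both issues because it never needs base point freeness of $\lvert -K_{\wt{X}} \rvert$ directly: nefness suffices, the passage from nef to globally generated having been established beforehand in \cite[Theorem~1.7]{FHI1}.
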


By \cite[Theorem~1.7]{FHI1}, this theorem is equivalent to saying that the ACM bundle $\mcF$ is nef if we choose $C$ sufficiently general. 
We will see that the nefness of $\mcF$ comes down to the nefness of a certain Cartier divisor $D$ on a Du Val K3 surface $S$ with $H^{>0}(\mcO(D))=0$. 
To see the nefness of such a $D$, we prepare the following lemma. 

\begin{lem}\label{lem-K3dcp}
Let $T$ be a smooth K3 surface and $D$ an effective divisor with $H^{>0}(\mcO(D))=0$. 
Then there exists a unique decomposition
\[D \sim P_{D}+\Gamma\]
where 
\begin{itemize}
\item $P_{D}$ is a nef divisor with $H^{p}(\mcO(P_{D})) \xrightarrow{\sim} H^{p}(\mcO(D))$ for every $p$, and 
\item $\Gamma=0$ or a reduced union of $(-2)$-curves contained in the fixed locus of $D$. 
\end{itemize}
If $\Gamma \neq 0$ moreover, $\Gamma$ is the sum of $(-2)$-curves
\[ \Gamma=\sum_{i=1}^{n} \Gamma_{i} = \sum_{i=1}^{n} \bigsqcup_{j=1}^{m_{i}} \Gamma_{i,j}\]
satisfying the following properties. 
\begin{enumerate}
\item For each $i \in \{1,\ldots,n\}$, the $(-2)$-curves $\Gamma_{i,1},\ldots,\Gamma_{i,m_{i}}$ are mutually disjoint. 
\item Define $D_{0}=D$ and $D_{k}:=D-\sum_{i=1}^{k}\Gamma_{i}$ for $k \in \{1,\ldots,n\}$. 
Then $\Gamma_{i,j}$ is a $(-2)$-curve with $D_{i}.\Gamma_{i+1,j}=-1$ for every $j$ and the set $\{\Gamma_{i+1,j} \mid 1 \leq j \leq m_{i}\}$ coincides with $\{C \subset T \mid C\text{ is an irreducible curve with } D_{i}.C<0\}$. 
\end{enumerate}
In particular, $\Gamma_{1,j}$ with $j \in \{1,\ldots,m_{1}\}$ satisfies $D.\Gamma_{1,j}=-1$. 
\end{lem}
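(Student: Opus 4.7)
My plan is to construct the decomposition by an iterative peeling procedure, stripping off irreducible curves with negative intersection one batch at a time and verifying the claimed properties by induction on the total multiplicity of the running divisor. Set $D_{0} := D$; given an effective $D_{k}$ with $H^{p}(\mcO(D_{k})) \simeq H^{p}(\mcO(D))$ for all $p$, if $D_{k}$ is nef we stop and set $P_{D} := D_{k}$, $n := k$; otherwise let $\Gamma_{k+1} := \sum_{j} \Gamma_{k+1,j}$ be the reduced sum of all irreducible curves $C$ with $D_{k} \cdot C < 0$ and set $D_{k+1} := D_{k} - \Gamma_{k+1}$.

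The essential structural claim is that $\{C \mid D_{k} \cdot C < 0\}$ consists of pairwise disjoint $(-2)$-curves, each meeting $D_{k}$ in degree $-1$. Any such $C$ must be a component of $D_{k}$ by effectivity, and writing $D_{k} = a C + E$ with $C \not\subset \Supp(E)$ and $a \geq 1$ gives $D_{k} \cdot C \geq a C^{2}$, forcing $C^{2} < 0$; combined with K3 adjunction $C^{2} = 2 p_{a}(C) - 2 \geq -2$, this yields $C^{2} = -2$ and $C \simeq \P^{1}$. Next, the exact sequence
\[ 0 \to \mcO(D_{k} - C) \to \mcO(D_{k}) \to \mcO_{\P^{1}}(D_{k} \cdot C) \to 0 \]
together with $H^{>0}(\mcO(D_{k})) = 0$ yields $H^{2}(\mcO(D_{k} - C)) \simeq H^{1}(\mcO_{\P^{1}}(D_{k} \cdot C))$. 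If $D_{k} \cdot C \leq -2$, the right side is non-zero, so Serre duality on $T$ (with $K_{T} = 0$) gives $H^{0}(\mcO(C - D_{k})) \neq 0$, which by effectivity of $D_{k}$ forces the degenerate equality $D_{k} = C$. Under the non-degenerate $D_{k} \cdot C = -1$, disjointness for distinct $C, C'$ in the set follows by the analogous argument applied to $D_{k} - C$ and $C'$: an intersection $C \cdot C' \geq 1$ forces $(D_{k} - C) \cdot C' \leq -2$, again collapsing into a degenerate case.

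With these facts, the specialization $D_{k} \cdot C = -1$ makes the displayed sequence yield $H^{p}(\mcO(D_{k+1})) \xrightarrow{\sim} H^{p}(\mcO(D_{k}))$ for all $p$ (iterating over the disjoint components of $\Gamma_{k+1}$), preserving the inductive hypothesis; termination is immediate because $D_{k+1} < D_{k}$ as effective divisors, so the sum of multiplicities strictly drops. At termination, $P_{D} := D_{n}$ is nef, the cohomology isomorphism is obtained by composing the step-by-step isomorphisms, and multiplication by the defining sections of the $\Gamma_{i+1,j}$ shows inductively that every global section of $\mcO(D)$ vanishes along each $\Gamma_{i,j}$, placing $\Gamma$ in the fixed locus of $\lvert D \rvert$. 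Uniqueness is then immediate since $\{C \mid D_{k} \cdot C < 0\}$ is intrinsically determined by $D_{k}$, so the algorithm is canonical, and the ``in particular'' assertion $D \cdot \Gamma_{1,j} = -1$ is the $k = 0$ instance of the intersection identity.

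The subtlest point will be the Serre-duality dichotomy between the generic case $D_{k} \cdot C = -1$ and the degenerate possibility that $D_{k}$ itself coincides with (part of) a reducible configuration of $(-2)$-curves; this must be set up so that the induction on multiplicity closes without circularity and so that the disjointness assertion for $\Gamma_{i+1}$ is consistently maintained at each step.
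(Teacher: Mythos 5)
Your proposal follows essentially the same route as the paper's proof: the same iterative peeling of the curves meeting $D_{k}$ negatively, the same restriction sequence $0 \to \mcO(D_{k}-C) \to \mcO(D_{k}) \to \mcO_{C}(D_{k}) \to 0$ combined with Serre duality to force $D_{k}.C=-1$, the same computation for pairwise disjointness, and the same termination, cohomology-transfer and fixed-part conclusions.

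The only substantive issue is the ``degenerate branch'' you flag but do not close, where $H^{0}(\mcO(C-D_{k}))\neq 0$ would force $D_{k}=C$ (or, in the disjointness step, $D_{k}-C=C'$). This can be dispatched: if $C-D_{k}$ is effective or trivial and $D_{k}$ is effective, then $h^{0}(\mcO(D_{k}))\leq h^{0}(\mcO(C))=1$; since your induction preserves all cohomology, this branch can only occur when $h^{0}(\mcO(D))=1$. So whenever $h^{0}(\mcO(D))\geq 2$ -- which holds in the paper's application, where $D^{2}>0$ -- the dichotomy always lands on $D_{k}.C=-1$ and the induction closes exactly as you describe. For rigid $D$ the statement itself degenerates (e.g.\ $D$ a single $(-2)$-curve satisfies $H^{>0}(\mcO(D))=0$ but admits no nef $P_{D}$ with the required cohomology), and the paper's own proof silently assumes $D-\Gamma_{1,i}\neq 0$ and $D-\Gamma_{1,i}\neq \Gamma_{1,i'}$ at precisely this spot. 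So your loose end is real, but it is shared with the paper, it is harmless in the setting where the lemma is used, and it is closable by the one-line $h^{0}$ comparison above; with that added, your argument is complete and coincides with the paper's.
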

\begin{proof}
There is nothing to prove when $D$ is nef. 
Suppose that $D$ is not nef. 
Let $\{\Gamma_{1,i}\}_{i=1,\ldots,m_{1}}$ be the set of curves with $D.\Gamma_{1,i}<0$. 
Fix an arbitrary $i$. 
Then $\Gamma_{1,i}$ is also an irreducible component of the fixed part of $\lvert D \rvert$. 
Hence $\Gamma_{1,i}$ is a $(-2)$-curve and $D-\Gamma_{1,i}$ is a non-zero effective divisor. 
Consider an exact sequence
\[0 \to \mcO(D-\Gamma_{1,i}) \to \mcO(D) \to \mcO(D)|_{\Gamma_{1,i}} \to 0.\]
Since $H^{1}(\mcO(D))=0$ and $H^{2}(\mcO(D-\Gamma_{1,i}))=H^{0}(\mcO(\Gamma_{1,i}-D))^{\vee}=0$, $H^{1}(\mcO(D)|_{\Gamma_{1,i}})=0$. 
Since $D.\Gamma_{1,i}<0$, we have $D.\Gamma_{1,i}=-1$. 
In particular, the map $H^{p}(\mcO(D-\Gamma_{1,i})) \to H^{p}(\mcO(D))$ is isomorphic for every $p$. 
For $i' \neq i$, it holds that $(D-\Gamma_{1,i}).\Gamma_{1,i'}=-1-\Gamma_{1,i}.\Gamma_{1,i'} \leq -1$. 
Since $H^{>0}(\mcO(D-\Gamma_{1,i}))=0$ hold, 
one has $D-\Gamma_{1,i} \neq \Gamma_{1,i'}$,
and hence $H^{2}(\mcO(D-\Gamma_{1,i}-\Gamma_{1,i'}) = 0$.
This shows that $H^{1}(\mcO_{\Gamma_{1,i'}}(D-\Gamma_{1,i})) = 0$
and thus $(D-\Gamma_{1,i}).\Gamma_{1,i'}=-1$, which gives $\Gamma_{1,i}.\Gamma_{1,i'}=0$. 
Hence the curves $\Gamma_{1,1},\ldots,\Gamma_{1,m_{1}}$ are mutually disjoint, and it holds that $D.\Gamma_{1,i}=-1$ for every $i$. 

Put $\Gamma_{1}:=\sum_{i=1}^{m}\Gamma_{1,i}$ and $D_{1}:=D-\Gamma_{1}$. 
Note that $H^{p}(\mcO(D_{1})) \to H^{p}(\mcO(D))$ is isomorphic for every $p$. 
If $D_{1}$ is still not nef, then let $\{\Gamma_{2,j}\}_{j=1,\ldots,m_{2}}$ be the set of curves such that $D_{1}.\Gamma_{2,j}<0$. 
Then by the exactly same argument, the curves $\Gamma_{2,1},\ldots,\Gamma_{2,m_{2}}$ are mutually disjoint and it holds that $D_{1}.\Gamma_{2,j}=-1$ for every $j$. 
Let $\Gamma_{2}:=\sum_{j}\Gamma_{2,j}$ and $D_{2}:=D_{1}-\Gamma_{2}$. 
Since the maps $H^{p}(\mcO(D_{2})) \to H^{p}(\mcO(D_{1})) \to H^{p}(\mcO(D))$ are isomorphic for every $p$, 
the divisor $\sum_{i=1}^{m_{1}} \Gamma_{1,i} + \sum_{j=1}^{m_{2}} \Gamma_{2,j}$ lies in the fixed locus of $\lvert D \rvert$. 

By repeating this process, we obtain the desired decomposition
\[
D=P + \sum_{i=1}^{n} \Gamma_{i} \text{ with } \Gamma_{i}=\bigsqcup_{j=1}^{m_{i}} \Gamma_{i,j},
\]
where $P$ is a nef divisor and $\Gamma_{i}$ is a disjoint union of $(-2)$-curves $\Gamma_{i,j}$ with $j \in \{1,\ldots,m_{i}\}$. 
Let $D_{0}:=D$ and $D_{k}:=D-\sum_{i=1}^{k} \Gamma_{i}$ for $k \in \{1,\ldots,n\}$. 
Note that $D_{n}=P$. 
Then $D_{k-1}.\Gamma_{k,j}=-1$ for all $j$ and $\{\Gamma_{k,j} \mid 1 \leq j \leq m_{k}\}$ coincides with the set of curves which negatively intersect with $D_{k-1}$. 
From our construction, the morphisms $H^{p}(\mcO(P))=H^{p}(\mcO(D_{n})) \to H^{p}(\mcO(D_{n-1})) \to \cdots \to H^{p}(\mcO(D_{1})) \to H^{p}(\mcO(D))$ are isomorphic for every $p$. 
In particular, $\sum_{i}\Gamma_{i}$ is contained in the fixed locus of $\lvert D \rvert$. 
\end{proof}

\begin{proof}[Proof~of~Theorem~\ref{thm-exist}]
Pick a general elliptic normal curve $C$ and a general member $S \in \lvert \mcI_{C}(-K_{X}) \rvert$ as in Proposition~\ref{prop-CFK}. 
We proceed with $4$ steps.

\emph{Step 1.} In this step, we check that we can reduce our problem to the nefness of a divisor on a certain Du Val K3 surface. 

By \cite[Theorem~1.7]{FHI1}, it suffices to show $\mcF$ is nef. 
Let $\pi \colon Y:=\P_{X}(\mcF) \to X$ be the projectivization and $\xi$ a tautological divisor. 
By (\ref{seq-3fold}), the member $X' \in \lvert \xi \rvert$ corresponding to the section $s$ is the blowing-up of $X$ along $C$. 
Let $S':=\Bl_{C}S$ be the proper transform of $S$. 
Then $S' \in \lvert -K_{X'} \rvert=\lvert \xi|_{X'} \rvert$. 
Let $\s:=\pi|_{X'}$, $f:=\pi|_{S'}$, $E:=\Exc(\s)$, $E_{S'}:=E|_{S'}$, and $H_{S'}:=f^{\ast}(-K_{X}|_{S})$. 
When $S$ is singular, let $p$ denote the unique singular point of $S$ and $F:=\pi^{-1}(p) \simeq \P^{1}$. 
Then the effective Cartier divisor $E_{S'}$ has a unique irreducible component $C'$ which is a section of $\s|_{E} \colon E \to C$. 
Then it suffices to show the Cartier divisor $\xi|_{S'} \sim H_{S'}-E_{S'}$ is nef.

\emph{Step 2.} 
To treat the case where $S'$ is singular, we prepare the following claim. 

\begin{claim}\label{claim-K3exc}
Suppose that $S'$ is singular.
\begin{enumerate}
\item The $\pi$-fiber $F$ is contained in $S'$. 
\item $S'$ has only rational double points. Moreover, the singular locus of $S'$ lies on $F$. 
\item There exist a positive integer $a > 0$ and an equality of Weil divisors $E_{S'}=C'+aF$. 
\end{enumerate}
\end{claim}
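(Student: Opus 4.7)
The argument is local analytic around the singular point $p \in C$ of $S$. Choose coordinates $(x, y, z)$ on $X$ centered at $p$ with $C = \{y = z = 0\}$, and write the local equation of $S$ as $\phi = 0$. Since $C \subset S$ and $S$ is singular at $p$, we have $\phi \in (y, z) \cap \mathfrak{m}_{p}^{2}$, so every nonzero monomial $c_{ijk}x^{i}y^{j}z^{k}$ of $\phi$ satisfies $j + k \geq 1$ and $i + j + k \geq 2$. In the chart of $\sigma$ with coordinates $(x, y, u)$ and $z = yu$, the total transform factors as $\sigma^{\ast}\phi = y \cdot h(x, y, u)$ with
\[
h(x, y, u) = \sum c_{ijk}\, x^{i} y^{j+k-1} u^{k}.
\]
Restricting to $F = \{x = y = 0\}$, only the monomials with $i = 0$ and $j + k = 1$ survive, but all such monomials have $i + j + k = 1 < 2$, so none appear in $\phi$; hence $h|_{F} \equiv 0$ and $F \subset S'$ in this chart. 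The symmetric computation in the chart $y = vz$ gives $F \subset S'$ globally, proving (1).

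For (2), off $F$ the morphism $\sigma|_{S'}$ is an isomorphism: $\sigma$ is an isomorphism outside $E$, and at each smooth point $q \in C \setminus \{p\}$ the proper transform of the smooth surface $S$ under the blow-up of the smooth curve $C$ is locally identified with $S$. Hence $\Sing(S') \subseteq F$. A Jacobian computation in the coordinates above, using that the degree-$2$ part $\phi_{2}$ of $\phi$ is a nonzero quadric in $\mathrm{span}(xy, xz, y^{2}, yz, z^{2})$ (because the rational double point $p$ has multiplicity two), shows that the partial derivatives of $h$ at $(0, 0, u_{0}) \in F$ do not all vanish for generic $u_{0}$. Thus $S'$ is smooth at a generic point of $F$, so $\Sing(S')$ is a finite subset of $F$; since $S'$ is a hypersurface in the smooth variety $X'$, Serre's criterion gives that $S'$ is normal. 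Now $K_{X'} = \sigma^{\ast}K_{X} + E$ and $\sigma^{\ast}S = S' + E$ (the multiplicity of $S$ along $C$ is one at the generic point of $C$), so $K_{X'} + S' \sim \sigma^{\ast}(K_{X} + S) \sim 0$; thus $\sigma$ is crepant for $(X, S)$ and $K_{S'} \sim 0$ by adjunction. Since $X$ is smooth and $S$ is Du Val, the pair $(X, S)$ is plt, and crepancy transfers plt-ness to $(X', S')$. By inversion of adjunction $S'$ is klt, and being Gorenstein it has only Du Val singularities, proving (2).

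For (3), on $C \setminus \{p\}$ the isomorphism $S' \setminus F \xrightarrow{\sim} S \setminus \{p\}$ identifies $E_{S'}$ with the reduced Cartier divisor $C \cap (S \setminus \{p\})$. Taking closures yields a unique irreducible component $C' \subseteq E_{S'}$ that maps isomorphically onto $C$ as a section of $\sigma|_{E}$, and it appears in $E_{S'}$ with multiplicity one. By (1), $F$ is a second irreducible component occurring with some multiplicity $a \geq 1$, and the chart-by-chart computation above shows that no further components occur. Therefore $E_{S'} = C' + aF$ with $a > 0$.

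The main technical point is the generic smoothness of $S'$ along $F$: it is what allows one simultaneously to conclude normality via Serre and to apply inversion of adjunction to pin down the singularity type. This in turn depends on the non-degeneracy of the tangent cone of a rational double point. The remaining ingredients—the local substitutions on the blow-up, the crepancy computation for $\sigma$, and the standard passage from plt to klt via inversion of adjunction—are routine.
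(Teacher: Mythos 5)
Your proof is correct, but it reaches the conclusion by a route that differs from the paper's in the two substantive steps. For (1), the paper argues abstractly: if $F\not\subset S'$ then $S'\to S$ is bijective, hence an isomorphism by Zariski's main theorem, so $\mcI_{C}\cdot\mcO_{S}$ would be invertible and $C$ Cartier at $p$, forcing $S$ smooth at $p$ --- no local coordinates at all; you instead verify $h|_{F}\equiv 0$ by an explicit chart computation, which works (your monomial bookkeeping is right), but note you should say a word on why $\sigma^{\ast}\phi/y$ is already the strict transform at $p$, i.e.\ why $\phi\notin(y,z)^{2}$ in $\mcO_{X,p}$: if it were, all partials of $\phi$ would lie in $(y,z)$ and $S$ would be singular along $C$ near $p$, contradicting $\Sing S=\{p\}$. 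For (2), the paper proves normality by contradiction via the normalization: $K_{S'}\sim 0$ makes $S'$ Gorenstein, the conductor $\Delta_{\ol{S'}}$ would contain a divisor $D$ dominating $F$, hence an exceptional divisor over $S$ with center $p$ and $a(D,K_{S})\le -1$, contradicting that $p$ is canonical; Du Val-ness of $S'$ then follows from crepancy of $S'\to S$ over the canonical surface $S$, so the whole argument stays on the surfaces and never invokes inversion of adjunction. You instead get normality from generic smoothness of $S'$ along $F$ (your Jacobian claim is correct: only the quadratic part $\phi_{2}$ contributes to the gradient of $h$ along $F$, and the gradient at $(0,0,u_{0})$ is $(\alpha+\beta u_{0},\,a+bu_{0}+cu_{0}^{2},\,0)$, nonzero for generic $u_{0}$ since $\phi_{2}\neq 0$) plus Serre's criterion, and then pass through the threefold pair: $(X,S)$ plt, crepancy of $\sigma$ for $(X,S)$, adjunction to $S'$, and klt $+$ Gorenstein $=$ Du Val. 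Both uses of the Du Val hypothesis are equivalent in spirit (multiplicity two for you, nonnegative discrepancy for the paper). What your route buys is extra information (explicit equations, generic smoothness of $S'$ along $F$) and a constructive proof of (1); what the paper's route buys is economy --- no coordinates and no appeal to inversion of adjunction or plt machinery. Part (3) is essentially the same in both, with your treatment of the multiplicity one of $C'$ slightly more explicit.
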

\begin{proof}
(1) Note that every fiber of $S' \to S$ is connected and $S$ is normal by Proposition~\ref{prop-CFK}~(1). 
If $F \not\subset S'$, then $S' \to S$ is bijective and hence an isomorphism by Zariski main theorem. 
Since $S'=\Bl_{C}S$, it follows that $C$ is a Cartier divisor on $S$, which contradicts that $\{p\}=\Sing S$ is contained in $C$. 
Hence $F \subset S'$. 

(2) Since $\Sing S=\{p\}$, we have $\Sing S' \subset F$. 
First we show $S'$ is normal. 
Assume the contrary; then $S'$ is singular along $F$. 
Since $K_{S'} \sim 0$ by the adjunction formula, $S'$ is in particular Gorenstein. 
Take a normalization $\nu \colon \ol{S'} \to S'$. 
Then, $K_{\ol{S'}} + \Delta_{\ol{S'}} \sim \nu^{\ast}K_{S'} \sim 0$, where $\Delta_{\ol{S'}} > 0$ denotes the conductor. 
For an irreducible component $D$ of $\Delta_{\ol{S'}}$, 
since $D$ dominates $F$, 
$D$ is an exceptional divisor over $S$ whose center is $p$ such that the discrepancy $a(D,K_{S})$ satisfies $a(D,K_{S})<0$. 
This is a contradiction since $p$ is a canonical singularity.
Hence $S'$ is normal. 
Since $K_{S'} \sim 0$, $K_{S} \sim 0$, and $S$ has only rational double points, so does $S'$. 

(3) We can write $E_{S'}=C'+aF$ with $a \in \Z_{\geq 0}$ as an equation of Weil divisors. 
If $a=0$, then $E \cap S'$ does not contain $F$, which contradicts (1). 
\end{proof}

\emph{Step 3.} 
Our aim is to prove the nefness of $H_{S'}-E_{S'}$ on the Du Val K3 surface $S'$. 
Let $\mu \colon \wt{S} \to S'$ be the minimal resolution of singularity. 
Then $\wt{S}$ is a smooth K3 surface. 
Thus, it suffices to show $\mu^{\ast}(H_{S'}-E_{S'})$ is nef. 
Put $g:=f \circ \mu \colon \wt{S} \to S$: 
\[
\begin{tikzcd}
g := f \circ \mu&\wt{S} \arrow[d,"\mu"] && \\
F\arrow[d] \arrow[r, phantom, "\subset"] &S' \arrow[r, hook] \arrow[d,"f"]& X' \ar[r,hook] \arrow[d]& \P(\mcF) \arrow[d]\\
p \arrow[r, phantom,"\in"] &S \ar[r, hook]&X\ar[r, equal]&X.
\end{tikzcd}
\]
Let $\wt{C} \subset \wt{S}$ be the proper transform of $C \subset S$. 
When $S$ is singular, let $\wt{F} \subset \wt{S}$ be the proper transform of $F \subset S'$. 
\begin{claim}
\begin{enumerate}
\item $(H_{S'}-E_{S'})^{2}>0$ and $H_{S'}-E_{S'}$ is effective. 
\item $H^{1}(S',\mcO(H_{S'}-E_{S'}))=0$. 
%
%
\end{enumerate}
\end{claim}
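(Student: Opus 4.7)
The plan is to identify $H_{S'} - E_{S'}$ with the tautological restriction $\xi|_{S'}$ on $Y = \P_{X}(\mcF)$ and to extract both parts of the claim from the ACM property of $\mcF$ propagated down the ladder $S' \subset X' \subset Y$. The argument is uniform for $S$ smooth and $S$ Du Val, since $H_{S'}$ and $E_{S'}$ are restrictions of Cartier divisors from the smooth threefold $X'$.

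For the self-intersection in (1), on $X' = \Bl_{C}X$ the blow-up formula gives $-K_{X'} \sim \s^{*}(-K_{X}) - E$, and together with $-K_{Y} \sim 2\xi$ and $X' \in |\xi|$ this forces $\xi|_{X'} \sim -K_{X'}$; restricting once more yields $\xi|_{S'} \sim H_{S'} - E_{S'}$. Then the Grothendieck relation, plus the vanishing of $c_{1}^{2}c_{2}$ as a $4$-cycle on the threefold $X$, gives
\[
(H_{S'} - E_{S'})^{2} \;=\; (\xi|_{S'})^{2} \;=\; \xi^{4} \;=\; c_{1}(\mcF)^{3} - 2c_{1}(\mcF)c_{2}(\mcF) \;=\; (2g-2) - 2d,
\]
which is at least $2$ by the inequality $d \leq g-2$ from Proposition~\ref{prop-BNineq}.

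For (2), I run cohomology along the two short exact sequences
\[
0 \to \mcO_{Y} \to \mcO_{Y}(\xi) \to \mcO_{X'}(\xi|_{X'}) \to 0, \qquad 0 \to \mcO_{X'} \to \mcO_{X'}(\xi|_{X'}) \to \mcO_{S'}(\xi|_{S'}) \to 0.
\]
Since $\RR\pi_{*}\mcO_{Y} = \mcO_{X}$ and $\pi_{*}\mcO_{Y}(\xi) = \mcF$ with vanishing higher direct images, and $\mcF$ is ACM by construction \cite[Theorem~1.2]{CFK}, we have $H^{i}(Y, \mcO_{Y}(\xi)) = H^{i}(X, \mcF) = 0$ for $0 < i < 3$ and $H^{i}(Y, \mcO_{Y}) = H^{i}(X, \mcO_{X}) = 0$ for $i > 0$ since $X$ is Fano. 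The first sequence then forces $H^{1}(X', \xi|_{X'}) = 0$, and, combined with Kawamata--Viehweg vanishing $H^{i}(X', \mcO_{X'}) = 0$ for $i > 0$ on the weak Fano threefold $X'$, the second sequence yields $H^{1}(S', \mcO(H_{S'} - E_{S'})) = 0$. For the effectivity half of (1), the same two sequences on $H^{0}$ together with these vanishings give $h^{0}(S', \mcO(H_{S'}-E_{S'})) = h^{0}(X, \mcF) - 2$, and (\ref{seq-3fold}) together with the projective normality of the elliptic normal curve $C$, which makes $H^{0}(X, -K_{X}) \twoheadrightarrow H^{0}(C, -K_{X}|_{C})$ surjective, gives $h^{0}(X, \mcF) = 1 + (g+2) - d = g+3-d$, hence $h^{0}(S', \mcO(H_{S'}-E_{S'})) \geq 3 > 0$.

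The main obstacle is not in the claim itself, which comes almost for free from the ACM property once the ladder framework is set up. The delicate work lies in its sequel: combining the output of this claim with Lemma~\ref{lem-K3dcp} and Proposition~\ref{prop-CFK}~(2)--(3) to upgrade $H_{S'} - E_{S'}$ to a nef divisor on the minimal resolution $\wt{S}$, by ruling out the appearance of any $(-2)$-curves in the fixed part.
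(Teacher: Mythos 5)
Your proposal is correct in substance and, for the intersection-number half of (1), is the same computation as the paper's: both reduce $(H_{S'}-E_{S'})^{2}$ to $\xi^{4}=c_{1}(\mcF)^{3}-2c_{1}(\mcF)c_{2}(\mcF)=2g-2-2d\geq 2$ via $d\leq g-2$. For (2) you differ only in bookkeeping: the paper resolves $\mcI_{S'}$ inside $Y=\P_{X}(\mcF)$ by the Koszul complex of the two sections of $\xi$ cutting out $S'$, using $\RG(\mcO_{Y}(-\xi))=0$ and $\RG(\mcO_{Y}(\xi))=\RG(X,\mcF)=\C^{\oplus 3+g-d}$ (ACM-ness), whereas you restrict in two steps along the ladder $S'\subset X'\subset Y$; the two computations are equivalent. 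For effectivity the paper works on the minimal resolution $\wt{S}$ and uses Riemann--Roch on a K3 surface ($D^{2}\geq 2$ together with $D\cdot\mu^{\ast}H_{S'}>0$, which excludes $-D$ being effective), while you produce sections directly, $h^{0}(S',H_{S'}-E_{S'})\geq h^{0}(\mcF)-2=g+1-d\geq 3$; this is an equally valid, slightly more quantitative route, and indeed the same count $h^{0}(H_{S'}-E_{S'})\geq 1+g-d$ is what the paper uses later in Step 4.

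One justification in your write-up is circular as stated: you invoke Kawamata--Viehweg vanishing on the \emph{weak Fano} threefold $X'$ to get $H^{i}(X',\mcO_{X'})=0$ for $i>0$. At this stage $X'=\Bl_{C}X$ is not known to be weak Fano; the nefness of $-K_{X'}=\xi|_{X'}$ is exactly the positivity that Theorem~\ref{thm-exist} is in the process of establishing, so it cannot be assumed. The vanishing itself is true for an elementary reason: $\s\colon X'\to X$ is the blow-up of a smooth Fano threefold along a smooth curve, so $R^{i}\s_{\ast}\mcO_{X'}=0$ for $i>0$ and hence $H^{i}(X',\mcO_{X'})\simeq H^{i}(X,\mcO_{X})=0$. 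With this replacement (or by running the paper's Koszul-complex argument on $Y$, which never touches the positivity of $X'$), your proof of the claim goes through.
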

\begin{proof}
(1) Note that $\mu^{\ast}H_{S'}$ is nef big and 
\begin{align*}
\mu^{\ast}(H_{S'}-E_{S'})^{2} 
&= (H_{S'}-E_{S'})^{2} \\
&= (-K_{\wt{X}})^{3} \\
&= \xi^{4} = 2g-2-2d \geq 2, \\
\mu^{\ast}H_{S'}.\mu^{\ast}(H_{S'}-E_{S'})
&=(H_{S'}-E_{S'}).H_{S'} \\
&=(-K_{\wt{X}})^{2}.(\s^{\ast}(-K_{X})) \\
&=\xi^{3}.\pi^{\ast}(-K_{X}) \\
&=(c_{1}^{2}-c_{2})(\mcF).(-K_{X}) \\
&=2g-2-d > 0.
\end{align*}
In particular, $\mu^{\ast}(H_{S'}-E_{S'})$ is effective on $\wt{S}$. 

(2) Note that $\mcF$ is an ACM bundle. 
Since $\RG(\P(\mcF),\mcO(\xi))=\RG(X,\mcF)=\C^{\oplus 3+g-d}$, the sequences 
\begin{align*}
&0 \to \mcO_{\P(\mcF)}(-\xi) \to \mcO_{\P(\mcF)}^{\oplus 2} \to \mcI_{S'}(\xi) \to 0 \text{ and } 0 \to \mcI_{S'}(\xi) \to \mcO_{\P(\mcF)}(\xi) \to \mcO_{\P(\mcF)}(\xi)|_{S'} \to 0
\end{align*}
show that $\RG(\mcI_{S'}(\xi))=\C^{\oplus 2}$, which shows $H^{>0}(S',\mcO(H_{S'}-E_{S'}))=0$. 
%
\end{proof}
\emph{Step 4.}
Finally we show $\mu^{\ast}(H_{S'}-E_{S'})$ is nef. 
Let 
\begin{align}\label{eq-nefdcp}
\mu^{\ast}(H_{S'}-E_{S'}) = M+N = M+\sum_{i=1}^{n} \bigsqcup_{j=1}^{m_{i}}N_{i,j}
\end{align}
be the decomposition as in Lemma~\ref{lem-K3dcp}. 
Since $\mu^{\ast}(H_{S'}-E_{S'}).N_{1,j}=-1$, $N_{1,j}$ is not contracted by $\mu$. 
Put $N_{1,j}':=\mu_{\ast}N_{1,j}$. 
If $N_{1,j}'=F$, then 
\begin{align*}
-1
&=\mu^{\ast}(H_{S'}-E_{S'}).N_{1,j} \\
&=(H_{S'}-E_{S'}).N_{1,j}' &\text{ (since $H_{S'}$ and $E_{S'}$ are Cartier) }\\
&=(H_{S'}-E_{S'}).F =1,
\end{align*}
which is a contradiction. 
Therefore, for every $j$, the morphism $g \colon \wt{S} \to S$ does not contract $N_{1,j}$ to a point, which is equivalent to saying 
\[\mu^{\ast}H_{S'}.N_{1,j} > 0 \text{ for all } i.\]
\begin{claim}\label{claim-excludeline}
$\mu^{\ast}H_{S'}.N_{1,j} \geq 2$ for each $j$.
\end{claim}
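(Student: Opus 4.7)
The plan is to argue by contradiction: assume $\mu^{\ast}H_{S'}.N_{1,j}=1$ for some $j$, and extract a line $\ell\subset X$ contained in $S$ that would meet $C$ scheme-theoretically in length $2$, contradicting Proposition~\ref{prop-CFK}(2).

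First I would extract the line. Combining the assumption with $\mu^{\ast}(H_{S'}-E_{S'}).N_{1,j}=-1$ yields $\mu^{\ast}E_{S'}.N_{1,j}=2$. Writing $g:=f\circ\mu\colon\wt{S}\to S$, the projection formula gives $H_{S}.g_{\ast}N_{1,j}=g^{\ast}H_{S}.N_{1,j}=\mu^{\ast}H_{S'}.N_{1,j}=1$, so the image $\ell:=g(N_{1,j})$ is an irreducible reduced curve with $-K_{X}.\ell=1$; hence $\ell$ is a line on $X$ lying in $S$, and $g|_{N_{1,j}}$ is birational onto $\ell$. Since $f$ contracts only $F$ and $H_{S'}.F=\s^{\ast}H_{X}.F=0$, the curve $N_{1,j}$ is neither $\mu$-exceptional nor mapped into $F$ by $\mu$. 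Consequently $\mu(N_{1,j})\subset S'$ is an irreducible curve which, by uniqueness of proper transforms of irreducible curves (using that $\ell\neq C$ since $\deg C\geq 4$ by Proposition~\ref{prop-BNineq}), coincides with the proper transform $\wt{\ell}\subset X'=\Bl_{C}X$ of $\ell$, so $\mu_{\ast}N_{1,j}=\wt{\ell}$.

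Since $E_{S'}$ is the scheme-theoretic restriction of the Cartier divisor $E\subset X'$ to $S'$, the projection formula and the restriction of Cartier divisors give
\[
2=\mu^{\ast}E_{S'}.N_{1,j}=E_{S'}.\mu_{\ast}N_{1,j}=E_{S'}.\wt{\ell}=E.\wt{\ell},
\]
where the last intersection is computed in $X'$. The standard pull-back formula for the blowup along a smooth curve in a smooth threefold identifies $E.\wt{\ell}$ with the length $\lg(\mcO_{\ell\cap C})$ of the scheme-theoretic intersection in $X$. Thus $\lg(\mcO_{\ell\cap C})=2$, contradicting Proposition~\ref{prop-CFK}(2).

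The main subtlety is to keep the intersection-theoretic identifications valid in the singular case, where $S$, and hence $S'$, carries a rational double point at a point of $C$ with $E_{S'}=C'+aF$ for some $a>0$ (Claim~\ref{claim-K3exc}). This is harmless because $E_{S'}$ and $H_{S'}$ remain Cartier on $S'$ as restrictions of Cartier divisors from $X'$, so the projection formula applies on the possibly singular $S'$; and $N_{1,j}\not\subset\mu^{-1}(F)$ follows from $\mu^{\ast}H_{S'}.N_{1,j}>0$, which preserves the identification $\mu_{\ast}N_{1,j}=\wt{\ell}$ and thereby the blowup intersection formula $E.\wt{\ell}=\lg(\mcO_{\ell\cap C})$.
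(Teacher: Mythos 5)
Your proof is correct and follows essentially the same route as the paper: from $\mu^{\ast}(H_{S'}-E_{S'}).N_{1,j}=-1$ you extract $\mu^{\ast}E_{S'}.N_{1,j}=2$, identify the image of $N_{1,j}$ as a line on $S$, and contradict Proposition~\ref{prop-CFK}~(2). The only difference is bookkeeping: you compute $\lg(\mcO_{\ell\cap C})$ uniformly in $X'=\Bl_{C}X$ via the blow-up intersection formula $E.\wt{\ell}=\lg(\mcO_{\ell\cap C})$, whereas the paper restricts to $S'$ and splits into the cases $S$ smooth or singular, allowing in the latter an extra $bF$ term in $\mu_{\ast}N_{1,j}$ (your identification $\mu_{\ast}N_{1,j}=\wt{\ell}$ shows this term is not needed).
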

\begin{proof}
Suppose that there exists a $j$ with $\mu^{\ast}H_{S'}.N_{1,j}=1$. 
Then $N_{1,j}$ is the proper transform $\wt{l}$ of a line $l$ on $S$. 
By (\ref{eq-nefdcp}), we have $\mu^{\ast}(H_{S'}-E_{S'}).N_{1,j}=-1$, which implies $\mu^{\ast}E_{S'}.N_{1,j}=2$. 
Thus we obtain $E_{S'}.N_{1,j}'=2$ on $S'$ where $N_{1,j}':=\mu_{\ast}N_{1,j}$. 

When $S$ is smooth, the morphism $S' \to S$ is isomorphic and hence we have $C.l = 2$. 
This is a contradiction since we pick a general $C$ as in Proposition~\ref{prop-CFK} and hence $C$ meets every line transversally at at most one point. 

Suppose that $S$ is not smooth. 
In this case, $N_{1,j}' \subset S'$ is a reduced Weil divisor containing the proper transform $l'$ of the line $l$. 
Hence there is $b \in \Z_{\geq 0}$ with $N_{1,j}'=l'+bF$. 
Thus we obtain $E_{S'}.l'=E_{S'}.(N_{1,j}'-bF)=2+b \geq 2$. 
Since $E_{S'}=f^{-1}(C)$ as schemes and $\lg(\mcO_{E_{S'} \cap l'}) \geq 2$, 
we have $\lg(\mcO_{C \cap l}) \geq 2$, which is also a contradiction. 
\end{proof}
Thus we obtain
\[2 \leq \mu^{\ast}H_{S'}.N_{1,j} 
= \mu^{\ast}(H_{S'}-E_{S'}).N_{1,j} + \mu^{\ast}E_{S'}.N_{1,j}
= -1+ \mu^{\ast}E_{S'}.N_{1,j},\]
which means $\mu^{\ast}E_{S'}.N_{1,j} \geq 3$ for all $j \in \{1,\ldots,m_{1}\}$. 
Put $L:=\mu^{\ast}E_{S'}+N_{1}$. Then 
\begin{align*}
L^{2}
&=\mu^{\ast}E_{S'}^{2} + 2\mu^{\ast}E_{S'}.N_{1} + N_{1}^{2} \\
&=2 \sum_{j=1}^{m_{1}} \mu^{\ast}E_{S'}.N_{1,j} - 2m_{1} \\
&\geq 4m_{1}. 
\end{align*}
Therefore, $h^{0}(L) \geq \chi(L) = 2m_{1}+2$. 
Since $h^{0}(M+\sum_{i \geq 2} N_{i}) = h^{0}(M)=h^{0}(H_{S'}-E_{S'}) \geq (1/2)(H_{S'}-E_{S'})^{2}+2=1+g-d$, we have 
\[(2m_{1}+2)(1+g-d) \leq h^{0}(L)h^{0}\left( M+\sum_{i=2}^{n} N_{i} \right) < g+1\]
by Theorem~\ref{thm-LazBN}. 
If $m_{1} \geq 1$, then $4(1+g-d) < g+1$. 
Hence $3(1+g) < 4d$. 
Since $d \leq g-2$, we have $3+3g < 4g-8$. 
Hence $g = 12$. 
Then $d=H_{S'}.E_{S'}=10$, $m_{1}=1$, $\mu^{\ast}E_{S'}.N_{1,1}=3$ and $\mu^{\ast}H_{S'}.N_{1,1}=2$. 
Hence $N_{1,1}$ is the proper transform $\wt{\gamma}$ of a conic $\gamma$ on $S$. 
Thus we obtain $E_{S'}.N_{1,1}'=3$ on $S'$ where $N_{i}'=\mu_{\ast}N_{i}$. 
By the same argument as in the proof of Claim~\ref{claim-excludeline}, the curve $\gamma:=g(N_{1,1})$ is a conic satisfying $\lg(\mcO_{\gamma \cap C}) \geq 3$, which contradicts Proposition~\ref{prop-CFK}~(3). 
We complete the proof. 
\end{proof}

\subsection{Proof~of~Theorem~\ref{main-index1}}
If $c_{1}(\mcE) = 0$, then Theorem~\ref{thm-split} shows that $\mcE$ is the case of Theorem~\ref{main-index1}~(1).
Consider the case when $c_{1}(\mcE) = c_{1}(X)$.
If $\mcE$ is decomposable, then since $\mcE$ is nef, it is the case of Theorem~\ref{main-index1}~(2).
Recall that, thanks to \cite{mos2} or \cite{Wis89}, it is known that there is no indecomposable rank $2$ Fano bundle over $X$.
Thus if $\mcE$ is indecomposable, then $c_{2}(\mcE) \neq 0$ by Lemma~\ref{lem-c20} and the tautological divisor $\xi$ of $\P(\mcE)$ is not ample.
Therefore Proposition~\ref{prop-BNineq} can be applied to $\mcE$ to prove that it is the case of Theorem~\ref{main-index1}~(3). 
The remaining part follows from Theorem~\ref{thm-exist} and the non-emptyness of $\mathfrak{H}_{d}'(X)$ defined in (\ref{eq-defofHd}), which is established by \cite{CFK}. \qed

\section{Embedding theorem}\label{sec-MukaiEmb}
To conclude this paper, we prove Theorem~\ref{main-emb}. 
Until the end of this paper, we follow the terminology introduced in Notation~\ref{nota-c1Fc1X}; let $X$ denote a Fano $3$-fold with $\rho(X)=i_{X}=1$, and $\mcF$ a rank $2$ weak Fano bundle with $c_{1}(\mcF)=c_{1}(X)$. 

Let $\xi$ be a tautological divisor on $\P_{X}(\mcF)$, and let 
\[\Phi_{\lvert \xi \rvert} \colon Y:=\P_{X}(\mcF) \to \P(H^{0}(\mcF))\]
be the morphism given by the complete linear system $\lvert \xi \rvert$. 
Note that $\lvert \xi \rvert$ is base point free by \cite[Theorem~1.7]{FHI1}. 
Let $\psi \colon Y \to \ol{Y}$ be the contraction given by the semi-ample divisor $\xi$. 
Then, $\psi$ is the first part of the Stein factorization of $\Phi_{\lvert \xi \rvert}$.  
We also note that $\psi$ is not an isomorphism, which is equivalent to saying that $\xi$ is not ample, by Wi\'{s}niewski's classification result \cite{Wis89} of rank $2$ Fano bundles on Fano $3$-folds of Picard rank $1$. 
The Hirzebruch--Riemann--Roch and the Kawamata--Viehweg vanishing theorem give
\begin{align}\label{eq-h0F}
h^{0}(\mcF)=\chi(X,\mcF)=\frac{(-K_{X})^{3}}{2}+4+K_{X}.c_{2}(\mcF)=\frac{s_{3}(\mcF)}{2}+4.
\end{align}

\subsection{Hyper-elliptic case}\label{subsec-hypell}

By the equation (\ref{eq-h0F}), $s_{3}(\mcF)=2$ if and only if $h^{0}(\mcF)=5$. 
In this case, the morphism $\Phi_{\lvert \xi \rvert}$ is of degree $2$. 
In other words, the condition $s_{3}(\mcF)=2$ is a sufficient condition for $\deg \Phi_{\lvert \xi \rvert}=2$. 
In the following proposition, we characterize the pair $(X,\mcF)$ such that $\Phi_{\lvert \xi \rvert}$ is not a birational morphism onto the image.

\begin{prop}\label{prop-nonhypell}
The image $Z$ of $\Phi_{\lvert \xi \rvert}$ is normal and $\deg \Phi_{\lvert \xi \rvert} \in \{1,2\}$. 
Moreover, $\Phi_{\lvert \xi \rvert}$ is not a birational morphism onto $Z$ if and only if $s_{3}(\mcF)=2$ or $(X,\mcF)$ satisfies (2) in Theorem~\ref{main-emb}. 
\end{prop}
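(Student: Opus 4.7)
The plan is to apply Castelnuovo's bound to the image $Z \subset \P^{N}$ and analyze the equality case, mirroring the strategy used in Iskovskikh's classification of hyperelliptic Fano threefolds. Since $\lvert \xi \rvert$ is complete and base-point-free by \cite[Theorem~1.7]{FHI1}, the image $Z$ is irreducible, non-degenerate, and of dimension $4$ in $\P^{N}$ with $N = h^{0}(\mcF) - 1 = s_{3}(\mcF)/2 + 3$ by (\ref{eq-h0F}). Writing $d = \deg \Phi_{\lvert \xi \rvert}$, the projection formula gives
\[d \cdot \deg_{\P^{N}} Z = \xi^{4} = s_{3}(\mcF),\]
and the classical Castelnuovo lower bound $\deg Z \geq N - \dim Z + 1 = s_{3}(\mcF)/2$ for a non-degenerate irreducible $4$-fold in $\P^{N}$ immediately forces $d \leq 2$.

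For the characterization of the $d = 2$ case, the ``if'' direction is direct. When $s_{3}(\mcF) = 2$, then $N = 4$ and $\dim Z = 4$ force $Z = \P^{4}$, so $d = 2$. When $(X, \mcF)$ satisfies condition (2) of Theorem~\ref{main-emb}, the morphism $\Phi_{\lvert \xi \rvert}$ factors through the degree-two projection $Y = \P_{X}(\mcF) \to \P_{\Q^{3}}(\mcO_{\Q^{3}} \oplus \mcO_{\Q^{3}}(1))$ induced by the double cover $X \to \Q^{3}$, followed by the birational contraction onto the cone over $\Q^{3}$ in $\P^{5}$; hence $d = 2$. For the ``only if'' direction, assume $d = 2$ and $s_{3}(\mcF) \geq 4$. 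Equality in Castelnuovo forces $Z$ to be a variety of minimal degree in $\P^{N}$, which by Del Pezzo--Bertini is a quadric, a rational normal scroll, or a cone over such. I would then exploit the factorization $Y \to Y' \to Z$ from the outline, where $Y' \subset \Fl(H^{0}(\mcF); 2, 1)$ is a $\P^{1}$-bundle over $X' = \Phi_{\lvert \mcF \rvert}(X) \subset \Gr(H^{0}(\mcF), 2)$. Writing $d = \deg(X \to X') \cdot \deg(Y' \to Z)$, a case analysis on which factor equals $2$, combined with a direct Schubert-calculus computation showing that cases (1) and (3) of Theorem~\ref{main-emb} both satisfy $s_{3}(\mcF) = 2$, should identify $(X, \mcF)$ with case (2) and $Z$ with the cone over $\Q^{3}$ in $\P^{5}$.

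Finally, for normality of $Z$: when $d = 2$, each possible $Z$ (namely $\P^{4}$, the smooth quadric, or the cone over $\Q^{3}$) is normal by inspection. When $d = 1$, consider the Stein factorization $Y \xrightarrow{\psi} \bar{Y} \xrightarrow{\nu} Z$. Here $\nu$ is finite and birational from the normal Gorenstein $4$-fold $\bar{Y}$ with $\bar{\xi}$ ample and $-K_{\bar{Y}} \sim 2\bar{\xi}$; I would show that $\bar{\xi}$ is very ample so that $\nu$ is an isomorphism, using the base-point-freeness of $\xi$ together with $-K_{Y} \sim 2\xi$. The main obstacle will be the case analysis in the ``only if'' direction, in particular excluding the possibility that $Z$ is a rational normal scroll of degree $\geq 3$ when $s_{3}(\mcF) \geq 6$ and $\Phi_{\lvert \mcF \rvert}$ is a closed embedding; this requires carefully combining the $\P^{1}$-bundle structure of $Y' \to X'$ with the classification \cite{JPR05} of divisorial contractions from almost Fano $4$-folds referenced in the outline.
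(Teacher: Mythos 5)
Your first half is fine and is essentially the paper's own argument: from $h^{0}(\mcF)=s_{3}(\mcF)/2+4$ and $\deg\Phi_{\lvert\xi\rvert}\cdot\deg Z=\xi^{4}=s_{3}(\mcF)$, the minimal-degree bound $\deg Z\geq \codim Z+1$ (the paper phrases it as Fujita's inequality $\Delta(Z)\geq 0$ from \cite{Fujita75}) gives $\deg\Phi_{\lvert\xi\rvert}\leq 2$, and equality forces $Z$ to be a variety of minimal degree. The ``if'' direction and the observation that cases (1) and (3) of Theorem~\ref{main-emb} have $s_{3}(\mcF)=2$ are also correct.

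The genuine gap is in the ``only if'' direction: your plan has no mechanism for the case where $\deg\Phi_{\lvert\xi\rvert}=2$, $s_{3}(\mcF)\geq 4$, and the crepant contraction $\psi\colon Y\to\ol{Y}$ is \emph{small}. The classification you invoke via \cite{JPR05} (i.e.\ Proposition~\ref{prop-divisorial}) only applies when $\psi$ contracts a divisor, and the Del Pezzo--Bertini list for $Z$ by itself does not rule out, say, a degree-two morphism onto a four-dimensional rational normal scroll with no divisor contracted; likewise your factorization $d=\deg(X\to X')\cdot\deg(Y'\to Z)$ and ``Schubert-calculus'' remark do not exclude the possibility that $\Phi_{\lvert\mcF\rvert}$ is a genuinely new double cover in this small-contraction regime. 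This is exactly the hard case in the paper: one restricts to a ladder member, obtaining a degree-two map $\Phi_{S}\colon S\to W$ from a K3 member $S\in\lvert -K_{X}\rvert$ containing the elliptic curve $C$ onto a two-dimensional minimal-degree surface $W$, shows ${\Phi_{S}}_{\ast}C$ has self-intersection zero, and then derives a numerical contradiction by combining $c_{1}(\mcF)c_{2}(\mcF)\geq 4$ (Proposition~\ref{prop-BNineq}) with the Brill--Noether generality of $(S,-K_{X}|_{S})$ (Theorem~\ref{thm-LazBN}); nothing in your proposal plays this role, and without it the classification of the $d=2$ case does not close. A secondary, smaller issue: for $d=1$ you assert normality of $Z$ by claiming $\ol{\xi}$ is very ample ``using base-point-freeness and $-K_{Y}\sim 2\xi$,'' but that is precisely what must be proved (it is equivalent to the normality of $Z$, since $\ol{Y}\to Z$ is the normalization); the paper instead deduces it from the surjectivity of $H^{0}(\mcO_{\P(V)}(m))\to H^{0}(\mcO_{Y}(m\xi))$ via the ladder, and your sketch should supply an argument of that kind rather than a restatement of the goal.
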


Before proving the proposition above, we show the following proposition. 
\begin{prop}[\cite{JPR05}]\label{prop-divisorial}
The contraction $\psi \colon Y \to \ol{Y}$ contracts a divisor if and only if $X$ and $\mcF$ satisfies one of the following.  
\begin{enumerate}
\item $\mcF \simeq \mcO \oplus \mcO(-K_{X})$. 
\item $X$ is a double cover of a smooth del Pezzo $3$-fold $V$ of degree $5$ ramified along a smooth K3 surface, and $\mcF$ is isomorphic to the pull-back of the restriction of the rank $2$ quotient bundle $\mcQ_{\Gr(5,2)}$ under the embedding $V_{5} \hra \Gr(5,2)$. In this case, $s_{3}(\mcF)=2$. 
\end{enumerate}
\end{prop}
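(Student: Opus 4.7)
The plan proceeds by verifying each implication, with the converse being the substantive part.

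For the forward direction, case (1) is immediate: $Y = \P_{X}(\mcO \oplus \mcO(-K_{X}))$ has two disjoint sections isomorphic to $X$, and $\psi$ contracts the negative one, yielding a divisorial contraction with $\dim \psi(\Exc(\psi)) = 0$. For case (2), I would first carry out a direct Chern class computation on the pull-back of $\mcQ_{\Gr(5,2)}|_{V_{5}}$ to confirm $c_{1}(\mcF) = c_{1}(X)$ and $s_{3}(\mcF) = 2$; the divisorial contraction of $Y = \P_{X}(\mcF)$ then descends from the corresponding divisorial contraction on $\P_{V_{5}}(\mcQ_{\Gr(5,2)}|_{V_{5}})$ via the finite cover.

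For the converse, suppose $\psi$ contracts a prime divisor $E$. By Lemma~\ref{lem-c20}, if $\dim \psi(E) \leq 1$ then $\mcF \simeq \mcO \oplus \mcO(-K_{X})$, i.e.\ case (1). The main work is the case $\dim \psi(E) = 2$. Then $\pi|_{E} \colon E \to X$ is finite, since a general $\pi$-fiber $\simeq \P^{1}$ meets $E$ in a finite scheme. I plan to cut by a general anticanonical hyperplane section $\wt{X} \in \lvert \xi \rvert$, which by Notation~\ref{nota-c1Fc1X} is $\Bl_{C}X$ for a smooth elliptic curve $C$, and is a smooth weak Fano 3-fold admitting a divisorial crepant contraction to a normal 3-fold $\ol{X}$. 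The classification in \cite{JPR05} of such 3-fold contractions then restricts the possible local structures of $\psi|_{\wt{X}}$ along $\Exc(\psi) \cap \wt{X}$. Combining this with the ambient structure $Y = \P_{X}(\mcF)$ and the identity $-K_{Y} \sim 2\xi$ should force $\pi|_{E}$ to be a finite cover of degree exactly two; its Stein factorization then produces a double cover $\varphi \colon X \to W$ of a normal projective 3-fold $W$.

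The hardest step will be identifying $W$ with the del Pezzo 3-fold $V_{5}$ of degree $5$ and recognizing $\mcF$ as $\varphi^{\ast}(\mcQ_{\Gr(5,2)}|_{V_{5}})$. Using \eqref{eq-h0F} with $s_{3}(\mcF) = 2$ gives $h^{0}(\mcF) = 5$; combined with the $2$-to-$1$ structure and the assumption $c_{1}(\mcF) = c_{1}(X)$, this should force $W$ to be a smooth Fano 3-fold of index 2, Picard rank 1, and degree 5, which is uniquely $V_{5}$ by the classical Fano classification. Finally, a rank 2 ample bundle on $V_{5}$ with the matching Chern invariants is rigid by Mukai's results on tautological bundles on $V_{5}$, forcing the identification $\mcF \simeq \varphi^{\ast}(\mcQ_{\Gr(5,2)}|_{V_{5}})$ and completing the proof.
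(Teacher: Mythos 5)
Your forward direction and the reduction of the case $\dim\psi(\Exc(\psi))\leq 1$ to case (1) via Lemma~\ref{lem-c20} match the paper, and cutting by $\wt{X}\in\lvert\xi\rvert$ and invoking \cite{JPR05} is the right opening move for the case $\dim\psi(\Exc(\psi))=2$. But from that point on there is a genuine gap, and one concrete step is false. You propose to show that $\pi|_{E}\colon E=\Exc(\psi)\to X$ is finite of degree exactly two and to obtain the double cover $X\to V_{5}$ as its Stein factorization. In the actual geometry of case (2) this cannot work: as the paper computes, $\Exc(\psi)\sim 3\xi+\pi^{\ast}K_{X}$, so $\Exc(\psi)$ meets a $\pi$-fiber in degree $3$, i.e.\ $\pi|_{E}$ has degree $3$ over $X$. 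The double cover structure on $X$ has nothing to do with $\pi|_{E}$; it comes from the morphism $\Phi_{\lvert\mcF\rvert}\colon X\to\Gr(5,2)$ induced by the globally generated bundle $\mcF$ itself (which is why $\mcF\simeq\Phi_{\lvert\mcF\rvert}^{\ast}\mcQ$ is then automatic, whereas your appeal to ``rigidity'' never connects $\mcF$ to the cover you construct).

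The second, and harder, missing ingredient is the exclusion of the alternative branch. What \cite{JPR05} actually gives (after cutting by $\wt{X}=\Bl_{C}X$) is that $X$ is a prime Fano threefold of genus $6$ with $-K_{X}.C=4$, hence $c_{2}(\mcF)=4$, $s_{3}(\mcF)=2$ and $h^{0}(\mcF)=5$ by (\ref{eq-h0F}); then Gushel's theorem \cite{Gushel-genus6} says $\Phi_{\lvert\mcF\rvert}$ is \emph{either} a closed embedding $X\hra\Gr(5,2)$ (the ordinary case) \emph{or} a double cover of $V_{5}$. Your outline contains no mechanism for ruling out the embedded case, which is precisely the hard part of the paper's proof: there one first pins down $\Exc(\psi)\sim 3\xi+\pi^{\ast}K_{X}$ by intersection numbers and the negativity lemma, and then, assuming the embedding, computes via the Koszul resolution of $Y\subset\Fl(5;2,1)$ that $h^{0}(Y,\mcO_{Y}(3\xi+\pi^{\ast}K_{X}))=0$, contradicting the effectivity of $\Exc(\psi)$. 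Without an argument of this kind (numerical invariants alone do not distinguish the two Gushel branches, since both have $s_{3}(\mcF)=2$ and $h^{0}(\mcF)=5$), the converse direction of Proposition~\ref{prop-divisorial} is not established.
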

\begin{proof}
First, we see that $\psi$ is divisorial when $(X,\mcF)$ satisfies (1) or (2). 
If $\mcF \simeq \mcO \oplus \mcO(-K_{X})$, $\ol{Y}$ is the cone of the anticanonical model of $X$ and $\psi$ is the blowing-up at the vertex. 
If $(X,\mcF)$ satisfies (2), then there is a double covering $\Phi \colon X \to V$ onto a del Pezzo $3$-fold of degree $5$. 
Let $\mcQ_{V}$ denote the restriction of the rank $2$ quotient bundle $\mcQ_{\Gr(5,2)}$ under the embedding $V \hra \Gr(5,2)$. 
Then $\Phi^{\ast}\mcQ_{V} \simeq \mcF$. 
The morphism $\P(\mcF) \to \P^{4}$ factors $\P_{V}(\mcQ_{V})$ and $\P(\mcQ_{V}) \to \P^{4}$ is a divisorial contraction. 
Since $\psi$ is obtained as the Stein factorization of $\P(\mcF) \to \P^{4}$, $\psi$ is divisorial. 

To show the converse direction, let us suppose $\psi$ is a divisorial contraction. 
Let $\wt{X},\ol{X},C$ be as in Notation~\ref{nota-c1Fc1X}. 
By Proposition~\ref{prop-BNineq}~(1), $C$ is connected or empty. 
By Lemma~\ref{lem-c20}, $C$ is empty if and only if $\mcF \simeq \mcO_{X} \oplus \mcO_{X}(-K_{X})$. 
Assume that $C$ is not empty. 
Then $\wt{X}=\Bl_{C}X$ is a weak Fano $3$-fold having a divisorial crepant contraction. 
By the classification result of \cite{JPR05}, 
$X$ is a prime Fano $3$-fold of genus $6$ and $-K_{X}.C=4$. 
Since $c_{2}(\mcF) \equiv C$, we have $c_{2}(\mcF)=4$. 
In this case, $h^{0}(\mcF)=5$ holds by (\ref{eq-h0F}). 
It is known by \cite{Gushel-genus6} that the morphism $\Phi_{\lvert \mcF \rvert} \colon X \to \Gr(5,2)$ induced by $\mcF$ is a closed embedding or a double cover onto a smooth del Pezzo $3$-fold of degree $5$. 

Now we show that $\Exc(\psi) \sim 3\xi+\pi^{\ast}K_{X}$. 
Take $n,a,b \in \Z_{\geq 0}$ such that $\Exc(\psi) \sim n(a\xi+b\pi^{\ast}K_{X})$ and $a,b$ are coprime. 
Let $D:=a\xi+b\pi^{\ast}K_{X}$. 
Since $\xi^{3}D=0$, we have 
$0=a\xi^{4}+b\xi^{3}\pi^{\ast}K_{X}=2a-6b$, 
which implies $a=3$ and $b=1$. 
Then $nD \sim \Exc(\psi)$, and hence $\psi_{\ast}D \sim 0$. 
Since $-D$ is $\psi$-nef, the negativity lemma shows $D$ is effective. 
Hence $D=\Exc(\psi)$. 

Now it suffices to show that $\Phi_{\lvert \mcF \rvert}$ is not a closed embedding. 
If $\Phi_{\lvert \mcF \rvert}$ is closed embedding, then the image $V$ of $\Phi_{\lvert \mcF \rvert}$ is a complete intersection of two members of $\lvert \mcO_{
\Gr(5,2)}(1) \rvert$ and a member of $\lvert \mcO_{\Gr(5,2)}(2) \rvert$ \cite{Gushel-genus6}. 
Consider the partial flag variety $\Fl(5;2,1)$ with the projections $\pr^{21}_{1} \colon \Fl(5;2,1) \to \P^{4}$ and $\pr^{21}_{2} \colon \Fl(5;2,1) \to \Gr(5,2)$.
Put 
$L_{1}:={\pr^{21}_{1}}^{\ast}\mcO_{\P^{4}}(1)$ and $L_{2}:={\pr^{21}_{2}}^{\ast}\mcO_{\Gr(5,2)}(1)$.
Then the ideal of $Y=\P_{X}(\mcF) \simeq \P_{V}(\mcQ|_{V})$ in $\Fl(5;2,1)$ has the following Koszul resolution:
\begin{align*}
0 \to \mcO(-4L_{2}) \to \mcO(-2L_{2}) \oplus \mcO(-3L_{2})^{\oplus 2} \xrightarrow{\alpha} \mcO(-L_{2})^{\oplus 2} \oplus \mcO(-2L_{2}) \to \mcI_{Y/\Fl(5;2,1)} \to 0.
\end{align*}
This Koszul resolution above yields
\begin{align}\label{eq-idealresol}
&0 \to \mcO(3L_{1}-5L_{2}) \to \mcO(3L_{1}-3L_{2}) \oplus \mcO(3L_{1}-4L_{2})^{\oplus 2} \to \Im(\alpha)(3L_{1}-L_{2}) \to 0  \\
&0 \to \Im(\alpha)(3L_{1}-L_{2}) \to \mcO(3L_{1}-2L_{2})^{\oplus 2} \oplus \mcO(3L_{1}-3L_{2}) \to \mcI_{Y/\Fl(5;2,1)}(3L_{1}-L_{2})  \to 0 \notag \\
&0 \to \mcI_{Y/\Fl(5;2,1)}(3L_{1}-L_{2})  \to \mcO_{\Fl(5;2,1)}(3L_{1}-L_{2}) \to \mcO_{Y}(D) \to 0 \notag
\end{align}
We note that $-K_{\Fl(5;2,1)} \sim 2L_{1}+4L_{2}$ and $\pr^{21}_{1} \colon \Fl(5;2,1) \to \P(V)$ is the projectivization of $\Omega_{\P^{4}}(2)$. 
For $1 \leq a \leq 5$, we have 
\begin{align*}
H^{i}(\Fl(5;2,1),3L_{1}-aL_{2})
&=H^{7-i}(\Fl(5;2,1),-5L_{1}+(a-4)L_{2})^{\vee} \\
&=\left\{ 
\begin{array}{ll}
0 & \text{ if } a \in \{1,2,3\} \\
H^{7-i}(\P^{4},\mcO_{\P^{4}}(-5)) & \text{ if } a = 4 \\
H^{7-i}(\P^{4},\Omega_{\P^{4}}(-3)) & \text{ if } a = 5 
\end{array} 
\right.\\
&=\left\{ 
\begin{array}{ll}
\C & \text{ if } a=4 \text{ and } i=3 \\
0 & \text{ otherwise. }
\end{array} 
\right.
\end{align*}
Hence $\RG(Y,\mcO_{Y}(D))=\RG(\mcI_{Y/\Fl(5;2,1)}(3L_{1}-L_{2}))[1] = \RG(\Im(\alpha)(3L_{1}-L_{2}))[2] = \RG(3L_{1}-4L_{2})[2]^{\oplus 2} = \C[-1]^{\oplus 2}$. 
In particular, we have $h^{0}(Y,\mcO_{Y}(D))=0$, which is a contradiction. Therefore, $\Phi_{\lvert \mcF \rvert}$ is not a closed embedding. 
This completes the proof. 
\end{proof}

\begin{proof}[Proof~of~Proposition~\ref{prop-nonhypell}]
Let $Y=\P_{X}(\mcF) \xrightarrow{\psi} \ol{Y} \xrightarrow{p} Z \subset \P(V)=\P(H^{0}(\mcF))$ be the Stein factorization of $\Phi_{\lvert \xi \rvert}$. 
Let $H_{Z}$ denote a hyperplane section of $Z$ such that $\Phi_{\lvert \xi \rvert}^{\ast}H_{Z}=\xi$ and let $\ol{\xi}:=p^{\ast}H_{Z}$. 
Put $g_{\mcF}:=\frac{s_{3}(\mcF)}{2}+1$. 
Then 
$\deg Z = \frac{2g_{\mcF}-2}{\deg p}$
and
$\codim Z=g_{\mcF}-1$. 
Then \cite{Fujita75} shows 
$0 \leq \Delta(Z):=\deg Z-(\codim Z+1) =  \frac{2g_{\mcF}-2}{\deg p} -(g_{\mcF}-1)$. 
If $\deg p \neq 1$, then $\deg p=2$ and $\Delta(Z)=0$, which implies that $Z$ is normal. 
If $\deg p=1$, then the map $\P(\mcF) \to Z$ is birational. 
As 
$H^{0}(\mcO_{\P(V)}(m)) \to H^{0}(\mcO_{Y}(m\xi))$
is surjective for every $m \geq 0$ by Noether-Enriques-Petri theorem, $Z$ is normal. 

Now it is enough to show $\deg p \neq 2$ if $s_{3}(\mcF) \neq 2$. 
If $\deg p=2$, then $Z$ is one of the following \cite{Fujita75}.
\begin{enumerate}
\item[(i)] $Z=\P^{4}$. 
\item[(ii)] $Z=\Q^{4}$. 
\item[(iii)] $Z$ is a cone over $\Q^{3}$.
\item[(iv)] $Z$ is the image of the morphism $\s \colon \F = \F(d_{1},d_{2},d_{3},d_{4}) \to Z$, where 
\[\F(d_{1},d_{2},d_{3},d_{4}):=\P_{\P^{1}}\left( \bigoplus_{i=1}^{4} \mcO_{\P^{1}}(d_{i}) \right) \text{ with } 0 \leq d_{1} \leq \cdots \leq d_{4} \text{ and } \sum_{i=1}^{4} d_{i}=\frac{s_{3}(\mcF)}{2}\]
and $\s$ is given by the complete linear system of a tautological bundle $\eta$. 
\item[(v)] $Z$ is the join of a line and a Veronese surface. 
\end{enumerate}

Suppose that $s_{3}(\mcF) \geq 4$.
Then the case (i) does not occur. 
Since $\xi$ is primitive in $\Pic(Y)$, the case (v) also does not occur. 
If the case (iv) occurs, then we denote a fiber of $\F(d_{1},d_{2},d_{3},d_{4}) \to \P^{1}$ by $F$. 
If $\sigma \colon \F \to Z$ is divisorial, which is equivalent to the condition that $d_{1}=d_{2}=d_{3}=0$, 
then a unique member $E \in \lvert \eta-d_{4}F \rvert$ is the $\s$-exceptional divisor. 
Putting $\ol{F}:=\s_{\ast}F$, we have $H_{Z} \sim d_{4}\ol{F}$ as Weil divisors. 
Thus $\xi$ is divisible by $d_{4}$, which implies that $d_{4}=1$, a contradiction. 
Therefore, $\dim \Sing(Z) \leq 1$. 
In summary, only the cases (ii), (iii), or (iv) with $\dim \Sing(Z) \leq 1$ occur.

Suppose $\psi \colon Y \to \ol{Y}$ is divisorial. 
Since we assumed $s_{3}(\mcF) \geq 4$, 
Proposition~\ref{prop-divisorial} shows $\mcF=\mcO_{X} \oplus \mcO_{X}(-K_{X})$ with $(-K_{X})^{3} \geq 4$. 
Then the morphism $\Phi_{\lvert \xi \rvert}$ coincides with the blowing-up of the cone of the image of $\Phi_{\lvert -K_{X} \rvert}$ at the vertex. 
In particular, $-K_{X}$ is not very ample. 
Hence $(X,\mcF)$ is in the case (1) and (2) of Theorem~\ref{main-emb} by \cite{Iskovskikh77}. 

Suppose that $\psi$ is a small contraction. 
Take a general ladder $S \subset \wt{X} \subset Y=\P(\mcF)$ as in (\ref{eq-ladder}). 
Denote the restriction $\Phi_{\lvert \xi \rvert}|_{S}$ by $\Phi_{S}$. 
Then $\Phi_{S}$ is a double covering $\Phi|_{S} \colon S \to W$, where $W$ is a rational normal scroll (including the quadric). 
Moreover, the pull-back of a hyperplane section $H_{W}$ of $W$ is linearly equivalent to $L:=H_{S}-C$, where $H_{S}:=-K_{X}|_{S}$ and $C$ is an elliptic curve with $c_{2}(\mcF) \equiv C$. 
Hence $L^{2}=s_{3}(\mcF)$. 
Take integers $k \geq 0$ and $a>0$ so that $W \simeq \P(\mcO(a) \oplus \mcO(a+k))$ and $H_{W}$ is a tautological divisor of this projectivization. 
Let $f$ be the ruling on $W$ and $h:=H_{W}-af$. 
Let $D:={\Phi_{S}}_{\ast}C$. Then
\begin{align}\label{eq-degD}
H_{W}.D
=H_{W}.{\Phi_{S}}_{\ast}C
=\Phi_{S}^{\ast}H_{W}.C 
=(H_{S}-C).C 
=H_{S}.C 
=c_{1}(\mcF)c_{2}(\mcF)
\end{align}
holds. 
For general member $C' \in \lvert C \rvert$, we have $C \cap C'=\emp$ and hence $D':={\Phi_{S}}_{\ast}C'$ does not meet $D$. 
Hence $D^{2}=0$ on the scroll $W$. 
Thus $D_{\red}$ is a smooth rational curve and $C \to D_{\red}$ is a double cover. 
Thus $D \sim 2f$ or $k=0$ and $D \sim 2h$. 
If $D \sim 2f$, then $c_{1}(\mcF)c_{2}(\mcF)=H_{W}.D=2$, which contradicts the inequality in Proposition~\ref{prop-BNineq}~(3). 
Hence the case $k=0$ and $D \sim 2h$ only occur. 
In this case, the following can be calculated for the Chern classes of $\mcF$; 
$c_{1}(\mcF)c_{2}(\mcF)=H_{W}.D=2h(h+af)=2a$, 
$s_{3}(\mcF)=L^{2}=2H_{W}^{2}=4a$. 
Hence it holds that $c_{1}(X)^{3}=c_{1}(\mcF)^{3}=s_{3}(\mcF)+2c_{1}(\mcF)c_{2}(\mcF)=8a$. 
Moreover, $h^{0}(S,L)=2a+2$ and $h^{0}(S,C)=2$ holds. 
Since $(S,-K_{X}|_{S})=(S,L+C)$ is Brill--Noether general by Proposition~\ref{prop-BNineq}~(2) and Theorem~\ref{thm-LazBN}~(3), we have 
\[\frac{c_{1}(X)^{3}}{2}+1 = h^{0}(S,-K_{X}|_{S}) \geq h^{0}(S,L) \cdot h^{0}(S,C)=4(a+1)=\frac{c_{1}(X)^{3}}{2}+4,\]
a contradiction. 
We complete the proof. 
\end{proof}
Let 
\[\Phi_{\lvert \mcF \rvert} \colon X \to \Gr(H^{0}(\mcF),2)\]
be the morphism given by the globally generated rank $2$ vector bundle $\mcF$. 
The relationship between $\Phi_{\lvert \xi \rvert}$ and $\Phi_{\lvert \mcF \rvert}$ may be organized as follows.
Let $\Fl(H^{0}(\mcF);2,1) \simeq \P(\mcQ_{\Gr(H^{0}(\mcF),2)})$ denote the Flag variety and $\pr^{21}_{i} \colon \Fl(H^{0}(\mcF);2,1) \to \Gr(H^{0}(\mcF),i)$ the projection. 
Then there is a morphism $\Psi \colon Y=\P(\mcF) \to \Fl(H^{0}(\mcF);2,1)$ such that the following diagram
\begin{equation}\label{dia-Flag}
\begin{tikzcd}
Y=\P(\mcF) \arrow[d,"\pi"'] \arrow[r,"\Psi"] \arrow[rr, bend left, "\Phi_{\lvert \xi \rvert}"] \arrow[rd, phantom, "\Box"]& \Fl(H^{0}(\mcF);2,1) \arrow[r, "\pr^{21}_{1}"] \arrow[d,"\pr^{21}_{2}"']& \P(H^{0}(\mcF)) \\
X \arrow[r, "\Phi_{\lvert \mcF \rvert}"'] & \Gr(H^{0}(\mcF),2)&
\end{tikzcd}
\end{equation}
commutes. 
Note that $\Phi_{\lvert \mcF \rvert}$ is finite since $\mcF|_{C} \not\simeq \mcO_{C}^{\oplus 2}$ for every curve $C$.  
Hence $\Psi$ is also finite. 
Moreover, if $\Phi_{\lvert \xi \rvert}$ is birational onto its image, then so is $\Phi_{\lvert \mcF \rvert}$. 

We also prepare another lemma in order to prove Theorem~\ref{main-emb}. 

\begin{lem}\label{lem-Sveedim}
Let $x \in X$ be a point and $l:=\pi^{-1}(x)$ a fiber. 
Put $\ol{l}:=\Phi_{\lvert \xi \rvert}(l) \subset \P(H^{0}(\mcF))$. 
If $\mcF$ does not split into line bundles, then $\dim \pi(\Phi_{\lvert \xi \rvert}^{-1}(\ol{l})) \leq 1$. 
\end{lem}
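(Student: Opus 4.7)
My plan is to identify $\pi(\Phi_{\lvert \xi \rvert}^{-1}(\ol{l}))$ with a determinantal locus $Z \subseteq X$ and then to show $\dim Z \leq 1$ in two sub-cases. Writing $V_{x} := \Ker(\ev_{x} \colon H^{0}(\mcF) \to \mcF(x))$, the commutative diagram~(\ref{dia-Flag}) together with the identification of $\ol{l}$ with $\P(\mcF(x))$ gives
\[\pi(\Phi_{\lvert \xi \rvert}^{-1}(\ol{l})) = Z := \{ x' \in X : \rk(e_{x}(x')) \leq 1 \},\]
where $e_{x} \colon V_{x} \otimes \mcO_{X} \to \mcF$ is the evaluation; equivalently, $Z$ is the zero locus of the wedge $\bigwedge^{2}e_{x} \colon \bigwedge^{2}V_{x} \otimes \mcO_{X} \to \det \mcF = \mcO_{X}(-K_{X})$. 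Suppose for contradiction that $\dim Z \geq 2$, so $Z$ either equals $X$ or contains a prime divisor $D$. Since $\Pic(X) = \Z \cdot (-K_{X})$, one has $D \sim a(-K_{X})$ for some $a \geq 1$, and when $a \geq 2$ the wedge map factors through $H^{0}(\mcO_{X}((1-a)(-K_{X}))) = 0$, forcing $Z = X$; only the cases $Z = X$ and $D \sim -K_{X}$ remain.

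In the case $Z = X$, the saturation $\mcK \subseteq \mcF$ of $\Im(e_{x})$ is a line sub-bundle and $V_{x} \hookrightarrow H^{0}(\mcK)$, so $h^{0}(\mcK) \geq N - 2$ with $N := h^{0}(\mcF)$. Writing $\mcK \simeq \mcO_{X}(aH_{X})$ in $\Pic(X) = \Z \cdot (-K_{X})$, the lower bound $h^{0}(\mcK) \geq 3$ forces $a \geq 1$; on the other hand, since $\mcF$ is globally generated but of rank $2$, the image of $H^{0}(\mcF) \otimes \mcO_{X} \to \mcF$ cannot lie inside the rank-one sub-sheaf $\mcK$, which (together with a direct computation of $h^{0}(\mcO_{X}(aH_{X}))$) forces $a = 1$, i.e., $\mcK \simeq \mcO_{X}(-K_{X})$. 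The resulting exact sequence $0 \to \mcO_{X}(-K_{X}) \to \mcF \to \mcI_{W} \to 0$ then either violates the global generation of $\mcF$ (if $W \neq \emptyset$, as $h^{0}(\mcI_{W}) = 0$ would force all sections of $\mcF$ to land in the rank-one $\mcK$) or splits into $\mcF \simeq \mcO_{X} \oplus \mcO_{X}(-K_{X})$ via $H^{1}(\mcO_{X}(-K_{X})) = 0$ (if $W = \emptyset$), in either case contradicting our hypotheses on $\mcF$.

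In the case $D \sim -K_{X}$, the wedge map produces a non-zero alternating form $\omega$ on $V_{x}$ via $s \wedge t = \omega(s,t)\,\sigma_{D}$, where $\sigma_{D} \in H^{0}(\mcO_{X}(-K_{X}))$ defines $D$. Choosing $s_{1}, s_{2} \in V_{x}$ with $\omega(s_{1}, s_{2}) = 1$, so that $s_{1}(x'), s_{2}(x')$ span $\mcF(x')$ on the dense open $X \setminus D$, and writing any $r \in V_{x}$ as $r = a\,s_{1} + b\,s_{2}$ on $X \setminus D$, a direct comparison with the identities $r \wedge s_{i} = \omega(r, s_{i})\,\sigma_{D}$ forces the coefficients $a$ and $b$ to be the constants $\omega(r, s_{2})$ and $-\omega(r, s_{1})$, respectively. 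By density, the equality $r = \omega(r, s_{2})\,s_{1} - \omega(r, s_{1})\,s_{2}$ then holds on all of $X$, so $V_{x} \subseteq \C s_{1} + \C s_{2}$ and $\dim V_{x} \leq 2$; this contradicts $\dim V_{x} = N - 2 \geq 3$, which holds since $N = h^{0}(\mcF) \geq 5$ by~(\ref{eq-h0F}) and Proposition~\ref{prop-BNineq}. The main step of the argument is this alternating-form trick in the second case, which replaces a potentially intricate geometric analysis by a single linear-algebraic identity.
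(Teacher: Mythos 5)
Your proof is correct, and after the common first step it follows a genuinely different route from the paper's. Both arguments identify $\pi(\Phi_{\lvert \xi \rvert}^{-1}(\ol{l}))$ with the locus where the evaluation $e_{x}\colon V_{x}\otimes\mcO_{X}\to\mcF$, $V_{x}=\Ker(H^{0}(\mcF)\to\mcF(x))$, drops to rank $\leq 1$; the paper phrases this dually, as the degeneracy locus of a map $s_{X}\colon\mcO_{X}^{\oplus 2}\to\mcF^{\perp}$ obtained by pulling back a cone $W\subset\Gr(H^{0}(\mcF),2)$. The paper then excludes the case where this locus is all of $X$ by producing a map to $\P^{1}$ that must be constant since $\rho(X)=1$, forcing the image of $\Phi_{\lvert\mcF\rvert}$ into a linear subspace and hence $\mcF\simeq\mcO_{X}\oplus\mcO_{X}(1)$; in the remaining case it shows, by a $c_{1}$/saturation computation on $\mcF^{\perp}$ (using global generation and $\Hom(\mcF^{\perp},\mcO_{X})=0$), that $\Cok(s_{X})$ is torsion free, so the locus has codimension $\geq 2$ with no separate divisorial case. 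You instead handle the full-degeneracy case by saturating the rank-one image inside $\mcF$ itself (global generation and $h^{0}(\mcK)\geq 3$ pin the saturation to $\mcO_{X}(-K_{X})$ and then force either a failure of global generation or the split $\mcO_{X}\oplus\mcO_{X}(-K_{X})$), and you dispose of the divisorial case by the alternating-form/Cramer identity, which bounds $\dim V_{x}\leq 2$ against $\dim V_{x}=h^{0}(\mcF)-2\geq 3$; note that this last bound needs only that $s_{3}(\mcF)=\xi^{4}$ is positive and even, so (\ref{eq-h0F}) alone suffices and Proposition~\ref{prop-BNineq} is not really needed. Your route is more elementary and stays entirely on $X$, at the price of invoking $\Pic(X)=\Z\cdot(-K_{X})$ and the numerical input $h^{0}(\mcF)\geq 5$, whereas the paper's cokernel argument yields codimension two in one stroke. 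Two small polish points: the saturation $\mcK$ is a line bundle but need not be a sub-bundle of $\mcF$ (harmless for your use of it), and the step forcing $a=1$ is cleanest if stated as: $\mcF/\mcK\simeq\mcI_{W}((1-a)H_{X})$ is a nonzero globally generated quotient, hence has a nonzero section, so $a\leq 1$.
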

\begin{proof}
Let $V:=H^{0}(\mcF)$ and let $m:=\dim V$, which is $\frac{s_{3}(\mcF)}{2}+4$ by (\ref{eq-h0F}). 
Note that the maps $l \to \Psi(l) \to \ol{l}$ are isomorphisms. 
Put $\wt{W}:=(\pr^{21}_{1})^{-1}(\ol{l})$ and $W:=\pr^{21}_{2}(\wt{W})$.
Then $W$ is a cone over $\P^{1} \times \P^{m-4}$ embedded in $\Gr(V,2)$: 
\[
\begin{tikzcd}
\Psi^{-1}(\wt{W}) \arrow[r] \arrow[d] \arrow[rd, phantom, "\Box"]&\wt{W} \arrow[r] \arrow[d] \arrow[rd, phantom, "\Box"]&\ol{l} \arrow[d]\\
\P(\mcF) \arrow[d,"\pi"'] \arrow[r,"\Psi"] \arrow[rd, phantom, "\Box"]& \Fl(V;2,1) \arrow[r, "\pr^{21}_{1}"] \arrow[d, "\pr^{21}_{2}"'] & \P(H^{0}(\mcF)) \simeq \P^{m-1} \\
X \arrow[r,"\Phi_{\lvert \mcF \rvert}"] & \Gr(H^{0}(\mcF),2) \simeq \Gr(m,2). 
\end{tikzcd}
\]
The line $\ol{l} \subset \P(V)$ corresponds to a $2$ dimensional quotient $V \epm \C^{\oplus 2}$. 
By taking the dual, we have a two dimensional subspace $\C^{\oplus 2}  \hra V^{\vee}=H^{0}(\Gr(V,2),\mcS^{\vee})$. 
Let $s \colon \mcO_{\Gr(V,2)}^{\oplus 2} \to \mcS^{\vee}$ be the corresponding morphism. 
Then, the degeneracy locus $D(s):=\{u \in \Gr(V,2) \mid \rk s(u) \leq 1\}$ is isomorphic to $W$. 
Hence $\Phi_{\lvert \mcF \rvert}^{-1}(W)$ is the degeneracy locus $D(s_{X})$ of the morphism 
\[s_{X}:=\Phi_{\lvert \mcF \rvert}^{\ast}s \colon \mcO_{X}^{\oplus 2} \to \mcF^{\perp}.\]
If $D(s_{X})=X$, then there is a finite morphism $X \to W$ and hence a rational map $X \dra \P^{1}$ whose indeterminacy locus is contained in a finite set. 
Hence the rational map $X \dra \P^{1}$ defines a morphism $X \to \P^{1}$. 
As $X$ is of Picard rank $1$, this morphism must contracts $X$ to a point. 
Hence the image of $\Phi_{\lvert \mcF \rvert}$ is contained in a linear space $P \simeq \P^{m-3}$, which is the image of a $\pr_{1}^{21}$-fiber $\wt{P}$. 
Since $\mcQ|_{P} \simeq \mcO_{P} \oplus \mcO_{P}(1)$, 
it holds that $\mcF \simeq \mcO_{X} \oplus \mcO_{X}(1)$, which contradicts the assumption. 

Hence $D(s_{X}) \neq X$, which implies $\rk \Im (s_{X})=2$ and $s_{X}$ is injective. 
Let $\mcK$ be the reflexive hull of $s_{X}$ and $\mcC:=\mcF^{\perp}/\mcK$, which is torsion-free: 
\[
\begin{tikzcd}
0 \arrow[r] &\mcO_{X}^{\oplus 2} \arrow[r,"s_{X}"] \arrow[d]&\mcF^{\perp} \arrow[r] \arrow[d, equal] &\Cok(s_{X}) \arrow[r] \arrow[d]& 0 \\
0 \arrow[r] & \mcK \arrow[r] & \mcF^{\perp} \arrow[r] & \mcC \arrow[r] & 0.
\end{tikzcd}
\]
Put $\mcT:=\Ker(\Cok(s_{X}) \to \mcC) \simeq \Cok(\mcO_{X}^{\oplus 2} \to \mcK)$. 
Since $\mcT$ is a torsion sheaf, $c_{1}(\mcT) \geq 0$ and hence $c_{1}(\mcK) \geq 0$. 
On the other hand, since $\mcF^{\perp}$ is globally generated, so is $\mcC$, which implies $c_{1}(\mcC) \geq 0$. 
If $c_{1}(\mcC)=0$, then $\mcC \simeq \mcO^{\oplus \rk \mcC}$, which contradicts $\Hom(\mcF^{\perp},\mcO_{X})=H^{0}(\Ker(H^{0}(\mcF) \otimes \mcO_{X} \to \mcF))=0$. 
Hence $c_{1}(\mcC)>0$. 
Since $c_{1}(\mcF^{\perp})=c_{1}(X)$ is a generator of $\Pic(X)$, 
it holds that $c_{1}(\mcK)=0$, which shows $\dim \Supp \mcT \leq 1$. 
In particular, $\mcO_{X}^{\oplus 2} = \mcK$ holds as subsheaves of $\mcF^{\perp}$, and hence $\Cok(s_{X})$ is torsion free. 
Thus $D(s_{X})=\Phi_{\lvert \mcF \rvert}^{-1}(W)$ is at most $1$-dimensional. 
Since $\pi(\Phi_{\lvert \xi \rvert}^{-1}(\ol{l})) \subset \Phi_{\lvert \mcF \rvert}^{-1}(W)$, this proves the assertion. 
\end{proof}

\subsection{Proof of Theorem~\ref{main-emb}}
If $(X,\mcF)$ satisfies one of (1) -- (3) in Theorem~\ref{main-emb}, it is clear that $\Phi_{\lvert \mcF \rvert} \colon X \to \Gr(2,H^{0}(\mcF))$ is a double covering. 
We now prove the converse.
We put $Y':=\Psi(Y)$, $X'=\Phi_{\lvert \mcF \rvert}(X)$, $\psi':=\pr^{21}_{1}|_{Y'}$. 
Then $Z = \psi'(Y') = \Phi_{\lvert \xi \rvert}(Y)$. 
By Proposition~\ref{prop-nonhypell}, $Z$ is normal and hence $\Phi_{\lvert \xi \rvert}$ factors $\ol{Y}=\Spec_{Z} {\Phi_{\lvert \xi \rvert}}_{\ast}\mcO_{Y}$ as $Y \xrightarrow{\psi} \ol{Y} \xrightarrow{g} Z$. 
Let $\ol{Z}:=\Spec_{Z}\psi'_{\ast}\mcO_{Z}$. 
Let $Y' \xrightarrow{\alpha} \ol{Z} \xrightarrow{\beta} Z$ be the Stein factorization of $\psi' \colon Y' \to Z$.  
Note that $\ol{Z}$ is normal since $Z$ is normal, and $\alpha \colon Y' \to \ol{Z}$ is a birational morphism having connected fibers. 
There is a morphism $h \colon \ol{Y} \to \ol{Z}$ over $Z$ corresponding to $\psi'_{\ast}\mcO_{Y'} \to {\Phi_{\lvert \xi \rvert}}_{\ast}\mcO_{Y}$. 
Then $Y \xrightarrow{\psi} \ol{Y} \xrightarrow{h} \ol{Z}$ is the Stein factorization of $\alpha \circ \Psi$. 
Then the following diagram arises:
\[
\begin{tikzcd}
&&\ol{Y} \arrow[rdd, bend left, "g"] \arrow[d,"h"']&& \\
&&\ol{Z} \arrow[rd,"\beta"]&&\Phi_{\lvert \xi \rvert}=g \circ \psi \\
Y \arrow[d,"\pi"'] \arrow[r,"\Psi"] \arrow[rruu, bend left, "\psi"] & Y' \arrow[rr,"\psi'"] \arrow[ru,"\alpha"] \arrow[d,"\pi'"]&& Z  \arrow[r, hook]&\P(H^{0}(\mcF)) \\
X \arrow[r,"\Phi_{\lvert \mcF \rvert}"'] & X'.&&
\end{tikzcd}
\]
\begin{lem}\label{lem-singfibMella}
The morphism $\pi'|_{\Exc(\alpha)} \colon \Exc(\alpha) \to X'$ is finite. 
\end{lem}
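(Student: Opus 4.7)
I would argue by contradiction. Suppose the conclusion fails, so there exists some $x' \in X'$ for which the entire $\pi'$-fiber $l' := (\pi')^{-1}(x')$ is contained in $\Exc(\alpha)$. My plan is to show this forces $\Phi_{\lvert \mcF \rvert}$ to contract a curve to a point, contradicting the finiteness of $\Phi_{\lvert \mcF \rvert}$ noted right before the diagram~(\ref{dia-Flag}).

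First, since $\pr^{21}_{1}$ restricts to an isomorphism on every fiber of $\pr^{21}_{2}$, the map $\psi'|_{l'}$ is an isomorphism onto a line $\ol{l} \subset \P(V)$, where $V := H^{0}(\mcF)$. The factorization $\psi' = \beta \circ \alpha$ then forces $\alpha|_{l'}$ to be injective, so $\alpha(l')$ is an irreducible curve that $\beta$ sends isomorphically to $\ol{l}$. Because $\alpha$ is a birational contraction with connected fibers to a normal base (after passing to the normalization of $\ol{Z}$ if necessary, which does not affect the set-theoretic exceptional locus), the hypothesis $l' \subset \Exc(\alpha)$ implies that the fiber $\alpha^{-1}(\alpha(y))$ has positive dimension for every $y \in l'$. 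Since $\alpha|_{l'}$ is injective, each such fiber meets $l'$ only at $y$ and hence contains a component of dimension $\geq 1$ lying off $l'$. Applying the fiber-dimension formula to $\alpha|_{T} \colon T \to \alpha(l')$ with $T := \alpha^{-1}(\alpha(l'))$ therefore yields $\dim T \geq 2$.

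Next, since $\Psi$ is finite, $\Psi^{-1}(T)$ is also at least two-dimensional, and from $\psi'(T) = \beta(\alpha(l')) = \ol{l}$ one obtains $\Psi^{-1}(T) \subset \Phi_{\lvert \xi \rvert}^{-1}(\ol{l})$. Because we are in the indecomposable case (the split case is already handled in the preamble to the proof of Theorem~\ref{main-emb}), Lemma~\ref{lem-Sveedim} yields $\dim \pi(\Phi_{\lvert \xi \rvert}^{-1}(\ol{l})) \leq 1$. Since $\pi$ has one-dimensional fibers, every two-dimensional irreducible component $W_{0}$ of $\Psi^{-1}(T)$ must satisfy $W_{0} = \pi^{-1}(C_{0})$ for some irreducible curve $C_{0} \subset X$. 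For each $x \in C_{0}$ the image $\Phi_{\lvert \xi \rvert}(\pi^{-1}(x))$ is the line in $\P(V)$ corresponding to the $2$-dimensional quotient $V \twoheadrightarrow \mcF(x)$, namely to the point $\Phi_{\lvert \mcF \rvert}(x) \in \Gr(V,2)$; since $\pi^{-1}(x) \subset W_{0} \subset \Phi_{\lvert \xi \rvert}^{-1}(\ol{l})$, this line equals $\ol{l}$, so $\Phi_{\lvert \mcF \rvert}(x) = x'$. Hence $\Phi_{\lvert \mcF \rvert}$ contracts $C_{0}$ to the point $x'$, contradicting the finiteness of $\Phi_{\lvert \mcF \rvert}$. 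The delicate step is the dimension bound $\dim T \geq 2$: it is the only place requiring a careful local analysis of $\alpha$, after which Lemma~\ref{lem-Sveedim} together with the geometric interpretation of $\Phi_{\lvert \xi \rvert}$ on $\pi$-fibers closes the argument.
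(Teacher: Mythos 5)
Your argument is correct and is essentially the paper's own proof: you assume a fiber $l'$ lies in $\Exc(\alpha)$, use the birational Stein contraction structure of $\alpha$ to get $\dim \alpha^{-1}(\alpha(l')) \geq 2$, pull back along the finite map $\Psi$ to obtain a surface in $Y$ mapped by $\Phi_{\lvert \xi \rvert}$ into the line $\ol{l}$, and then apply Lemma~\ref{lem-Sveedim} to see that this surface is $\pi^{-1}$ of a curve. The only difference is the final contradiction: the paper observes that $\xi$ restricted to such a surface would be nef but not big, contradicting $c_{1}(\mcF)>0$, whereas you observe that $\Phi_{\lvert \mcF \rvert}$ would contract the curve, contradicting its finiteness -- these are the same positivity statement in different packaging, so your proof matches the paper's.
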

\begin{proof}
Pick an arbitrary $\pi'$-fiber $l'$, and put $\ol{l}:=\psi'(l')$. 
Then $\ol{l}$ is a line on $\P(H^{0}(\mcF))$ and $\psi'|_{l'} \colon l' \to \ol{l}$ is an isomorphism. 
Suppose $l'$ is contained in $\Exc(\alpha)$. 
Then $\alpha(l')$ is also a smooth rational curve and contained in $\alpha(\Exc(\alpha))$. 
Then $\dim \alpha^{-1}(\alpha(l')) \geq 2$. 
Hence there is an irreducible surface $S \subset \Psi^{-1}\alpha^{-1}(\alpha(l'))$ on $Y$ such that $\Phi_{\lvert \xi \rvert} \colon Y \to \P(V)$ contracts $S$ to $\ol{l}$. 
By Lemma~\ref{lem-Sveedim}, $\pi(S)$ is a curve $C$. 
Hence $S=\P(\mcF|_{C})$ and $\dim(\psi(S)) \leq 1$ shows $\xi|_{S}$ is nef but not big. 
This contradicts $c_{1}(\mcF)>0$. 
\end{proof}
We now show that $X'$ is normal. 
Since $\Exc(\alpha)$ is finite over $X'$ by Lemma~\ref{lem-singfibMella}, 
$\alpha$ is an isomorphism at the generic point of each $\pi'$-fiber. 
Hence $Y'$ is normal at that point. 
If $X'$ is not normal at a point $x' \in X'$, then $Y'$ is non-normal along $l':={\pi'}^{-1}(x') \subset Y'$, which is a contradiction. 
Hence $X'$ is normal. 
Since $X'$ is normal, so is $Y'$. 

Assume that $\Phi_{\lvert \mcF \rvert} \colon X \to X' \subset \Gr(H^{0}(\mcF),2)$ is not a closed embedding. 
Since $X'$ is normal, this is equivalent to assuming that $\deg \Phi_{\lvert \mcF \rvert} > 1$. 
Then it follows from Proposition~\ref{prop-nonhypell} that $\deg \Psi=2$, $\deg \psi'=1$, and $(X,\mcF)$ is one of the following.
\begin{enumerate}
\item[(i)] $(X,\mcF)$ satisfies (2) of Theorem~\ref{main-emb}.
\item[(ii)] $s_2(\mcF) = 2$.
\end{enumerate}
In the latter case, $Z \simeq \P^{4}$. 
Since $Y'$ is normal, $\beta \colon \ol{Z} \to \P^{4}$ is
isomorphic and $\psi' \colon Y' \to \P^{4}$ can be identified with the contraction $\alpha$. 
In particular, $\psi'$ is an isomorphism or a divisorial contraction. 
Hence the same applies to $\psi$. 
As we pointed out in Section~\ref{subsec-hypell}, $\mcF$ is not a Fano bundle, which means that $\psi$ is not an isomorphism. 
Hence $\psi$ is divisorial. 
By Proposition~\ref{prop-divisorial}, 
$\mcF \simeq \mcO \oplus \mcO(-K_{X})$ or $(X,\mcF)$ satisfies (3) of Theorem~\ref{main-emb}. 
In the former case, the condition $\Phi_{\lvert \mcF \rvert}$ is not a closed embedding implies that $X$ is hyperelliptic. 
Hence $(X,\mcF)$ satisfies (1) of Theorem~\ref{main-emb}. 
This completes the proof of Theorem~\ref{main-emb}. \qed

\bibliography{quadric_paper_f}
\bibliographystyle{amsplain}

\end{document}